\newtheorem{algorithm}{Algorithm}
\newtheorem{assumption}[lemma]{Assumption}
\newtheorem{example}[lemma]{Example}
\def\1{{\mhpastefig{root}}}
\def\2{{\mhpastefig[2/3]{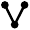}}}
\def\3{{\!\mhpastefig[1/2]{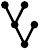}}}
\def\9{{\mhpastefig[1/2]{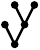}}}
\def\4{{\mhpastefig[1/2]{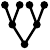}}}
\def\5{{\!\mhpastefig[1/2]{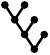}}}
\def\6{{\!\mhpastefig[1/2]{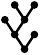}}}
\def\8{{\mhpastefig[1/2]{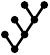}}}
\def\7{{\mhpastefig[1/2]{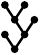}}}
\def\smalltree{{\raisebox{-0.4mm}{\mhpastefig[3/5]{tree12}}}}
\def\bigtree{{\raisebox{-0.6mm}{\mhpastefig[3/5]{tree112}}}}
\def\nicetree{{\raisebox{-0.4mm}{\mhpastefig[3/5]{tree22}}}}
\newcommand{\sims}{\stackrel{\mbox{\tiny $P$}}{\sim}}
\newcommand{\simp}{\stackrel{\mbox{\tiny $P$}}{\approx}}
\newcommand{\sym}{{\symb[\rm]{sym}}}
\def\KK{\mathscr{K}}
\def\LL{\mathscr{L}}
\def\JJ{\mathscr{J}}
\def\TT{\mathscr{T}}
\def\GG{\mathscr{G}}
\def\cC{\mathscr{C}}
\def\XX{\mathbf{X}}
\def\YY{\mathbf{Y}}
\def\iiint{\rlap{\hspace{0.43em}\raisebox{0.2em}{\rule{0.5em}{0.1em}}}}
\def\siint{\rlap{\hspace{0.36em}\raisebox{0.26em}{\rule{0.3em}{0.06em}}}}
\def\iint{\mathchoice{\iiint}{\siint}{\siint}{\siint}\int}
\def\E{\mathbf{E}}
\def\P{\mathbf{P}}
\def\${|\!|\!|}
\definecolor{darkred}{rgb}{0.9,0.1,0.1}
\def\Fd{F_{\eps}^{\bar\eps}}
\def\Cd{C_{\eps}^{\bar\eps}}
\begin{document}

\title{Solving the KPZ equation}
\author{Martin Hairer}
\institute{The University of Warwick, \email{M.Hairer@Warwick.ac.uk}}
\maketitle

\begin{abstract}
We introduce a new concept of solution to the KPZ equation which is shown to
extend the classical Cole-Hopf solution. This notion provides a factorisation of the
Cole-Hopf solution map into a ``universal'' measurable map from the probability space into
an explicitly described auxiliary metric space, composed with a new solution map that
has very good continuity properties. 
The advantage of such  a formulation is that it essentially provides a pathwise notion of a solution,
together with a very detailed approximation theory. In particular, our construction completely bypasses the 
Cole-Hopf transform, thus laying the groundwork for proving that the KPZ equation describes the
fluctuations of systems in the KPZ universality class.

As a corollary of our construction, we obtain very detailed new regularity results about the solution, 
as well
as its derivative with respect to the initial condition. Other byproducts of the proof
include an explicit approximation to the stationary solution of the KPZ equation,
a well-posedness result for the Fokker-Planck equation associated to a particle 
diffusing in a rough space-time dependent potential, and a new periodic homogenisation result 
for the heat equation with a space-time periodic potential. One ingredient in our construction
is an example of a non-Gaussian
rough path such that the area process of its natural approximations needs to be renormalised
by a diverging term for the approximations to converge.
\end{abstract}

\tableofcontents
\section{Introduction}

The aim of this article is to construct and describe solutions to the KPZ equation. 
At a purely formal level, this equation is given by
\begin{equ}[e:KPZ]
\d_t h = \d_x^2 h + \lambda (\d_x h)^2 - \infty + \xi\;,
\end{equ}
where ``$\infty$'' denotes an ``infinite constant'' required to renormalise the divergence
appearing in the term $(\d_x h)^2$ and $\lambda > 0$ is a ``coupling strength''. Here, $h(x,t)$ is a continuous stochastic process
with $x \in S^1$ (which we usually identify with $[0,2\pi]$, but we will always assume periodic boundary conditions) and 
$\xi$ denotes space-time white noise which is a distribution-valued Gaussian field with correlation function
\begin{equ}[e:convention]
\E \xi(x,t) \xi(y,s) = 4\pi \delta(x-y)\delta(t-s)\;.
\end{equ}
The prefactor $1$ in front of the term $\d_x^2 h$ and the strange-looking prefactor $4\pi$ in the definition of $\xi$ are 
normalisation constants which could be set to any positive value by rescaling time, $h$ and $\lambda$,
but our particular choice will simplify some expressions in the sequel.

At this stage, it is of course completely unclear what \eref{e:KPZ} actually means and, in a way, this
is the main question that will be addressed in this article. 
Originally, the equation \eref{e:KPZ} was proposed by Kardar, Parisi and Zhang as a model of surface growth 
\cite{PhysRevLett.56.889}. 
However, it was later realised that it is a universal object that describes the fluctuations
of a number of strongly interacting models of statistical mechanics with space-time  dependencies. 
For example, it is known rigorously to arise as the fluctuation process for
the weakly asymmetric simple exclusion process \cite{MR1462228}, as well as the partition 
function for directed polymer models 
\cite{KardarRough,ReplicaKPZ,MR2796514}. More generally, the solution to the KPZ equation is expected to describe the fluctuations
of a much larger class of systems, namely the systems in the KPZ universality class which is associated
to the dynamic scaling exponents ${3\over 2}$, see for example \cite{SeppQuas}. We refer to the excellent review
article \cite{Ivan} for many more references and a more detailed historical account of the KPZ equation.

Over the past ten years or so, substantial progress has been made in the understanding of
the solutions to \eref{e:KPZ} (especially in the extended case $x \in \R$), but very few 
results had been established rigorously until an explosion of
recent results yielding \textit{exact} formulae for the one-point distribution of solutions to \eref{e:KPZ}. 
A foundation for these results was laid by the groundbreaking work of Johansson \cite{Johansson:00},
who noted a link between discrete approximations to \eref{e:KPZ} and random matrix theory,
and who used this to prove that the Tracy-Widom distribution arises as the long-time limit of this discrete model.
One  stunning recent result was the rigorous proof in \cite{SeppQuas,MR2796514,HalfBrownian} 
of the fact that, also for the continuous model
\eref{e:KPZ}, one has 
$u(t) \approx t^{1/3}$ for large times (this had already been conjectured in \cite{PhysRevLett.56.889}
and the results in \cite{Johansson:00} provided further evidence, but the lack of a good approximation theory for
\eref{e:KPZ} had defeated earlier attempts) and that, at least for the ``infinite wedge'' and the ``half-Brownian'' initial
distributions, the law of $t^{-1/3}u(0,t)$, appropriately recentred, does  converge, as $t \to \infty$,
to the Tracy-Widom distribution. Another very recent achievement exploiting this link is the series of articles 
\cite{MR2570756,MR2628936,MR2796514,SasaSpo:10} in which the authors provide
an \textit{exact} formula for the law of the solution to the KPZ equation at a fixed time and fixed spatial location. 
These results built on a number of previous results using related ideas, in particular Tracy and Widom's exact
formulae for the asymmetric simple exclusion process \cite{TW2,TW1,TW3}. 

Together with this explosion of exact results on the solutions to \eref{e:KPZ}, 
there has been renewed interest in giving a rigorous interpretation of \eref{e:KPZ}.
Ever since the seminal work of Bertini and Giacomin \cite{MR1462228}, there
has been an accepted notion of solution to \eref{e:KPZ} via the so-called ``Cole-Hopf transform'',
which had long been known to be useful in the study of the deterministic KPZ / Burgers equation \cite{Hopf,Cole}.
The idea is to consider the solution $Z$ to the linear multiplicative stochastic heat equation
\begin{equ}[e:linear]
dZ = \d_x^2Z\,dt + \lambda Z\,dW(t)\;,
\end{equ}
where $W$ is a cylindrical Brownian motion on $L^2(S^1)$ (i.e.\ it is the time integral of the space-time white
noise $\xi$). Here, the term $Z\,dW(t)$ should be interpreted as an It\^o integral. It is well-known
(see for example the monograph \cite{DaPrato-Zabczyk92}) that the mild form of \eref{e:linear} admits a unique positive
solution in a suitable space of adapted processes. One then \textit{defines} the process $h$ to be given by
\begin{equ}[e:defh]
h(x,t) = \lambda^{-1} \log Z(x,t)\;.
\end{equ}
In the sequel, we denote this solution by 
$h = \CS_{\mathrm{CH}}(h_0,\omega)$, where $h_0 =  \lambda^{-1} \log Z_0$ is an initial condition for \eref{e:KPZ}.
The map $\CS_{\mathrm{CH}}$ is a jointly measurable map from $\CC \times \Omega$ into $\CC(\R_+,\CC^\alpha)$
for every $\alpha < {1\over 2}$.

There are two powerful arguments for this to be the ``correct'' notion of solution to \eref{e:KPZ}. First, one
can consider the solution $Z_\eps$ to \eref{e:linear} with $W$ replaced by $W_\eps$, which is
obtained by multiplying the $k$th Fourier component with $\phi(\eps k)$ for some smooth cut-off 
function $\phi$ with compact support and $\phi(0) = 1$. Defining $h_\eps$ via \eref{e:defh} and applying It\^o's formula, it is then possible 
to verify that
$h_\eps$ solves the equation
\begin{equ}[e:KPZeps]
\d_t h_\eps = \d_x^2 h_\eps + \lambda (\d_x h_\eps)^2 - \lambda C_\eps + \xi_\eps\;,
\end{equ}
where the constant $C_\eps$ is given by $C_\eps = \sum_{k\in \Z} \phi^2(k\eps) \approx {1\over \eps} \int_{\R} \phi^2(x)\,dx$.
Since $Z_\eps \to Z$ as $\eps \to 0$ by standard SPDE arguments, 
it follows that $h_\eps$ converges to a limiting process $h$ which,
in light of \eref{e:KPZeps}, does indeed formally solve \eref{e:KPZ}. 

The second argument in favour of the Cole-Hopf solution is that, as shown in \cite{MR1462228},
the fluctuations of the stationary weakly asymmetric simple exclusion process (WASEP) converge, 
under a suitable rescaling, to the Cole-Hopf solution to \eref{e:KPZ}. This result was further improved recently
in \cite{MR2796514} where, among other things, the authors show that the fluctuations for 
the WASEP with ``infinite wedge'' initial condition
are also given by the Cole-Hopf solution.

The problem with the Cole-Hopf solution is that it does not provide a satisfactory theory of approximations to 
\eref{e:KPZ}. Indeed, all approximations to \eref{e:KPZ} must first be reinterpreted as approximations to
\eref{e:linear}, which is not always convenient. While it works well for the approximation by mollification of the noise
that we just mentioned,
it does not work at all for other natural approximations to \eref{e:KPZ}, like for example adding a small
amount of hyperviscosity or performing a spatio-temporal mollification of the noise. 
This is also why only the fluctuations of the WASEP have so far been shown to
converge to the solutions to the KPZ equation: this is one of the rare discrete systems that behave well under
the corresponding version of the Cole-Hopf transform. 

As a consequence, there have been a number of, 
unfortunately unsuccessful, attempts
over the past decade to provide a more natural notion of solution without making use of the Cole-Hopf transform.
For example, as illustrated by \eref{e:KPZeps}, the Cole-Hopf solution really corresponds to an interpretation of the nonlinearity
as a Wick product $\d_x h \diamond \d_x h$, where the Wick product is defined relative to the Gaussian structure
given on the space of solutions by the linearised equation (i.e. the one where we simply drop the
nonlinearity altogether). One could also imagine interpreting the nonlinearity as a Wick product with respect
to the Gaussian structure given on the underlying probability space by the driving noise $\xi$. This yields
a \textit{different} concept of solution that was studied in \cite{BookZhang,MR1743612}. In the spatially extended situation, this solution
appears however to behave in a non-physical way in the sense that it does not exhibit the correct scaling exponents.

Following a similar line of though, one may try to apply ``standard'' renormalisation theory to interpret 
\eref{e:KPZ}. This programme was initiated in
\cite{MR2365646}, where the authors were able to treat a mollified version of \eref{e:KPZ}, namely
\begin{equ}[e:KPZalpha]
\d_t h = \d_x^2 h + (-\d_x^2)^{-2\alpha} \bigl((\d_x h)^2 - \infty\bigr) + (-\d_x^2)^{-\alpha} \xi\;.
\end{equ}
Unfortunately, the techniques used there seem to break down at $\alpha = {1\over 8}$. 
We refer to Remark~\ref{rem:renorm} below
for an explanation why ${1\over 8}$ is one natural barrier arising for ``conventional'' techniques
and what other barriers (the largest of which being the passage from $\alpha > 0$ to $\alpha = 0$) 
must be crossed before reaching \eref{e:KPZ}.

Another way to make sense of \eref{e:KPZ} could be to formulate a corresponding
martingale problem. This is a technique that was explored in \cite{MR1888875} for example.
Very recently, a somewhat related notion of ``weak energy solution''
was introduced in \cite{Milton} and further refined in \cite{SigurdRecent}, but there is so far no 
corresponding uniqueness result. Furthermore, this notion does not 
seem to provide any way of distinguishing solutions that differ by spatial constants.

Some recent progress has also been made in providing an approximation theory to variants of \eref{e:linear}, 
but the results are only partial \cite{Etienne,Guillaume}.
To a large extent, this long-standing problem is solved (or at least a programme is established on how to
solve some of its variants) by the results of this article. In particular, we provide a ``pathwise'' interpretation
of \eref{e:KPZ}, together with a robust approximation theory. 

Before we state the theorem, we introduce some notation. 
We denote by $\bar \CC^\alpha$ the space $\CC^\alpha$ to which we add a ``point at infinity'' $\infty$ with
neighbourhoods of the form $\{h\,:\, \|h\|_{\alpha}> R\} \cup \{\infty\}$, which turns $\bar \CC^\alpha$ into a Polish space.
We need to work with the space $\bar \CC^\alpha$ since our construction only provides local solution so that,
for a given $\Psi \in \CX$ and a given initial condition $h_0$, we cannot guarantee that solutions will not explode
in finite time. However, if solutions do explode in finite time, it is always because the $\CC^\alpha$-norm diverges.
With this terminology in place, our result can be stated as follows:

\begin{theorem}\label{theo:mainCont}
There exists a Polish space $\CX$, a measurable map $\Psi\colon \Omega \to \CX$ and, for every $\beta \in (0,{1\over 2})$, 
a lower semicontinuous map $T_\star \colon \CC^\beta\times\CX \to (0,+\infty]$ and a map $\CS_{\mathrm{R}} \colon \CC^\beta \times \CX \to \CC(\R_+, \bar \CC^{{1\over 2}-\beta})$ such that 
\begin{equ}
(t, h_0, \Psi) \mapsto \CS_{\mathrm{R}}(h_0, \Psi)(t)\;,
\end{equ}
is continuous on all triples such that $t \in (0,T_\star(h_0,\Psi))$.
Furthermore, for every $h_0 \in \CC^\beta$, one has $T_\star(h_0, \Psi(\omega)) = +\infty$ almost surely and
the identity
\begin{equ}
\CS_{\mathrm{CH}} (h_0, \omega) = \CS_{\mathrm{R}}\bigl(h_0, \Psi(\omega)\bigr)\;,
\end{equ}
holds for almost every $\omega \in \Omega$. 

Finally, there exists a separable Fr\'echet space $\CW$ such that $\CX \subset \CW$ (with the
topology of $\CX$ given by the induced topology of $\CW$) and such that, for every $\ell \in \CW^\star$, 
the random variable $\ell(\Psi(\cdot))$ belongs to the union of the first four Wiener chaoses of $\xi$ 
(see Section~\ref{sec:chaos} for a short reminder of the definition
of the Wiener chaos). 
\end{theorem}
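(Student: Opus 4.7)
The plan is to split the proof into three tasks. First, construct a Polish space $\CX$ together with a measurable lift $\Psi\colon \Omega \to \CX$ carrying just enough structured information about $\xi$ to give meaning to the ill-defined nonlinearity $(\d_x h)^2 - \infty$. Second, define a continuous solution map $\CS_{\mathrm{R}}$ by a fixed-point argument that uses only the data in $\CX$. Third, identify $\CS_{\mathrm{R}}(h_0, \Psi(\omega))$ with $\CS_{\mathrm{CH}}(h_0, \omega)$ almost surely by passing to the limit through smooth approximations of $\xi$ on both sides.

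For the first task I posit a controlled expansion $h = \sum_\tau X_\tau + \tilde h$, where $\tau$ ranges over a finite family of labelled binary trees corresponding to iterated space--time integrals of $\xi$ against convolutions of the heat kernel (and its spatial derivatives), while $\tilde h$ enjoys a substantial gain of regularity. Power counting around $\xi \in \CC^{-3/2-\kappa}$ indicates that only trees with at most four leaves appear. The space $\CX$ is then realised as a closed subset of a countable product of weighted negative H\"older spaces hosting the components $X_\tau$, subject to the natural algebraic identities; it embeds into a separable Fr\'echet space $\CW$ of such tuples and is itself Polish. The lift $\Psi$ is first defined on smooth mollifications $\xi_\eps$ by building each $X_\tau^\eps$ as an explicit iterated Wiener integral, from which we subtract the finitely many diverging counter-terms corresponding to trees containing a self-contraction (a Wick-type renormalisation). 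A chaos-by-chaos moment computation, combined with hypercontractivity and a Kolmogorov-type criterion, gives $\Psi(\xi_\eps) \to \Psi(\omega)$ in $L^p(\Omega;\CX)$, extending $\Psi$ to a measurable map on $\Omega$. The claim about $\CW^\star$ and the first four Wiener chaoses then follows by inspection: each $X_\tau$ is a polynomial in $\xi$ of degree equal to the number of leaves of $\tau$, and all trees in our family have between one and four leaves.

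For the second task, given $(h_0, \Psi) \in \CC^\beta \times \CX$, I seek $h$ in mild form, expanded along the controlled ansatz. Substituting into the nonlinearity, every term in $(\d_x h)^2$ is either a component of $\Psi$ (specified by the data) or a product whose singular factor has been traded for a controlled remainder of positive regularity and is therefore definable by classical pairing. Schauder estimates in appropriately weighted H\"older spaces then turn the resulting map into a contraction on a small time interval, producing a unique local solution depending jointly continuously on $(h_0, \Psi)$. Taking $T_\star(h_0, \Psi)$ to be the maximal time of existence obtained by patching these local solutions, the contraction argument forces the $\CC^{1/2-\beta}$-norm to blow up at $T_\star$ when it is finite, which is what the one-point compactification in $\bar\CC^{1/2-\beta}$ records; lower semicontinuity of $T_\star$ then follows from continuity of the local flow together with stability of strict inequalities for H\"older norms.

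For the third task, when the lift is $\Psi(\xi_\eps)$, the fixed point of $\CS_{\mathrm{R}}$ coincides with the classical solution $h_\eps$ of \eref{e:KPZeps}: both solve the same mild equation and $h_\eps$ admits the controlled decomposition by construction, so uniqueness in the contraction gives equality. Since $h_\eps \to \CS_{\mathrm{CH}}(h_0, \omega)$ by the It\^o argument recalled in the introduction and $\Psi(\xi_\eps) \to \Psi(\omega)$ in $\CX$ by step one, joint continuity of $\CS_{\mathrm{R}}$ yields the almost-sure identity, and $T_\star(h_0, \Psi(\omega)) = +\infty$ is inherited from the global well-posedness of the Cole--Hopf solution. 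The principal obstacle is step one: proving convergence of the renormalised $X_\tau^\eps$ for the four-leaved trees, which sit at the borderline of the power-counting scale and produce logarithmic divergences requiring exactly the right counter-term subtractions. A secondary difficulty is that the controlled decomposition of step two must be preserved under the heat semigroup with sufficient uniformity to close the contraction, since $\d_x h$ has negative regularity and the whole tuple $(X_\tau, \tilde h)$, not merely $h$, has to be propagated consistently through the fixed-point iteration.
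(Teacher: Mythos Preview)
Your three-step plan matches the paper's architecture, and your description of the tree expansion and Wiener-chaos argument for convergence of the $X_\tau^\eps$ is right in spirit. But there is a genuine gap in step two that undermines the whole scheme.

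You write that after substituting the controlled ansatz, every product in $(\d_x h)^2$ is ``either a component of $\Psi$ \ldots\ or a product whose singular factor has been traded for a controlled remainder of positive regularity and is therefore definable by classical pairing.'' This is precisely where the argument fails. After subtracting $h^\star = \sum_\tau \lambda^{|\tau|} Y^\tau$, the remainder $u$ lives in $\CC^{3/2-}$, so $\d_x u \in \CC^{1/2-}$; but the equation for $u$ still contains the term $\d_x u \cdot \d_x Y^\1$, and $\d_x Y^\1 \in \CC^{-1/2-}$. The sum of the H\"older exponents is \emph{non-positive}, so classical pairing (Young/Bony) does \emph{not} define this product --- this is exactly the obstruction recorded in Theorem~\ref{theo:negativeBM}. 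The paper resolves this by rough path theory: one shows that $\d_x u$ is \emph{controlled} by the process $\Phi$ of \eref{e:defPhi}, and then the product is defined via an area process $\YY$ between $\Phi$ and $Y^\1$. This area process, together with a further remainder $R^\5$ certifying that $\d_x Y^\5$ is itself controlled by $\Phi$, must be included as additional components of $\Psi$ and of $\CX$; your description of $\CX$ as hosting only the $X_\tau$ omits exactly the data that makes the nonlinearity well-defined. Correspondingly, the fixed-point space in step two is not a single Banach space but a bundle $\CB_{\Psi,T}$ of spaces of paths controlled by $\Phi$, indexed by $\Psi \in \CX$.

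There is also a logical slip in step three. You say ``joint continuity of $\CS_{\mathrm{R}}$ yields the almost-sure identity, and $T_\star(h_0,\Psi(\omega)) = +\infty$ is inherited from global well-posedness of Cole--Hopf.'' But $\CS_{\mathrm{R}}$ is only continuous on $\{t < T_\star\}$, so you cannot pass to the limit $\Psi_\eps \to \Psi$ without first knowing $T_\star(h_0,\Psi) > T$. The paper closes this by contradiction: if $\P(T_\star(h_0,\Psi) \le T) \ge \kappa > 0$, then since solutions blow up at $T_\star$, for any $K$ there is a neighbourhood of the blow-up set on which $\sup_{t\le T}\|\CS_{\mathrm{R}}\|_\beta \ge K$; by $\Psi_\eps \to \Psi$ in probability this forces $\P(\sup_{t\le T}\|h_\eps\|_\beta \ge K) \ge \kappa/2$ uniformly for small $\eps$, contradicting $h_\eps \to \CS_{\mathrm{CH}}(h_0,\cdot)$ in probability in $\CC([0,T],\CC^\beta)$.
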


\begin{remark}
The letter `R' in $\CS_{\mathrm{R}}$ stands for ``Rough''. It will become clear later why we chose this terminology.
\end{remark}

\begin{remark}
Loosely speaking, our result states that one can find a Polish space $\CX$ and a jointly continuous map $\CS_{\mathrm{R}}$
such that the following diagram commutes, where arrows without label denote the identity:
\begin{equ}[e:comm1]
\begin{minipage}[c]{.8\textwidth}
\centering
\begin{tikzpicture}[thick]
  \node (X) at ( 0,1.5) [] {$\CX$};
  \node at ( 0.5,1.5) [] {$\times$};
  \node (CC) at ( 1,1.5) [] {$\CC^\alpha$};
  \node (end) at ( 3.5,1.5) [] {$\CC(\R_+,\CC^\alpha)$};

  \node  at ( 2.25,0.75) [] {$\cdot$};

  \node (O) at ( 0,0) [] {$\Omega$};
  \node at ( 0.5,0) [] {$\times$};
  \node (CC1) at ( 1,0) [] {$\CC^\alpha$};
  \node (end1) at ( 3.5,0) [] {$\CC(\R_+,\CC^\alpha)$};
  \draw [->] (end) -- (end1);
  \draw [->] (O) -- node [left] {$\Psi$} (X);
  \draw [->] (CC1) -- (CC);
  \draw [->] (CC) --node [above] {$\CS_{\mathrm{R}}$} (end);
  \draw [->] (CC1) --node [above] {$\CS_{\mathrm{CH}}$}  (end1);
\end{tikzpicture}
\end{minipage}
\end{equ}
As it turns out, $\CS_{\mathrm{R}}$ also extends the usual (deterministic) notion $\CS_{\mathrm{D}}$
of solution to the KPZ equation with regular data:
\begin{equ}[e:KPZsmooth]
\d_t h = \d_x^2 h + \lambda (\d_x h)^2 + g(x,t)\;.
\end{equ}
In other words, it is possible to find a map $\Phi$ such that the following commutes:
\begin{equ}[e:comm2]
\begin{minipage}[c]{.8\textwidth}
\centering
\begin{tikzpicture}[thick]
  \node (X) at ( -0.5,1.5) [] {$\CX$};
  \node at ( 0.5,1.5) [] {$\times$};
  \node (CC) at ( 1,1.5) [] {$\CC^\alpha$};
  \node (end) at ( 3.5,1.5) [] {$\CC(\R_+,\CC^\alpha)$};

  \node  at ( 2.25,0.75) [] {$\cdot$};

  \node (O) at ( -0.5,0) [] {$\CC(\R_+,\CC)$};
  \node at ( 0.5,0) [] {$\times$};
  \node (CC1) at ( 1,0) [] {$\CC^\alpha$};
  \node (end1) at ( 3.5,0) [] {$\CC(\R_+,\CC^\alpha)$};
  \draw [->] (end) -- (end1);
  \draw [->] (O) -- node [left] {$\Phi$} (X);
  \draw [->] (CC1) -- (CC);
  \draw [->] (CC) --node [above] {$\CS_{\mathrm{R}}$} (end);
  \draw [->] (CC1) --node [above] {$\CS_{\mathrm{D}}$}  (end1);
\end{tikzpicture}
\end{minipage}
\end{equ}
where the first argument to $\CS_{\mathrm{D}}$ is the function $g$ in \eref{e:KPZsmooth}.
Interestingly, the choice of $\Phi$ in \eref{e:comm2}
is not unique. As we will see later, the map $\Psi$ in \eref{e:comm1} is given by the limit
in probability of maps $\Phi_\eps$ that are admissible for \eref{e:comm2}, applied to 
$\xi_\eps - C_\eps$ for a suitable mollification $\xi_\eps$ of $\xi$ and 
constant $C_\eps \to \infty$.
\end{remark}

\begin{remark}
The space $\CX$ will be given explicitly later on, but it is \textit{not} a linear space. 
It is indeed not difficult to convince oneself that, even though
the probability space associated to $\xi$ carries a natural linear structure (one could take it to be given by 
the space of distributions over $S^1 \times \R$ for example), it is not possible to find a norm on it that 
would make the map $\CS_{\mathrm{CH}}$ continuous.

Similarly, the reason why we did not simply formulate the statement of the theorem with $\CX$ replaced by $\CW$ from
the beginning is that, even though $\CS_{\mathrm{R}}$
is continuous on $\CX$, it does \textit{not} extend continuously to all of $\CW$.
\end{remark}

We also have a more explicit description of $\CS_{\mathrm{R}}$ as the solution to a fixed point argument, which 
in particular implies that the Cole-Hopf solutions of the KPZ equation can be realised as a \textit{continuous} random
dynamical system.
This can for example be formulated as follows:

\begin{proposition}\label{prop:mainCont}
Fix $\beta \in (0,{1\over 2})$. For every $T>0$ there exists a  Banach space $\CB_{\star,T}$ with a canonical
projection  
$\pi \colon \CB_{\star,T} \to \CC([0,T], \CC^{{3\over 2}-\beta})$, a closed algebraic variety
$\CY_{\star,T} \subset \CX \times \CB_{\star,T}$, continuous maps
$h^\star \colon \CX \to \CC([0,T], \CC^{{1\over 2}-\beta})$ and
\begin{equ}
\hat\CM \colon \CC^\beta \times \CY_{\star,T} \to \CB_{\star,T}\;,
\end{equ}
as well as a lower semi-continuous map $T_\star \colon \CC^\beta \times \CX \to (0,+\infty]$
with the following properties:
\begin{claim}
\item The map $\hat\CM$ leaves $\CY_{\star,T}$ invariant in the sense that
$(\Psi, \hat\CM(h,\Psi,v)) \in \CY_{\star,T}$ for every pair $(\Psi, v) \in \CY_{\star,T}$ and every $h \in \CC^\beta$.
\item For every $\Psi\in \CX$, the space $\CB_{\Psi,T} = \{v \in \CB_{\star,T}\,:\, (\Psi, v) \in \CY_{\star,T}\}$
is a Banach subspace of $\CB_{\star,T}$.
\item For every $h_0 \in \CC^\beta$, $\Psi_0 \in \CX$, and $T < T_\star(h_0, \Psi)$ and
neighbourhoods $U$ of $h_0$ and $V$ of $\Psi_0$ 
such that, for every $\Psi \in V$ and every $h \in U$, the restriction of $\CS_{\mathrm{R}}(h, \Psi)$ to $[0,T]$ 
can be decomposed as
\begin{equ}[e:decompCS]
\CS_{\mathrm{R}}(h, \Psi)\bigr|_{[0,T]} = h^\star(\Psi)\bigr|_{[0,T]} + \pi \hat\CS_{\mathrm{R}}(h, \Psi)\;,
\end{equ}
where $\hat \CS_{\mathrm{R}}\colon U\times V \to \CC([0,T],\CC^{{1\over2}-\beta})$ is a continuous map that is 
the unique solution in $\CB_{\Psi,T}$ to the fixed point problem
\begin{equ}
\hat\CM(h,\Psi,\hat \CS_{\mathrm{R}}(h, \Psi)) = \hat \CS_{\mathrm{R}}(h, \Psi)\;.
\end{equ}
\item If $T_\star(h,\Psi) < \infty$, then $\lim_{t \to T_\star} \|\CS_{\mathrm{R}}(h, \Psi)(t)\|_{\beta} = \infty$
for every $\beta > 0$.
\item There exists a group of continuous transformations $\Theta_t \colon \CX \to \CX$ such that $h^\star$ and $\hat\CS_{\mathrm{R}}$ satisfy the cocycle property in the sense that the identities
\begin{equ}
h^\star(\Psi)(t+s) = h^\star(\Theta_t \Psi)(s)\;,\quad  \hat \CS_{\mathrm{R}}(h, \Psi)(t+s) =  \hat \CS_{\mathrm{R}}\bigl( \hat \CS_{\mathrm{R}}(h, \Psi)(t), \Theta_t \Psi\bigr)(s)\;,
\end{equ}
hold for every $\Psi\in \CX$, every $h_0 \in \CC^\beta$ and every $s,t > 0$ with $s+t < T_\star$.
\end{claim}
\end{proposition}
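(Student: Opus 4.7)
The strategy is to separate the singular part of the dynamics, which is a functional of the noise alone and is completely read off from $\Psi$, from a remainder $u$ that satisfies a fixed-point equation driven by the stochastic data but otherwise behaves like a quasilinear heat equation. Concretely, I would let $h^\star(\Psi)$ be (the component of) $\Psi$ corresponding to the solution of the linear equation $\d_t h^\star = \d_x^2 h^\star + \xi$ started from zero, which gives a continuous map $\CX \to \CC([0,T],\CC^{{1\over 2}-\beta})$. Plugging $h = h^\star(\Psi) + u + P_t h_0$ (with $P_t$ the heat semigroup) into \eref{e:KPZ} gives formally
\begin{equ}
\d_t u = \d_x^2 u + \lambda (\d_x u)^2 + 2\lambda\, \d_x h^\star\, \d_x (u + P_\cdot h_0) + \lambda\bigl((\d_x h^\star)^2 - \infty\bigr) + 2\lambda\, \d_x h^\star \d_x h^\star\text{-type corrections},
\end{equ}
in which the last two groups of terms are ill-defined classically and have to be interpreted through the pre-built stochastic objects encoded in $\Psi$.

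Next I would define $\CB_{\star,T}$ as a Banach space of ``controlled'' remainders: pairs consisting of $u \in \CC([0,T],\CC^{{3\over 2}-\beta})$ together with a finite list of Gubinelli-type derivative data describing how $u$ is expanded, at small scales, against the stochastic objects inside $\Psi$. The projection $\pi$ simply returns $u$. The variety $\CY_{\star,T} \subset \CX \times \CB_{\star,T}$ is then the zero set of the algebraic consistency relations that must hold between the derivative data of $v$ and the stochastic objects of $\Psi$ for all otherwise-forbidden products $\d_x h^\star\cdot\d_x(\pi v)$ and $(\d_x \pi v)^2$ to have a continuous, $\Psi$-dependent meaning. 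By construction $\CY_{\star,T}$ is a closed algebraic variety whose fibre over each $\Psi \in \CX$ is a linear subspace of $\CB_{\star,T}$, giving claim (ii). The operator $\hat\CM(h_0,\Psi,v)$ is defined as the mild solution map: convolve the heat kernel with the right-hand side of the equation for $u$, reconstructed from $v$ and the explicit pieces of $\Psi$, and incorporate the initial-datum term $P_\cdot(h_0 - h^\star(\Psi)(0))$ in the derivative data. Continuity of $\hat\CM$ and the fact that it preserves $\CY_{\star,T}$ (claim (i)) reduce, by multilinearity in the rough-path-like data, to continuity of each of the bilinear products appearing there, which is precisely what the construction of $\CX$ in Theorem~\ref{theo:mainCont} guarantees.

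The fixed point itself is then a standard Banach contraction argument: by Schauder estimates for the heat semigroup on parabolic H\"older spaces, every occurrence of $u$ on the right-hand side acquires a strictly positive power of $T$ when measured in $\CB_{\star,T}$, so for $T$ small enough (depending continuously on $\|h_0\|_\beta$ and on the norm of $\Psi$ in $\CW$) the map $\hat\CM(h_0,\Psi,\cdot)$ is a strict contraction on a ball of $\CB_{\Psi,T}$. Gluing the local solutions produces the maximal existence time $T_\star(h_0,\Psi)$; its lower semi-continuity follows from the fact that the contraction radius depends continuously on the data, and the blow-up criterion in claim~(iv) is the usual continuation argument, since if $\|\CS_{\mathrm{R}}(t)\|_\beta$ stayed bounded up to $T_\star$ then local existence at $t = T_\star - \delta$ would extend the solution past $T_\star$. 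Finally, the cocycle property in claim~(v) follows by taking $\Theta_t$ to be the natural time-shift on $\CX$, which exists because the components of $\Psi$ are polynomial-type functionals of $\xi$ that commute with translations: uniqueness of the fixed point combined with the semigroup property of the heat kernel immediately delivers both identities.

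The main obstacle I expect is the simultaneous design of the pair $(\CB_{\star,T}, \CY_{\star,T})$. One must specify exactly which controlled-type structure a remainder must carry for the products $\d_x h^\star \cdot \d_x u$ and $(\d_x u)^2$, together with their iterations under $\hat\CM$, to be continuously definable from $\Psi$, and then check that this structure is preserved by a heat-kernel convolution \emph{while} the defining consistency relations of $\CY_{\star,T}$ are preserved; the fact that these relations are nonlinear is precisely why an algebraic variety rather than a linear space appears. Matching regularity exponents so that there really is a small power of $T$ at every iteration---and in particular that the controlled remainder closes at the H\"older level ${3\over 2}-\beta$ in spite of the negative regularity of several components of $\Psi$---is the delicate bookkeeping that drives the whole argument.
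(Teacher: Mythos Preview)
Your overall strategy---subtract a stochastic ansatz $h^\star$, set up a controlled-path Banach space for the remainder, run a contraction with Schauder estimates, then continue and extract the cocycle---matches the paper's architecture. The continuation, blow-up criterion, lower semicontinuity, and cocycle arguments you sketch are essentially correct and coincide with what the paper does.

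The substantive gap is in your choice of $h^\star$. Taking $h^\star$ to be only the linear solution $Y^\1$ is not enough. With that subtraction the remainder $u$ is forced by terms of regularity $\CC^{-\frac12-}$ (for instance $\bar Y^\1\bar Y^\2$, which is the driver of $Y^\3$), so $u$ lands only in $\CC^{\frac32-}$ and $\d_x u\in\CC^{\frac12-}$; the product $\bar Y^\1\,\d_x u$ is then genuinely borderline and requires a controlled-path interpretation. The trouble is that $\d_x u$ now contains, at leading order, the contribution $\lambda\bar Y^\2\in\CC^{0-}$, which is \emph{rougher} than the process $\Phi$ one would like to control against, so a one-level Gubinelli expansion against a single model process does not close. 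You would be forced into a multi-level (branched) controlled structure, which you do not specify. The paper avoids this by taking $h^\star=\sum_{\tau\in\bar\CT}\lambda^{|\tau|}Y^\tau$ with $\bar\CT$ a set of nine trees up to four leaves; after this subtraction the remainder equation \eref{e:equationu} has a smooth forcing $\bar\CR^\star_\eps\in\CC^{0-}$ plus a single rough term $2\lambda\bar Y^\1(\d_x u+\lambda^3\bar Y^\4+4\lambda^3\bar Y^\5)$ in which every factor multiplying $\bar Y^\1$ is controlled by the one process $\Phi$ (for $\bar Y^\5$ this is the nontrivial content of Section~\ref{sec:controlProcess}). The space $\CB_{\star,T}$ is then just quadruples $(m,v,v',R^v)$ with $v=\d_x u$, $v'$ a single derivative process, and $R^v$ the remainder in \eref{e:remainder}; the variety $\CY_{\star,T}$ is cut out by the single affine relation \eref{e:remainder}.

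A smaller point: you list $(\d_x\pi v)^2$ among the products needing stochastic interpretation, but in the paper's setup $\d_x u\in\CC^{\frac12-\kappa}$ with positive exponent, so this square is classical; the only non-classical product is $\bar Y^\1$ against $\d_x u$ and against $\bar Y^\5$.
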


\begin{remark}
The reason for requiring the decomposition \eref{e:decompCS} instead of writing $\CS_{\mathrm{R}}$ itself as a solution
to a fixed point problem is that $h^\star$ does not belong to $\CB_{\star,T}$ in general. 
Note also that, quite unusually in the theory of partial differential equations, the space $\CB_{\Psi,T}$ 
in which we effectively solve our fixed point problem depends on the choice of $\Psi$!
\end{remark}

\begin{remark}
In principle, Proposition~\ref{prop:mainCont} only provides a description of solutions up to the explosion
time $T_\star$. It is then natural to simply set $\CS_{\mathrm{R}}(h, \Psi)(t) = \infty$ for $t > T_\star(h,\Psi)$,
which yields a continuous path by the definition of the topology on $\bar \CC^{{1\over 2}-\beta}$ and the fact that
solutions explode when approaching $T_\star$. In order to prove Theorem~\ref{theo:mainCont}, it is therefore 
sufficient to construct  $\hat \CS_{\mathrm{R}}$ and $h^\star$ with the properties stated in Proposition~\ref{prop:mainCont}
and such that $\CS_{\mathrm{CH}} = \pi\hat \CS_{\mathrm{R}} + h^\star$ for every initial condition and almost every
realisation of $\Psi$. The fact that we know \textit{a priori} that Cole-Hopf solutions are defined for all times
ensures that, for every $h \in \CC^\beta$, one has $T_\star(h,\Psi(\omega)) = +\infty$ almost surely, but we
cannot rule out the existence of a non-trivial exceptional set that may depend on the initial condition.
\end{remark}

\begin{remark}
As an abstract result, it is not clear how useful Proposition~\ref{prop:mainCont} really is. However, we will
provide very explicit constructions of all the quantities appearing in its statement. As a consequence, in order
to approximate the Cole-Hopf solutions to \eref{e:KPZ}, it is enough to provide a good enough approximation
to the fixed point map $\CM$ in a suitable space, as well as an approximation to the map $\Psi$.
For an example of how such programme can be implemented in the context of a different equation
with similar regularity properties, see \cite{JanHendrik}.
\end{remark}

Another drawback of the Cole-Hopf solution
is that some properties of the solutions that seem natural in view of \eref{e:KPZ} turn out to be very
difficult to prove at the level of \eref{e:linear}. For example, due to the additive nature of the driving noise
in \eref{e:KPZ}, one would expect the difference between two solutions to exhibit better spatial and temporal
regularity properties than the solutions themselves. However, such a statement turns into a statement
about the regularity of the \textit{ratio} between solutions to \eref{e:linear}, which seems very difficult to obtain,
although some very recent progress was obtained in this direction in \cite{NeilJon}.

As a corollary of the construction of $\CS_{\mathrm{R}}$ however, we obtain extremely detailed information
about the solutions. In order to formulate our next result, we introduce the stationary mean zero
solution to the 
stochastic heat equation
\begin{equ}
\d_t X^\1 = \d_x^2 X^\1 + \Pi_0^\perp \xi\;,
\end{equ}
where $\Pi_0^\perp = 1-\Pi_0$, with $\Pi_0$ the orthogonal projection onto constant functions in $L^2$. (Adding 
this projection is necessary
in order to have a stationary solution.) Another vital role will be played by the process $\Phi$ given as the
centred stationary solution to 
\begin{equ}
\d_t \Phi = \d_x^2 \Phi + \d_x^2 X^\1\;.
\end{equ}
Note that, for any fixed time $t$, both $\Phi$ and $X^\1$ are equal in law to a Brownian bridge (in space!)  
which is centred so that its spatial average vanishes, but there are correlations between the two processes.

\begin{remark}
Both $X^\1$ and $\Phi$ are \textit{a priori} given as stochastic processes defined
on the underlying probability space $\Omega$. However, we will see below that $\CX$ is constructed in such
a way that there are natural counterparts to $X^\1$ and $\Phi$ that are \textit{continuous} functions
from $\CX$ into $\CC(\R, \CC^{{1\over 2}-\delta})$ for every $\delta > 0$.
\end{remark}

With this notation at hand, we have the following decomposition of the solutions:

\begin{theorem}\label{theo:decomposition}
Let $\beta > 0$ be arbitrarily small and,
 for $h_0 \in \CC^\beta$ and
$\Psi \in \CX$, set $h_t = \CS_{\mathrm{R}}(h_0, \Psi)(t)$ and $T_\star = \inf\{t>0\,:\, h_t = \infty\}$.

Then, for every $t<T_\star$, one has $h_t - X^\1_t \in \CC^{1-\beta}$. Furthermore,
there exists a continuous map $Q\colon \CX \to \CC(\R_+, \CC^{-\beta})$ such that
one has
\begin{equ}[e:decompRegular]
e^{-2\lambda \Phi_t} \d_x\bigl(h_t - X^\1_t\bigr) - Q_t \in \CC^{1-\beta}\;,
\end{equ}
for every $t < T_\star$.
\end{theorem}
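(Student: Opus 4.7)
The plan is to invoke Proposition~\ref{prop:mainCont} to decompose
\[ h_t = h^\star(\Psi)(t) + \pi\hat\CS_{\mathrm{R}}(h_0,\Psi)(t)\;, \]
and then read off both claims from the explicit combinatorial structure of $h^\star$. Since $\pi\hat\CS_{\mathrm{R}}(h_0,\Psi) \in \CC([0,T],\CC^{{3\over 2}-\beta})$ for every $T<T_\star$, this piece and its spatial derivative both lie in $\CC^{1-\beta}$, so they can be absorbed into the $\CC^{1-\beta}$ remainder throughout. The problem thus reduces to a purely $\Psi$-dependent analysis of $h^\star(\Psi) - X^\1$ and of $\d_x(h^\star(\Psi) - X^\1)$.

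For the first claim, recall that $h^\star(\Psi)$ is constructed in the earlier sections as a finite linear combination of tree-indexed contributions, each obtained by convolving the corresponding component of $\Psi$ against the heat kernel. By construction, the lowest-order contribution is precisely $X^\1$, while the higher-order ones all have spatial H\"older regularity strictly exceeding $1-\beta$; this follows from parabolic Schauder estimates together with the a priori regularities of the tree components of $\Psi$ established earlier. Subtracting $X^\1$ therefore leaves a quantity in $\CC^{1-\beta}$, uniformly in $t$ on compact subsets of $[0,T_\star)$.

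For the second claim, a formal calculation shows that $u = h - X^\1$ satisfies
\[ \d_t u = \d_x^2 u + \lambda (\d_x u)^2 + 2\lambda\, \d_x X^\1\cdot \d_x u + \lambda\, \d_x X^\1 \diamond \d_x X^\1 + \Pi_0\xi\;, \]
and the transport-type term $2\lambda\, \d_x X^\1\cdot \d_x u$ is precisely the source of the extra singularity of $\d_x u$. The natural ``integrating factor'' that absorbs this transport term at the level of $\d_x u$ has the form $e^{2\lambda F}$, with $F$ determined by $\d_t F = \d_x^2 F + \d_x^2 X^\1$, and this equation identifies $F$ with $\Phi$. Motivated by this, one expects the singular part of $\d_x u$ to factorise as $e^{2\lambda\Phi_t}$ times a universal object $Q_t$ depending only on $\Psi$. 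Concretely, $Q_t$ is identified as $e^{-2\lambda\Phi_t}$ times the $\d_x$ of the ``second-order'' tree contribution in the expansion of $h^\star(\Psi)$, namely the heat-kernel convolution of $\d_x X^\1 \diamond \d_x X^\1$, which has been given a pathwise meaning inside $\CX$. Continuity $\Psi\mapsto Q \in \CC(\R_+,\CC^{-\beta})$ then follows from the continuity of the tree-lift together with the continuous dependence of $\Phi$ on $\Psi$.

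The main obstacle is to verify that, once $Q_t$ has been subtracted, the residue $e^{-2\lambda\Phi_t}\d_x(h_t - X^\1_t) - Q_t$ genuinely belongs to $\CC^{1-\beta}$. This amounts to controlling all cross-products of $e^{-2\lambda\Phi_t}$ with the higher-order tree contributions to $h^\star(\Psi) - X^\1$: each such product either makes classical sense when the sum of H\"older exponents is positive, or has already been assigned a meaning via the rough-path lifts encoded in $\Psi$. A careful bookkeeping of these contributions, combined with the parabolic Schauder estimates used in the construction of $h^\star$, then yields the desired decomposition.
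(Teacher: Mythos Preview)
Your argument for the first claim ($h_t - X^\1_t \in \CC^{1-\beta}$) is fine. The serious gap is in the second claim. You assert that since $\pi\hat\CS_{\mathrm{R}}(h_0,\Psi) \in \CC([0,T],\CC^{{3\over 2}-\beta})$, ``this piece and its spatial derivative both lie in $\CC^{1-\beta}$''. This is false: differentiating costs a full derivative, so $v_t \eqdef \d_x\bigl(\pi\hat\CS_{\mathrm{R}}(h_0,\Psi)(t)\bigr)$ lies only in $\CC^{{1\over 2}-\beta}$, not in $\CC^{1-\beta}$. Consequently $e^{-2\lambda\Phi_t}\,\d_x u_t$ cannot be absorbed into the remainder, and the entire reduction ``to a purely $\Psi$-dependent analysis of $h^\star(\Psi)$'' collapses. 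The contribution of $v_t$ to $e^{-2\lambda\Phi_t}\d_x(h_t-X^\1_t)$ is genuinely singular and must be dealt with.

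The paper handles exactly this point via Proposition~\ref{prop:expPhi}. The key structural fact is that $v_t$ solves a fixed point with $G(v,t)=2\lambda v + w_t$ where $w_t = 2\lambda^4(\bar Y^\4_t + 4\bar Y^\5_t)$, so $v_t$ is controlled by $\Phi_t$ with derivative process $2\lambda v_t + w_t$. Proposition~\ref{prop:expPhi} then shows that the specific exponential factor $e^{-2\lambda\Phi_t}$ cancels the self-referential part $2\lambda v_t$ of this derivative, yielding
\[
e^{-2\lambda\Phi_t}v_t(x) = \iint_0^x e^{-2\lambda\Phi_t(z)}\,w_t(z)\,d\Phi_t(z) + R_t(x)\;,\qquad R_t \in \CC^{1-\beta}\;.
\]
The rough integral on the right is the missing ingredient: it depends only on $\Psi$ and contributes the term $8\lambda^4\iint_0^\cdot e^{-2\lambda\Phi_t}\bar X^\5\,d\Phi_t$ to $Q_t$, which your heuristic identification of $Q_t$ (as $e^{-2\lambda\Phi_t}$ times the derivative of the $\2$-tree term) omits entirely. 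Your ``integrating factor'' intuition is on the right track, but making it rigorous requires the controlled-rough-path bootstrap of Proposition~\ref{prop:maxReg} to push the remainder regularity of $v_t$ up to $\CC^{1-\beta}$, followed by the algebraic manipulation in Proposition~\ref{prop:expPhi}; neither Schauder estimates nor ``careful bookkeeping'' of tree products can substitute for this.
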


\begin{proof}
In view of the construction of Section~\ref{sec:IdeasProofs}, this is an immediate consequence of
Proposition~\ref{prop:expPhi}, provided that we set
\begin{equ}
Q_t = \lambda e^{-2\lambda \Phi_t} \d_x \bigl( X^\2_t + \lambda X^\3_t + 4\lambda^2 X^\5_t\bigr) +8\lambda^4 \iint_0^\cdot e^{-2\lambda \Phi_t(z)} \d_x X^\5(z)\, d\Phi_t(z)\;.
\end{equ}
See Section~\ref{sec:IdeasProofs} for a definition of the expressions appearing here, as well
as Section~\ref{sec:roughpaths} for a definition of the ``rough integral'' $\iint$.
Actually, Proposition~\ref{prop:expPhi} provides an expression with two additional terms involving a
process $X^\4$, but since $X_t^\4 \in \CC^{2-\beta}$ and $\Phi_t \in \CC^{{3\over 2}-\beta}$ for every fixed $t$, 
one can check that the sum of these two terms
belongs to $\CC^{1-\beta}$.
\end{proof}

\begin{remark}
The product appearing on the left hand side of \eref{e:decompRegular} makes sense by Proposition~\ref{prop:Holder}
since $\Phi_t \in \CC^{{1\over 2}-\beta}$ and $\d_x\bigl(h_t - X^\1_t\bigr) \in \CC^{-\beta}$ for every $\beta > 0$.
\end{remark}

\begin{remark}
Together with the explicit construction of $Y$ given in Proposition~\ref{prop:expPhi} below, Theorem~\ref{theo:decomposition} 
provides a full description of the microscopic structure of the solutions to the KPZ equation, all the way down to
the ``level $\CC^{2-\beta}$'' for every $\beta > 0$.
\end{remark}

As a simple consequence of this decomposition, we also have a sharp regularity result for the difference between
two solutions with different initial conditions:

\begin{corollary}
Let $h_t$ and $\bar h_t$ be two solutions to \eref{e:KPZ} with different H\"older continuous initial conditions,
but driven by the same realisation of the noise. For every $\beta > 0$, one then has $h_t - \bar h_t \in \CC^{{3\over 2}-\beta}$ 
and
\begin{equ}[e:bounddiff]
e^{-2\lambda \Phi_t} \d_x\bigl(h_t - \bar h_t\bigr) \in \CC^{1-\beta}\;,
\end{equ}
for every $t$ less than the smaller of the two explosion times.
\end{corollary}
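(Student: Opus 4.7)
The plan is to deduce both claims from Theorem~\ref{theo:decomposition} applied to each of the two solutions and then subtracting. Writing $h_t = \CS_{\mathrm{R}}(h_0, \Psi)(t)$ and $\bar h_t = \CS_{\mathrm{R}}(\bar h_0, \Psi)(t)$ for the common rough driver $\Psi \in \CX$ associated to the shared noise, the key observation is that the auxiliary processes appearing in the decomposition~\eref{e:decompRegular}, namely $X^\1_t$, $\Phi_t$, and the remainder $Q_t$, are functionals of $\Psi$ alone and do not depend on the initial condition. Consequently, they are literally identical in the two decompositions.

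Applying Theorem~\ref{theo:decomposition} to each solution separately gives that both
$e^{-2\lambda \Phi_t} \d_x(h_t - X^\1_t) - Q_t$ and
$e^{-2\lambda \Phi_t} \d_x(\bar h_t - X^\1_t) - Q_t$
lie in $\CC^{1-\beta}$ for every $t$ less than the smaller explosion time. Taking the difference causes both $X^\1_t$ and $Q_t$ to cancel, leaving
\begin{equ}
e^{-2\lambda \Phi_t} \d_x\bigl(h_t - \bar h_t\bigr) \in \CC^{1-\beta}\;,
\end{equ}
which is exactly~\eref{e:bounddiff}.

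To obtain the improved regularity of the difference itself, I would multiply the above by the inverse factor $e^{2\lambda \Phi_t}$. Since $\Phi_t$ is a continuous function on the circle, in fact belonging to $\CC^{{1\over 2}-\beta}$, the exponential $e^{2\lambda \Phi_t}$ is bounded, bounded away from zero, and still lies in $\CC^{{1\over 2}-\beta}$. Invoking Proposition~\ref{prop:Holder} for the product of a $\CC^{1-\beta}$ function with a $\CC^{{1\over 2}-\beta}$ function (both regularities being positive for $\beta$ small) yields $\d_x(h_t - \bar h_t) \in \CC^{{1\over 2}-\beta}$, whence $h_t - \bar h_t \in \CC^{{3\over 2}-\beta}$ after integrating in space.

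There is no genuine obstacle at the corollary stage: all of the analytic work has been absorbed into the construction underlying Theorem~\ref{theo:decomposition}, and the proof reduces to the structural observation that the singular ``rough'' contributions $X^\1$ and $Q$ are functionals of the noise alone, hence cancel upon taking the difference of two solutions driven by the same $\Psi$. The content of the statement is really the slogan that the non-smooth part of a KPZ solution is determined entirely by the noise, and that sensitivity to the initial condition lives in the ``smoother than Brownian'' component.
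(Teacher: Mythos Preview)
Your proof is correct and follows essentially the same route as the paper: subtract the two instances of \eref{e:decompRegular} so that $X^\1_t$ and $Q_t$ cancel, then multiply by $e^{2\lambda\Phi_t}$ and use $\Phi_t \in \CC^{{1\over2}-\beta}$ to recover the regularity of $h_t-\bar h_t$. One minor quibble: Proposition~\ref{prop:Holder} is stated for products $\CC^{-\alpha}\times\CC^\beta\to\CC^{-\alpha}$ with $\alpha>0$, so it is not quite the right citation when both factors have positive H\"older regularity; in that case the fact that $\CC^{1-\beta}\cdot\CC^{{1\over2}-\beta}\subset\CC^{{1\over2}-\beta}$ is elementary and needs no special result.
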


\begin{proof}
The bound \eref{e:bounddiff} follows immediately from \eref{e:decompRegular}. The fact that 
$h_t - \bar h_t \in \CC^{{3\over 2}-\beta}$  is then immediate since $\Psi_t \in \CC^{{1\over 2}-\beta}$.
\end{proof}

In a  recent article \cite{NeilJon}, O'Connell and Warren provided a
``multilayer extension'' of the solution to the stochastic heat equation \eref{e:linear}.
As a byproduct of their theory, it follows that $h_t - \bar h_t \in \CC^{1}$
so that Theorem~\ref{theo:decomposition} can be seen as a refinement of their results,
even though the decomposition considered there is quite different.
One object that arises in \cite{NeilJon} is the solution to
the linearised KPZ equation, namely
\begin{equ}[e:linearKPZ]
\d_t u = \d_x^2 u + \d_x u \, \d_x h\;,
\end{equ}
where $h$ is itself a solution to \eref{e:KPZ} (see equation~(20) in \cite{NeilJon}). 
One byproduct of our construction is that we are able to
provide a rigorous meaning to equations of the type \eref{e:linearKPZ} or, more generally,
equations of the type
\begin{equ}
\d_t u = \d_x^2 u + G(t,u,\d_x u)\,\d_x X^\1_t + F(t,u,\d_x u)\;,
\end{equ}
where $F$ and $G$ are suitable nonlinearities; see Theorem~\ref{theo:main} below.
In particular, this theorem also allows to provide a rigorous meaning for the Fokker-Planck equation
associated to a one-dimensional diffusion in the time-dependent potential $X^\1_t$, which
does not seem to be covered by existing techniques. Indeed, the well-posedness
of such a Fokker-Planck equation is quite well-known in the time independent case, also with
even weaker regularity assumptions, but the time-dependent case seems to be new and highly non-trivial. 
See for example \cite{MR2065168,MR2353387} for
some results in the time-independent case, as well as \cite{MR2450159} for some previously
known results that are very general (the authors allow non-constant diffusion coefficients
and higher space dimensions
for example), but do not appear to cover the situation at hand.

To conclude this introductory section, let us mention a few more byproducts of our 
construction that are of independent mathematical interest:
\begin{claim}
\item We provide an example of a two-dimensional ``geometric rough path'' which is obtained in a natural way by 
approximations by smooth paths but where, in order to obtain a well-defined limit, a logarithmically divergent ``area term''
needs to be subtracted, see Section~\ref{sec:controlProcess} below.
\item Since the map $\CS_{\mathrm{R}}$ is continuous, it does not depend on any choice of measure on $\CX$.
This allows us to use it for other type of convergence results, even in the case of deterministic drivers. 
As an example, we show in Section~\ref{sec:homogen} how to obtain a new periodic homogenisation result for
the heat equation with a very strong space-time periodic potential. 
\item It transpires that, besides the renormalisation $C_\eps \approx {1\over \eps}$ observed in \eref{e:KPZeps},
two further renormalisations, this time with logarithmically divergent constants, are lurking underneath. The reason
why this doesn't seem to have been observed before (and the reason why only $C_\eps$ appears when performing the
Cole-Hopf transform of the multiplicative stochastic heat equation)
is that these two logarithmically diverging constants cancel each other 
 \textit{exactly}, see Theorem~\ref{theo:convYeps} below. This appears to be due to a 
certain symmetry of the equation \eref{e:KPZ}
which may not hold in general for other equations in the same class.
\end{claim}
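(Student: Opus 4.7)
The three bullets above are essentially independent byproducts of the main construction, so the plan splits into three pieces sharing a common thread: each is a computation performed on a concrete approximation $\xi_\eps$ of $\xi$ (or, in (ii), on a deterministic analogue), whose outcome is then transferred to the limit by the continuity of $\CS_{\mathrm{R}}$.

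For the two-dimensional geometric rough path of (i), the plan is to single out, among the components of $\Psi$, a pair of processes whose fixed-time H\"older regularity sits just below $1/2$, so that they lie exactly at the threshold beyond which Young integration fails to define their iterated integral. I would form the canonical L\'evy area $\mathbf{A}_\eps(s,t) = \int_s^t (X_\eps(r) - X_\eps(s))\,dY_\eps(r)$ from a smooth mollification at scale $\eps$ and, using a Wiener chaos decomposition, reduce $\E\mathbf{A}_\eps$ to explicit integrals of the periodic heat kernel against itself. The diagonal singularity produces a divergence of the form $c\,(\log \eps)(t-s)$; hypercontractivity on the fixed chaos then yields uniform variance bounds for $\mathbf{A}_\eps - c\,(\log \eps)(t-s)$ with the correct scaling in $|t-s|$, and a Kolmogorov-type criterion in the space of geometric $\alpha$-H\"older rough paths delivers convergence in probability.

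For (ii), the key observation is that $\CS_{\mathrm{R}}$ is a deterministic map on $\CX$, so it can be applied to any approximating sequence $\Psi_\eps \in \CX$, random or not. I would build $\Psi_\eps$ from a strongly space-time periodic potential $V(x/\eps, t/\eps^2)$ by inserting $V$ (suitably rescaled to respect the parabolic scaling) into the same polynomial expressions in $\xi_\eps - C_\eps$ that define $\Psi$ in the stochastic setting, with the same counterterms. Periodic Fourier analysis and an averaging argument on the torus then yield convergence $\Psi_\eps \to \Psi_{\mathrm{hom}}$ in $\CX$ towards an explicit deterministic limit, and joint continuity of $\CS_{\mathrm{R}}$ on $\CC^\beta \times \CX$ lifts this to convergence of the corresponding solutions of the heat equation with the periodic potential.

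Item (iii) is both the most delicate and, I expect, the real obstruction. The strategy is to inspect the construction of $\Psi$ component by component and identify the two objects whose naive $\eps$-regularisations require logarithmic counterterms $c_i(\eps) = a_i \log \eps + O(1)$; each $a_i$ is an explicit integral of the periodic heat kernel, which I would evaluate via Parseval's identity in Fourier series on $S^1 \times \R$ so as to isolate the logarithmic singularity. Verifying $a_1 + a_2 = 0$ amounts to an algebraic identity between these two integrals, coming from an integration by parts that shifts a spatial derivative between two copies of the heat kernel; this symmetry explains why only the power-law divergence $C_\eps \sim 1/\eps$ is visible at the level of the Cole-Hopf transform, since It\^o's formula forces the two logarithmic contributions to appear in a combination that cancels them. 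The bulk of the work across all three items is precisely this combinatorial bookkeeping: tracking which components of $\Psi$ produce logarithmic divergences, with which coefficients and signs, and which of them are forced to cancel by hidden symmetries of \eref{e:KPZ}.
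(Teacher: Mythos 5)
Your plan for the homogenisation byproduct is essentially the paper's route: one defines the tree processes $Y^\tau_n$ driven by the rescaled periodic forcing, shows convergence of the resulting data in $\CX$, and invokes the continuity of $\CS_{\mathrm{R}}$. Be aware, though, that the interesting part in Section~\ref{sec:homogen} is that the limit is a \emph{non-canonical} element of $\CX$: the area component $\YY_n$ converges to $\bar K(y-x)\neq 0$, and it is Theorem~\ref{theo:smooth} that converts this into the extra transport term $4\bar K\,\d_x h$ in the limit equation, so "averaging to a homogenised $\Psi_{\mathrm{hom}}$" must keep track of exactly this term. For the rough-path byproduct your general strategy (second-moment bounds in a fixed Wiener chaos, hypercontractivity, Kolmogorov) is the right one, but two points are glossed over. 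First, the relevant pair is $(X^\1_t,\bar X^\3_t)$ with $\bar X^\3$ in the third chaos, so "explicit integrals of the heat kernel against itself" is really a large sum over pairings whose uniform summability is the whole content of Section~\ref{sec:controlProcess}. Second, the paper does not subtract a deterministic $c\,(\log\eps)(t-s)$: in \eref{e:defXXbasic} one subtracts the full \emph{random} zero mode ${y-x\over 2\pi}\int_0^{2\pi}\bar X^\3_\eps\bar X^\1_\eps\,dz$ (whose mean is the log-divergent constant). While the divergence of the second moment does come from the disconnected pairing, i.e.\ from the squared mean, the specific random subtraction is what makes the renormalised area compatible with $X^\5$ through \eref{e:roughIntEps} and \eref{e:idenX5}, which is the property actually used later; a plan that only removes the mean loses this identity.

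The genuine gap is in the third item. The proposed mechanisms — "an integration by parts shifting a derivative between two copies of the heat kernel", or "It\^o's formula forces the cancellation" — are not arguments. The It\^o/Cole--Hopf route is circular in this context: the paper only identifies the limit of the truncated expansion with the Cole--Hopf solution \emph{after} the convergence of each $Y^\tau_\eps$ (hence the cancellation) has been established independently, so consistency with Cole--Hopf cannot be used to prove it. What the paper actually does (Lemmas~\ref{lem:KY} and~\ref{lem:convK5}) is compute $K^\4_\eps$ explicitly, isolating the ${4\pi\over\sqrt3}|\log\eps|$ divergence, and then show $K^\4_\eps+4K^\5_\eps=-{\pi^2\over 3}+\CO(\eps)$: after reducing $K^\5_\eps$ to a double sum over Fourier modes, one symmetrises under $(k,m)\leftrightarrow(m,k)$, uses the algebraic identity ${2m-k\over k}+{2k-m\over m}={2(k^2+m^2-km)\over km}$ to cancel the kernel, and the whole divergent sum factorises as $\bigl(\sum_{k\in\Z_\star}\phi(\eps k)/k\bigr)^2$, which vanishes because $\phi$ is even — this parity is the "symmetry" alluded to in the statement. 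Your $a_1+a_2=0$ also drops the multiplicity factor: the cancellation is between $C^\4_\eps$ and $4C^\5_\eps$, the $4$ counting the trees isometric to $\bigtree$. Without an identity of this kind actually exhibited, the claimed exact cancellation — the crux of this bullet — remains unproved in your proposal.
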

The remainder of this article is organised in the following way. 
In Section~\ref{sec:IdeasProofs}, we provide a more detailed mathematical formulation of the main results
of this article and we explain the main ideas arising in the proof. In particular, we provide an explicit description
of all the objects appearing in Proposition~\ref{prop:mainCont}.
In Section~\ref{sec:roughpaths}, we then introduce some the elements of the theory of (controlled) rough paths
that are essential to our proof. This section also contains some of the regularising bounds on the heat kernel that we
need in the sequel. This is followed in Section~\ref{sec:FP} by a solution theory for a class of rough stochastic PDEs
that includes the type of equation arising when taking the difference between two solutions to \eref{e:KPZ}.

In Section~\ref{sec:constructX}, we then build a ``universal process'' which provides a very good approximation
to the stationary solution to \eref{e:KPZ}, lying in the fourth Wiener chaos with respect to the driving noise $\xi$.
This process is centred by construction (so it really approximates the corresponding Burgers equation), so in
Section~\ref{sec:constantMode} we also construct its constant Fourier mode, in order to obtain an approximation to KPZ. 
Finally, in Section~\ref{sec:controlProcess} we provide a more detailed control of the local fluctuations of one of the
building blocks of the process built in Section~\ref{sec:constructX}.

\subsection{Notation}

We will often work with Fourier components. We adopt the usual convention
$X(x) = \sum_{k \in \Z} X_k e^{ikx}$,
so that one has the identity $(XY)_k = \sum_{\ell \in \Z} X_{\ell} Y_{k-\ell}$.
One feature of this normalisation is that the average of a function $X$ is equal to $X_0$
and the average of $|X|^2$ is given by $\sum_{k\in \Z}|X_k|^2$.

Throughout this article, we will consistently make use of H\"older seminorms, so that, for $X\colon S^1 \to \R$ and 
$\alpha \in (0,1]$, we set
\begin{equ}
\|X\|_\alpha \eqdef \sup_{x\neq y} {|\delta X(x,y)|\over |x-y|^\alpha}\;,
\end{equ}
where $\delta X(x,y) = X(y) - X(x)$. 
We will also extend this to negative values of $\alpha$. For $\alpha \in (-1,0)$, we set
\begin{equ}
\|X\|_\alpha \eqdef \sup_{x\neq y} {|\int_x^y X(z)\,dz|\over |x-y|^{1+\alpha}}\;,
\end{equ}
and we denote by $\CC^{\alpha}$ the space of distributions obtained by closing $\CC^\infty$ under the above norm.
We make a slight abuse of notation for the supremum norm by also writing
\begin{equ}
\|X\|_\infty \eqdef \sup_{x} {|X(x)|}\;.
\end{equ}
We are sometimes lead to consider H\"older norms instead of seminorms, so we set
\begin{equ}
\|X\|_{\CC^\alpha} \eqdef \|X\|_\alpha + \|X\|_\infty\;.
\end{equ}

A crucial ingredient in the theory of (controlled) rough paths used in this article is played by ``area processes'' 
and ``remainder terms'', both of which are
functions of \textit{two} spatial variables. For such functions, we also set
\begin{equ}[e:Holder]
\|\XX\|_{\alpha} \eqdef  \sup_{x \neq y} {|\XX(x,y)|\over |x-y|^{\alpha}}\;,
\end{equ}
which is a kind of H\"older seminorm ``on the diagonal'' and we denote by $\CC_2^\alpha$ the closure of
the space of smooth functions of two variables under the norm $\|\XX\|_{\CC_2^\alpha} = \|\XX\|_{\alpha} + \|\XX\|_{\infty}$.
The advantage of only ever considering the closures of $\CC^\infty$ under the above norms has the advantage that
all the spaces appearing in this article are separable, so that no problem of measurability arises.

\subsection*{Acknowledgements}

I would like to thank Sigurd Assing, Jan Maas, Neil O'Connell, Jon Warren, and 
Jeremy Quastel for numerous discussions on this and
related problems that helped deepen and 
clarifying the arguments presented here. Special thanks are due to Hendrik Weber for
numerous suggestions and his careful reading of the draft manuscript, 
as well as to G\'erard Ben Arous who suggested that the techniques developed
in \cite{SemiPert,BurgersRough} might prove 
useful for analysing the KPZ equation. 

Financial support was kindly provided by EPSRC grant EP/D071593/1, by the 
Royal Society through a Wolfson Research Merit Award, and by the Leverhulme Trust through a Philip Leverhulme Prize.

\section{Main results and ideas of proof}
\label{sec:IdeasProofs}

The idea pursued in this article is to solve \eref{e:KPZ} by performing a Wild expansion \cite{MR0042999} of 
the solution in powers of $\lambda$
but, instead of deriving an infinite series that may
be extremely difficult to sum, we truncate it at a fixed level (after exactly $4$ terms to be precise) and then use
completely different techniques to treat the remainder.
In order to appreciate how the techniques explained in this section can also apply to a concrete deterministic 
example, it may be helpful
to simultaneously follow the calculations in Section~\ref{sec:homogen} below.

Recall that 
$X^\1_\eps$ is the stationary mean zero solution to the linearised equation
\begin{equ}
\d_t X^\1_\eps = \d_x^2 X^\1_\eps + \Pi_0^\perp \xi_\eps\;.
\end{equ}
Here, the noise process $\xi_\eps$ is a mollified version of $\xi$, obtained by choosing a function $\phi \colon \R \to \R_+$ that is even, smooth, compactly supported, decreasing on $\R_+$, and
such that $\phi(0) = 1$, and then setting
\begin{equ}
\xi_{\eps,k} = \phi(\eps k)\,\xi_k\;.
\end{equ}
The $\xi_k$ are the Fourier components of $\xi$, which are complex-valued white noises with $\xi_{-k} = \bar \xi_k$ and
$\E \xi_k(s)\xi_\ell(t) = 2\delta_{k,-\ell}\delta(t-s)$.
The above properties of the mollifier $\phi$ will be assumed throughout the whole article without further mention.

A crucial ingredient of the construction performed in this article is a 
family $X^\tau_\eps$ of processes indexed by binary trees $\tau$, where ``$\bullet$'' denotes 
the ``trivial'' tree consisting of only its root. The process $X^\1_\eps$ associated to the trivial tree has already been
defined, and we define the remaining processes recursively as follows.
Denoting by $\CT_2$ the set of all binary trees, any binary tree 
$\tau \in \CT_2$ with $\tau \neq \bullet$ can be written as $\tau = [\tau_1,\tau_2]$, i.e. $\tau$ consists of 
its root, with trees $\tau_1,\tau_2 \in \CT_2$ attached. 
For any such tree $\tau$, we then define $X^\tau_\eps$ as the stationary solution to
\begin{equ}[e:defXtau]
\d_t X^\tau_\eps = \d_x^2 X^\tau_\eps + \Pi_0^\perp \bigl(\d_x X^{\tau_1}_\eps\, \d_x X^{\tau_2}_\eps\bigr)\;.
\end{equ}
\begin{remark}
As before, the reason why we introduce the projection $\Pi_0^\perp$ is so that we can consider stationary solutions. 
Another possibility 
would have been to slightly modify the equation to replace $\d_x^2$ by $\d_x^2 -1$ for example, but it turns out that the
current choice leads to simpler expressions. Since we only have derivatives of $X^\tau_\eps$ appearing in \eref{e:defXtau} anyway,
the effect of $\Pi_0^\perp$ turns out to be rather harmless, see Remark~\ref{rem:XvsY}.
\end{remark}

We now add the constant terms back in. Set $Y_\eps^\1(t) = X_\eps^\1(t) + \sqrt2 B(t)$, where $B$ is a standard Brownian motion.
One of the main results of this article is that one can then find constants $C_\eps^\tau$ for $\tau \neq \bullet$ such that  
the solutions $Y^\tau_\eps$ to
\begin{equ}[e:defYtau]
\d_t Y^\tau_\eps = \d_x^2 Y^\tau_\eps + \d_x Y^{\tau_1}_\eps\, \d_x Y^{\tau_2}_\eps - C^\tau_\eps\;,
\end{equ}
with initial condition $Y^\tau_\eps(0) = X^\tau_\eps(0)$, have a limit as $\eps \to 0$ that is independent of the choice of mollifier $\phi$.

\begin{remark}\label{rem:XvsY}
Since only derivatives of $Y_\eps^\tau$ appear on the right hand side of \eref{e:defYtau}, it follows that
$X^\tau_\eps = \Pi_0^\perp Y^\tau_\eps$. The reason for introducing the processes $X^\tau_\eps$ is that it
is easier, as a first step, to show that they converge to a limit. The constant Fourier mode will then be treated 
separately.
\end{remark}

The reason for the definition of the processes $Y^\tau_\eps$ is that, at least at a formal level, if one defines 
a process $h_\eps(t)$ by
\begin{equ}[e:sumTrees]
h_\eps(t) = \sum_{\tau} \lambda^{|\tau|} Y^\tau_\eps(t)\;,
\end{equ}
where $|\tau|$ denotes the number of inner nodes of $\tau$ (i.e.\ the number of nodes that are not leaves, with $|\bullet| = 0$
by convention),
then $h_\eps$ solves the equation
\begin{equ}
\d_t h_\eps = \d_x^2 h_\eps + \lambda (\d_x h_\eps)^2 + \xi_\eps - \sum_{\tau} \lambda^{|\tau|} C_\eps^\tau \;,
\end{equ}
which is precisely \eref{e:KPZeps}.
The problem with such an approach is twofold: first, we have no guarantee that the sum \eref{e:sumTrees} actually converges.
Then, even if it did converge for fixed $\eps > 0$, we would have no guarantee that the sequence of processes $h_\eps$ constructed
in this way converges to a limit, even if we knew that each of the $Y_\eps^\tau$ converges.
See however \cite{MR0042999,MR0224348,MR1725612} for an analysis of the corresponding expansion in the context
of the Boltzmann equation, where the sum over all binary trees can actually be shown to converge.

The strategy pursued in this work is to truncate the expansion \eref{e:sumTrees} at a fixed level and
to then derive an equation for the remainder that can be solved by using techniques inspired from \cite{BurgersRough}.

\subsection{Convergence of the processes $Y^\tau$}

In this section, we state the precise convergence result that we obtain for the processes $Y^\tau_\eps$.
The choice for $C_\eps^\tau$  that we retain 
is $C_\eps^\tau = 0$ for $\tau \not \in \{\2, \4, \5, \6, \7, \8\}$, and
\begin{equs}[e:defCtau]
C_\eps^\2 &= {1\over \eps} \int_{\R} \phi^2(x)\,dx\;,\\
C_\eps^\4 &= {4\pi \over \sqrt 3}|\log \eps|- 8\int_{\R_+} \int_\R {x \phi'(y)\phi(y)\phi^2(y-x) \log y \over x^2 -xy +y^2} \,dx\,dy\;,\\
 C_\eps^\5 &= -{C_\eps^\4 \over 4}\;.
\end{equs}
The remaining trees are of course all equivalent to the tree $\bigtree$ and therefore are associated with
the same constant. It is remarkable that $C_\eps^\4$ and $C_\eps^\5$ turn out to exhibit the 
exact same logarithmic divergence but with opposite signs, 
save for the factor $4$ that takes into account
the difference in multiplicities between the two terms.
It is also remarkable that, even though the constants $C_\eps^\4$ and $C_\eps^\5$ do depend on the
choice of mollifier $\phi$, the resulting processes $Y^\tau$ do not.

As a consequence of this choice,  note also that we have the identity
\begin{equ}[e:sumconstants]
\sum_{\tau} \lambda^{|\tau|} C_\eps^\tau = \lambda C_\eps^\2\;,
\end{equ}
so that, at least formally, the process $h_\eps$ solves \eref{e:KPZeps} for the ``correct'' constant $C_\eps$.
Before we state our convergence result, we introduce some more notation. For every $\tau$, we define
an exponent $\alpha_\tau$ by
\begin{equ}
\alpha_\1 = {1\over 2}\;,\qquad \alpha_\2 = 1\;,
\end{equ}
and then, recursively, by
\begin{equ}
\alpha_{[\tau_1,\tau_2]} = (\alpha_{\tau_1}\wedge\alpha_{\tau_2}) + 1\;.
\end{equ}
(So we have for example $\alpha_\3 = {3\over 2}$ and $\alpha_\4 = 2$.)
For $\tau \neq \bullet$, we then 
define the separable 
Fr\'echet space $\CX_\tau$ as the closure of smooth functions under the system of seminorms
\begin{equ}[e:defXtauSpace]
\|X\|_{\tau,\delta,T} = \sup_{s,t \in [-T,T]} \Bigl(\|X(t)\|_{\CC^{\alpha_\tau-\delta}} + {\|X(t)-X(s)\|_\infty \over |t-s|^{{1\over 2}-\delta}}\Bigr)\;,
\end{equ}
where $T \in [1,\infty)$ and $\delta \in (0,{1\over 4})$.
Similarly, we define $\CX_\1$ as the closure of smooth functions under the system of seminorms
\begin{equ}
\|X\|_{\1,\delta} = \sup_{|t-s| \in (0,1]}\Bigl( {\|X(t)-X(s)\|_{\infty}\over |t-s|^{{1\over 4}-\delta} (1+|t|)} + {\|X(t)\|_{\CC^{{1\over 2}-\delta}} \over 1+|t|}\Bigr)\;,
\end{equ}
for $\delta \in (0,{1\over 4})$.

With these definitions, our precise convergence result for the processes $Y^\tau_\eps$ is the following, which we will prove at the end of Section~\ref{sec:controlProcess}.

\begin{theorem}\label{theo:convYeps}
Let $Y_\eps^\tau$ be as in \eref{e:defYtau} and let $\CX_\tau$ be as above. Then, for every binary tree $\tau$, there 
exists a process $Y^\tau$ such that $Y^\tau_\eps \to Y^\tau$ in probability in $\CX_\tau$.
\end{theorem}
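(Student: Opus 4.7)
My plan is to proceed by induction on $|\tau|$, handling the zero-mean part $X^\tau_\eps = \Pi_0^\perp Y^\tau_\eps$ first and then appending the constant Fourier mode separately (as will be done in Section~\ref{sec:constantMode}). The base case $\tau = \1$ is standard: $X^\1_\eps$ is a stationary Gaussian process whose mild representation yields an explicit covariance converging to that of $X^\1$ as $\eps \to 0$, and applying Kolmogorov's continuity criterion to the Gaussian differences $X^\1_\eps - X^\1_{\bar\eps}$ in the seminorms defining $\CX_\1$ gives convergence in probability. The additive $\sqrt 2\, B(t)$ term is independent of $\eps$ and is easily absorbed.

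For the inductive step, I assume convergence of $Y^{\tau_1}_\eps$ and $Y^{\tau_2}_\eps$ in their respective spaces. Using stationarity, I would write
\begin{equ}
X^\tau_\eps(t) = \int_{-\infty}^t e^{(t-s)\d_x^2}\,\Pi_0^\perp \bigl(\d_x X^{\tau_1}_\eps(s)\,\d_x X^{\tau_2}_\eps(s) - C_\eps^\tau\bigr)\,ds\;,
\end{equ}
and reduce the problem to showing that the centred driving field $\d_x X^{\tau_1}_\eps\,\d_x X^{\tau_2}_\eps - C_\eps^\tau$ converges in a space-time Hölder space of spatial regularity $\alpha_\tau - 2 - \delta$; the two spatial derivatives gained from the heat semigroup then produce convergence in $\CX_\tau$ with the correct exponent $\alpha_\tau = (\alpha_{\tau_1}\wedge\alpha_{\tau_2})+1$. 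The required $1/2 - \delta$ temporal modulus follows from standard heat-kernel estimates once the spatial bounds are in place.

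The core analytic step is to handle the product $\d_x X^{\tau_1}_\eps\,\d_x X^{\tau_2}_\eps$ through its Wiener chaos decomposition. Each factor is a polynomial functional of $\xi$, so by Wick's theorem the product splits into a finite sum of projections onto chaoses of orders $0, 2, 4, \ldots, 2|\tau|$, each representable as an iterated stochastic integral against an explicit kernel. The zero-chaos component equals $\E\bigl[\d_x X^{\tau_1}_\eps\,\d_x X^{\tau_2}_\eps\bigr]$, which is constant in $(x,t)$ by stationarity and is precisely what $C_\eps^\tau$ is designed to subtract. For each higher-chaos component I would compute its second moment as a Feynman integral over Fourier modes, bound it uniformly in $\eps$ with the correct scaling in $|x-y|$ and $|t-s|^{1/2}$, verify convergence of the kernels as $\eps \to 0$ by dominated convergence, upgrade to arbitrary $L^p$ moments via Gaussian hypercontractivity within each fixed chaos, and finally invoke Kolmogorov's criterion to conclude convergence in probability in $\CX_\tau$.

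The main obstacle lies with the critical trees $\4, \5, \6, \7, \8$, for which the Feynman integrals associated to certain non-zero chaos components pick up logarithmic subdivergences at the borderline of power-counting integrability. One must disentangle these subdivergences and show that they match divergences already cancelled by the lower-tree renormalisation constants, which is where the recursive structure of the construction really pays off. The striking identity $C_\eps^\5 = -C_\eps^\4/4$ reflects a combinatorial symmetry between the Feynman diagrams associated with $\4$ and $\5$: they share the same Fourier-space integrand up to a factor of four in multiplicity and an overall sign flip coming from the placement of the derivative across the two subtrees. I would verify the exact asymptotics in \eref{e:defCtau} by an explicit Fourier computation extracting the $\log \eps$ from the on-diagonal singularity of the relevant kernel, and close the argument for the higher-chaos remainders by appealing to the refined local control of the building blocks developed in Section~\ref{sec:controlProcess}.
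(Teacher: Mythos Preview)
Your inductive framework, the separation into $X^\tau_\eps = \Pi_0^\perp Y^\tau_\eps$ plus constant mode, and the chaos-decomposition attack on the base trees are all in line with the paper. But the inductive step as you describe it does not close. You propose to reduce every $\tau = [\tau_1,\tau_2]$ to convergence of the driving field $\d_x X^{\tau_1}_\eps\,\d_x X^{\tau_2}_\eps - C_\eps^\tau$ in $\CC^{\alpha_\tau-2-\delta}$, handled uniformly by chaos expansion. For $\tau_1 \neq \bullet$ this is fine and even overkill: the paper simply invokes Young multiplication (Proposition~\ref{prop:Holder}) since $\d_x Y^{\tau_1} \in \CC^{0-}$ and $\d_x Y^{\tau_2} \in \CC^{0+}$. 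The real problem is the infinite family $\tau = [\bullet,\bar\tau]$, where the product $\bar Y^\1\,\bar Y^{\bar\tau}$ is not classically defined in the limit and your chaos programme would have to be rerun from scratch for every such tree. The paper avoids this by strengthening the inductive hypothesis: it shows that $\d_x Y^\tau_\eps$ converges \emph{as a rough path controlled by $\Phi_\eps$} with derivative process $\d_x Y^{\bar\tau}_\eps$, bootstrapped from the single hand-checked base case $\tau = \bigtree$ (Theorem~\ref{theo:controlX5}) via the remainder estimate of Proposition~\ref{prop:boundRemainder}. That controlled-path structure is the content of Section~\ref{sec:controlProcess}, not merely ``refined local control''; without it your induction stalls at the first tree of the form $[\bullet,[\bullet,\bar\kappa]]$ beyond $\bigtree$.

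There is also a conceptual error in your account of the renormalisation. You write that for $\4,\5$ the ``non-zero chaos components pick up logarithmic subdivergences'' which are ``cancelled by the lower-tree renormalisation constants''. This is not what happens. Since $X^\tau_\eps$ carries $\Pi_0^\perp$, the non-zero Fourier modes see no constant subtraction at all, and the summability criterion of Section~\ref{sec:constructGraph} shows that every pairing integral is finite uniformly in $\eps$ with no renormalisation needed. The logarithmic divergences $C_\eps^\4$ and $C_\eps^\5$ live entirely in the zero Fourier mode, i.e.\ in the expectation treated in Section~\ref{sec:constantMode}, and the identity $4K_\eps^\5 + K_\eps^\4 = -\pi^2/3 + \CO(\eps)$ of Lemma~\ref{lem:convK5} comes from an algebraic symmetrisation of an explicit double sum, not from a diagrammatic subdivergence cancellation in higher chaoses.
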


\begin{remark}
We believe that in the definition \eref{e:defXtauSpace}, we could actually have imposed time regularity of
order $\alpha_\tau/2$ instead of $1/2$.
\end{remark}

\subsection{Treatment of the remainder}

The truncation of \eref{e:sumTrees} that turns out to be the shortest ``viable'' one is as follows. 
Setting $\bar \CT =  \{\bullet,\2, \3, \9, \4, \5, \6, \7, \8\}$,
we look for solutions to \eref{e:KPZeps} of the form
\begin{equ}[e:defhstar]
h_\eps(t) = \sum_{\tau \in \bar \CT} \lambda^{|\tau|} Y^\tau_\eps(t) + u_\eps(t) \eqdef h^\star_\eps(t) + u_\eps(t)\;,
\end{equ}
for a remainder $u_\eps$. In the sequel, since the processes $Y^\tau_\eps$ mostly appear via their
spatial derivatives, we set
\begin{equ}[e:defYbar]
\bar Y^\tau_\eps \eqdef \d_x Y^\tau_\eps\;,
\end{equ}
as a shorthand. With this notation, we have the following result for $h^\star_\eps$:

\begin{proposition}\label{prop:sol}
The process $h^\star_\eps$ defined above is the stationary solution to
\begin{equ}
\d_t h^\star_\eps = \d_x^2 h^\star_\eps + \lambda\bigl(\d_x h^\star_\eps\bigr)^2 + \xi_\eps - \lambda C_\eps^\2 - \CR_\eps^\star\;,
\end{equ}
where the remainder term $\CR_\eps^\star$ is given by
\begin{equ}
\CR_\eps^\star = \sum_{\tau,\kappa \in \bar \CT \atop [\tau,\kappa] \not \in \bar \CT}\lambda^{|\tau|+|\kappa|+1} \bar Y_\eps^\tau\, \bar Y_\eps^\kappa\;.
\end{equ}
\end{proposition}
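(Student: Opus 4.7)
The proof is essentially a direct algebraic verification, so the plan is to write down $\d_t h^\star_\eps$ using the defining equation for each constituent $Y^\tau_\eps$ and match the result against the claimed equation term by term. First I would compute, using linearity of the time derivative,
\begin{equ}
\d_t h^\star_\eps = \d_x^2 Y^\1_\eps + \xi_\eps + \sum_{\tau \in \bar\CT \setminus \{\bullet\}} \lambda^{|\tau|} \bigl(\d_x^2 Y^\tau_\eps + \d_x Y^{\tau_1}_\eps \, \d_x Y^{\tau_2}_\eps - C_\eps^\tau\bigr)\;,
\end{equ}
where for $\tau \in \bar\CT \setminus \{\bullet\}$ we have written $\tau = [\tau_1,\tau_2]$. (The term $\xi_\eps$ includes both the projection $\Pi_0^\perp \xi_\eps$ driving $X^\1_\eps$ and the $\sqrt 2 \,dB$ contribution to $Y^\1_\eps$.) The Laplacian pieces immediately collapse to $\d_x^2 h^\star_\eps$ by definition of $h^\star_\eps$, so all that remains is to identify the bilinear source with $\lambda(\d_x h^\star_\eps)^2$ modulo the claimed remainder, and to check that the constants telescope.

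For the constants, observe that $C_\eps^\tau$ vanishes except for $\tau \in \{\2,\4,\5,\6,\7,\8\}$; since the last four trees are all equivalent to $\bigtree$ they each carry the constant $C_\eps^\5 = -C_\eps^\4/4$, whence
\begin{equ}
\sum_{\tau \in \bar\CT} \lambda^{|\tau|} C_\eps^\tau = \lambda C_\eps^\2 + \lambda^3\bigl(C_\eps^\4 + 4 C_\eps^\5\bigr) = \lambda C_\eps^\2\;,
\end{equ}
which matches the renormalisation constant announced in~\eref{e:sumconstants}. For the bilinear piece, expand the square:
\begin{equ}
\lambda (\d_x h^\star_\eps)^2 = \sum_{\tau,\kappa \in \bar\CT} \lambda^{|\tau|+|\kappa|+1} \bar Y^\tau_\eps\, \bar Y^\kappa_\eps
= \sum_{{\tau,\kappa \in \bar\CT \atop [\tau,\kappa] \in \bar\CT}}\lambda^{|\tau|+|\kappa|+1} \bar Y^\tau_\eps\, \bar Y^\kappa_\eps + \CR_\eps^\star\;,
\end{equ}
by the very definition of $\CR_\eps^\star$. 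The point is that the map $(\tau,\kappa) \mapsto [\tau,\kappa]$ is a bijection from $\{(\tau,\kappa) \in \bar\CT \times \bar\CT : [\tau,\kappa] \in \bar\CT\}$ onto $\bar\CT \setminus\{\bullet\}$ (since every non-trivial binary tree has a unique root decomposition), so the first sum is precisely $\sum_{\sigma \in \bar\CT \setminus \{\bullet\}} \lambda^{|\sigma|} \bar Y^{\sigma_1}_\eps \bar Y^{\sigma_2}_\eps$, which is exactly the bilinear contribution obtained above from the $Y^\tau_\eps$ equations.

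Combining these observations yields the claimed identity. The only real thing to verify is the combinatorial fact that each $\sigma \in \bar\CT \setminus \{\bullet\}$ has both of its children inside $\bar\CT$; this is a direct inspection of the nine trees in $\bar\CT$ (the tree $\2$ comes from two copies of $\bullet$; the two trees of size~$2$ come from $\bullet$ and $\2$; and each of the five trees of size~$3$ is built from $\bullet+\3$, $\bullet+\9$, $\2+\2$, $\3+\bullet$, or $\9+\bullet$, all of which lie in $\bar\CT$). Apart from this bookkeeping there is no analytic obstacle: at fixed $\eps > 0$ all the processes $Y^\tau_\eps$ are smooth in space and time, so the manipulations above are entirely pointwise.
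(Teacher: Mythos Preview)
Your proof is correct and follows essentially the same approach as the paper's: both compute $\d_t h^\star_\eps$ from the defining equations of the $Y^\tau_\eps$, expand $\lambda(\d_x h^\star_\eps)^2$ as a double sum over $\bar\CT\times\bar\CT$, and match terms via the observation that $\bar\CT\setminus\{\bullet\}\subset\{[\tau,\kappa]:\tau,\kappa\in\bar\CT\}$ together with $|[\tau,\kappa]|=|\tau|+|\kappa|+1$. Your treatment is in fact slightly more explicit than the paper's, since you spell out the cancellation $\sum_\tau\lambda^{|\tau|}C_\eps^\tau=\lambda C_\eps^\2$ (which the paper records separately as \eref{e:sumconstants}) and you note that the inspection step is automatic because $\bar\CT$ consists precisely of all binary trees with at most three inner nodes.
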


\begin{proof}
It follows from the definition of $Y_\eps^\tau$ and from the fact that $\bullet \in \bar \CT$ that
\begin{equ}
\d_t h^\star_\eps =  \d_x^2 h^\star_\eps + \sum_{\tau \in \bar \CT\setminus \{\bullet\} \atop \tau = [\tau_1,\tau_2]}\lambda^{|\tau|} \bar Y_\eps^{\tau_1}\, \bar Y_\eps^{\tau_2}+ \xi_\eps - \sum_{\tau \in \bar \CT} C_\eps^\tau\;.
\end{equ}
The claim now follows at once  from the identity 
\begin{equ}
\lambda \bigl(\d_x h_\eps^\star \bigr)^2  = \sum_{\tau, \kappa \in \bar \CT} \lambda^{|\tau|+|\kappa|+1}\bar Y_\eps^{\tau}\, \bar Y_\eps^{\kappa}\;,
\end{equ}
noting that $|[\tau,\kappa]| = |\tau|+|\kappa|+1$ and that
$\bar \CT \setminus \{\bullet\} \subset \{[\tau,\kappa]\,:\, \tau, \kappa\in \bar \CT\}$ by inspection.
\end{proof}

As a consequence of Proposition~\ref{prop:sol}, if we want $h_\eps$ to satisfy \eref{e:KPZeps}, we should take $u_\eps$ to be the solution to
\begin{equ}[e:equationuepsorig]
\d_t u_\eps = \d_x^2 u_\eps + \lambda (\d_x u_\eps)^2 + 2\lambda \d_x u_\eps\, \d_x h_\eps^\star  +  \CR_\eps^\star\;.
\end{equ}
Actually, it turns out to be advantageous to regroup the terms on the right hand side of this equation in a slightly different
way, by isolating those terms that contain an occurrence of $Y_\eps^\1$. We thus write $h_\eps^\star = Y_\eps^\1 + \bar h_\eps^\star$,
as well as
\begin{equ}
\CR_\eps^\star = 2\lambda^4 \bar Y_\eps^\1 \bigl(\bar Y_\eps^\4 + 4\bar Y_\eps^\5\bigr) + \bar \CR_\eps^\star\;,
\end{equ}
with
\begin{equs}
\bar \CR_\eps^\star &= \lambda^5 \bigl(2\,\bar Y_\eps^\2\, \bar Y_\eps^\4 + 8\,\bar Y_\eps^\2\, \bar Y_\eps^\5+\bar Y_\eps^\3\, \bar Y_\eps^\3 \bigr)
 + \lambda^6 \bigl(2\, \bar Y_\eps^\3\, \bar Y_\eps^\4+ 8\, \bar Y_\eps^\3\, \bar Y_\eps^\5\bigr) \\
&\quad  + \lambda^7 \bigl(\bar Y_\eps^\4\, \bar Y_\eps^\4 + 8\, \bar Y_\eps^\4\, \bar Y_\eps^\5 + 16\, \bar Y_\eps^\5\, \bar Y_\eps^\5\bigr)\;.\label{e:defRbar}
\end{equs}
The precise form of $\bar \CR_\eps^\star$ is actually irrelevant. The important fact is that one should retain 
from this expression is that, by 
combining Theorem~\ref{theo:convYeps} with Proposition~\ref{prop:Holder}, there is a limiting process $\bar \CR^\star$
such that $\bar \CR_\eps^\star \to \bar \CR^\star$ in probability in $\CC(\R,\CC^{-\beta})$ for every $\beta > 0$.

With these notations, \eref{e:equationuepsorig} can be rewritten as
\begin{equs}
\d_t u_\eps &= \d_x^2 u_\eps + 2 \lambda \bar Y_\eps^\1 \bigl(\d_x u_\eps + \lambda^3 \bar Y_\eps^\4 + 4\lambda^3\bar Y_\eps^\5\bigr) \\
&\qquad + \lambda (\d_x u_\eps)^2 + 2\lambda \d_x u_\eps \, \d_x \bar h_\eps^\star  + \bar \CR_\eps^\star\;.\label{e:equationu}
\end{equs}
Since, by Theorem~\ref{theo:convYeps}, $\bar h_\eps^\star$ is continuous with values in $\CC^{1-\delta}$ for every 
$\delta > 0$, it follows that if we are able to find a solution $u_\eps$ taking values in $\CC^\alpha$ for some $\alpha > 1$ with
a uniform bound as $\eps \to 0$, there is no problem in making
sense of the terms on the second line of this equation in the limit $\eps \to 0$.

The problem of course is the second term. Indeed, since $\bar Y^\1 \in \CC^{\gamma}$ only for $\gamma < -{1\over 2}$, we
would need $u_\eps(t)$ to converge in $\CC^\alpha$ for $\alpha > {3\over 2}$ for this term to make sense in the limit (see Remark~\ref{rem:Holder} below). This however is hopeless since, by the usual maximal regularity results, the 
action of the heat semigroup allows us to gain only two spatial derivatives so that the best we can hope for is that 
$u_\eps(t)$ converges in $\CC^\alpha$ precisely for
every $\alpha < {3\over 2}$ only!

This is where the theory of rough paths comes into play. Denote by $v_\eps$ the derivative of 
$u_\eps$, so that \eref{e:equationu} becomes
\begin{equ}[e:equationv]
\d_t v_{\eps} = \d_x^2 v_{\eps} + 2\lambda \d_x \bigl(\bar Y_\eps^\1 \bigl(v_{\eps} +  \lambda^3 \bar Y_\eps^\4 + 4\lambda^3\bar Y_\eps^\5\bigr)\bigr) + \d_x F_\eps(v_{\eps},t)\;,
\end{equ}
where the nonlinearity $F_\eps$ is given by
\begin{equ}
F_\eps = \lambda v^2_\eps + 2\lambda v_\eps \, \d_x \bar h_\eps^\star + \bar \CR_\eps^\star\;.
\end{equ}
As already mentioned, this nonlinearity is expected to be ``nice'', in the sense that we can use classical functional analysis
to make sense of it as $\eps \to 0$, so that we do not consider it for the moment and will treat it as
a perturbation later on.

If the right hand side of \eref{e:equationv} were well-posed in
the limit $\eps \to 0$, we would expect the solution $v_\eps$ to look at small scales like the solution $\Phi_\eps$ to
\begin{equ}[e:defPhieps]
\d_t \Phi_\eps = \d_x^2 \Phi_\eps + \d_x^2 Y_\eps^\1\;,
\end{equ}
so we define $\Phi_\eps$ by
\begin{equ}[e:realdefPhi]
\Phi_{\eps,t} = \int_{-\infty}^t P_{t-s}\, \d_x^2 Y_{\eps,s}^\1\,ds\;,
\end{equ}
where $P_t$ is the heat semigroup.
Since $\d_x^2 Y_{\eps,s}^\1$ has zero average, this is well-defined as long as $Y_{\eps,s}^\1$ does not grow
too fast for large times.

The idea now is to try to solve \eref{e:equationv} in a space of functions that are ``controlled by $\Phi$'' in the sense that 
there exists a function $v'$ such that the ``remainder term''
\begin{equ}[e:remainder]
R^v_{\eps,t}(x,y) = \delta v_{\eps,t}(x,y) - v'_{\eps,t}(x) \,\delta \Phi_{\eps,t}(x,y)\;,
\end{equ}
satisfies a bound of the type $\|R^v_{\eps,t}\|_\alpha < \infty$, uniformly as $\eps \to 0$, for some $\alpha > {1\over 2}$.
Here, we have made use of the shorthand notation $\delta v(x,y) = v(y)-v(x)$ and similarly for $\delta \Phi$. This notation will be used repeatedly in the sequel.
What a bound like \eref{e:remainder} tells us is that, at very small scales, $v$ looks like some multiple of $\Phi$, modulo a remainder
term that behaves as if it was $\alpha$-H\"older for some $\alpha > {1\over 2}$. Note that this is a purely local property of the increments. 

This suggests that, if we were able to show ``by hand'' that  $\Phi_{\eps,t} \, \d_x Y_{\eps,t}^\1$ 
converges to a limiting distribution as $\eps \to 0$, then one may be
able to use this knowledge to give a meaning to the expression
$v\, \d_x Y^\1$ for those functions $v$ admitting a ``derivative process'' $v'$
such that the remainder $R^v_{t}(x,y)$ defined as in \eref{e:remainder} satisfies $\|R^v_{t}\|_\alpha < \infty$
for some $\alpha > {1\over 2}$. This is precisely what the theory of controlled rough paths \cite{Max} allows
us to do. For any fixed $t$, let  $\YY_t$ be the function of two variables defined by
\begin{equ}[e:defYY]
\YY_{\eps,t}(x,y) = \int_x^y \delta \Phi_{\eps,t}(x,z)\,  dY_{\eps,t}^\1(z)\;.
\end{equ}
It is important to note that, for every $t$ and every $\eps$, $\YY_{\eps,t}$ satisfies the algebraic relation
\begin{equ}[e:relYY]
\YY_{\eps,t}(x,z) - \YY_{\eps,t}(x,y) - \YY_{\eps,t}(y,z) =  \delta \Phi_{\eps,t}(x,y)\,\delta Y_{\eps,t}^\1(y,z)\;,
\end{equ}
for every $x,y,z \in S^1$. One can then show, and this is the content of Proposition~\ref{prop:convYY}
below, that there exists a process $\YY$ with values in $\CC_2^{\gamma}$ such that 
$\YY_\eps \to \YY$ in probability in $\CC(\R, \CC_2^{\gamma})$ for every $\gamma < 1$.

We refer to Section~\ref{sec:roughpaths} below for more details, but the gist of the theory of controlled rough paths
is that
one can use the process $\YY$ in order to define a ``rough integral'' $\iint_x^yA_t(z)\, dY_{t}^\1(z)$ as a convergent limit of 
compensated Riemann sums for every smooth test function $\phi$
and for every function $A_t$ such that, for some $\delta > 0$, there exists $A'_t \in \CC^{\delta}$ and 
$R^A_t \in \CC_2^{{1/ 2}+\delta}$ with
\begin{equ}[e:remainderA]
R^A_{t}(x,y) = \delta A_t(x,y) - A'_t(x) \,\delta \Phi_{t}(x,y)\;.
\end{equ}
See Theorem~\ref{theo:integral} below for a precise formulation of this statement. 
It is important to note at this stage that the notation $\iint$ used for the rough integral is really an abuse of notation.
Indeed, it does in general depend not just on $A$ and $Y$, but also on a choice of $\YY$ satisfying 
\eref{e:relYY}, as well as on the choice of $A'$ in \eref{e:remainderA}. It is only when $\YY$ is actually given by
\eref{e:defYY} that it coincides with the Riemann integral, independently of the choice of $A'$.
See equation~\ref{e:corint} below for more details.

\begin{remark}
A number of recent results have made use of the theory of rough paths to treat classes of stochastic PDEs,
see for example \cite{PeterMich,Oberhaus,MaxSam,Josef}. In all of these cases, the theory of rough paths was used to deal with the lack
of \textit{temporal} regularity of the equations. In this article, as in \cite{BurgersRough,Hendrik}, we use it instead
in order to deal with the lack of \textit{spatial} regularity.
\end{remark}

In this way, we can indeed make sense of the product $v_t\, \d_x Y_{t}^\1$ as a distribution, provided that
$v_t$ admits a sufficiently regular decomposition as in \eref{e:remainderA} 
for some ``derivative process'' $v_t'$. In a way, this is reminiscent of the technique of ``two-scale convergence''
developed in \cite{MR990867,twoscale}. The main differences are that it does not require any periodicity at the small scale and
that it does not rely on any explicit small parameter $\eps$, both of which make it particularly adapted to situations where the
small-scale fluctuations are random. See however Section~\ref{sec:homogen} below for an example with deterministic periodic
data where the results of this article also apply.

The same theory can also be used in order to make sense of the term $\bar Y_t^\5 \,\bar Y_t^\1$ in \eref{e:equationv}. 
It is indeed possible to show that $\bar Y_t^\5$ is controlled by $\bar Y_t^\1$ in the sense that
the process $R_t^\5$ defined by
\begin{equ}[e:remainder5]
R^\5_{t}(x,y) = \delta \bar Y_t^\5(x,y) - \bar Y_t^\3(x) \,\delta \Phi_{t}(x,y)\;,
\end{equ}
takes values in $\CC_2^{{1\over 2}+\zeta}$ for some $\zeta > 0$. Furthermore, the corresponding processes for $\eps>0$ 
do converge
to $R^\5$ in that topology, which turns out to be surprisingly difficult to prove, see 
Theorem~\ref{theo:controlX5} below. In view of all of these convergence results, the space $\CX$ and the
map $\Psi\colon \Omega \to \CX$
appearing in Theorem~\ref{theo:mainCont} and Proposition~\ref{prop:mainCont} are then defined as follows:

\begin{definition}
Setting $\bar \CT_0 = \{\bullet,\2, \3, \4, \5\}$, the Fr\'echet space $\CW$ is given by
\begin{equ}
\CW = \Bigl(\bigoplus_{\tau \in \bar \CT_0} \CX_\tau \Bigr) \oplus \CC(\R, \CC_2^{3\over 4})  \oplus \CC(\R, \CC_2^{3\over 4})\;,
\end{equ}
and the map $\Psi\colon \Omega \to \CW$ is given by the random variable
\begin{equ}[e:notationPsi]
\Psi = \Bigl(\bigoplus_{\tau \in \bar \CT_0} Y^\tau \Bigr) \oplus \YY  \oplus R^\5\;.
\end{equ} 
The space $\CX\subset \CW$ is then defined as the algebraic variety determined by the relations \eref{e:relYY} and \eref{e:remainder5}. Since $\CX$ is closed (as a subset of $\CW$) and $\Psi$ is the limit in probability of 
maps $\Psi_\eps$ which map $\Omega$ into $\CX$, one automatically has $\Psi(\omega) \in \CX$ for almost every $\omega$.
\end{definition}

We now have all the ingredients necessary to reformulate \eref{e:equationv} as a fixed point map by considering its mild
formulation. We will then turn this into a fixed point argument for \eref{e:equationu}, which is equivalent save for the
constant Fourier mode.
Using the variation of constants formula, we can rewrite solutions to \eref{e:equationv} for every fixed realisation
of $\{Y^\tau\}_{\tau \in \bar \CT}$ and every fixed initial condition $v_0$ as
\begin{equ}
v_{\eps} = \CK_0(v_\eps)\;,
\end{equ}
where the map $\CK_0$ is given by
\begin{equs}
\bigl(\CK_0 v\bigr)_t &= P_t v_0 + 2\lambda\, \d_x \int_0^t P_{t-s} \Bigl(\bigl(v_s + 4\lambda^3 \bar Y_{\eps,s}^\5\bigr)\,\bar Y_{\eps,s}^\1\Bigr)\,ds \\
&\qquad + \d_x \int_0^t P_{t-s} \bigl(\lambda^4\bar Y_{\eps,s}^\4 \, \bar Y_{\eps,s}^\1  + F_\eps(v_\eps,s)\bigr)\,ds\;,
\end{equs}
were $P_t$ denotes the heat semigroup, the kernel of which we will denote by $p_t$. For any given \textit{smooth} data 
$\{Y^\tau\}_{\tau \in \bar \CT}$, the map $\CK_0$ is well-defined as a map from the set of smooth functions $v$ into itself.
The problem with $\CK_0$ is that it is not possible to extend it to sufficiently large functional spaces by performing
a classical completion procedure. 
The idea is therefore to first extend its definition to smooth input data $\Psi = (\{Y^\tau\}_{\tau \in \bar \CT}, \YY, R^\5) \in \CX$,
and to smooth triples $V = (v,v',R^v)$ such that the additional algebraic relations \eref{e:remainder} are satisfied, by setting
\begin{equs}
\CK (v_0,V,\Psi)_t &= P_t v_0 + 2\lambda \int_0^t\! \iint_{S^1} p'_{t-s}(\cdot -y) \bigl(v_s(y) + 4\lambda^3 \bar Y_{s}^\5(y)\bigr)\,dY_{s}^\1(y)\,ds \\
&\qquad +  \d_x \int_0^t P_{t-s} \bigl(\lambda^4 \bar Y_{s}^\4 \, \bar Y_{s}^\1  + F(v,\Psi,s)\bigr)\,ds\;,\label{e:defMstart}
\end{equs}
where the ``rough integral'' 
$\iint$ is defined as in \eref{e:defintfinal} below and where we set as before
\begin{equ}[e:defFs]
F(v,\Psi,s) = \lambda v^2_s + 2\lambda v_s \, \d_x \bar h^\star(\Psi)_s + \bar \CR^\star(\Psi)_s\;,
\end{equ}
with $h^\star$ and $\bar \CR^\star$ given by \eref{e:defhstar} and \eref{e:defRbar} respectively.
Actually, the precise definition of $\iint$ really does not matter at this stage. Indeed,
if we denote by $\CX_s \subset \CX$ the set of smooth elements in $\CX$ such that $\YY$ is given
by \eref{e:defYY} and $R^\5$ is given by \eref{e:remainder5}, then $\iint$ coincides with the usual
Riemann integral and therefore
$\CK$ coincides with $\CK_0$ on $\CX_s$. Furthermore, by Proposition~\ref{prop:continuous} below, 
$\CX_s$ is dense in $\CX$ and, as we will see in Theorem~\ref{theo:mainTechnical} below, $\CK$ is the 
unique continuous extension of $\CK_0$ to $\CX$.
In this sense, we have \textit{not} changed
the classical notion of a smooth solution to \eref{e:equationv} at all, but have simply extended it to a larger class
of input data.

\begin{remark}
If $\YY$ is defined differently from \eref{e:defYY}, \textit{even if it is smooth},
we obtain different solutions, see Section~\ref{sec:smooth} below.
While these different solutions may appear ``unphysical'' at first sight, they actually have a clear interpretation in terms
of limiting points of solutions to the KPZ equation with highly oscillatory data, see Section~\ref{sec:homogen}
for an explicit example. 
\end{remark}

For fixed $\kappa > 0$ (small enough as we will see shortly) and $T>0$, denote now by $\CB_{\star,T}$ the closure of the space of smooth quadruples 
$V=(m,v,v',R^v)$, where $m$ is a real-valued function of time only, $v$ and $v'$ are functions of time and space,
and $R^v$ is a function of time and two spatial variables, under the norm
\begin{equs}
\|m,v,v',R^v\|_{\star,T} &= \sup_{t\in (0,T]} t^{1-2\kappa} \bigl(\|v_t\|_{{1\over 2}-\kappa} + \|v'_t\|_{\CC^{3\kappa}} + 
\|R_t^v\|_{{1\over 2}+2\kappa} + t^{{3\kappa-1\over 2}}\|v_t\|_\infty\bigr) \\
&\qquad + \sup_{0<s< t\le T} {s^{1-2\kappa}\over |t-s|^{2\kappa}} \|v_t-v_s\|_\infty + \sup_{t \in (0,T]} |m_t|\;. 
\end{equs}
Here, $m_t$ is interpreted as the spatial mean of $u_t$, so that the natural projection map $\pi\colon \CB_{\star,T} \to \CC([0,T],\CC^{{3\over 2}-\kappa})$ recovering $u$ from $V$ is given by
\begin{equ}
(\pi V)_t = \CI v_t + m_t\;,
\end{equ}
where $\CI$ is the integration operator given by the Fourier multiplier $(1-\delta_{k,0})/(ik)$.

We furthermore denote by $\CY_{\star,T}$ the closed algebraic variety in $\CX \oplus \CB_{\star,T}$
determined by the additional relation \eref{e:remainder}. Note that $\CY_{\star,T}$ is again a Polish space
equipped with a natural metric given by the restriction of the product norm on $\CW \oplus \CB_{\star,T}$.
We now use $\CK$ as a building block for the map $\hat \CM$ appearing in Proposition~\ref{prop:mainCont} 
in the following way. 
For any smooth element $(h_0,\Psi,V) \in \CC^\infty \times \CY_{\star,T}$, and using furthermore
the shorthand notation $V = (m,v,v',R^v)$, we set
\begin{equ}
\hat \CM(h_0, \Psi,V) = \Bigl(\CJ(V,\Psi), \CK\bigl(\d_x(h_0 - h^\star_0(\Psi)),V,\Psi\bigr), \CK'(V,\Psi), R^\CM\Bigr)\;,
\end{equ}
where $\CK$ is as in \eref{e:defMstart}, $\CK'$ is given by
\begin{equ}[e:defKprime]
\CK'(V,\Psi) = 2\lambda \bigl(v + 4\lambda^3 \bar Y^\5+ \lambda^3 \bar Y^\4\bigr)\;,
\end{equ}
$R^\CM$ is defined by the relation \eref{e:remainder}, and
\begin{equs}
\CJ(V,\Psi)_t &= \Pi_0 \bigl(h_0 - h_0^\star(\Psi)\bigr) + \int_0^t \Pi_0 F(v,\Psi,s)\,ds \\
&\qquad + {\lambda \over \pi} \int_0^t \iint_{S^1} \Bigl(v_s(y) + 4\lambda^3 \bar Y_{s}^\5(y)+\lambda^3 \bar Y_{s}^\4(y)\Bigr)\,dY_{s}^\1(y)\,ds\;.
\end{equs}
(The latter expression is nothing but the constant mode of the right hand side of \eref{e:defMstart} before that expression was differentiated.)

At this stage of our construction, it seems that the choice \eref{e:defKprime} for $\CK'$ is somewhat
arbitrary. Intuitively, it should be the right choice though, since this is precisely the factor that appears
in the second term of \eref{e:equationv}, so that one does expect it to describe the amplitude of the small-scale
fluctuations of the solution. Mathematically, the fact that this is indeed the correct choice is seen by the fact that this
is the \textit{only} choice guaranteeing that the image of $\hat \CM$ lies again in $\CB_{\star,T}$, so that we
can set up a fixed point argument. 
This is the content of the following result which,
together with the convergence results already mentioned earlier in this section, forms the core of this article.
The space $\CB_{\Psi,T}$ appearing in the statement is defined as in Proposition~\ref{prop:mainCont}.

\begin{theorem}\label{theo:mainTechnical}
For every $\kappa < {1\over 12}$, every $T>0$, and every $\beta > 2\kappa$, 
the map $\hat \CM$ extends uniquely to 
a locally uniformly continuous map from $\CC^\beta \times \CY_{\star,T}$ to $\CB_{\star,T}$.

Furthermore, for every $\Psi \in \CX$ and every $h_0 \in \CC^{\beta}$, there exists $T>0$ depending 
only on the norms of $\Psi$ and $h_0$ such that
the map $V \mapsto \hat \CM(h_0,\Psi,V)$ is a strict contraction in a sufficiently small ball of $\CB_{\Psi,T}$.
Furthermore, the equation $V = \hat \CM(h_0,\Psi,V)$ admits a unique solution in all of $\CB_{\Psi,T}$.
\end{theorem}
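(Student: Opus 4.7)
The strategy is to first establish well-definedness and local uniform continuity of the four components of $\hat\CM$ on smooth inputs (which determines the extension uniquely, since smooth data are dense in $\CY_{\star,T}$ by the density of $\CX_s$ in $\CX$ combined with the standard density of smooth quadruples in $\CB_{\star,T}$), and then to close a contraction argument in $\CB_{\Psi,T}$ for $T$ small.

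\textbf{Component-wise bounds.} I would decompose $\CK$ as $P_t v_0 + I_t^{\mathrm{rough}} + I_t^{\mathrm{smooth}}$, where $v_0 = \d_x(h_0 - h_0^\star(\Psi)) \in \CC^{\beta-1}$, the rough piece comes from the $\iint p'_{t-s}(\cdot-y)(v_s + 4\lambda^3 \bar Y_s^\5)\,dY_s^\1$ term, and the smooth piece is the heat propagation of $\lambda^4 \bar Y^\4 \bar Y^\1 + F(v,\Psi,s)$. For $P_t v_0$, standard Schauder bounds give $\|P_t v_0\|_{\frac12-\kappa} \lesssim t^{-(3/4 - \kappa/2 - \beta/2)}\|v_0\|_{\beta-1}$, which, together with the weight $t^{1-2\kappa}$ in the norm of $\CB_{\star,T}$, is integrable precisely because $\beta > 2\kappa$. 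For $I^{\mathrm{rough}}$, the integrand $v_s + 4\lambda^3 \bar Y_s^\5$ is controlled by $\Phi_s$ with derivative $v'_s + 4\lambda^3 \bar Y_s^\3$ and remainder $R^v_s + 4\lambda^3 R^\5_s \in \CC_2^{\frac12+2\kappa}$; Theorem~\ref{theo:integral} then gives meaning to $\iint$, while the heat kernel derivative $p'_{t-s}$ contributes a factor of $(t-s)^{-1}$ that is balanced by a spatial smoothing gain of two derivatives (modulo the $\kappa$-loss). For $I^{\mathrm{smooth}}$, the three terms of $F$ require the multiplication estimates of Proposition~\ref{prop:Holder}: $v^2$ is handled because the weights on $\|v_t\|_\infty$ and $\|v_t\|_{\frac12-\kappa}$ combine; $v_s\,\d_x \bar h^\star$ uses that $\d_x \bar h^\star \in \CC^{-\kappa}$; $\bar\CR^\star \in \CC^{-\kappa}$ enters as source. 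A parallel decomposition of $\CJ$ into its deterministic, smooth-integral and rough-integral parts gives a bounded function of $t$. Finally, $\CK'$ is defined by the algebraic formula \eqref{e:defKprime}, so membership of $\CK' \in \CC^{3\kappa}$ is immediate from the controlled structure of $\Psi$, and $R^\CM$ is then \emph{defined} by \eqref{e:remainder}; the crucial bound $\|R^\CM_t\|_{\frac12+2\kappa} < \infty$ amounts to the Taylor-expansion-like identity provided by rough integration (i.e.\ that $\CK - \CK'\delta\Phi$ gains the critical $1/2+2\kappa$ order of Hölder regularity), which is the key output of Theorem~\ref{theo:integral}.

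\textbf{Contraction and fixed point.} Once local uniform continuity is established, the difference $\hat\CM(h_0,\Psi,V_1) - \hat\CM(h_0,\Psi,V_2)$ depends linearly on $V_1 - V_2$ in every term except the quadratic $\lambda v^2$ in $F$ and the product $v\,\d_x \bar h^\star$. The integration from $0$ to $t$ together with the time weight $t^{1-2\kappa}$ yields a factor $T^\theta$ with $\theta > 0$ in every off-diagonal contribution, so choosing $T$ small (depending only on $\|h_0\|_\beta$ and a suitable seminorm of $\Psi$) makes $\hat\CM$ a $\frac12$-contraction on the ball $\{\|V\|_{\star,T} \le B\}$ of $\CB_{\Psi,T}$ for an appropriate $B$; the algebraic constraint \eqref{e:remainder} is preserved because $\CK'$ and $R^\CM$ are built to satisfy it, so the fixed point indeed lies in $\CB_{\Psi,T}$. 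Uniqueness in the whole of $\CB_{\Psi,T}$ (not just the ball) follows from the standard bootstrap: if $V_1, V_2$ are two fixed points, their difference satisfies a linear-in-$V$ estimate on any interval $[0,T']$ with $T'$ small enough to guarantee contraction, forcing $V_1 = V_2$ on $[0,T']$; iterating in $T'$ (using the cocycle-like structure of the equation) covers $[0,T]$.

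\textbf{Principal obstacle.} The hardest ingredient is controlling $I^{\mathrm{rough}}$ simultaneously in all four norms comprising $\|\cdot\|_{\star,T}$: spatial $\CC^{\frac12-\kappa}$ regularity of $v_t$, the $\CC^{3\kappa}$ regularity of $\CK'_t$, the $\CC_2^{\frac12+2\kappa}$ regularity of $R^\CM_t$, and the temporal Hölder-$2\kappa$ regularity of $v$ in the sup norm, all with the correct powers of $t$ and $s$. Balancing the singularity of $p'_{t-s}$ at $s=t$ against the regularity gains afforded by controlled rough integration, while keeping every Hölder exponent in the admissible range of Proposition~\ref{prop:Holder} and Theorem~\ref{theo:integral}, is what dictates the restriction $\kappa < \tfrac{1}{12}$: one needs $\frac12 + 2\kappa < 1 - (3\kappa + \delta)$ for the product estimates on $R^v \cdot \d_x Y^\1$ to close, and the analogous inequality $2\kappa + 3\kappa + (\frac12-\kappa) < 1$ for the temporal regularity, both of which saturate near $\kappa = \tfrac{1}{12}$.
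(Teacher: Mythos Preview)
Your proposal is correct and follows essentially the same approach as the paper: decompose $\hat\CM$ into the heat-semigroup term $P_t v_0$, the rough-integral operator $\CM$ applied to a controlled input, and the ``smooth'' nonlinear convolution $\CN$; bound each in the five norms of $\|\cdot\|_{\star,T}$ with a positive power of $T$ in front; then close a Banach fixed-point argument on $\CB_{\Psi,T}$, with uniqueness extended to all of $\CB_{\Psi,T}$ by the usual restart-and-iterate trick. The paper's own proof of Theorem~\ref{theo:mainTechnical} is very short and simply invokes Proposition~\ref{prop:contraction} (the $\|\CM v\|_{\star,T}\lesssim T^\theta\|v\|_{\star,T}$ bound) and Theorem~\ref{theo:main} (the abstract fixed-point theorem under Assumptions~\ref{ass:F1}--\ref{ass:G}), whose proofs match your outline.

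One small inaccuracy: your explanation of where $\kappa<\tfrac1{12}$ comes from is off. The purely rough-path part (Proposition~\ref{prop:contraction}) only needs $\kappa<\tfrac1{10}$; the sharper constraint $\kappa<\tfrac1{12}$ enters in Theorem~\ref{theo:main} through the time-regularity estimate on $\CN v$, specifically via the requirement $\delta<\tfrac12-\tfrac\eta2$ in the bound \eqref{e:boundTD1} for the $F_2$-contribution, which with $\delta=2\kappa$ and $\eta<\tfrac12-2\kappa$ forces $\kappa<\tfrac1{12}$. The inequalities you wrote down do not reproduce this threshold.
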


\begin{proof}
First, note that, since we defined $\CR^\CM$ such that \eref{e:remainder} holds, we ensure that, 
at least for smooth data, $\hat \CM(h_0,\Psi,V) \in \CB_{\Psi,T}$
for every $(h_0,\Psi,V) \in \CC^\beta\times \CY_{\star,T}$.
The local uniform continuity of $\hat \CM$ is the hard part of this result, and this is
obtained in Proposition~\ref{prop:contraction} below.

The contraction properties and the existence of a unique fixed point for $\hat \CM$ with its first two arguments fixed 
then follows from Theorem~\ref{theo:main}, noting that its
assumptions are satisfied for \textit{every} $\Psi$ by the definition of the space $\CX$ in which the input $\Psi$ lies.
\end{proof}

At this point, it is legitimate to question whether such a complicated nonlinear
construction is really necessary, or whether one could instead find fixed Banach
spaces $\hat \CB_T$ and $\hat \CX$ such that 
$\hat \CM$ extends to a continuous map $\hat \CX \times \hat \CB_T \to \hat \CB_T$ and
has a fixed point for small enough time horizon $T$. 

While it doesn't seem easy to disprove such a statement at this level of generality, the results in 
\cite{TerryPath} strongly suggest that it is not possible to find any such spaces.
Indeed, the following is a straightforward extension of \cite{TerryPath}:

\begin{theorem}\label{theo:negativeBM}
There exists no separable Banach space $\CB$ supporting 
Wiener measure and such that the bilinear functional
\begin{equ}
\CI\colon (u,v) \mapsto \int_0^1 u(t)\,dv(t)\;,
\end{equ}
defined on $\CH = H^1([0,1])$, extends to a continuous function on $\CB\times \CB$.
\end{theorem}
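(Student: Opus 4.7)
The plan is to argue by contradiction: assuming such a Banach space $\CB$ exists, I would exhibit an explicit deterministic sequence in $\CH$ that converges to zero in the $\CB$-topology yet against which the bilinear form $\CI$ remains bounded away from zero, violating the postulated continuity.

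First, I would extract the necessary topological control from the hypothesis that $\CB$ supports Wiener measure. The abstract Wiener space framework provides both a continuous dense inclusion $\CH \hookrightarrow \CB$ and a continuous embedding $\CB \hookrightarrow \CC([0,1])$. Combining these with Fernique's integrability, the almost sure regularity of Brownian paths in $\CC^\alpha$ for every $\alpha < 1/2$, and Gross's characterisation of $\CB$-norms as measurable seminorms on $\CH$, one deduces that for every $\alpha \in (0, 1/2)$ the H\"older space $\CC^\alpha([0,1])$ embeds continuously into $\CB$.

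With this domination in hand, consider the oscillatory pair
\begin{equ}
u_n(t) = \frac{\sin(n\pi t)}{\sqrt n}\;,\qquad v_n(t) = \frac{1-\cos(n\pi t)}{\sqrt n}\;,
\end{equ}
both of which lie in $\CH = H^1([0,1])$. An elementary calculation yields $\|u_n\|_\infty + \|v_n\|_\infty \lesssim n^{-1/2}$ and $\|u_n\|_\alpha + \|v_n\|_\alpha \lesssim n^{\alpha - 1/2}$ for every $\alpha < 1/2$, so by the previous step $u_n, v_n \to 0$ in $\CB$. Meanwhile, evaluating $\CI$ on $\CH$ via its classical Lebesgue-Stieltjes definition gives
\begin{equ}
\CI(u_n, v_n) = \int_0^1 u_n(t)\,v_n'(t)\,dt = \pi \int_0^1 \sin^2(n\pi t)\,dt = \frac{\pi}{2}
\end{equ}
for every $n \ge 1$, contradicting the hypothesised continuity of $\CI$ on $\CB \times \CB$.

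The hard part will be the topological first step: verifying that the $\CB$-norm on $\CH$ is dominated by a H\"older norm $\|\cdot\|_{\CC^\alpha}$ for some $\alpha < 1/2$. While this domination is transparent for any of the standard concrete Banach spaces in which Wiener measure sits (sup-norm continuous paths, $L^p$ spaces, or H\"older, Sobolev, and Besov spaces of regularity strictly less than $1/2$), securing it in the full abstract generality of the statement requires a careful invocation of Gross's characterisation of measurable norms along the lines of \cite{TerryPath}.
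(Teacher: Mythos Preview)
Your argument has a genuine gap at precisely the place you flag as ``the hard part'': the claim that $\CC^\alpha([0,1])$ embeds continuously into $\CB$ for some (let alone every) $\alpha<\tfrac12$ is false in general, and no amount of Gross's characterisation of measurable norms will yield it. Here is a concrete counterexample. Let $\{\psi_k\}_{k\ge 1}$ be the Karhunen--Lo\`eve basis of $\CH$ and set
\begin{equ}
\|f\|_\CB = \|f\|_\infty + \sup_{k\ge 1} \frac{|\langle f,\psi_k\rangle_\CH|}{\log(k+2)}\;.
\end{equ}
The completion of $\CH$ under this norm is separable, and Wiener measure lives on it because $\sup_{k>N}|\xi_k|/\log(k+2)\to 0$ almost surely for i.i.d.\ standard Gaussians $\xi_k$. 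But a direct computation gives $|\langle u_n,\psi_n\rangle_\CH|\sim \sqrt{n}$, so $\|u_n\|_\CB\gtrsim \sqrt{n}/\log n\to\infty$: your sequences $u_n,v_n$ are \emph{not} small in this $\CB$. The underlying obstruction is that $\|u_n\|_\CH\sim\sqrt n$; any deterministic sequence that works uniformly over all abstract Wiener spaces would have to converge to zero in $\CH$ itself, and then $\CI(u_n,v_n)\to 0$ by the continuity of $\CI$ on $\CH$.

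The paper's proof sidesteps this by working probabilistically rather than with fixed sequences. It invokes the construction from \cite{TerryPath} of a \emph{coupling} $\mu$ on $\CB\times\CB$ with Brownian marginals whose expected L\'evy area under Fourier truncation diverges like $\log N$. Fernique's theorem then gives $\int\|\Pi_N u\|_\CB\|\Pi_N v\|_\CB\,\mu(du,dv)<\infty$ uniformly in $N$, which contradicts the divergence if $\CI$ were continuous on $\CB\times\CB$. The point is that this argument only ever uses the $\CB$-norm on genuine Brownian paths, where Fernique applies regardless of which $\CB$ was chosen; it never needs to control $\|\cdot\|_\CB$ on deterministic oscillatory data.
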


\begin{proof}
Note first that we can assume without loss of generality that $\CB \subset \CC([0,1])$ since 
larger spaces make it only harder for $\CI$ to be continuous.
Also, by assumption, $\CB$ is the completion of $\CH$ under some norm $\|\cdot\|_\CB$. 
Assuming by contradiction that $\CI$ is continuous on $\CB \times \CB$, it follows from Fernique's theorem
that $\int \CI(u,v)\,\mu(du,dv) < \infty$, for every measure $\mu$ on $\CB \times \CB$ such that both of its marginals
are given by Wiener measure.

Let $\Pi_N \colon \CB \to \CH$ be the projection onto the first $N$ Fourier modes which, since
$\CB \subset \CC([0,1])$, is a bounded operator for every $N$.
The construction in \cite{TerryPath} then yields  a measure $\mu$ as above with the property that
\begin{equ}[e:boundLog]
\int \CI(\Pi_N u, \Pi_N v)\,\mu(du,dv) \sim \log N\;,
\end{equ}
for $N$ large. Since Fourier modes form an orthonormal basis of $\CH$, it follows from \cite[Thm~3.5.1]{Bogachev} 
that $(\Pi_N u, \Pi_N v) \to (u,v)$
$\mu$-almost surely as $N \to \infty$. Since weak convergence implies tightness in separable 
Banach spaces, we conclude from Fernique's theorem that 
\begin{equ}
\sup_N \int \|\Pi_N u\|_\CB \|\Pi_N v\|_\CB\,\mu(du,dv) < \infty\;,
\end{equ}
which is a contradiction to \eref{e:boundLog}. 
\end{proof}

Since, for any fixed $t$, both $Y^\1_t$ and $v_t$ in \eref{e:defMstart} have regularity properties
identical to those of Brownian motion (actually, both $Y^\1_t$ and $\Phi_t$ \textit{are} nothing but centred Brownian bridges),
Theorem~\ref{theo:negativeBM} leaves no doubt that the classical approach to making sense of \eref{e:equationv} 
in the limit $\eps \to 0$ is doomed to failure.

We are now able to provide a proof of the results stated in the introduction:

\begin{proof}[of Proposition~\ref{prop:mainCont}]
The spaces $\CB_{\star,T}$, $\CX$ and $\CY_{\star,T}$ as well as the maps $\hat \CM$ and $h^\star = \sum_{\tau \in \bar \CT} Y^\tau$  were already defined, so that it suffices to verify that they satisfy the required properties.

Given an initial condition $h_0 \in \CC^\beta$, we set $v_0 = \d_x \bigl(h_0 - h^\star_0\bigr)$, so that
$v_0 \in \CC^{\beta-1}$. We know from Theorem~\ref{theo:mainTechnical} 
that one can choose $T>0$ depending only on  
$\|v_0\|_{\beta-1}$ and $\|\Psi\|_{\CW}$ such that the map $\hat \CM$ is a contraction in its last argument
and we denote its fixed point by $\hat \CS_{\mathrm{R}}^T(h_0, \Psi) \in \CY_{\star,T}$. By performing the  same continuation
procedure as in the proof of the existence of a unique maximal solution for ordinary differential equations, 
we obtain an explosion time $T_\star(h_0, \Psi)$, which is the supremum over all times $T$ 
such that the fixed point problem in $\CB_{\star,T}$ has a solution. The fact that all H\"older norms of the solution explode
as $t \to T_\star$ is an immediate consequence of the fact that the local existence time can be controlled in terms 
of the H\"older norm of the initial condition.
Furthermore, these solutions are all unique by the same
argument as in the proof of Theorem~\ref{theo:main} below, and they agree on their common domains of definition.

The third property, namely the continuity of $\hat \CS_{\mathrm{R}}^T$ in a neighbourhood of $(h_0, \Psi_0)$ whenever
$T < T_\star(h_0,\Psi_0)$ also follows in the same way as in the classical theory of ODEs. This then immediately implies the lower semicontinuity of $T_\star$, since its definition implies that one has $T_\star(h,\Psi) > T$ for every $(h,\Psi)$ in
such a neighbourhood. 
Finally, if we define $\Theta_t$ to be the canonical time-shift on $\CX$ (which is a continuous map for 
every $t \in \R$), then the cocycle property follows immediately from the elementary properties of the integral
and the heat semigroup.
\end{proof}

\begin{proof}[of Theorem~\ref{theo:mainCont}]
We now define the map $\CS_{\mathrm{R}}$ by setting
\begin{equ}
\CS_{\mathrm{R}}(h_0, \Psi)_t = h_\star(\Psi)_t + \bigl(\pi \hat \CS_{\mathrm{R}}(h_0, \Psi)\bigr)_t\;,
\end{equ}
for $t < T_\star(h_0, \Psi)$, and $\CS_{\mathrm{R}}(h_0, \Psi)_t = \infty$ for $t > T_\star(h_0, \Psi)$.
Since one necessarily has $\lim_{t \to T_\star} \|\CS_{\mathrm{R}}(h_0, \Psi)_t\|_{{1\over 2}-\beta} = +\infty$,
the definition of the topology on $\bar \CC^{{1\over 2}-\beta}$ implies that the map $\CS_{\mathrm{R}}$ constructed
in this way does indeed take values in $\CC(\R_+, \bar \CC^{{1\over 2}-\beta})$.

If we furthermore denote by $\CS_{\mathrm{R}}^T$ the restriction of $\CS_{\mathrm{R}}$ to the interval $[0,T]$,
then it follows from Proposition~\ref{prop:mainCont} that $\CS_{\mathrm{R}}^T$ is continuous on the set $\{(h,\Psi)\,:\, T_\star(h,\Psi) > T\}$. In particular, this is stronger than the claimed continuity property.

It remains to show that, for every fixed initial condition $h_0$, one has 
$\CS_{\mathrm{CH}}(h_0,\omega) = \CS_{\mathrm{R}}(h_0,\Psi(\omega))$ almost surely. 
Fix $T>0$, and let $S_T(h_0) = \{\Psi \in \CX\,:\, T_\star(h_0,\Psi) \le T\}$, which is the set of 
possible discontinuities of $\CS_{\mathrm{R}}(h_0,\cdot)$. By construction, for every $\eps > 0$, 
$\CS_{\mathrm{R}}^T(h_0,\Psi_\eps(\omega))$ almost surely agrees with the solution $h_\eps$ to
\eref{e:KPZeps} up to time $T$. Since we know on the one hand that $\Psi_\eps \to \Psi$ in probability,
and on the other hand that $h_\eps$ converges in probability to $\CS_{\mathrm{CH}}(h_0,\omega)$,
the stated claim follows if we can show that $\P(\Psi(\omega) \in S_T(h_0)) = 0$ for every $T>0$ and every 
initial condition $h_0 \in \CC^{\beta}$.

Assume by contradiction that there exists $h_0$ and $\kappa > 0$ such that 
$\P(\Psi(\omega) \in S_T(h_0)) \ge \kappa$.
It follows from our construction that, for every
$\Psi_0 \in S_T(h_0)$ and every $K>0$, there exists a neighbourhood $V$ of $\Psi_0$ in $\CX$ such that
\begin{equ}
\sup_{t \le T} \|\CS_{\mathrm{R}}(h_0,\Psi)\|_{\beta} \ge K\;,\qquad \forall \Psi \in V\;.
\end{equ}
Since $\Psi_\eps \to \Psi$ in probability, we conclude that there exists $\eps_0 > 0$ such that 
\begin{equ}
\P \bigl(\sup_{t \le T} \|h_{t,\eps}\|_{\beta} \ge K\bigr) \ge {\kappa \over 2}\;,
\end{equ}
uniformly over all $\eps < \eps_0$. This on the other hand is ruled out by the fact that $h_\eps \to h$ in
probability in $\CC([0,T], \CC^\beta)$.
\end{proof}

To conclude this section, we give an explicit interpretation of the solution map $\CS_{\mathrm{R}}$ for
arbitrary smooth data $\Psi$ and we use the continuity of the solution map to provide a novel homogenisation 
result.

\subsection{Smooth solutions}
\label{sec:smooth}

It is instructive to see what is the meaning of $\CS_{\mathrm{R}}(h_0,\Psi)$ for general smooth data $\Psi = (\{Y^\tau\}_{\tau \in \bar \CT}, \YY, R^\5) \in \CX$.
Given such smooth data, we set
\begin{equ}
\xi(x,t) \eqdef \d_t Y^\1_t(x) - \d_x^2 Y^\1_t(x)\;,
\end{equ}
as well as
\begin{equ}[e:defH]
H_t(x) = \sum_{\tau=[\tau_1,\tau_2] \in \bar \CT}\lambda^{|\tau|} \Bigl(\d_t Y^\tau_t(x) -\d_x^2 Y^\tau_t(x)- \bar Y^{\tau_1}_t(x) \,\bar Y^{\tau_2}_t(x) \Bigr)\;,
\end{equ}
which is some kind of ``defect'' by which the $Y^\tau$ may fail to satisfy their constituent equations.
Defining $\Phi$ as in \eref{e:realdefPhi} (but with $Y^\1_\eps$ replaced by $Y^\1$), we also define
$G$ to be the smooth function such that
\begin{equ}
\YY_t(x,y) = \int_x^y \Phi_t(z)\,dY_t^\1(z) + \int_x^y G_t(z)\,dz\;.
\end{equ}
Such a function always exists since $\YY$ satisfies \eref{e:relYY} by definition of $\CX$ and since any two
functions satisfying these relations always differ by an increment of a function of one variable.

With this notation, we then have the following result:

\begin{theorem}\label{theo:smooth}
Let $\Psi \in \CX$ be a smooth element, let $h_0 \in \CC^\infty$, and let $H$, $G$ and $\xi$ be as above. 
Then, $T_\star(h_0,\Psi) = +\infty$ and $\CS_{\mathrm{R}}(h_0,\Psi)$ is the unique global solution to
\begin{equ}[e:correctH]
\d_t h_t = \d_x^2 h_t + \lambda \bigl(\d_x h_t\bigr)^2 + 4\lambda^2 G_t\, \d_x \bigl(h_t - J_t(\Psi)\bigr) + H + \xi\;,
\end{equ}
where $J_t(\Psi)$ is the function given by
\begin{equs}
J_t(\Psi) = Y_t^\1 + \lambda Y_t^\2 - \lambda^2 Y_t^\3\;,
\end{equs}
with initial condition $h_0$.
\end{theorem}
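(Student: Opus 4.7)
The plan is to unfold the fixed point definition of $\hat\CS_{\mathrm{R}}(h_0,\Psi)$ for smooth data and verify that $h = h^\star(\Psi) + \pi \hat \CS_{\mathrm{R}}(h_0,\Psi)$ solves \eref{e:correctH}. Writing $u = \pi\hat\CS_{\mathrm{R}}(h_0,\Psi)$ and $v = \d_x u$, the first step is to compute $\d_t h^\star$: the definition of $H$ together with the purely combinatorial manipulation used in the proof of Proposition~\ref{prop:sol} (which does not use that the $Y^\tau$ actually satisfy their defining equations) yields
\begin{equ}
\d_t h^\star = \d_x^2 h^\star + \lambda(\d_x h^\star)^2 - \CR^\star + H + \xi\;,
\end{equ}
where $\CR^\star = 2\lambda^4 \bar Y^\1(\bar Y^\4 + 4\bar Y^\5) + \bar \CR^\star$ as in Section~\ref{sec:IdeasProofs}.

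Next I would derive the equation satisfied by $u$ by differentiating the mild form \eref{e:defMstart} in time. The key observation is that, on smooth data, the rough integral $\iint A\,dY^\1$ with $A = v + 4\lambda^3 \bar Y^\5$ and derivative $A' = v' + 4\lambda^3 \bar Y^\3$ (where $v' = 2\lambda(v + 4\lambda^3\bar Y^\5 + \lambda^3 \bar Y^\4)$ is forced by the fixed point through $\CK'$) does \emph{not} reduce to the Riemann integral; instead it equals the Riemann integral plus the correction $\int A'\, G\, dz$, which is precisely how the non-triviality of $\YY$ relative to \eref{e:defYY} enters at the smooth level. Substituting this into \eref{e:defMstart} and differentiating in time yields
\begin{equ}
\d_t u = \d_x^2 u + 2\lambda \bar Y^\1(v + \lambda^3 \bar Y^\4 + 4\lambda^3 \bar Y^\5) + \lambda v^2 + 2\lambda v\, \d_x\bar h^\star + \bar \CR^\star + 2\lambda A' G\;,
\end{equ}
i.e., the classical equation \eref{e:equationu} plus the extra source $2\lambda A' G$. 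Adding to the equation for $h^\star$, the cancellation $-\CR^\star + 2\lambda^4 \bar Y^\1(\bar Y^\4 + 4\bar Y^\5) + \bar\CR^\star = 0$ combined with the pointwise identity $(\d_x h^\star + v)^2 = (\d_x h)^2$ produces
\begin{equ}
\d_t h = \d_x^2 h + \lambda(\d_x h)^2 + H + \xi + 2\lambda A' G\;.
\end{equ}

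The remaining step, and the main obstacle, is to identify $2\lambda A' G$ with $4\lambda^2 G\,\d_x(h - J)$. Factoring $A' = 2\lambda\bigl(v + \lambda^3 \bar Y^\4 + 4\lambda^3 \bar Y^\5 + 2\lambda^2 \bar Y^\3\bigr)$ gives $2\lambda A' G = 4\lambda^2 G\bigl(v + \lambda^3 \bar Y^\4 + 4\lambda^3 \bar Y^\5 + 2\lambda^2 \bar Y^\3\bigr)$, and using the natural identifications $Y^\9 = Y^\3$ and $Y^\5 = Y^\6 = Y^\7 = Y^\8$ that are intrinsic to the definition of $h^\star$ as a sum over $\bar \CT$, one expands $h - J$ and verifies coefficient by coefficient that $\d_x(h - J)$ agrees with the parenthesised expression. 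This algebraic matching is what singles out the specific choice $J = Y^\1 + \lambda Y^\2 - \lambda^2 Y^\3$ among low-order Wild truncations, and is the most delicate bookkeeping in the argument.

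Finally, to obtain $T_\star(h_0,\Psi) = +\infty$ together with uniqueness: equation \eref{e:correctH} is a semilinear parabolic PDE with smooth coefficients, to which the classical Cole-Hopf substitution $Z = \exp(\lambda h)$ applies. It converts \eref{e:correctH} into a linear parabolic equation of the form $\d_t Z = \d_x^2 Z + 4\lambda^2 G\,\d_x Z + Z\,\varrho$ for a smooth source $\varrho$ incorporating $H$, $\xi$ and $-4\lambda^2 G\,\d_x J$. Standard theory produces a unique positive global solution $Z$, hence a unique global $h = \lambda^{-1}\log Z$; identification with $\CS_{\mathrm{R}}(h_0,\Psi)$ then follows because any such $h$ furnishes a solution of the fixed point problem for $\hat\CM$, which is unique by Theorem~\ref{theo:mainTechnical}.
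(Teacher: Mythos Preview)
Your proposal is correct and follows essentially the same route as the paper's own proof. Both arguments unfold the fixed point equation \eref{e:equationfixu} for $u$, invoke the identity \eref{e:corint} to compute the correction $\int A'\,G$ arising from the non-canonical area process $\YY$, and then add the resulting PDE for $u$ to the PDE for $h^\star$ (the latter coming from the purely combinatorial computation of Proposition~\ref{prop:sol} together with the definition \eref{e:defH} of $H$). The identification of $A' = 2\lambda\bigl(v + \lambda^3\bar Y^{\4} + 4\lambda^3\bar Y^{\5} + 2\lambda^2\bar Y^{\3}\bigr)$, obtained by combining $v' = \CK'(V,\Psi)$ from \eref{e:defKprime} with the derivative $(\bar Y^{\5})' = \bar Y^{\3}$ coming from \eref{e:remainder5}, matches the paper verbatim.

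One point where you go a bit further than the paper: you supply an explicit argument for $T_\star(h_0,\Psi) = +\infty$ via the Cole--Hopf substitution $Z = e^{\lambda h}$, which linearises \eref{e:correctH} into a parabolic equation with smooth coefficients. The paper simply asserts global existence without spelling this out; your argument is the natural way to fill that gap and is correct.
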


\begin{proof}
By construction, we have
\begin{equ}
h = u + \sum_{\tau\in \bar \CT} \lambda^{|\tau|} Y^\tau\;,
\end{equ} 
where $u$ solves the fixed point equation
\begin{equs}
u &= \bigl(P_t u_0\bigr)(x) + 2\lambda \int_0^t \iint_{S^1} p_{t-s}(x-y) \bigl(\d_x u_s(y) + 4 \lambda^3\bar Y_{s}^\5(y)\bigr)\,dY_{t}^\1(y)\,ds \\
&\qquad + \int_0^t P_{t-s} \bigl(\lambda^4 \bar Y_{s}^\4(y) \, \bar Y_{s}^\1(y)  + F(u,\Psi,s)\bigr)\,ds\;, \label{e:equationfixu}
\end{equs} 
where $u_0 = h_0 - \sum_{\tau\in \bar \CT} \lambda^{|\tau|} Y^\tau_0$ and $F$ is as in \eref{e:defFs}. It then 
follows from \eref{e:corint} below and the fact that, by construction, $\d_x u_s + 4 \lambda^3 \bar Y_{s}^\5$ is 
a rough path controlled by $\Phi$ with derivative process $2\lambda\bigl(\d_x u_s + 4\lambda^3\bar Y_s^\5 + \lambda^3\bar Y_s^\4 + 2\lambda^2\bar Y_s^\3\bigr)$, that one has the identity
\begin{equs}
\iint_{S^1} &p_{t-s}(x-y) \bigl(\d_x u_s(y) + 4\lambda^3 \bar Y_{s}^\5(y)\bigr)\,dY_{s}^\1(y)\\
&= \int_{S^1} p_{t-s}(x-y) \bigl(\d_x u_s(y) + 4 \lambda^3\bar Y_{s}^\5(y)\bigr)\,\d_xY_{s}^\1(y)\,dy\\
&\quad + 2\lambda \int_{S^1} p_{t-s}(x-y) \bigl(\d_x u_s + 4\lambda^3\bar Y_s^\5 + \lambda^3\bar Y_s^\4 + 2\lambda^2\bar Y_s^\3\bigr)\,G_{s}(y)\,dy\\
&= \int_{S^1} p_{t-s}(x-y) \bigl(\d_x u_s(y) + 4 \lambda^3\bar Y_{s}^\5(y)\bigr)\,\d_xY_{t}^\1(y)\,dy\\
&\quad + 2\lambda \int_{S^1} p_{t-s}(x-y) \d_x\bigl(h_s - J_s(\Psi)\bigr)\,G_{s}(y)\,dy\;.
\end{equs}
Similarly, it follows from \eref{e:defH} that
\begin{equs}
\sum_{\tau\in \bar \CT} \lambda^{|\tau|} Y^\tau_t &= \sum_{\tau\in \bar \CT} \lambda^{|\tau|} P_t Y^\tau_0 + \int_0^t P_{t-s} \Bigl(\Bigl(\sum_{\tau\in \bar \CT} \lambda^{|\tau|} Y^\tau_s\Bigr)^2- \bar \CR^\star(\Psi)_s \Bigr)\,ds \\
&\quad +  \int_0^t P_{t-s}H_s\,ds\;.
\end{equs}
We can now ``undo'' the construction and recover a fixed point equation for $h$. Since the fixed point
map for $u$ was built precisely in such a way that $h$ solves the KPZ equation, provided that the $Y^\tau$ solve
their constituent equations and that the rough integral is replaced by a usual Riemann integral, we recover
the KPZ equation, except for the two correction terms involving $H$ and $G$, thus yielding \eref{e:correctH}.
\end{proof}

\subsection{A new homogenisation result}
\label{sec:homogen}

To conclude this section, we present a new periodic homogenisation result for the 
heat equation with a strong time-varying potential, which illustrates the power of the techniques presented in this article. 
This equation has been studied extensively recently and several homogenisation results have
been obtained for both the stochastic and the deterministic case \cite{MR2718269,Guillaume,Etienne}, 
see also the monograph \cite{MR1185878}.

In this section, we show how to obtain a periodic homogenisation result in the situation where, in \eref{e:KPZ},
the driving noise $\xi$ is replaced by a space-time periodic function that is rescaled with the same exponents 
as space-time
white noise. More precisely, we fix a periodic function $\phi\colon S^1 \to \R$ with $\int \phi(x)\,dx = 0$
and we consider the equation
\begin{equ}[e:homogen]
\d_t h^{(n)} = \d_x^2 h^{(n)} + (\d_x h^{(n)})^2 + n^{3/2} \phi(nx + cn^2 t) - C_n\;,
\end{equ}
for $n$ large, where $C_n$ a sequence of constants to be determined so that the solutions to \eref{e:homogen}
converge to a non-trivial limit. Of course, as in \eref{e:linear}, this is equivalent to solving the heat equation
with the potential $n^{3/2} \phi(nx + cn^2 t)$.

Similarly to what we did before, we write $C_n = \sum_{\tau \in \bar \CT} C_n^\tau$ and we define
$Y_n^\tau$ as the stationary (modulo constant Fourier mode) solutions to
\begin{equ}[e:equationYn]
\d_t Y_n^\tau = \d_x^2 Y_n^\tau + \d_x Y_n^{\tau_1}\,\d_x Y_n^{\tau_2} - C_n^\tau\;,
\end{equ}
where we want to specify the constants $C_n^\tau$ in such a way that the resulting expressions all converge to
finite limits. It turns out to be straightforward to solve these equations in the following way. Set $\gamma_\1 = {1\over 2}$ and
then define recursively a family of exponents $\gamma_\tau$ by
\begin{equ}
\gamma_{[\tau_1,\tau_2]} = \gamma_{\tau_1} + \gamma_{\tau_2}\;.
\end{equ}
With this notation, we then make the ansatz
\begin{equ}[e:ansatzY]
Y_n^\tau(t,x) =  n^{-\gamma_{\tau}} \phi^\tau(nx + cn^2 t) + n^{2-\gamma_\tau} K^\tau\, t - C_n^\tau\,t\;,
\end{equ}
for some periodic centred functions $\phi^\tau$ and constants $K^\tau$. We furthermore 
introduce the operator $G = (c - \d_x)^{-1}$, where $c$ is as in \eref{e:homogen}.
With this ansatz, we then immediately obtain the identity 
\begin{equ}
\phi^\1 = G\,\d_x \phi\;.
\end{equ}
Further inserting \eref{e:ansatzY} into \eref{e:equationYn}, we obtain for the remaining functions $\phi^\tau$
and constants $K^\tau$ the recursion relations
\begin{equ}
\d_x \phi^{[\tau_1,\tau_2]} = G\,\Pi_0^\perp \bigl(\d_x \phi^{\tau_1}\,\d_x \phi^{\tau_1}\bigr)\;,\qquad
K^{[\tau_1,\tau_2]} = \Pi_0 \bigl(\d_x \phi^{\tau_1}\,\d_x \phi^{\tau_1}\bigr)\;.
\end{equ}
It is now very easy to apply the results exposed in this section to obtain the following homogenisation
result:

\begin{theorem}
With the same notations as above, set $C_n = n K^{\2} + 2n^{1/2} K^\3$. Then, for every H\"older continuous 
initial condition $h_0$, the solution to \eref{e:homogen} converges locally uniformly as $n \to \infty$ to the solution
$h$ to
\begin{equ}
\d_t h = \d_x^2 h + (\d_x h)^2 + 4\bar K \d_x h + K^\4 + 4K^\5\;,
\end{equ}
where the constant $\bar K$ is given by $\bar K = \Pi_0 \bigl(\d_x \phi^\1\, G\d_x \phi^\1\bigr)$.
If furthermore $\phi$ is non-constant, then $\bar K \neq 0$ if and only if $c \neq 0$.
\end{theorem}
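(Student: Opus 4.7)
My plan is to realize $h^{(n)}$ as $\CS_{\mathrm{R}}(h_0,\Psi_n)$ for an explicit smooth sequence $\Psi_n\in\CX$ built from the ansatz \eref{e:ansatzY}, to prove that $\Psi_n\to \Psi_\infty$ in $\CW$ for a smooth limit $\Psi_\infty\in\CX$, and then to identify the resulting limit equation by evaluating Theorem~\ref{theo:smooth} at $\Psi_\infty$. The continuity of $\CS_{\mathrm{R}}$ from Proposition~\ref{prop:mainCont}, combined with the fact that $T_\star(h_0,\Psi_\infty)=+\infty$ (since the limit PDE is a smooth transport-perturbed KPZ equation with global solutions), will then yield the asserted locally uniform convergence.

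I would first pin down the constants. Substituting \eref{e:ansatzY} into \eref{e:equationYn} and matching powers of $n$, the recursion $\gamma_{[\tau_1,\tau_2]}=\gamma_{\tau_1}+\gamma_{\tau_2}$ ensures that every nonlinear product $\d_x Y_n^{\tau_1}\d_x Y_n^{\tau_2}$ oscillates at the common spatial scale $n$ with prefactor $n^{2-\gamma_\tau}$, which splits into the mean $n^{2-\gamma_\tau}K^\tau$ (a linear-in-$t$ drift) and a mean-zero oscillation. Setting $C_n^\tau:=n^{2-\gamma_\tau}K^\tau$ when $\gamma_\tau<2$ and $C_n^\tau:=0$ at the critical value $\gamma_\tau=2$ (which occurs exactly for $|\tau|=3$) cancels the divergent drift while preserving $K^\tau t$ for $\tau\in\{\4,\5\}$. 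Summing over $\bar\CT$ with the combinatorial multiplicity (the pair $\{\3,\9\}$ contributes identically) yields $nK^\2+2n^{1/2}K^\3$, matching the claimed $C_n$.

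The heart of the proof will be the convergence $\Psi_n\to\Psi_\infty$ in $\CW$. The individual components $Y_n^\tau$ converge by direct interpolation, using $\|X(n\cdot)\|_{\CC^\beta}\lesssim n^\beta\|X\|_{\CC^\beta}$ together with the crude $L^\infty$ bound valid on long time-scales: this gives $Y_n^\tau\to 0$ in $\CX_\tau$ whenever $\gamma_\tau<2$ and $Y_n^\tau\to K^\tau t$ when $\gamma_\tau=2$. \emph{The main obstacle is the convergence of the area process $\YY_n$:} even though both $\Phi_n=n^{-1/2}\psi(nx+cn^2 t)$ (with $\psi$ the periodic profile of $\Phi$, obtained by solving the linearised equation in the fast variable) and $dY_n^\1$ each shrink pointwise, they are locked in a resonant interaction at scale $n$. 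A change of variable reduces $\YY_n(x,y)$ to $n^{-1}\int_{nx+cn^2 t}^{ny+cn^2 t}(\psi\cdot(\phi^\1)')(u)\,du$ plus a remainder of order $n^{-1}$, and periodic averaging of the integrand over long intervals gives $\YY_n\to \bar K(y-x)$ in $\CC(\R,\CC_2^{3/4})$ with $\bar K=\Pi_0(\psi\cdot(\phi^\1)')=\Pi_0(\d_x\phi^\1\cdot G\d_x\phi^\1)$. The component $R_n^\5$ tends to $0$ in the same space since both $\bar Y_n^\5$ and $\bar Y_n^\3\cdot\delta\Phi_n$ are uniformly of order $n^{-1}$. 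The algebraic constraints \eref{e:relYY} and \eref{e:remainder5} pass trivially to the limit, so $\Psi_\infty$ — with $Y^\tau=0$ for $\tau\in\{\bullet,\2,\3\}$, $Y^\4=K^\4 t$, $Y^\5=K^\5 t$, $\YY(x,y)=\bar K(y-x)$, and $R^\5=0$ — does lie in $\CX$.

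To close the argument, I would read off the auxiliary data in Theorem~\ref{theo:smooth} from $\Psi_\infty$: the noise $\xi=\d_tY^\1-\d_x^2Y^\1$ vanishes, the excess $G_t=\bar K$ is the constant forced by $\Phi=0$ and $\YY_t(x,y)=\int_x^y\bar K\,dz$, $J_t(\Psi_\infty)=0$, and the defect $H$ in \eref{e:defH} collapses to $\lambda^3(K^\4+4K^\5)$ — the factor $4$ reflecting that the four non-perfectly-balanced planar binary trees $\5,\6,\7,\8\in\bar\CT$ all contribute the common $K^\5$ by symmetry. Plugging these into \eref{e:correctH} with $\lambda=1$ reproduces exactly the stated limit equation. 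For the nondegeneracy, setting $f:=\d_x\phi^\1$ (real and mean-zero), a direct Fourier computation yields
\begin{equ}
\bar K = \Pi_0(f\cdot Gf) = \sum_{k\neq 0}|f_k|^2\,\Re\frac{1}{c+ik} = c\sum_{k\neq 0}\frac{|f_k|^2}{c^2+k^2}\;,
\end{equ}
which vanishes iff $c=0$, provided $\phi$ is non-constant so that $f\not\equiv 0$.
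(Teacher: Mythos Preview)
Your proposal is correct and follows essentially the same strategy as the paper: build $\Psi_n\in\CX$ from the ansatz, show $\Psi_n\to\Psi_\infty$ in $\CX$ (with the key step being the resonant computation of $\YY_n\to\bar K(y-x)$ via periodic averaging of $\bar\phi\,\d_x\phi^\1$), then invoke the continuity of $\CS_{\mathrm{R}}$ together with Theorem~\ref{theo:smooth} to identify the limit equation; your Fourier computation of $\bar K=c\sum_{k\neq0}|f_k|^2/(c^2+k^2)$ with $f=\d_x\phi^\1$ is the same as the paper's formula after expressing $f_k$ in terms of $\phi_k$. The only cosmetic difference is that you correctly write $Y^\4=K^\4 t$, $Y^\5=K^\5 t$ (needed so that $H=K^\4+4K^\5$ via the $\d_t$ term in \eref{e:defH}), whereas the paper's shorthand $Y^\4=K^\4$ should be read the same way.
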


\begin{proof}
The claim follows immediately from Theorem~\ref{theo:smooth}, as well as the continuity of $\CS_{\mathrm{R}}$
established in Proposition~\ref{prop:mainCont}, provided that we can show that
\begin{equ}
(\{Y_n^\tau\}_{\tau\in \bar \CT}, \YY_n, R^\5_n) \to (\{Y^\tau\}_{\tau\in \bar \CT}, \YY, R^\5)\;,
\end{equ}
in $\CX$, where $Y^\1 = Y^\2 = Y^\3 = 0$, $Y^\4 = K^\4$, $Y^\5 = K^\5$, $\YY_t(x,y) = \bar K(y-x)$, and $R^\5 = 0$.
By choosing $C_n^\2 = n K^\2$ and $C_n^\3 = n^{1/2} K^\3$, the convergence of the processes $Y^\tau$ to the correct constants follows immediately from \eref{e:ansatzY}. Note that the scaling is precisely such that the convergence does indeed take place
in $\CX_\tau$ for each of the $Y^\tau_n$, but it would \textit{not} take place in any stronger H\"older-type norm.
The reason why $K^\3$ appears with a prefactor $2$ in the statement of the theorem is that there are
two trees isometric to $\smalltree$ in $\CT$.

It remains to consider $\YY_n$ and $R^\5_n$, which are both related to the process $\Phi_n$ given
as in \eref{e:realdefPhi}. A straightforward calculation shows that
\begin{equ}
\Phi_n(t) = n^{-1/2} \bar \phi(nx + cn^2 t)\;,\qquad \bar \phi = G\,\d_x \phi^\1\;.
\end{equ}
Let now $\psi = \bar \phi\, \d_x \phi^\1$ so that, at time $t=0$, one has
\begin{equs}
\YY_n(x,y) &= \int_x^y \psi(nz) \,dz - {\bar \phi(nx) \over n}\bigl(\phi^\1(ny)-\phi^\1(nx)\bigr)\\
&= \bar K (y-x) + \CO(|y-x| \wedge n^{-1})\;.
\end{equs}
Since the situation at time $t \neq 0$ is the same, modulo a spatial translation, it shows that
$\YY_n$ does indeed converge to $\YY$ in $\CC(\R, \CC_2^{3/4})$.
A similar calculation shows that $R^\5_n(x,y) = \CO(|y-x| \wedge n^{-1})$, so that it does indeed converge to $0$
in the same space.

For the last statement, an explicit calculation yields the identity
\begin{equ}
\bar K = \sum_{k \in \Z} {ck^2 \over (c^2 + k^2)^2}|\phi_k|^2\;,
\end{equ}
from which the claim follows at once.
\end{proof}

\section{Elements of rough path theory}
\label{sec:roughpaths}

In this section, we give a very short introduction to some of the elements of rough path theory needed for this work.
For more details, see the original article \cite{Lyons} and the monographs \cite{MR2036784,MR2314753,PeterBook} or, for a simplified exposition covering most of the notions required for this
work, see \cite{BurgersRough}. 
We will mostly make use of the notations and terminology
introduced by Gubinelli in \cite{Max} since the estimates given in that work 
seem to be the ones that are most 
suitable for the present undertaking.

We denote by $\CC_2(S^1, \R^n)$ the space of continuous functions
from $\R^2$ into $\R^n$ that vanish on the diagonal and such that, for $f \in \CC_2(S^1,\R^n)$,
there exists $c \in \R^n$ such that the relations
\begin{equ}
f(x+2\pi, y+2\pi) = f(x,y)\;,\qquad f(x,y+2\pi) = f(x,y) + c\;,
\end{equ}
hold for every $x,y \in \R^n$. We will often make an abuse of notation and write $f(x,y)$ for
$x, y \in S^1$. Our convention in this case is that we take for $x$ the unique representative in 
$[0,2\pi)$ for $y$ the unique representative in $[x,x+2\pi)$. The same convention is enforced whenever 
we write $\int_x^y$ for $x,y \in S^1$.

Usually, we will omit the base space $S^1$ and 
the target space $\R^n$ in our notations  for the
sake of simplicity. We also define a difference operator $\delta\colon \CC \to \CC_2$ by
\begin{equ}
\delta X(x,y) = X(y) - X(x)\;.
\end{equ}

A \textit{rough path} on $S^1$ then consists of two parts: a continuous function
$X \in \CC(S^1,\R^n)$, as well as a continuous ``area process'' $\XX \in \CC_2(S^1, \R^{n \times n})$ such that
the algebraic relations
\begin{equ}[e:constr]
\XX^{ij}(x,z) - \XX^{ij}(x,y) -\XX^{ij}(y,z) =  \delta X^i(x,y)\delta X^j(y,z)\;,
\end{equ}
hold for every triple of points $(x,y,z)$ and every pair of indices $(i,j)$.
One should think of $\XX$ as postulating the value of the quantity
\begin{equ}[e:intX]
\int_x^y \delta X^i(x,z)\,dX^j(z) \eqdef \XX^{ij}(x,y) \;,
\end{equ}
where we take the right hand side as a \textit{definition} for the left hand side. (And not the other way around!)
The aim of imposing \eref{e:constr} is to ensure that \eref{e:intX} does indeed behave
like an integral when considering it over two adjacent intervals.

\begin{remark}
We see from \eref{e:intX} why $\XX$ can not in general be a continuous function on $S^1\times S^1$,
since there is no a priori reason to impose that $\int_{S^1} \delta X^i(x,z)\,dX^j(z) = 0$.
\end{remark}

Another important notion taken from \cite{Max} is that of a path $Y$ \textit{controlled} by a rough path $X$.
Given a rough path $X$, we say that a \textit{pair} of functions $(Y,Y')$ is a rough path \textit{controlled by $X$} 
if  the ``remainder term'' $R$ given by
\begin{equ}[e:defYR]
R(x,y) \eqdef \delta Y(x,y) - Y'(x)\, \delta X(x,y)\;,
\end{equ}
has better regularity properties than $Y$. Typically, we will assume that $\|Y\|_\alpha < \infty$ and $\|Y'\|_\alpha < \infty$
for some H\"older exponent $\alpha$, but that $\|R\|_\beta < \infty$ for some $\beta > \alpha$.
Here, $R_{s,t} \in \R^m$ and the second term is  a matrix-vector multiplication. 

Note that, a priori, there could be many distinct ``derivative processes'' $Y'$ associated to a given path $Y$. 
However, if $X$ is a typical sample path of Brownian motion and if we impose the bound $\|R\|_\beta < \infty$
for some $\beta > {1\over 2}$, then it was shown in \cite{NateshRough} that there can be at most one derivative process $Y'$ 
associated to every $Y$.

\subsection{Integration of controlled rough paths.}

It turns out that if $(X,\XX)$ is a rough path taking values in $\R^n$ and $Y$ is a path controlled by $X$ that also takes values in $\R^n$, 
then one can give a natural 
meaning to the expression $\int \scal{Y_t , dX_t}$, provided that $X$ and $Y$ are sufficiently regular. 
The approximation $Y_t \approx Y_s + Y'_s \,\delta X_{s,t}$ suggested  by
\eref{e:defYR} shows  that it is reasonable to define the integral as the following limit of ``second-order Riemann sums'':
\begin{equ}[e:defintfinal]
\iint \scal{Y(x) , dX(x)} = \lim_{|\CP| \to 0} \sum_{[x,y] \in \CP} \bigl(\scal{Y(x), \delta X(x,y)} + \tr Y'(x) \,\XX(x,y)\bigr)\;,
\end{equ}
where $\CP$ denotes a partition of the integration interval, and $|\CP|$ is the length of its longest element.

With these notations at hand, we quote the following result, which is a slight reformulation of \cite[Prop~1]{Max}:

\begin{theorem}\label{theo:integral}
Let $(X,\XX)$ satisfy \eref{e:constr} and let $(Y,Y')$ be a rough path controlled by $X$ with a remainder $R$
given by \eref{e:defYR}.
Assume furthermore that
\begin{equ}[e:boundHolderX]
\|X\|_\alpha + \|\XX\|_\beta + \|Y'\|_{\bar \alpha} + \|R\|_{\bar \beta} < \infty\;,
\end{equ}
for some exponents $\alpha, \bar \alpha, \beta, \bar \beta > 0$. Then, provided that 
$\alpha + \bar \beta > 1$ and $\bar \alpha +  \beta > 1$, the compensated 
Riemann sum in \eref{e:defintfinal} converges. Furthermore, one has the bound
\begin{equ}[e:boundInt]
\Bigl|\iint_x^y \scal{\delta Y(x,z), dX(z)} - \tr Y'(x)\, \XX(x,y)\Bigr| \lesssim |y-x|^{\gamma}
\bigl(\|X\|_\alpha \|R\|_{\bar \beta} + \|\XX\|_{\beta}\|Y'\|_{\bar \alpha}\bigr)
\end{equ}
with $\gamma = (\alpha + \bar \beta)\wedge (\bar \alpha +  \beta)$, 
for some proportionality constant depending only on the dimensions of the quantities involved
and the values of the exponents.
\end{theorem}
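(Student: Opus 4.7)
The plan is to establish convergence of the compensated Riemann sum in \eref{e:defintfinal} by a sewing-type argument that controls the effect of refining a partition, and then to read the quantitative bound \eref{e:boundInt} off the same computation. Write
\begin{equ}
S(\CP;x,y) = \sum_{[a,b]\in\CP}\bigl(\scal{Y(a),\delta X(a,b)}+\tr Y'(a)\,\XX(a,b)\bigr)\;,
\end{equ}
for a partition $\CP$ of $[x,y]$. The first step is to compute the ``local defect'' one incurs by refining a single interval $[a,b]$ of $\CP$ by adding a midpoint $m$. Using $\delta X(a,b)=\delta X(a,m)+\delta X(m,b)$, together with the relation \eref{e:constr} in the form $\XX(a,m)+\XX(m,b)-\XX(a,b)=-\delta X(a,m)\,\delta X(m,b)$, the candidate integrand reshuffles so that the only surviving terms are
\begin{equ}
D(a,m,b) \;=\;\scal{R(a,m),\delta X(m,b)}\;+\;\tr\bigl(Y'(m)-Y'(a)\bigr)\,\XX(m,b)\;,
\end{equ}
because the a priori leading contribution $\scal{Y'(a)\delta X(a,m),\delta X(m,b)}$ cancels exactly against $-\tr Y'(a)\,\delta X(a,m)\,\delta X(m,b)^{T}$ thanks to the Chen-type relation \eref{e:constr}. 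This algebraic cancellation is the heart of the matter and the reason why $\XX$ must be postulated as part of the data.

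From the regularity assumptions \eref{e:boundHolderX} I immediately obtain
\begin{equ}
|D(a,m,b)| \;\lesssim\; \|R\|_{\bar\beta}\|X\|_{\alpha}\,|m-a|^{\bar\beta}|b-m|^{\alpha} \;+\;\|Y'\|_{\bar\alpha}\|\XX\|_{\beta}\,|m-a|^{\bar\alpha}|b-m|^{\beta}\;.
\end{equ}
Setting $\gamma = (\alpha+\bar\beta)\wedge(\bar\alpha+\beta)>1$, this gives $|D(a,m,b)|\lesssim K\,|b-a|^{\gamma}$ with $K=\|R\|_{\bar\beta}\|X\|_{\alpha}+\|Y'\|_{\bar\alpha}\|\XX\|_{\beta}$. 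The next step is the standard telescoping: any refinement $\CP'$ of $\CP$ can be reached by successively removing interior points, and at each removal the change in $S$ is a single $D$-defect. Summing and using that $\gamma>1$ forces $\sum|b-a|^{\gamma}\lesssim |\CP|^{\gamma-1}|y-x|\to 0$ as the mesh shrinks; by the usual two-partition trick (compare $\CP_{1},\CP_{2}$ through their common refinement $\CP_{1}\cup\CP_{2}$) the sums form a Cauchy net, so $\iint_{x}^{y}\scal{Y(z),dX(z)}$ is well defined and independent of the partition sequence.

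For the quantitative estimate \eref{e:boundInt}, I subtract the one-interval approximation $\scal{Y(x),\delta X(x,y)}+\tr Y'(x)\,\XX(x,y)$ corresponding to the trivial partition $\CP_{0}=\{[x,y]\}$. Writing the difference as a telescoping sum of the defects $D$ generated along a sequence of refinements that converge to the limit, and summing the geometric series in $|b-a|^{\gamma}$ with $\gamma>1$, yields
\begin{equ}
\Bigl|\iint_{x}^{y}\scal{Y(z),dX(z)} - \scal{Y(x),\delta X(x,y)} - \tr Y'(x)\,\XX(x,y)\Bigr|\lesssim K\,|y-x|^{\gamma}\;.
\end{equ}
Since $\scal{\delta Y(x,z),dX(z)}=\scal{Y(z),dX(z)}-\scal{Y(x),dX(z)}$ and $\int_{x}^{y}dX(z)=\delta X(x,y)$, this is exactly \eref{e:boundInt} after rearrangement.

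The genuinely delicate step is the algebraic cancellation in the defect $D(a,m,b)$, which is what turns an a priori first-order error $|b-a|^{\alpha+\bar\alpha}$ (typically $\le 1$) into the super-linear error $|b-a|^{\gamma}$ that is summable: everything else is a routine sewing argument in the spirit of Young and of \cite{Max}. I would also note that continuity of the data in the norms appearing in \eref{e:boundHolderX} gives continuity of $(X,\XX,Y,Y')\mapsto \iint\scal{Y,dX}$ for free, since the defects $D$ depend bilinearly on $(X,\XX)$ and $(Y,Y',R)$ and the telescoping bound is uniform on bounded sets.
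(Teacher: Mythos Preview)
The paper does not give its own proof of this theorem: it is explicitly quoted as ``a slight reformulation of \cite[Prop~1]{Max}'', i.e.\ Gubinelli's sewing construction of the controlled rough integral. Your argument is precisely that construction --- compute the defect $D(a,m,b)$ arising from inserting a point, use Chen's relation \eref{e:constr} together with $\delta Y = Y'\,\delta X + R$ to cancel the leading term, and then sew --- so there is nothing to compare: you have supplied the standard proof that the paper outsources to the reference.

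One small imprecision is worth flagging. In your telescoping step you write $\sum |b-a|^\gamma \lesssim |\CP|^{\gamma-1}|y-x|$ and claim this controls the accumulated defect. That inequality is true for a \emph{fixed} partition, but when you remove points one by one the adjacent intervals merge and grow, so the defects you sum are not indexed by the intervals of the fine partition. The clean way to close this (and to get the sharp constant depending only on $\gamma$) is Young's maximal-inequality trick: in any partition of $[x,y]$ into $n\ge 2$ intervals there exist two adjacent intervals whose union has length at most $2|y-x|/n$, so removing their common endpoint costs at most $C\,K\,(|y-x|/n)^\gamma$; iterating and summing $\sum_{n\ge 2} n^{-\gamma} < \infty$ gives both convergence and the bound \eref{e:boundInt}. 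Equivalently, one may restrict to dyadic refinements of the trivial partition, where the geometric series is immediate, and then pass to arbitrary partitions via the common refinement. With this standard fix your proof is complete.
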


Actually, one has an even stronger statement. Let $\CC^{\alpha,\beta} = \CC^\alpha \oplus \CC_2^\beta$ be the
space of integrators $(X,\XX)$ and let $\CY$ be the closed subset of $\CC^{\alpha,\beta} \oplus \CC^{\bar \alpha,\bar \beta}$
(with elements of $\CY$ written as $(X,\XX,Y',R)$) 
defined by the algebraic relations \eref{e:constr} and \eref{e:defYR}, where \eref{e:defYR} is interpreted as stating that
there exists a function of \textit{one} variable $Y$ such that \eref{e:defYR} holds for all pairs $(x,y)$.
Let furthermore $\CY_g \subset \CY$ be the set defined by the additional constraint
\begin{equ}[e:geometric]
\XX^{ij}(x,y) + \XX^{ji}(x,y) = \delta X^i(x,y)\,\delta X^j(x,y)\;.
\end{equ}
(Note that this constraint is automatically satisfied if $\XX$ is given by the left hand side of \eref{e:intX} for some smooth $X$.)
Then, one has:

\begin{proposition}\label{prop:continuous}
The set $\CY_g$ is dense in $\CY$.
Furthermore, provided that $\bar \alpha \le \alpha$, $\bar \beta \le \beta$, and $(\alpha + \bar \beta)\wedge (\bar \alpha +  \beta) > 1$,
the map 
\begin{equ}[e:integrationMap]
\CI \colon (X,\XX,Y, Y') \mapsto  \Bigl(X, \XX, \int_0^\cdot \scal{\delta Y(0,z), dX(z)}, \delta Y(0,\cdot)\Bigr)\;,
\end{equ}
defined on smooth elements of $\CY_g$, extends uniquely to the continuous map 
$\hat \CI\colon \CY_g \to \CY_g$ obtained by replacing the Riemann integral by $\iint$ in the above expression. 

Furthermore, $\hat \CI$ is
uniformly Lipschitz continuous on bounded sets of $\CY_g$ under the natural norm
\begin{equ}
\|X,\XX,Y', R^Y\| = \|Y'\|_{\bar\alpha} + \|R^Y\|_{\bar\beta} + \|X\|_\alpha + \|\XX\|_\beta\;,
\end{equ}
and it is given by replacing $\int$ by $\iint$ in \eref{e:integrationMap}.
\end{proposition}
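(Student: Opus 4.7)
The proof splits into three parts: density of $\CY_g$ in $\CY$, construction of $\hat\CI$, and its Lipschitz continuity on bounded sets.

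For density, the plan is to approximate an arbitrary $(X,\XX,Y,Y')\in\CY$ by smooth geometric elements. Given a standard mollifier $\rho_\eps$, I would set $X_\eps = \rho_\eps * X$ and $Y'_\eps = \rho_\eps * Y'$, take $\XX_\eps$ to be the iterated Riemann integral of $X_\eps$ (which is smooth and satisfies the Chen relation \eref{e:constr} and the geometric condition \eref{e:geometric} automatically), and set $Y_\eps(x) = Y(0) + \int_0^x Y'_\eps(z)\,dX_\eps(z) + S_\eps(x)$, where $S_\eps$ is a smooth approximation of $R(0,\cdot)$ arranged so that \eref{e:defYR} holds identically. Convergence $X_\eps\to X$ in $\CC^\alpha$ and $Y'_\eps\to Y'$ in $\CC^{\bar\alpha}$ follows from standard mollifier estimates; the delicate point is the convergence $\XX_\eps\to\XX$, for which one uses that \eref{e:geometric} determines the symmetric part of $\XX$ from $X$ alone, reducing the question to the antisymmetric (area) part. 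Working with convergence in $\CC_2^{\beta'}$ for every $\beta'<\beta$ suffices and follows by interpolation between uniform convergence and the uniform H\"older bound on $\XX$ and $\XX_\eps$, combined with the fact that $\CC_2^\beta$ is the separable closure of smooth functions.

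For the construction of $\hat\CI$, given $(X,\XX,Y,Y')\in\CY_g$ I define
\begin{equ}
Z(x) \eqdef \iint_0^x \scal{\delta Y(0,z),\,dX(z)}\;,\qquad Z'(x) \eqdef Y(x)-Y(0)\;,
\end{equ}
with the rough integral interpreted as the compensated Riemann sum \eref{e:defintfinal}. By Theorem~\ref{theo:integral}, the condition $(\alpha+\bar\beta)\wedge(\bar\alpha+\beta)>1$ guarantees convergence. The output Chen relation and geometric constraint are inherited trivially from the input since the component $(X,\XX)$ is unchanged, so the only nontrivial verification is that the remainder
\begin{equ}
R^Z(x,y) = \delta Z(x,y) - Z'(x)\,\delta X(x,y) = \iint_x^y \scal{\delta Y(x,z),\,dX(z)}\;,
\end{equ}
belongs to $\CC_2^{\bar\beta}$. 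The bound \eref{e:boundInt} gives $R^Z(x,y) = \tr Y'(x)\,\XX(x,y) + E(x,y)$ with $E\in\CC_2^\gamma$ for some $\gamma>1$, so the main term lies in $\CC_2^\beta\subset\CC_2^{\bar\beta}$ (using $\bar\beta\le\beta$) and the error is strictly more regular than required. Hence $\hat\CI(\CY_g)\subset\CY_g$.

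For Lipschitz continuity on bounded sets, the plan is to exploit the bilinear dependence of \eref{e:defintfinal} on the pairs $(Y,Y')$ and $(X,\XX)$. Writing $\hat\CI(v)-\hat\CI(\tilde v)$ as a telescoping sum in which each term carries exactly one factor from $v-\tilde v$ and the remaining factors from either $v$ or $\tilde v$, and applying \eref{e:boundInt} piece by piece, one obtains an estimate of the form
\begin{equ}
\bigl\|\hat\CI(v)-\hat\CI(\tilde v)\bigr\| \lesssim \bigl(1+\|v\|+\|\tilde v\|\bigr)\,\|v-\tilde v\|\;,
\end{equ}
in the natural norm of $\CY_g$. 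Uniqueness of the continuous extension from smooth elements then follows from density. I expect the main obstacle to lie in the density statement, specifically in arranging simultaneous convergence of all four components in their respective H\"older topologies while maintaining all three algebraic constraints \eref{e:constr}, \eref{e:defYR}, and \eref{e:geometric}; the analytic heavy lifting, by contrast, is already packaged in Theorem~\ref{theo:integral}.
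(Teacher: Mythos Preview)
Your three-part structure is right, and the verification that $\hat\CI$ lands in $\CY_g$ via \eref{e:boundInt} is fine. However, both the density step and the Lipschitz step contain genuine gaps that the paper's proof handles by deferring to outside sources precisely because they are not as routine as your sketch suggests.

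For density, the mollification scheme fails at the area process. Defining $\XX_\eps$ as the iterated Riemann integral of $X_\eps=\rho_\eps*X$ does \emph{not} converge to a prescribed $\XX$, even a geometric one. A clean counterexample: take $X$ smooth and set $\XX^{ij}(x,y)=\int_x^y\delta X^i(x,z)\,dX^j(z)+G^{ij}(y-x)$ for a fixed nonzero antisymmetric matrix $G$. This pair satisfies both \eref{e:constr} and \eref{e:geometric}, yet the canonical lift of $X_\eps$ converges to the unperturbed area, missing $G$ entirely. Your interpolation argument for the antisymmetric part presupposes uniform convergence of $\XX_\eps$ to $\XX$, which is exactly what fails. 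The result the paper cites as \cite{Geometric} uses a genuinely different construction (piecewise-geodesic approximation, or equivalently correcting the area by inserting small loops); it is not a mollification statement, and your remark that $\CC_2^\beta$ is a closure of smooth functions does not help, since a smooth approximant to $\XX$ alone need not be the canonical area of anything.

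For Lipschitz continuity, the paper explicitly warns that the result is \emph{not} a corollary of \eref{e:boundInt}, so ``applying \eref{e:boundInt} piece by piece'' after a telescope cannot work as written. Concretely, in the decomposition $\iint Y\,dX-\iint\tilde Y\,d\tilde X=\bigl(\iint Y\,dX-\iint\tilde Y\,dX\bigr)+\bigl(\iint\tilde Y\,dX-\iint\tilde Y\,d\tilde X\bigr)$, the path $\tilde Y$ is controlled by $\tilde X$, not by $X$, so the first bracket is not a rough integral of a controlled path; and the pair $(X-\tilde X,\XX-\tilde\XX)$ violates \eref{e:constr}, so the second bracket is not a rough integral against a rough path. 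Thus \eref{e:boundInt} applies to neither piece. The correct route, which is what \cite[Theorem~1]{Max} does and what the paper invokes, is to return to the compensated Riemann sums \eref{e:defintfinal} and run the sewing estimate directly on the \emph{difference} of germs $\Xi(x,y)-\tilde\Xi(x,y)$, exploiting multilinearity at that level rather than at the level of the final bound. Your closing remark that ``the analytic heavy lifting is already packaged in Theorem~\ref{theo:integral}'' is therefore the point on which the paper most directly disagrees.
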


\begin{proof}
The density of $\CY_g$ in $\CY$ was shown for example in \cite{Geometric}.
For the uniform Lipschitz continuity of $\hat \CI$ on bounded sets, it  
suffices to retrace the proof of \cite[Theorem~1]{Max}. Since $\CY_g$ is dense in $\CY$, the uniqueness
of the extension follows.

Note that this is \textit{not} a corollary of Theorem~\ref{theo:integral}. Indeed, the bound \eref{e:boundInt} only
holds on the nonlinear space $\CY$ so that it is not possible to simply exploit the bilinearity of the integral,
even though the bound obtained in \cite{Max} shows that it behaves ``as if'' the bound \eref{e:boundInt} was valid on
all of $\CC^{\alpha,\beta} \oplus \CC^{\bar \alpha,\bar \beta}$.
\end{proof}

\begin{remark}
We made a slight abuse of notation in \eref{e:integrationMap} in order to improve the legibility of the expressions,
by identifying on both sides of the equation elements $(X,\XX,Y',R)$ with the corresponding element 
$(X,\XX,Y,Y')$, where $Y$ is the
(unique up to constants) function such that \eref{e:defYR} holds. We also slightly jumbled the dimensions of the spaces
(if $X$ is $n$-dimensional then $Y$ should also be so, but the integral is only one-dimensional), but the meaning
should be obvious.
\end{remark}

\begin{remark}
The bound \eref{e:boundInt} does behave in a very natural way under dilatations.
Indeed, the integral is invariant under the transformation 
\begin{equ}[e:dilat]
(Y,X,\XX) \mapsto (\lambda^{-1}Y, \lambda X, \lambda^2 \XX)\;.
\end{equ}
The same is true for right hand side of \eref{e:boundInt}, since under this dilatation, we also have $(Y', R) \mapsto (\lambda^{-2} Y', \lambda^{-1}R)$. 
\end{remark}

\begin{remark}
It is straightforward to check that, if $(Y,Y')$ is a rough path controlled by $X$, then so is $(fY, fY')$, for any 
smooth function $f$. As a consequence, if $(X,\XX)$ and $(Y,Y')$ satisfy the bounds \eref{e:boundHolderX},
then Theorem~\ref{theo:integral} allows to make sense of the product $Y(x) {dX\over dx}$ as a distribution,
even in situations when $\alpha < {1\over 2}$, where such a product would not be well-defined in the classical sense.
\end{remark}

\begin{remark}
One could argue that it would have been natural to impose the condition \eref{e:geometric} from the beginning.
The reasons for not doing so are that the integral $\iint$ is well-defined without it and that non-geometric
situations can arise naturally in the context of numerical approximations, see for example \cite{Jan,JanHendrik}.
\end{remark}

It is clear from the definition \eref{e:defintfinal} that if $X$ is smooth and $\XX$ is given by \eref{e:intX}
(reading the definition from right to left), then $\iint$ coincides with the usual Riemann integral.
It is therefore instructive to see what happens if $X$ is a smooth function but one sets
\begin{equ}
\XX^{ij}(x,y) = \int_x^y \delta X^i(x,z)\,dX^j(z) + \int_x^y F^{ij}(z)\,dz \;,
\end{equ}
for some continuous function $F$. 
It is then clear that \eref{e:constr} is still satisfied and that $\|\XX\|_{\beta} < \infty$
provided that $\beta \le 1$. Even the additional ``geometric'' constraint \eref{e:geometric} is satisfied if $F$ is
antisymmetric. 
Given a smooth function $Y$, we can then choose for $Y'$ an arbitrary smooth function
and the remainder term $R$ given by \eref{e:defYR} will still satisfy $\|R\|_{\bar\beta} < \infty$ for $\bar\beta \le 1$.
It is now straightforward to verify that the rough integral is well-posed and
equals
\begin{equ}[e:corint]
\iint_x^y \scal{Y(z)\,dX(z)} = \int_x^y \scal[B]{Y(z),{dX \over dz}(z)}\, dz + \int_x^y \tr Y'(z) F(z)\,dz\;,
\end{equ}
where the right hand side is a usual Riemann integral. See for example the original article 
\cite[Example~1.1.1]{Lyons} for a more detailed explanation on
how to interpret this apparent discrepancy.

\subsection{Heat kernel bounds}

In this section, we obtain a number of sharp bounds on the interplay between the heat kernel on $S^1$ and
rough path valued functions. The reader who is interested in getting quickly to the heart of the matter can easily
skip the proofs of these results, since they are not particularly formative and mostly consist of relatively
straightforward estimates. However, Proposition~\ref{prop:intScale} is one of the most important ingredients
of the next section, so we prefer not to relegate these bounds to a mere appendix.
Several of these bounds are close in spirit to those obtained in \cite{Hendrik,BurgersRough}, but both the norms employed here 
and the precise form of the bounds required for our arguments are quite different.

The following quantity will be very often used in the sequel, so we give it a name. Given
a path $Y$ controlled by a rough path $(X,\XX)$ and given $\kappa > 0$, we define the quantity
\begin{equ}
\KK^\kappa(Y,X) \eqdef \|Y\|_{{1\over 2}-\kappa} \|X\|_{{1\over 2}-\kappa}  + \|\XX\|_{1-2\kappa}\|Y'\|_{\CC^{3\kappa}} + \|R^Y\|_{{1\over 2}+2\kappa}\|X\|_{{1\over 2}-\kappa}\;.
\end{equ}
We also define, for $\CC^1$ functions $f\colon \R \to \R$, the norm
\begin{equ}[e:normf]
\$f\$ := \sum_{n \in \Z} \sqrt{1+|n|}\sup_{0 \le t \le 1} \bigl(|f(n+t)| + |f'(n+t)|\bigr) < \infty\;.
\end{equ}

We then have the following bound, which can be viewed as 
a refinement of \cite[Prop.~2.5]{BurgersRough}:

\begin{proposition}\label{prop:intScale}
Let $f \in \CC^1(\R,\R)$ be such that $\$f\$ < \infty$, let $\lambda \ge 1$, and let $\kappa \in (0,{1\over 2})$. Then, the bound
\begin{equs}
\Bigl|\iint_{S^1} f(\lambda x) Y(x)\,dX(x)\Bigr| &\lesssim \lambda^{\kappa - {1\over 2}} |Y(0)| \|X\|_{{1\over 2}-\kappa} + \lambda^{2\kappa-1} \KK^\kappa(Y,X)\;,
\end{equs}
holds uniformly for all $\lambda > 1$, with a proportionality  constant depending only on $\$f\$$.
\end{proposition}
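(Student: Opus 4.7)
The plan is to exploit the scale-$1/\lambda$ oscillations of $f(\lambda \cdot)$ by partitioning $[0,2\pi]$ into intervals $J_n = [n/\lambda, (n+1)/\lambda]$ for $n = 0, \ldots, N-1$ with $N \asymp \lambda$, and to apply the local rough-integration estimate of Theorem~\ref{theo:integral} on each piece. Write $F_n = \sup_{[n,n+1]}(|f|+|f'|)$, so that $\sum_n F_n \sqrt{1+n} \le \$f\$$ by the definition of $\$\cdot\$$. The integrand $g(x) := f(\lambda x) Y(x)$ is a rough path controlled by $X$ with derivative process $g'(x) = f(\lambda x) Y'(x)$ and remainder
\[
R^g(x,y) = (f(\lambda y) - f(\lambda x)) Y'(x)\,\delta X(x,y) + f(\lambda y) R^Y(x,y) + (f(\lambda y) - f(\lambda x)) Y(x).
\]
With exponents $\alpha = \tfrac12-\kappa$, $\beta = 1 - 2\kappa$, $\bar\alpha = 3\kappa$, $\bar\beta = \tfrac12 + 2\kappa$ (for which $\gamma = 1 + \kappa$), Theorem~\ref{theo:integral} applied on $J_n$ gives
\[
\iint_{J_n} g \,dX = f(n)\,Y(n/\lambda)\,\delta X(J_n) + f(n)\,Y'(n/\lambda)\,\XX(J_n) + r_n,
\]
with $|r_n|\lesssim |J_n|^{1+\kappa}\bigl(\|X\|_{{1\over 2}-\kappa}\|R^g\|_{{1\over 2}+2\kappa,J_n} + \|\XX\|_{1-2\kappa}\|g'\|_{\CC^{3\kappa},J_n}\bigr)$.

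The main technical step is to estimate the local H\"older norms of $g'$ and $R^g$ on $J_n$. Since $\lambda|y-x|\le 1$ on $J_n$, the interpolation $\lambda|y-x| \le \lambda^a |y-x|^a$ valid for any $a \in [0,1]$ converts each ``extra factor'' of $\lambda|y-x|$ coming from $f(\lambda y) - f(\lambda x)$ into a fractional power $|y-x|^a$ at the cost of $\lambda^a$. This yields $\|g'\|_{\CC^{3\kappa},J_n} \lesssim F_n \lambda^{3\kappa}\|Y'\|_{\CC^{3\kappa}}$. For $R^g$, the first term is rewritten using $Y'(x)\delta X(x,y) = \delta Y(x,y) - R^Y(x,y)$, thereby avoiding the product $\|Y'\|_\infty \|X\|_{{1\over 2}-\kappa}$, which is not controlled by $\KK^\kappa$. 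The critical third term is split as
\[
(f(\lambda y) - f(\lambda x))Y(0) + (f(\lambda y) - f(\lambda x))(Y(x) - Y(0)),
\]
the first piece contributing $F_n \lambda^{1/2+2\kappa}|Y(0)|$ (via the interpolation with $a = \tfrac12+2\kappa$), and the second using $|Y(x)-Y(0)|\le \|Y\|_{{1\over 2}-\kappa}((n+1)/\lambda)^{{1\over 2}-\kappa}$ to give $F_n \lambda^{3\kappa}(n+1)^{{1\over 2}-\kappa}\|Y\|_{{1\over 2}-\kappa}$.

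Summation in $n$ is where $\$f\$$ plays its role: one has $\sum_n F_n \le \$f\$$ and $\sum_n F_n(n+1)^{{1\over 2}-\kappa} \le \sum_n F_n\sqrt{n+1} \le \$f\$$. Multiplying by $|J_n|^{1+\kappa} = \lambda^{-1-\kappa}$ produces final powers of $\lambda$ equal to either $\lambda^{\kappa-1/2}$ (from the $|Y(0)|$ piece, via $-1-\kappa+\tfrac12+2\kappa$) or $\lambda^{2\kappa-1}$ (from the $\KK^\kappa$ pieces, via $-1-\kappa+3\kappa$), so that $\sum_n|r_n|\lesssim \$f\$\bigl(\lambda^{\kappa-1/2}|Y(0)|\|X\|_{{1\over 2}-\kappa} + \lambda^{2\kappa-1}\KK^\kappa(Y,X)\bigr)$, using $\lambda^{-1-\kappa}\le \lambda^{2\kappa-1}$ where needed. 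The main terms are treated analogously: $\sum_n f(n)Y'(n/\lambda)\XX(J_n)$ is controlled directly by $\$f\$ \lambda^{2\kappa-1}\|\XX\|_{1-2\kappa}\|Y'\|_{\CC^{3\kappa}}$, while for $\sum_n f(n)Y(n/\lambda)\delta X(J_n)$ one splits $Y(n/\lambda) = Y(0) + (Y(n/\lambda)-Y(0))$, producing exactly the two contributions appearing in the claimed bound. The main obstacle is the careful bookkeeping of $\|R^g\|_{{1\over 2}+2\kappa,J_n}$, where the splitting $Y = Y(0) + (Y - Y(0))$ is essential in order to have $|Y(0)|$ enter only through the $\lambda^{\kappa-1/2}$ term.
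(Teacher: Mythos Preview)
Your proof is correct and follows essentially the same approach as the paper: partition $[0,2\pi]$ into $\sim\lambda$ intervals of length $\sim 1/\lambda$, apply the rough-integral estimate of Theorem~\ref{theo:integral} on each piece, and control the local norms of $g = f(\lambda\cdot)Y$ and its remainder $R^g$ using the splitting $Y(x) = Y(0) + (Y(x)-Y(0))$ together with the summability $\sum_n F_n\sqrt{1+n}\le\$f\$$. The paper packages the local bounds slightly differently (stating them as \eref{e:bYf} and then invoking \eref{e:boundYinf}), but the key mechanism---extracting $|Y(0)|$ with the better power $\lambda^{\kappa-1/2}$ while the $(Y-Y(0))$ piece picks up the factor $(1+n)^{1/2-\kappa}$ absorbed by $\$f\$$---is identical, as is your observation that one must rewrite $Y'\,\delta X = \delta Y - R^Y$ to stay within $\KK^\kappa$.
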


\begin{remark}
One very important feature of this bound is that the first term on the right hand side only depends on $|Y(0)|$ and not on 
$\|Y\|_\infty$ as in \cite{BurgersRough}. This is achieved thanks to the control provided by the norm $\$\cdot\$$, which ensures that
$f$ decays sufficiently fast at infinity. One place where this plays a crucial role is the proof of Corollary~\ref{cor:heatDiff} below.
\end{remark}

\begin{proof}
We use the same technique of proof as in \cite[Prop.~2.5]{BurgersRough}, but we are more careful with our bounds
and exploit the knowledge from \eref{e:normf} that $f$ decays relatively fast at infinity.
To shorten our notations, we set $Y_f(x) = f(\lambda x) Y(x)$ and $Y_f'(x) = f(\lambda x)Y'(x)$, and we also set
\begin{equ}
a_k = \sup_{0 \le t \le 1} \bigl(|f(k+t)| + |f'(k+t)|\bigr)\;.
\end{equ}
Setting $N = \lfloor 2\pi\lambda\rfloor$, $\delta x = 2\pi/N$ and writing $x_k = k\,\delta x$, we have
\begin{equ}
\Bigl|\iint_{S^1} f(\lambda x) Y(x)\,dX(x)\Bigr| \le \sum_{k=0}^{N-1}\Bigl|\iint_{x_k}^{x_{k+1}} Y_f(x)\,dX(x)\Bigr|
\eqdef \sum_{k=0}^{N-1} T_k\;.
\end{equ}
Note furthermore that, for $x \in [x_k, x_{k+1}]$, one has $\lambda x \in [k,k+2]$ so that, for every $\alpha \in (0,1]$,
one has the bounds
\minilab{e:bYf}
\begin{equs}
\|Y_f\|_{\alpha,k} &\lesssim (a_k+a_{k+1}) \bigl( \|Y\|_\alpha + \lambda^\alpha \|Y\|_\infty\bigr) \label{e:bYfa}\;,\\
\|Y_f'\|_{\alpha,k} &\lesssim (a_k+a_{k+1}) \bigl( \|Y'\|_\alpha + \lambda^\alpha \|Y'\|_\infty\bigr)\label{e:bYfb}\;, \\
\|R^{Y_f}\|_{\alpha,k} &\lesssim (a_k+a_{k+1}) \bigl( \|R^Y\|_{\alpha} + \lambda^{\alpha} \|Y\|_\infty\bigr)\;,\label{e:bYfc}
\end{equs}
where we denoted by $\|\cdot\|_{\alpha,k}$ the corresponding H\"older seminorm restricted to the interval $[x_k,x_{k+1}]$.

It then follows from Theorem~\ref{theo:integral} that
\begin{equ}[e:exprTk]
T_k = f(\lambda x_k) Y(x_k)\,\delta X_{k} + f(\lambda x_k) Y'(x_k)\,\XX_{k} + R_k\;,
\end{equ}
where the remainder term $R_k$ is bounded by
\begin{equ}[e:boundRk]
|R_k| \lesssim \lambda^{-\kappa-1} \bigl(\|Y_f'\|_{3\kappa,k} \|\XX\|_{1-2\kappa} + \|R^{Y_f}\|_{{1\over 2}+2\kappa,k}\|X\|_{{1\over 2}-\kappa}\bigr)\;.
\end{equ}
At this point, we note that the supremum norm of $Y$ over the interval $[x_k,x_{k+1}]$ is bounded by
\begin{equ}[e:boundYinf]
\|Y\|_{\infty,k} \lesssim |Y(0)| + \|Y\|_\alpha  \lambda^{-\alpha} (1+|k|)^\alpha\;,
\end{equ}
for any $\alpha \in (0,1]$. Using this identity with $\alpha = {1\over 2}-\kappa$ and 
the fact that $(1+|k|)^{1/2} a_k$ is summable by assumption, we can
combine \eref{e:boundRk} with \eref{e:bYf}, so that
\begin{equs}
\sum_{k=0}^{N-1} |R_k| &\lesssim \lambda^{\kappa - {1\over 2}} |Y(0)|\,\|X\|_{{1\over 2}-\kappa} + \lambda^{2\kappa - 1}\bigl( \|Y'\|_\infty \|\XX\|_{1-2\kappa}+ \|Y\|_{{1\over 2}-\kappa} \|X\|_{{1\over 2}-\kappa} \bigr)\\
&\quad + \lambda^{-\kappa-1} \bigl(\|\XX\|_{1-2\kappa}\|Y'\|_{3\kappa} + \|R^Y\|_{{1\over 2}+2\kappa}\|X\|_{{1\over 2}-\kappa}\bigr)\;,
\end{equs}
which is actually slightly better than the desired bound. In order to conclude, it remains to bound the other two terms appearing in
the right hand side of \eref{e:exprTk}. To do so, we use again \eref{e:boundYinf} to obtain
\begin{equs}
|f(\lambda x_k) &Y(x_k)\,\delta X_{k} + f(\lambda x_k) Y'(x_k)\,\XX_{k}| \lesssim (a_k+a_{k+1}) \lambda^{\kappa - {1\over 2}} |Y(0)|\,\|X\|_{{1\over 2}-\kappa} \\
& +  (a_k+a_{k+1}) \lambda^{2\kappa - 1} \bigl(|k|^{{1\over 2}-\kappa}\|Y\|_{{1\over 2}-\kappa} \|X\|_{{1\over 2}-\kappa} + 
\|Y'\|_\infty \|\XX\|_{1-2\kappa}\bigr)\;,
\end{equs}
and the claim follows at once.
\end{proof}

\begin{remark}
We think of $\kappa$ as being a small parameter. As a consequence, this bound is especially strong in the case
$Y(0)=0$ (or small), which will play a crucial role in the sequel.
\end{remark}

\begin{corollary}\label{cor:intKernel}
Let $p_t$ denote the heat kernel on $S^1$ and let $p_t^{(k)}$ be its $k$th (spatial)
derivative. Then, the bound 
\begin{equ}
\Bigl|\int_{S^1} p_t^{(k)}(x-y)\,dX(y)\Bigr| \lesssim t^{-{1\over 4} - {k + \kappa\over 2}}\|X\|_{{1\over 2}-\kappa}\;,
\end{equ}
holds uniformly over all $x$.
\end{corollary}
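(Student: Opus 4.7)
The plan is to apply Proposition~\ref{prop:intScale} in the degenerate case $Y \equiv 1$ with derivative process $Y' \equiv 0$. Since both $Y$ and $Y'$ are constant, the seminorms $\|Y\|_{{1\over 2}-\kappa}$, $\|Y'\|_{\CC^{3\kappa}}$ and $\|R^Y\|_{{1\over 2}+2\kappa}$ all vanish, so $\KK^\kappa(Y,X)=0$ while $|Y(0)|=1$. In this case the proposition reduces to the clean bound
\begin{equ}
\Bigl|\iint_{S^1} f(\lambda y)\,dX(y)\Bigr| \lesssim \lambda^{\kappa - {1\over 2}}\,\|X\|_{{1\over 2}-\kappa}\;,
\end{equ}
valid for every $\lambda \ge 1$ and every $f\in \CC^1(\R,\R)$ with $\$f\$<\infty$, the proportionality constant depending only on $\$f\$$.

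To bring the left-hand side of the corollary into this form, I would first translate the integration variable via $y\mapsto y+x$, which preserves $\|X\|_{{1\over 2}-\kappa}$, and then parametrise $S^1$ as $[-\pi,\pi)$ so that the peak of the heat kernel sits at the origin of the integration interval. Setting $\lambda = 1/\sqrt t$ and decomposing the periodic heat kernel on $[-\pi,\pi)$ as
\begin{equ}
p_t^{(k)}(z) = \lambda^{k+1}\,g^{(k)}(\lambda z) + E_t(z)\;,\qquad g(u) = {1\over \sqrt{4\pi}}\,e^{-u^2/4}\;,
\end{equ}
with $E_t$ collecting the contributions of the other periodic images of the Gaussian derivative, standard estimates give $\|E_t\|_\infty + \|E_t'\|_\infty \lesssim e^{-c/t}$ uniformly for some $c>0$, and the corresponding contribution to the integral is negligible (dominated by any power of $t$, by a trivial integration by parts). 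For the main term, I would take $f := (-1)^k g^{(k)}$, which is a \emph{fixed} Schwartz function, so $\$f\$$ is a universal constant independent of $t$. Applying the displayed consequence of the proposition (with the obvious relabelling of the partition indices in its proof to accommodate $[-\pi,\pi)$ in place of $[0,2\pi)$) and multiplying by the prefactor $\lambda^{k+1}=t^{-(k+1)/2}$ produces exactly $t^{-1/4 - (k+\kappa)/2}\|X\|_{{1\over 2}-\kappa}$, as required.

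The essential subtlety---arguably the main obstacle---is the choice of parametrisation of $S^1$. Had one kept the $[0,2\pi)$ convention and written $p_t^{(k)}(x-y) = \lambda^{k+1} f_x(\lambda y)$ for an $x$-dependent profile $f_x$ concentrated near $\lambda y = \lambda x$, the norm $\$f_x\$$---which for a translated Schwartz bump behaves like $\sqrt{1+\lambda x}$---would grow like $\sqrt\lambda$, yielding the strictly weaker bound $t^{-\kappa/2 - k/2}\|X\|_{{1\over 2}-\kappa}$. Centring the Gaussian at the origin of the integration interval is therefore exactly what unlocks the sharp exponent $-{1\over 4}-{k+\kappa\over 2}$.
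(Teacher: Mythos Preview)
Your proof is correct and follows essentially the same route as the paper: apply Proposition~\ref{prop:intScale} with $Y\equiv 1$, $Y'\equiv 0$ (so that $\KK^\kappa(Y,X)=0$) and $\lambda = t^{-1/2}$, using the heat-kernel scaling $p_t^{(k)} \sim t^{-(1+k)/2} f(t^{-1/2}\,\cdot\,)$. The paper compresses your Gaussian-plus-periodic-error decomposition into a single $t$-dependent profile $f_t$ with uniformly bounded $\$\cdot\$$-norm, and it leaves the translation-to-the-origin implicit (the point is only spelled out in the proof of the subsequent Corollary~\ref{cor:heatBasic}); you have made both steps explicit, and your discussion of why the centring is needed to keep $\$f\$$ bounded is exactly the issue that underlies the paper's choice of $f_t$.
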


\begin{proof}
Setting $Y(x) = 1$, this is an immediate consequence of Proposition~\ref{prop:intScale},
using the fact that there exist functions $f_t$ such that, for every $k \ge 0$, 
$\$f_t^{(k)}\$$ is uniformly
bounded for $t \in (0,1]$, and such that 
\begin{equ}
p_t^{(k)}(x) = t^{-{1+k\over 2}} f_t^k(t^{-1/2} x)\;.
\end{equ}
The claim then follows by setting $\lambda = t^{-1/2}$.
\end{proof}

\begin{corollary}\label{cor:heatBasic}
Let $p_t$ denote the heat kernel on $S^1$ and let $p_t^{(k)}$ be its $k$th (spatial)
derivative. Then, the bound 
\begin{equs}
\Bigl|\iint_{S^1} p_t^{(k)}(z-y) Y(y)\,dX(y)\Bigr| \lesssim t^{-{1\over 4} - {k + \kappa\over 2}} |Y(z)| \|X\|_{{1\over 2}-\kappa} + t^{-{k\over 2}-\kappa} \KK^\kappa(Y,X)\;,
\end{equs}
holds uniformly over all $z$.
\end{corollary}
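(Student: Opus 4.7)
The statement follows from a direct application of Proposition \ref{prop:intScale}, exactly in the spirit of the proof of Corollary \ref{cor:intKernel}, with two preliminary manipulations: a translation on the torus to reduce to the case $z=0$, and the scaling rewriting of the heat kernel. The plan is to first perform these reductions and then read off the exponents.

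Translation. Change the integration variable via $y = z + w$ and set $\tilde Y(w) = Y(z+w)$, $\tilde X(w) = X(z+w)$, $\tilde Y'(w) = Y'(z+w)$ and $\tilde R(w_1,w_2) = R^Y(z+w_1,z+w_2)$. Then $\tilde Y$ is again controlled by $\tilde X$ with derivative process $\tilde Y'$ and remainder $\tilde R$, and each of the H\"older seminorms appearing in the definition of $\KK^\kappa$ is invariant under this translation, so that $\KK^\kappa(\tilde Y, \tilde X) = \KK^\kappa(Y,X)$ and $\tilde Y(0) = Y(z)$. The integral on the left-hand side equals $\iint_{S^1} p_t^{(k)}(-w)\,\tilde Y(w)\,d\tilde X(w)$, so it suffices to prove the claimed bound in the special case $z=0$, with $Y,X$ replaced by $\tilde Y, \tilde X$.

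Scaling and application of Proposition \ref{prop:intScale}. As in the proof of Corollary \ref{cor:intKernel}, write $p_t^{(k)}(x) = t^{-(1+k)/2}\, f_t^k(t^{-1/2} x)$ with $\sup_{t\in (0,1]} \$f_t^k\$ < \infty$ (this uses the fact that for $t\le 1$ the periodization of the Gaussian over $2\pi\mathbb Z/t^{1/2}$ consists of well-separated bumps). The reflected function $\tilde f(u) = f_t^k(-u)$ satisfies the same bound on $\$\cdot\$$. Setting $\lambda = t^{-1/2} \ge 1$, the quantity to bound becomes
\begin{equ}
t^{-{1+k\over 2}}\,\Bigl|\iint_{S^1} \tilde f(\lambda w)\,\tilde Y(w)\,d\tilde X(w)\Bigr|\;.
\end{equ}
Proposition \ref{prop:intScale} applied to $\tilde f, \tilde Y, \tilde X$ with this $\lambda$ yields
\begin{equ}
\Bigl|\iint_{S^1} \tilde f(\lambda w)\,\tilde Y(w)\,d\tilde X(w)\Bigr|
\lesssim \lambda^{\kappa-{1\over 2}} |\tilde Y(0)|\,\|\tilde X\|_{{1\over 2}-\kappa} + \lambda^{2\kappa-1}\KK^\kappa(\tilde Y, \tilde X)\;.
\end{equ}
Multiplying by $t^{-(1+k)/2}$ and substituting $\lambda = t^{-1/2}$ gives exponents
$-{1+k\over 2} - {\kappa - 1/2\over 2} = -{1\over 4}-{k+\kappa\over 2}$ and $-{1+k\over 2} - {2\kappa - 1\over 2} = -{k\over 2}-\kappa$,
which is exactly the claim. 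The case $t > 1$ is easy since $p_t^{(k)}$ is then essentially the zeroth Fourier mode plus exponentially small corrections, and all the powers of $t$ on the right-hand side are close to $1$.

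There is no genuine obstacle here: the heavy lifting was done in Proposition \ref{prop:intScale}. The only point to verify carefully is that the translation step really preserves $\KK^\kappa$ (which is immediate from the definitions of the H\"older seminorms of $Y, Y', R^Y$ and $X$) and that this step replaces $|Y(0)|$ in the proposition by the desired $|Y(z)|$. The crucial feature inherited from Proposition \ref{prop:intScale}, namely that the first term on the right only involves a pointwise value of $Y$ rather than $\|Y\|_\infty$, is what makes this bound suitable for the corollaries that follow.
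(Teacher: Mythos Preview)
Your proof is correct and follows exactly the paper's approach: translate the origin to $y=z$ and then apply Proposition~\ref{prop:intScale} via the heat-kernel scaling used in Corollary~\ref{cor:intKernel}. The paper's own proof is a one-liner conveying precisely these two steps, and your exponent computation and remark about translation-invariance of $\KK^\kappa$ spell out the details faithfully.
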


\begin{proof}
This follows from Proposition~\ref{prop:intScale} and the scaling properties of the heat kernel in the same way as Corollary~\ref{cor:intKernel}.
It furthermore suffices to translate the origin to $y = z$.
\end{proof}

\begin{corollary}\label{cor:heatDiff}
Let $p_t$ denote the heat kernel on $S^1$ and let $p_t^{(k)}$ be its $k$th (spatial)
derivative. Then, the bound 
\begin{equs}
\Bigl|\iint_{S^1} p_t^{(k)}(z-y) \bigl(Y(y) &- Y(x)\bigr)\,dX(y)\Bigr| \lesssim t^{-{1\over 4} - {k + \kappa\over 2}} |z-x|^{{1\over 2}-\kappa} \|Y\|_{{1\over 2}-\kappa}\|X\|_{{1\over 2}-\kappa} \\
&\quad + t^{-{k\over 2}-\kappa} \KK^\kappa(Y,X)\;,
\end{equs}
holds uniformly over all $x$ and $z$.
\end{corollary}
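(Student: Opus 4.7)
The strategy is to reduce the statement directly to Corollary~\ref{cor:heatBasic} by centring $Y$ at the point $x$. Define $\tilde Y(y) = Y(y) - Y(x)$. Since $Y(x)$ is merely a constant, the pair $(\tilde Y, Y')$ remains a controlled rough path with respect to $X$ whose derivative process is unchanged and whose remainder
\[
R^{\tilde Y}(u,v) = \delta \tilde Y(u,v) - Y'(u)\,\delta X(u,v) = \delta Y(u,v) - Y'(u)\,\delta X(u,v) = R^Y(u,v)
\]
coincides with that of $(Y,Y')$. H\"older seminorms are insensitive to additive constants, so $\|\tilde Y\|_{{1\over 2}-\kappa} = \|Y\|_{{1\over 2}-\kappa}$, and consequently $\KK^\kappa(\tilde Y, X) = \KK^\kappa(Y, X)$.

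The next step is to apply Corollary~\ref{cor:heatBasic} to $\tilde Y$ with base point $z$, which immediately gives
\[
\Bigl|\iint_{S^1} p_t^{(k)}(z-y)\bigl(Y(y)-Y(x)\bigr)\,dX(y)\Bigr| \lesssim t^{-{1\over 4}-{k+\kappa\over 2}}|\tilde Y(z)|\,\|X\|_{{1\over 2}-\kappa} + t^{-{k\over 2}-\kappa}\KK^\kappa(Y,X)\,.
\]
The elementary H\"older bound $|\tilde Y(z)| = |Y(z)-Y(x)| \le |z-x|^{{1\over 2}-\kappa}\|Y\|_{{1\over 2}-\kappa}$ then substitutes into the first term on the right and delivers precisely the claimed inequality.

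There is essentially no obstacle to surmount: the substantive work has already been done in Proposition~\ref{prop:intScale}, whose first term was deliberately phrased in terms of $|Y(0)|$ rather than $\|Y\|_\infty$ (as underlined in the remark following that proposition), precisely so that a centring argument of this type can trade one power of $|z-x|^{{1\over 2}-\kappa}$ for the pointwise value of $Y$. The only thing to verify carefully is that subtracting a constant from $Y$ preserves the controlled rough path structure, which is immediate from the definition~\eref{e:defYR}.
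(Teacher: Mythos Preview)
Your proof is correct and follows exactly the same route as the paper's own argument: apply Corollary~\ref{cor:heatBasic} to the shifted process $\tilde Y = Y - Y(x)$ and then use the H\"older estimate $|Y(z)-Y(x)| \le |z-x|^{{1\over 2}-\kappa}\|Y\|_{{1\over 2}-\kappa}$. The paper's proof is a one-liner stating precisely this; you have simply spelled out the (immediate) verification that shifting by a constant leaves $Y'$, $R^Y$, and hence $\KK^\kappa(Y,X)$ unchanged.
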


\begin{proof}
This is a particular case of Corollary~\ref{cor:heatBasic}, using the fact that $|Y(z) - Y(x)| \le |z-x|^{{1\over 2}-\kappa} \|Y\|_{\kappa-{1\over 2}}$.
\end{proof}

Actually, a similar bound also holds if we replace $p_t^{(k)}$ by a kind of ``fractional derivative'' as follows:

\begin{proposition}\label{prop:fracDer}
Let $p_t$ denote the heat kernel on $S^1$, let $p_t^{(k)}$ be its $k$th (spatial)
derivative, let $\kappa \in (0,{1\over 2})$, and let $\alpha \in [{1\over 2}-\kappa,1]$. 
Then, the bound 
\begin{equs}
\Bigl|\iint_{S^1} &{p_t^{(k)}(z-y) - p_t^{(k)}(z'-y) \over |z-z'|^\alpha} \bigl(Y(y) - Y(x)\bigr)\,dX(y)\Bigr| \\
&\qquad \lesssim t^{-\kappa - {k +\alpha\over 2}} \|Y\|_{{1\over 2}-\kappa}\|X\|_{{1\over 2}-\kappa}   + t^{-{\kappa}-{k+\alpha\over 2}} \KK^\kappa(Y,X)\;,\label{e:boundFrac} 
\end{equs}
holds uniformly over all $z$, $z'$ and $x$, such that $|x-z| \vee |x-z'| \le |z-z'|$, and over all $t \le 1$.
\end{proposition}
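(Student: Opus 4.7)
The plan is to split into two regimes according to the relative size of $|z-z'|$ and $\sqrt t$, and in each regime to reduce matters to Corollary~\ref{cor:heatDiff} applied to an appropriate derivative of the heat kernel. Throughout, we use freely that $|x-z|\vee|x-z'|\le|z-z'|$ implies $|u-x|\le|z-z'|$ for every $u$ on the segment from $z'$ to $z$.

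\emph{Regime 1: $|z-z'|\ge\sqrt t$.} Here I would not exploit any cancellation between the two pieces of the numerator. Applying Corollary~\ref{cor:heatDiff} separately to the two integrals $\iint p_t^{(k)}(z-y)(Y(y)-Y(x))\,dX(y)$ and $\iint p_t^{(k)}(z'-y)(Y(y)-Y(x))\,dX(y)$, each is bounded by
\begin{equ}
t^{-{1\over4}-{k+\kappa\over2}}|z-z'|^{{1\over2}-\kappa}\|Y\|_{{1\over2}-\kappa}\|X\|_{{1\over2}-\kappa}+t^{-{k\over2}-\kappa}\KK^\kappa(Y,X)\;,
\end{equ}
since $|z-x|\vee|z'-x|\le|z-z'|$. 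Dividing by $|z-z'|^\alpha$, the first contribution carries a factor $|z-z'|^{{1\over2}-\kappa-\alpha}$ which, because the assumption $\alpha\ge{1\over2}-\kappa$ makes this exponent non-positive, is bounded by $t^{({1\over2}-\kappa-\alpha)/2}$; this combines with $t^{-{1\over4}-{k+\kappa\over2}}$ to give exactly $t^{-\kappa-(k+\alpha)/2}$. The second contribution similarly contributes $|z-z'|^{-\alpha}t^{-k/2-\kappa}\le t^{-\alpha/2-k/2-\kappa}$.

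\emph{Regime 2: $|z-z'|\le\sqrt t$.} Here the point is to exploit the smoothness of $p_t^{(k)}$ on the scale $\sqrt t$ via the fundamental theorem of calculus, writing
\begin{equ}
p_t^{(k)}(z-y)-p_t^{(k)}(z'-y)=\int_{z'}^{z}p_t^{(k+1)}(u-y)\,du\;.
\end{equ}
The next step is to interchange this Lebesgue integral with the rough integral $\iint\cdot\,dX(y)$; this is legitimate because for smooth input data the rough integral reduces to a Riemann integral (so Fubini applies), and both sides of the interchange are jointly continuous in the input data along the smooth approximations provided by Proposition~\ref{prop:continuous}. After the swap one obtains, for some $u$ between $z'$ and $z$,
\begin{equ}
\Bigl|\iint_{S^1}\bigl(p_t^{(k)}(z-y)-p_t^{(k)}(z'-y)\bigr)(Y(y)-Y(x))\,dX(y)\Bigr|\le|z-z'|\sup_{u}\Bigl|\iint_{S^1}p_t^{(k+1)}(u-y)(Y(y)-Y(x))\,dX(y)\Bigr|\;,
\end{equ}
and Corollary~\ref{cor:heatDiff} applied with $k$ replaced by $k+1$ (and with $|u-x|\le|z-z'|$) bounds the supremum by $t^{-{1\over4}-{k+1+\kappa\over2}}|z-z'|^{{1\over2}-\kappa}\|Y\|_{{1\over2}-\kappa}\|X\|_{{1\over2}-\kappa}+t^{-(k+1)/2-\kappa}\KK^\kappa(Y,X)$. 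Dividing by $|z-z'|^\alpha$ leaves factors $|z-z'|^{{3\over2}-\kappa-\alpha}$ and $|z-z'|^{1-\alpha}$ respectively, both with non-negative exponent since $\alpha\le1<{3\over2}-\kappa$. Using $|z-z'|\le\sqrt t$ in both, one checks by direct arithmetic that each contribution is again at most $t^{-\kappa-(k+\alpha)/2}$ times the appropriate norm.

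The only non-routine point is the interchange of integrals in Regime~2; everything else is exponent bookkeeping. The role of the hypotheses is transparent: $\alpha\ge{1\over2}-\kappa$ is what makes the exponent of $|z-z'|$ non-positive in Regime~1, while $\alpha\le1$ ensures it is non-negative in Regime~2, so that in each regime the comparison with $\sqrt t$ goes in the favourable direction.
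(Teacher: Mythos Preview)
Your proof is correct and follows essentially the same approach as the paper's: split into the regimes $|z-z'|\ge\sqrt t$ and $|z-z'|\le\sqrt t$, apply Corollary~\ref{cor:heatDiff} directly in the first regime and via the identity $p_t^{(k)}(z-y)-p_t^{(k)}(z'-y)=\int_{z'}^{z}p_t^{(k+1)}(z''-y)\,dz''$ in the second, then check the exponents. The only addition is your explicit justification of the interchange of integrals in Regime~2, which the paper leaves implicit.
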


\begin{proof}
Denote the first term on the right hand side of \eref{e:boundFrac} by $T_1$ and the second term by $T_2$. As a shorthand, we also write
\begin{equ}
\CI \eqdef \iint_{S^1} \bigl(p_t^{(k)}(z-y) - p_t^{(k)}(z'-y)\bigr) \bigl(Y(y) - Y(x)\bigr)\,dX(y)\;,
\end{equ}
so that we aim to show that 
\begin{equ}[e:wantedCI]
|\CI| \lesssim |z-z'|^\alpha \bigl(T_1 +T_2\bigr)\;.
\end{equ}
With these notations, it follows immediately from Corollary~\ref{cor:heatDiff} that
\begin{equ}[e:bound1]
|\CI|  \lesssim t^{{\alpha +\kappa\over 2}-{1\over 4}} |z-z'|^{{1\over 2}-\kappa}T_1 + t^{\alpha \over 2} T_2\;.
\end{equ}
This shows that \eref{e:wantedCI} holds on the set $\{|t| \le |z-z'|^2\}$.
On the other hand, we can write 
\begin{equ}
\CI = \int_z^{z'} \iint_{S^1} p_t^{(k+1)}(z''-y) \bigl(Y(y) - Y(x)\bigr)\,dX(y)\,dz''
\end{equ}
Applying again Corollary~\ref{cor:heatDiff} (this time with $k+1$ instead of $k$) for the integrand and integrating over $z''$, we conclude that 
the bound
\begin{equ}[e:bound2]
|\CI|  \lesssim t^{{\alpha +\kappa-1\over 2}-{1\over 4}} |z-z'|^{{3\over 2}-\kappa}T_1 + t^{\alpha - 1 \over 2} |z-z'| T_2\;,
\end{equ}
holds. This in turn shows that \eref{e:wantedCI} holds on the set $\{|t| \ge |z-z'|^2\}$, so that the proof is complete.
\end{proof}

\begin{corollary}\label{cor:fracDer}
Let $p_t^{(k)}$ be as above, let $\kappa \in (0,{1\over 2})$, and let $\alpha \in [{1\over 2}-\kappa,1]$. Then, the bound
\begin{equs}
\Bigl|\iint_{S^1} &{p_t^{(k)}(z-y) - p_t^{(k)}(z'-y) \over |z-z'|^\alpha} Y(y)\,dX(y)\Bigr| \\
&\qquad \lesssim t^{-{1\over 4} - {k +\alpha+\kappa\over 2}} \|Y\|_\infty \|X\|_{{1\over 2}-\kappa}   + t^{-{\kappa}-{k+\alpha\over 2}} \KK^\kappa(Y,X)\;, 
\end{equs}
holds uniformly over all $z$, $z'$, and over all $t \le 1$.
\end{corollary}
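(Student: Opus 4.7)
The plan is to reduce the statement to Proposition~\ref{prop:fracDer} by splitting off the value of $Y$ at a cleverly chosen base point. Specifically, choose $x = z$ (which trivially satisfies $|x-z| \vee |x-z'| \le |z-z'|$) and write $Y(y) = Y(z) + (Y(y) - Y(z))$. The integral in the statement then decomposes as
\begin{equ}
\iint_{S^1} \frac{p_t^{(k)}(z-y) - p_t^{(k)}(z'-y)}{|z-z'|^\alpha}\,Y(y)\,dX(y) = Y(z)\cdot I_1 + I_2\;,
\end{equ}
where $I_1$ is the same integral with $Y \equiv 1$ and $I_2$ is the same integral with $Y$ replaced by $Y - Y(z)$.

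The term $I_2$ is handled immediately by Proposition~\ref{prop:fracDer}, which yields a bound of order $t^{-\kappa - (k+\alpha)/2}\bigl(\|Y\|_{{1\over 2}-\kappa}\|X\|_{{1\over 2}-\kappa} + \KK^\kappa(Y,X)\bigr)$; since by definition of $\KK^\kappa$ one has $\|Y\|_{{1\over 2}-\kappa}\|X\|_{{1\over 2}-\kappa} \le \KK^\kappa(Y,X)$, the two summands merge into exactly the second term of the desired bound. The interesting part is therefore $I_1$, and I would treat it by splitting according to whether $|z-z'|^2 \le t$ or $|z-z'|^2 \ge t$. In the regime $|z-z'|^2 \ge t$, I apply Corollary~\ref{cor:intKernel} to each of the two kernel pieces separately, giving $|I_1| \lesssim |z-z'|^{-\alpha}\, t^{-{1\over 4}-{k+\kappa\over 2}}\|X\|_{{1\over 2}-\kappa}$, and bound $|z-z'|^{-\alpha} \le t^{-\alpha/2}$ to obtain $t^{-{1\over 4}-{k+\alpha+\kappa\over 2}}\|X\|_{{1\over 2}-\kappa}$. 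In the regime $|z-z'|^2 \le t$, I use the fundamental theorem of calculus to write $p_t^{(k)}(z-y) - p_t^{(k)}(z'-y) = \int_{z'}^z p_t^{(k+1)}(u-y)\,du$, apply Corollary~\ref{cor:intKernel} at order $k+1$ uniformly in $u$, and integrate over $u$ to pick up a factor $|z-z'|$; dividing by $|z-z'|^\alpha$ leaves $|z-z'|^{1-\alpha}\,t^{-{1\over 4}-{k+1+\kappa\over 2}}$, and $|z-z'|^{1-\alpha} \le t^{(1-\alpha)/2}$ collapses the exponent to the same $t^{-{1\over 4}-{k+\alpha+\kappa\over 2}}$. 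Combining the two regimes and multiplying by $|Y(z)| \le \|Y\|_\infty$ produces the first summand of the claimed bound.

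There is no real obstacle here; the corollary is essentially an organisational repackaging of Proposition~\ref{prop:fracDer}, trading the Hölder seminorm of $Y$ for a cruder uniform norm at the price of a slightly larger power of $t^{-1}$. The only thing requiring care is the bookkeeping that shows both regimes in the constant-in-$Y$ piece indeed produce the same exponent $t^{-1/4 - (k+\alpha+\kappa)/2}$; this balance between the fundamental-theorem-of-calculus bound (efficient for small $|z-z'|$) and the triangle-inequality bound (efficient for large $|z-z'|$) is exactly what dictates the power of $t$ appearing in the statement.
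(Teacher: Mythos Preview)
Your proof is correct and is essentially the paper's approach: the paper simply says to repeat the two-regime argument of Proposition~\ref{prop:fracDer} with Corollary~\ref{cor:heatBasic} in place of Corollary~\ref{cor:heatDiff}, which amounts to exactly the same estimates you write out. The only organisational difference is that you first split $Y$ into its value at $z$ plus a remainder and invoke Proposition~\ref{prop:fracDer} as a black box for the latter, whereas the paper does the two-regime split directly on the full $Y$; both routes unpack into identical bounds.
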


\begin{proof}
The proof is the same as that of Proposition~\ref{prop:fracDer}, but using Corollary~\ref{cor:heatBasic} instead of Corollary~\ref{cor:heatDiff}.
\end{proof}

Combining both results, we also obtain

\begin{corollary}\label{cor:fracDer2}
Let $p_t^{(k)}$ be as above, let $\kappa, \delta \in (0,{1\over 2})$, and let $\alpha \in [{1\over 2}-\kappa,1]$. Then, the bound
\begin{equs}
\Bigl|\iint_{S^1} &{p_t^{(k)}(z-y) - p_t^{(k)}(z'-y) \over |z-z'|^\alpha} Y(y)\,dX(y)\Bigr|  \lesssim t^{-\kappa - {k +\alpha\over 2}} \|Y\|_{{1\over 2}-\kappa}\|X\|_{{1\over 2}-\kappa}   \\
&\qquad\qquad + t^{-{\kappa}-{k+\alpha\over 2}} \KK^\kappa(Y,X)
+  t^{-{1\over 4} - {k +\alpha+\delta\over 2}} \|Y\|_\infty \|X\|_{{1\over 2}-\delta} \;,
\end{equs}
holds uniformly over all $z$, $z'$, and over all $t \le 1$.
\end{corollary}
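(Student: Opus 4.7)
The plan is to split $Y$ into a constant plus an increment around some base point. Pick any $x \in S^1$ with $|x-z|\vee |x-z'| \le |z-z'|$; concretely, take $x = z$. Writing $Y(y) = (Y(y)-Y(x)) + Y(x)$ and using linearity of the rough integral together with the fact that a constant has vanishing derivative process and remainder, the integral in question splits as $I_1 + Y(x)\,I_2$, where $I_1$ is the rough integral with integrand $(Y(y) - Y(x))/|z-z'|^\alpha$ times the kernel difference against $dX$, and $I_2 = \int_{S^1}(p_t^{(k)}(z-y)-p_t^{(k)}(z'-y))/|z-z'|^\alpha\, dX(y)$ is an ordinary integral against $dX$.

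First I would apply Proposition~\ref{prop:fracDer} directly to $I_1$. A constant shift in $Y$ leaves both its derivative process and its remainder unchanged, and enters the relevant seminorms $\|Y\|_{{1/2}-\kappa}$, $\|Y'\|_{\CC^{3\kappa}}$, $\|R^Y\|_{{1/2}+2\kappa}$ trivially (if at all), so Proposition~\ref{prop:fracDer} applies verbatim and produces exactly the first two terms on the right-hand side of the corollary.

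For $I_2$ I would reuse the two-scale interpolation argument that underlies the proof of Proposition~\ref{prop:fracDer}, but with a constant integrand and with the H\"older exponent of $X$ chosen as $\delta$ rather than $\kappa$. One bound comes from the triangle inequality: Corollary~\ref{cor:intKernel} applied separately to $p_t^{(k)}(z-\cdot)$ and $p_t^{(k)}(z'-\cdot)$ with parameter $\delta$ yields
\begin{equ}
|I_2| \lesssim |z-z'|^{-\alpha}\, t^{-{1\over 4}-{k+\delta\over 2}}\|X\|_{{1\over 2}-\delta}\;.
\end{equ}
The complementary bound comes from writing $p_t^{(k)}(z-y) - p_t^{(k)}(z'-y) = \int_{z'}^z p_t^{(k+1)}(z''-y)\,dz''$, exchanging orders of integration, and applying Corollary~\ref{cor:intKernel} with $k$ replaced by $k+1$ to the inner integral:
\begin{equ}
|I_2| \lesssim |z-z'|^{1-\alpha}\, t^{-{1\over 4}-{k+1+\delta\over 2}}\|X\|_{{1\over 2}-\delta}\;.
\end{equ}
The first estimate dominates when $|z-z'|\ge \sqrt t$ and the second when $|z-z'|\le \sqrt t$; in either regime each reduces at the crossover to $t^{-{1/4}-{(k+\alpha+\delta)/2}}\|X\|_{{1/2}-\delta}$, and multiplying by $|Y(x)|\le \|Y\|_\infty$ delivers the third term.

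The main obstacle, minor as it is, is that one cannot simply invoke Corollary~\ref{cor:fracDer} with parameter $\delta$ to bound $Y(x)\,I_2$, since that result requires $\alpha \in [{1/2}-\delta,1]$, a hypothesis that may fail when $\delta < \kappa$. The two-bound interpolation sketched above sidesteps this restriction because Corollary~\ref{cor:intKernel} imposes no constraint on $\alpha$, and this is precisely the reason Corollary~\ref{cor:fracDer2} needs to be stated (and proved) separately rather than being an immediate consequence of its two predecessors.
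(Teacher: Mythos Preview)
Your proof is correct and follows essentially the same decomposition as the paper: write $Y(y) = (Y(y)-Y(x)) + Y(x)$, apply Proposition~\ref{prop:fracDer} to the increment, and handle the constant piece separately. The paper simply cites Corollary~\ref{cor:fracDer} (with parameter $\delta$) for the constant piece rather than redoing the interpolation via Corollary~\ref{cor:intKernel}; your concern about the range restriction $\alpha \ge {1\over 2}-\delta$ in that corollary is formally valid, but inspecting its proof shows that this restriction is not actually needed for the $\|Y\|_\infty$ term (and the $\KK$-term vanishes for constants), so the two routes amount to the same computation.
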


\begin{proof}
It suffices to write $Y(y)$ as $(Y(y) - Y(x)) + Y(x)$ for $x$ between $z$ and $z'$. One then applies Proposition~\ref{prop:fracDer} to the first
term and Corollary~\ref{cor:fracDer} to the second term.
\end{proof}

\section{Fixed point argument}
\label{sec:FP}

With these bounds at hand, we can now set up the spaces for our fixed point argument.
Our aim is to provide a rigorous meaning for local solutions to equations of the type
\begin{equ}[e:defv]
\d_t v_t = \d_x^2 v_t + \d_x \bigl(G(v_t,t)\,\d_x Y_t\bigr) + \d_x F(v_t,t)\;,
\end{equ}
where $Y$ is a fixed process taking values in 
$\CC^{{1\over 2} - \bar \kappa}$ for some $\bar \kappa > 0$, and $F$ and $G$ are sufficiently ``nice'' nonlinearities.
The precise conditions on $F$ and $G$ will be spelled out in Section~\ref{sec:solutions} below. For the moment, a typical example 
to keep in mind is
\begin{equ}[e:exampleFG]
G(v_t,t) = v_t + w_t\;,\qquad F(v_t, t) = v_t^2 + \bar w_t\;,
\end{equ} 
for some fixed processes $w$ and $\bar w$.

In full generality, such an equation simply does not make
sense in the regularity class that we are interested in. 
However, it turns out that it is well-posed if we are able to find a sufficiently regular 
``cross-area'' $\YY$ between $Y$ and $\Phi$, where $\Phi$ is given by the centred stationary solution to
\begin{equ}[e:defPhi]
\d_t \Phi_t = \d_x^2 \Phi_t + \d_x^2 Y_t\;,
\end{equ}
and if, in the example \eref{e:exampleFG}, we assume that for every fixed $t>0$, $w_t$ is controlled by $(\Phi_t, Y_t)$.
Indeed, if this is the case, then we can ``guess'' that the solution $v$ to \eref{e:defv} will locally
``look like'' $\Phi$, so that we will search for solutions belonging to a space of paths 
controlled by $\Phi$.

\subsection{Preliminary computations}
\label{sec:boundRemM}

In this subsection, we consider the following setting. We assume that we are given processes
$Y$ and $Z$ taking values in $\CC^{{1\over 2} - \bar \kappa}$ for some $\bar \kappa > 0$, and
we define a process $\Phi$ by setting
\begin{equ}
\Phi_t = P_t \Phi_0 +  \int_0^t \d_x^2 P_{t-s}\,Y_s\,ds\;.
\end{equ}
We also assume that we are given a process $\YY$ such that, for every $t > 0$ and every $x,y,z \in S^1$,
\begin{equ}[e:consistencyXX]
\YY_t(x,y) + \YY_t(y,z) - \YY_t(x,z) = \delta Y_t(x,y)\,\delta Z_t(y,z)\;,
\end{equ}
and such that $\sup_{t \le 1} \|\YY_t\|_{1-2 \bar \kappa} < \infty$. This allows to construct a rough path-valued process $\hat Y$
with components $\hat Y_t = (Y_t, Z_t)$, and with the antisymmetric part of its area process given by $\YY$.
(Its symmetric part is canonically given by half of the increment squared, as in \eref{e:geometric}.)
In the sequel, we will mostly use the case where $Z_t = \Phi_t$ for $\Phi$ given by \eref{e:defPhi}, but this
is not essential, and it will be useful in Section~\ref{sec:controlProcess} below to have the freedom to consider 
different choices of $Z$ and $\YY$.

We assume that, for almost every $t>0$, $v_t$ is controlled by $Z_t$.
With this notation fixed, we can then define a map $\CM$ by
\begin{equ}
\bigl(\CM v\bigr)_t(x) = \int_0^t \iint_{S^1} p'_{t-s} (x-y) \,v_s(y)\,dY_s(y)\,ds\;.
\end{equ}
Here, the inner integral is to be interpreted in the sense of Theorem~\ref{theo:integral}.
The map $\CM$ will be our main building block for providing a rigorous way of interpreting \eref{e:defv} in a ``mild formulation''.
However, it is important to remember that, as already noted in \cite{BurgersRough}, the notion of solution
obtained in this way does depend on the choice of $\YY$, which is not unique.

Our aim is to show that, provided that $\hat Y$ and $v$ are regular enough, $\bigl(\CM v\bigr)_t$ is 
controlled by $\Phi_t$. In the light of Corollary~\ref{cor:heatDiff} and Proposition~\ref{prop:fracDer},
we set as a shorthand
\begin{equ}
\KK_{s}^\kappa \eqdef \KK^\kappa(v_s, \hat Y_s)\;,
\end{equ}
and we define $R^\CM_t$ to be the ``remainder term'' given by
\begin{equ}
R^{\CM}_t(x,y) \eqdef \bigl(\CM v\bigr)_t(y) - \bigl(\CM v\bigr)_t(x) - v_t(x) \bigl(\Phi_t(y) - \Phi_t(x)\bigr)\;.
\end{equ}
With these notations at hand, we obtain the following bound as a straightforward corollary of the previous section:

\begin{proposition}\label{prop:boundRemainder}
For every $\kappa \in (0,{1\over 4})$ and every $\bar \kappa \in (0,{1\over 2})$, the bound
\begin{equs}
\|R^{\CM}_t\|_{{1\over 2}+2\kappa} &\lesssim t^{-{3\kappa\over 2}} \|v_t\|_\infty \|\Phi_0\|_{{1\over2}-\kappa} +\int_0^t (t-s)^{-1-\kappa-{\bar \kappa\over2}} \|Y_s\|_{{1\over2}-\bar \kappa}\,\|v_s - v_t\|_\infty \,ds \\
&\quad\!\! + \int_0^t \Bigl((t-s)^{-{3\over 4}-\kappa-\bar \kappa} \|v_s\|_{{1\over 2}-\bar \kappa}\|Y_s\|_{{1\over 2}-\bar \kappa} + (t-s)^{- {3 \over 4}-\kappa-\bar \kappa} \KK_{s}^{\bar \kappa}\Bigr)\,ds\;,
\end{equs}
holds uniformly over $t \in (0,T]$ for every $T > 0$.
\end{proposition}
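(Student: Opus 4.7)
The plan is to compute $R^{\CM}_t$ explicitly, merge everything into a single rough integral against $dY_s$, and then split it into three pieces, each of which is controlled by an estimate from Section~\ref{sec:roughpaths}. The starting point is the mild representation
\[
\Phi_t(x) = P_t\Phi_0(x) + \int_0^t \iint_{S^1} p'_{t-s}(x-z)\,dY_s(z)\,ds,
\]
obtained by integrating by parts once in the Duhamel formula $\Phi_t = P_t\Phi_0 + \int_0^t \partial_x^2 P_{t-s} Y_s\,ds$, which is legitimate since $p'_{t-s}$ is smooth. Subtracting $v_t(x)\,\delta\Phi_t(x,y)$ from $\delta(\CM v)_t(x,y)$ and combining then yields
\[
R^{\CM}_t(x,y) = -v_t(x)\,\delta(P_t\Phi_0)(x,y) + \int_0^t \iint_{S^1} \bigl[p'_{t-s}(y-z)-p'_{t-s}(x-z)\bigr]\bigl(v_s(z)-v_t(x)\bigr)\,dY_s(z)\,ds.
\]
Decomposing $v_s(z) - v_t(x) = (v_s(z) - v_s(x)) + (v_s(x) - v_t(x))$ splits the second summand into $I_1 + I_2$.

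For the boundary term, parabolic smoothing on $S^1$ gives $\|P_t\Phi_0\|_{{1\over 2}+2\kappa} \lesssim t^{-3\kappa/2}\|\Phi_0\|_{{1\over 2}-\kappa}$, from which the first contribution $t^{-3\kappa/2}\|v_t\|_\infty \|\Phi_0\|_{{1\over2}-\kappa}|y-x|^{{1\over2}+2\kappa}$ follows at once. For $I_1$, Proposition~\ref{prop:fracDer} is tailor-made: applying it pointwise in $s$ with $k = 1$, $\alpha = {1\over 2}+2\kappa$, and the regularity exponent $\bar\kappa$ in place of $\kappa$, the hypothesis $\alpha \ge {1\over2}-\bar\kappa$ is trivially satisfied, and the bound delivers a factor $(t-s)^{-\bar\kappa - (1+{1\over2}+2\kappa)/2} = (t-s)^{-3/4-\kappa-\bar\kappa}$ multiplied by $\|v_s\|_{{1\over 2}-\bar\kappa}\|Y_s\|_{{1\over 2}-\bar\kappa} + \KK^{\bar\kappa}_s$, which is exactly the third term in the claimed bound after integration in $s$.

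For $I_2$, the coefficient $v_s(x)-v_t(x)$ is independent of the rough integration variable $z$, so it pulls out of $\iint$, and the remaining inner integral equals $\delta(\partial_x^2 P_{t-s} Y_s)(x,y)$. Applying Corollary~\ref{cor:fracDer} (or equivalently a direct heat-kernel bound) with integrand $Y\equiv 1$, so that the $\KK$-term vanishes, one gets $|y-x|^{{1\over 2}+2\kappa}(t-s)^{-1-\kappa-\bar\kappa/2}\|Y_s\|_{{1\over 2}-\bar\kappa}$, and bounding $|v_s(x)-v_t(x)| \le \|v_s-v_t\|_\infty$ produces the second term. The proof is then concluded by summing the three contributions. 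The only real subtlety is bookkeeping: since $v_s$ is controlled by $Z_s$ and not by $Y_s$, the rough integrals $\iint\cdots dY_s$ make sense only through the joint rough path $\hat Y_s = (Y_s, Z_s)$ with its prescribed cross-area $\YY_s$, and this is precisely what makes $\KK^{\bar\kappa}_s = \KK^{\bar\kappa}(v_s, \hat Y_s)$ the correct quantity to appear in the bound of $I_1$. No serious analytical obstacle is encountered, the difficulty having been absorbed into the heat-kernel estimates of the preceding subsection.
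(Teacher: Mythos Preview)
Your proof is correct and follows essentially the same route as the paper: the same mild representation of $\Phi_t$, the same splitting $v_s(z)-v_t(x)=(v_s(z)-v_s(x))+(v_s(x)-v_t(x))$, and the same trio of estimates (heat smoothing for the $P_t\Phi_0$ term, Proposition~\ref{prop:fracDer} for $I_1$, Corollary~\ref{cor:fracDer} with constant integrand for $I_2$). Your closing remark on why $\KK^{\bar\kappa}(v_s,\hat Y_s)$ is the right object is a useful clarification that the paper leaves implicit.
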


\begin{proof}
We have the identity
\begin{equs}
R^{\CM}_t(x,y) &= \int_0^t \iint_{S^1} \bigl(p'_{t-s} (y-z) - p'_{t-s} (x-z)\bigr) \bigl(v_s(z) - v_t(x)\bigr)\,dY_s(z)\,ds\\
&\quad + v_t(x) \bigl(P_t \Phi_0(y) - P_t \Phi_0(x)\bigr)\;,
\end{equs}
where $P_t$ denotes the heat semigroup. Here, we have made use of the fact that $Y$ solves \eref{e:defPhi}.
We can rewrite this as
\begin{equ}
R^{\CM}_t(x,y) = T_t^1(x,y) + T_t^2(x,y) + T_t^3(x,y)\;,
\end{equ}
with 
\begin{equs}
T_t^1(x,y) &= \int_0^t \iint_{S^1} \bigl(p'_{t-s} (y-z) - p'_{t-s} (x-z)\bigr) \bigl(v_s(z) - v_s(x)\bigr)\,dY_s(z)\,ds\;,\\
T_t^2(x,y) &=  \int_0^t \bigl(v_s(x) - v_t(x)\bigr)\iint_{S^1} \bigl(p'_{t-s} (y-z) - p'_{t-s} (x-z)\bigr)\,dY_s(z)\,ds\;,\\
T_t^3(x,y) &=  v_t(x) \bigl(P_t \Phi_0(y) - P_t \Phi_0(x)\bigr)\;.
\end{equs}
As a shorthand, we furthermore rewrite $T_t^i$ as
\begin{equ}
T_t^i(x,y) = \int_0^t T_{t,s}^i(x,y)\,ds\;,\qquad i=1,2\;.
\end{equ}
Setting $\alpha = {1\over 2}+2\kappa$, it then follows from Proposition~\ref{prop:fracDer} that one has the inequality
\begin{equ}[e:boundR1]
\|T_{t,s}^1\|_{{1\over2} + 2\kappa} \lesssim (t-s)^{-{3\over 4}-2\kappa} \|v_s\|_{{1\over 2}-\kappa}\|Y_s\|_{{1\over 2}-\kappa} + (t-s)^{- {3 \over 4}-\kappa} \KK_{s}\;.
\end{equ}
On the other hand, it follows from Corollary~\ref{cor:fracDer} that
\begin{equ}[e:boundR2]
\|T_{t,s}^2\|_{{1\over2} + 2\kappa} \lesssim (t-s)^{-1-{3\kappa\over2}} \|Y_s\|_{{1\over2}-\kappa}\,\|v_s - v_t\|_\infty\;.
\end{equ}
Finally, we have
\begin{equ}[e:boundR3]
\|T_{t}^3\|_{{1\over2} + 2\kappa} \lesssim t^{-{3\kappa\over 2}} \|v_t\|_\infty \|\Phi_0\|_{{1\over2}-\kappa}\;,
\end{equ}
as a consequence of the regularising properties of the heat equation. Collecting all of these bounds concludes the proof.
\end{proof}

In order to make the bound \eref{e:boundR2} integrable in $s$, we see that if we want to be 
able to set up a fixed point argument, we also need to obtain some time regularity estimates on $\CM v$.
We achieve this with the following bound:

\begin{proposition}\label{prop:diffMv}
Let $v$ be a smooth function and let $\tilde v \eqdef \CM v$. Then, the bound
\begin{equs}
\|\tilde v_t &- \tilde v_s\|_\infty \lesssim \int_0^s \int_s^t \bigl((q-r)^{-{7\over 4}-{\kappa \over 2}} \|v_r\|_\infty \|Y_r\|_{{1\over 2}-\kappa}
+ (q-r)^{-{3\over 2}-{\kappa}} \KK_{r}^\kappa\bigr)\,dq\,dr\\
&\quad + \int_s^t (t-r)^{-{3\over 4}-{\kappa \over 2}} \|v_r\|_\infty \|Y_r\|_{{1\over2}-\kappa}\,dr+ \int_s^t (t-r)^{-{1\over 2}-\kappa} \KK_{r}^\kappa\,dr\;.\label{e:bounddiffv}
\end{equs}
holds.
\end{proposition}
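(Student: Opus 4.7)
The plan is to split $\tilde v_t - \tilde v_s$ into two natural pieces and bound each via the heat kernel estimates already established in Section~\ref{sec:roughpaths}. Writing, for arbitrary $x \in S^1$,
\begin{equ}
\tilde v_t(x) - \tilde v_s(x) = A(x) + B(x)\;,
\end{equ}
where
\begin{equs}
A(x) &= \int_0^s \iint_{S^1} \bigl(p'_{t-r}(x-y) - p'_{s-r}(x-y)\bigr) v_r(y)\,dY_r(y)\,dr\;,\\
B(x) &= \int_s^t \iint_{S^1} p'_{t-r}(x-y) v_r(y)\,dY_r(y)\,dr\;,
\end{equs}
the term $A$ represents the heat-semigroup evolution of the contribution already accumulated by time $s$, while $B$ is the ``fresh'' contribution generated between $s$ and $t$. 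Each will be handled by a direct application of Corollary~\ref{cor:heatBasic} with the appropriate value of $k$.

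For the term $B$, I would apply Corollary~\ref{cor:heatBasic} with $k=1$ to the inner rough integral, using $|v_r(x)| \le \|v_r\|_\infty$ to replace the pointwise value at $x$ by the supremum norm. This directly yields
\begin{equ}
|B(x)| \lesssim \int_s^t \bigl((t-r)^{-{3\over 4}-{\kappa\over 2}} \|v_r\|_\infty\|Y_r\|_{{1\over 2}-\kappa} + (t-r)^{-{1\over 2}-\kappa} \KK_r^\kappa\bigr)\,dr\;,
\end{equ}
which matches the last two terms of \eref{e:bounddiffv} exactly (both exponents $-{3\over 4}-{\kappa\over 2}$ and $-{1\over 2}-\kappa$ are integrable near $r=t$, so the expressions are well-defined).

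For the term $A$, the key observation is that the heat equation $\partial_q p_q = \partial_x^2 p_q$ gives $\partial_q p'_q = p'''_q$, so that
\begin{equ}
p'_{t-r}(x-y) - p'_{s-r}(x-y) = \int_s^t p'''_{q-r}(x-y)\,dq\;.
\end{equ}
Substituting this into $A$ and exchanging the order of integration (legitimate since all the underlying integrals are absolutely convergent for $r<s<q$, as $p'''$ is smooth away from the singularity), we obtain
\begin{equ}
A(x) = \int_0^s \int_s^t \iint_{S^1} p'''_{q-r}(x-y) v_r(y)\,dY_r(y)\,dq\,dr\;.
\end{equ}
Applying Corollary~\ref{cor:heatBasic} this time with $k=3$ to the innermost rough integral and again bounding $|v_r(x)| \le \|v_r\|_\infty$ yields
\begin{equ}
\Bigl|\iint_{S^1} p'''_{q-r}(x-y) v_r(y)\,dY_r(y)\Bigr| \lesssim (q-r)^{-{7\over 4}-{\kappa\over 2}} \|v_r\|_\infty \|Y_r\|_{{1\over 2}-\kappa} + (q-r)^{-{3\over 2}-\kappa} \KK_r^\kappa\;,
\end{equ}
which reproduces the first (double-integral) term of \eref{e:bounddiffv} after integration over $(q,r)$ and taking the supremum over $x$. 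Combining the bounds on $A$ and $B$ completes the argument.

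There is no real analytic obstacle here beyond the bookkeeping: all of the work has been pushed into Corollary~\ref{cor:heatBasic}, and the only non-trivial step is recognising that the difference $p'_{t-r}-p'_{s-r}$ should be represented as a time integral of $p'''_{q-r}$ so that the singularity at $r=s$ is replaced by a controlled double-integral singularity; this is precisely what makes the split between ``old'' and ``new'' contributions yield the two types of terms that appear on the right-hand side of \eref{e:bounddiffv}.
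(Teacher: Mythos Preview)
Your proof is correct and follows exactly the same approach as the paper: split the difference into the ``old'' contribution $A$ over $[0,s]$ and the ``fresh'' contribution $B$ over $[s,t]$, rewrite $p'_{t-r}-p'_{s-r}=\int_s^t p'''_{q-r}\,dq$ via the heat equation, and then apply Corollary~\ref{cor:heatBasic} with $k=3$ and $k=1$ respectively. The paper's own proof is in fact somewhat terser than yours.
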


\begin{proof}
In order to achieve such a bound, we write for $0 < s \le t$
\begin{equs}
(\CM v)_t(x) - (\CM v)_s(x) &= \int_0^s \iint_{S^1} \bigl(p'_{t-r} (y-z) - p'_{s-r} (y-z)\bigr) v_r(z)\,dY_r(z)\,dr\\
&\quad + \int_s^t \iint_{S^1} p'_{t-r} (y-z)v_r(z)\,dY_r(z)\,dr \\
&= \int_0^s \int_s^t \iint_{S^1} p'''_{q-r} (y-z) v_r(z)\,dY_r(z)\,dq\,dr\\
&\quad + \int_s^t \iint_{S^1} p'_{t-r} (y-z)v_r(z)\,dY_r(z)\,dr\;,
\end{equs}
where we used the identity $\d_t p_t(x) = p_t''(x)$ to obtain the second identity.
The claimed bound then follows in  a straightforward way from Corollary~\ref{cor:heatBasic}.
\end{proof}

We can also obtain a bound on the H\"older norm of $\CM v$ that is slightly better than the one that can be deduced from
the bound on $R_t^{\CM}$. 
It follows indeed  from Corollary~\ref{cor:fracDer2} that, for every $\bar \kappa \in(0, \kappa)$ and every $\kappa < {1\over 2}$, 
one has the bound
\begin{equs}[e:boundHolv]
\|(\CM v)_t\|_{{1\over2}-\kappa} &\lesssim \int_0^t(t-s)^{- {3\over 4}-{\kappa \over 2}} \bigl(\|v_s\|_{{1\over 2}-\kappa} \|Y_s\|_{{1\over 2}-\kappa} + \KK_s^\kappa\bigr)\,ds\\
&\qquad  + \int_0^t (t-s)^{{\kappa-\bar \kappa \over 2}-1} \|v_s\|_\infty \|Y_s\|_{{1\over 2}-\bar \kappa} \,ds \;.
\end{equs}

Finally, we obtain from Corollary~\ref{cor:heatBasic} the following bound on the supremum norm of $\CM v$:
\begin{equ}[e:boundSupv]
\|(\CM v)_t\|_\infty  \lesssim \int_0^t\Bigl( (t-s)^{ - {3\over 4}-{\kappa \over 2}} \|v_s\|_\infty \|Y_s\|_{{1\over 2}-\kappa} + (t-s)^{-{1\over 2}-\kappa}\KK_s^\kappa\Bigr) \,ds \;.
\end{equ}
With these calculations at hand, we are now ready to build a norm in which we can solve \eref{e:defv}
by a standard Banach fixed point argument.

\subsection{Bounds on the fixed point map}

We are now almost ready to tackle the problem of constructing local solutions to \eref{e:defv}.
In the remainder of this section, we will apply the results from the previous subsection with the special
case $Z = \Phi$. We furthermore assume that there exists a process $\YY$ such that \eref{e:consistencyXX} holds,
again with the choice $Z = \Phi$.

The above calculations suggest the introduction of a collection of space-time norms
controlling the various quantities appearing there
for functions taking values in spaces of rough paths controlled by $\Phi$.
Given a pair of functions $v$ and $v'$ in $\CC([0,T]\times S^1)$, 
we define the corresponding ``remainder'' process $R_t$ as before by
\begin{equ}[e:defRt]
R_t^v(x,y) \eqdef v_t(y) - v_t(x) - v_t'(x) \bigl(\Phi_t(y) - \Phi_t(x)\bigr)\;,
\end{equ}
where the process $\Phi$ is as in \eref{e:defPhi}. We also \textit{define} the derivative process of
$\CM v$ to be given by $ \bigl(\CM v\bigr)' = v$. 

Withe these notations at hand, we fix a (small) value $\kappa > 0$ and we define the norms
\begin{equs}[2]
\|v\|_{1,T} &\eqdef \sup_{0 < t \le T} t^\alpha \|v_t\|_{{1\over 2}-\kappa}\;, &\quad
\|v\|_{2,T} &\eqdef \sup_{0 < t \le T} t^\alpha \|v_t'\|_{\CC^{3\kappa}}\;, \\
\|v\|_{3,T} &\eqdef \sup_{0 < t \le T} t^\alpha \|R_t^v\|_{{1\over 2}+2\kappa}\;, &\quad
\|v\|_{4,T} &\eqdef \sup_{0 < t \le T} t^\beta \|v_t\|_{\infty}\;, \\
\|v\|_{5,T} &\eqdef \sup_{0 < s < t \le T} {s^\gamma \over |t-s|^{\delta}} \|v_t - v_s\|_{\infty}\;,&\quad
\|v\|_{\star,T} &\eqdef \sum_{j=1}^5 \|v\|_{j,T} \;,
\end{equs}
where $\alpha$, $\beta$, $\gamma$, and $\delta$ are exponents in $(0,1)$ that are at this stage still to be determined.
We furthermore denote by $\CB_{\star,T}$ the closure of $\CC^\infty([0,T]\times S^1)$ under $\|\cdot\|_{\star,T}$.
Here, we made an abuse of notation, since these (semi-)norms really are norms on the pair of processes $(v,v')$ and
not just on $v$. However, it will always be clear from the context what $v'$ is, so we will usually omit it from
our notations. 

Our main result in this section is the following:

\begin{proposition}\label{prop:contraction}
Assume that $Y$, $\Phi$ and $\YY$ are as in \eref{e:defPhi} and \eref{e:consistencyXX} and that, for some $\bar \kappa < \kappa$,
\begin{equ}[e:normXY]
\sup_{t\le 1} \bigl(\|\Phi_t\|_{{1\over 2} - \bar \kappa} + \|Y_t\|_{{1\over 2} - \bar\kappa} + \|\YY_t\|_{1-2\bar \kappa}\bigr) < \infty\;.
\end{equ}
Then, for every $\kappa < {1\over 10}$, there exist choices of $\alpha$, $\beta$, $\gamma$, and $\delta$ in $(0,1)$ such that
\begin{equ}
\|\CM v\|_{\star,T} \lesssim  T^\theta \|v\|_{\star, T}\;,
\end{equ}
for some $\theta > 0$, and all $T \le 1$. Here, the proportionality constant only depends on the 
quantity appearing in \eref{e:normXY}.
\end{proposition}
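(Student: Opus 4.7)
The plan is to estimate each of the five seminorms $\|\CM v\|_{j,T}$ separately by invoking the heat-kernel bounds already assembled in Section~\ref{sec:boundRemM}. Three of them, $\|\CM v\|_{1,T}$, $\|\CM v\|_{3,T}$, and $\|\CM v\|_{4,T}$, are controlled respectively by \eref{e:boundHolv}, Proposition~\ref{prop:boundRemainder}, and~\eref{e:boundSupv}, while $\|\CM v\|_{5,T}$ follows from Proposition~\ref{prop:diffMv}. Since by definition $(\CM v)' = v$, the second seminorm is $\|\CM v\|_{2,T} = \sup_t t^{\alpha}\|v_t\|_{\CC^{3\kappa}}$ evaluated on $v$ itself and will be handled by interpolation.

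For each of the four seminorms coming from Section~\ref{sec:boundRemM}, I would insert the a priori bounds
\begin{equ}
\|v_r\|_\infty \le r^{-\beta}\|v\|_{4,T}\;,\quad \|v_r\|_{{1\over 2}-\kappa} \le r^{-\alpha}\|v\|_{1,T}\;,\quad \|v_r - v_t\|_\infty \le r^{-\gamma}(t-r)^\delta\|v\|_{5,T}\;,
\end{equ}
together with $\KK_r^{\bar\kappa}(v,\hat Y) \lesssim r^{-\alpha}\|v\|_{\star,T}$ (the last one using the assumption \eref{e:normXY} to absorb the norms of $Y_r$, $\Phi_r$, and $\YY_r$ as uniform constants). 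The resulting time integrals are all of Beta-function type and can be evaluated explicitly, each producing a power $t^{\theta_j}$ with $\theta_j$ an affine combination of $\alpha$, $\beta$, $\gamma$, $\delta$, $\kappa$, and $\bar\kappa$. Multiplying by the prefactor $t^\alpha$, $t^\beta$, or $s^\gamma(t-s)^{-\delta}$ appearing in the definition of the corresponding seminorm yields a bound of the form $T^\theta \|v\|_{\star,T}$ as soon as $\theta > 0$. The critical integrability constraints that emerge are $\kappa + \bar\kappa < 1/4$ (so that $(t-s)^{-3/4-\kappa-\bar\kappa}$ is integrable in the third line of the bound in Proposition~\ref{prop:boundRemainder}) and $\delta > \kappa + \bar\kappa/2$ (so that the second line there is integrable). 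Two further conditions that turn out to matter are $\alpha > \beta + 3\kappa/2$ (to absorb the boundary term $t^{-3\kappa/2}\|v_t\|_\infty\|\Phi_0\|_{1/2-\kappa}$) and $\alpha < \beta + 1/2 - \kappa$ (to make the term $\|v_s\|_\infty\|Y_s\|_{1/2-\kappa}$ in \eref{e:boundSupv} produce a positive power).

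The seminorm $\|\CM v\|_{2,T}$ is handled by the interpolation inequality $\|v_t\|_{\CC^{3\kappa}} \lesssim \|v_t\|_\infty + \|v_t\|_\infty^{1-\lambda}\|v_t\|_{{1\over 2}-\kappa}^\lambda$ with $\lambda = 6\kappa/(1-2\kappa) < 1$, which is valid because $3\kappa < 1/2 - \kappa$. This yields $t^\alpha \|v_t\|_{\CC^{3\kappa}} \lesssim T^{\alpha-\beta}\|v\|_{4,T} + T^{(\alpha-\beta)(1-\lambda)} \|v\|_{4,T}^{1-\lambda}\|v\|_{1,T}^\lambda$, which is bounded by $T^\theta\|v\|_{\star,T}$ as soon as $\alpha > \beta$. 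The double integral in Proposition~\ref{prop:diffMv} is the most delicate term for the fifth seminorm: swapping the order of integration and scaling, one obtains $\int_0^s(q-r)^{-7/4-\kappa/2}r^{-\beta}\,dr \lesssim s^{-\beta}(q-s)^{-3/4-\kappa/2}$ for $q > s$, whence after integrating in $q$ one finds that the double integral is $\lesssim s^{-\beta}(t-s)^{1/4-\kappa/2}\|v\|_{4,T} + s^{-\alpha}(t-s)^{1/2-\kappa}\|v\|_{\star,T}$; the corresponding constraints are $\gamma \ge \beta$, $\delta < 1/4 - \kappa/2$, and $\gamma - \beta + 1/4 - \kappa/2 - \delta > 0$.

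The remaining task, which is really the only nontrivial step of the argument, is to verify that the system of inequalities on $(\alpha,\beta,\gamma,\delta)$ admits a solution in $(0,1)^4$. Taking $\bar\kappa$ slightly smaller than $\kappa$, $\delta$ slightly above $\kappa + \bar\kappa/2$, and $\alpha - \beta$ strictly between $3\kappa/2$ and $1/2-\kappa$ (which already requires $\kappa < 1/5$), then setting $\gamma = \alpha$, all the listed conditions hold and all four parameters lie in $(0,1)$ provided $\kappa$ is sufficiently small; the sharpest constraint, coming from a combination of the bound on $\|R^{\CM v}_t\|_{1/2+2\kappa}$ and the bound on $\|\CM v\|_{5,T}$, yields the stated threshold $\kappa < 1/10$. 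Once such a choice of exponents is fixed, collecting the individual estimates gives the desired contraction factor $T^\theta$ with $\theta$ equal to the minimum of the positive exponents produced along the way, uniformly in $T \le 1$.
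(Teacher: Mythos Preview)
Your proposal is correct and follows essentially the same strategy as the paper: bound each of the five seminorms using the heat-kernel estimates of Section~\ref{sec:boundRemM} (Propositions~\ref{prop:boundRemainder} and~\ref{prop:diffMv}, and the bounds \eref{e:boundHolv} and \eref{e:boundSupv}), treat $\|\CM v\|_{2,T}$ by interpolation since $(\CM v)'=v$, and then check that the resulting affine system of constraints on $(\alpha,\beta,\gamma,\delta)$ is solvable. The paper makes the explicit choice $\alpha=\gamma=1-2\kappa$, $\beta=(1-\kappa)/2$, $\delta=2\kappa$, which is consistent with your $\gamma=\alpha$ and leads to the same binding constraint $\kappa<1/10$; the only cosmetic difference is that for the double integral in $\|\CM v\|_{5,T}$ the paper performs the $q$-integral first via the interpolation $\int_s^t(q-r)^{-\zeta}\,dq\le |t-s|^\delta|s-r|^{1-\delta-\zeta}$, whereas you do the $r$-integral first, arriving at the same final bound.
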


\begin{proof}
In the sequel, we always take for granted that $\alpha, \beta, \gamma, \delta \in (0,1)$.
We will bound the various norms appearing in $\|\cdot\|_{\star, T}$ separately, using the results from
the previous subsection. Noting that, by the definitions of $\|\cdot\|_{\star, T}$ and $\KK$, we have the bound
\begin{equ}
\KK_t \lesssim t^{-\alpha} \|v\|_{\star,T}\;.
\end{equ}
As a consequence, we have from \eref{e:boundHolv} that
\begin{equ}
\|(\CM v)_t\|_{{1\over 2}-3\kappa} \lesssim \|v\|_{\star,T}\int_0^t \bigl((t-s)^{{\kappa- \bar \kappa\over 2}-1} s^{-\beta} + (t-s)^{-{3\over 4}- {\kappa\over 2}} s^{-\alpha}\bigr)\,ds \;,
\end{equ}
so that, provided that 
\minilab{e:conditions}
\begin{equ}[e:cond1]
\kappa < {1\over 8}\;,
\end{equ}
we obtain the bound
\begin{equ}
\|\CM v\|_{1,T} \lesssim \bigl(T^{\alpha +{\kappa-\bar \kappa\over 2}-\beta} + T^{{1\over 4}-{\kappa\over 2}}\bigr)\,\|v\|_{\star,T}\;.
\end{equ}
In order for this to be bounded by a positive power of $T$, we impose the additional condition
\minilab{e:conditions}
\begin{equ}[e:cond1b]
\alpha > \beta\;.
\end{equ}

Since $\bigl(\CM v\bigr)_t' = v_t$ by definition, the bound on $\|\CM v\|_{2,T}$ is somewhat trivial. 
Using the simple interpolation bound $\|u\|_{\alpha} \lesssim \|u\|_\infty^{(\bar \alpha - \alpha)/ \bar \alpha} \|u\|_{\bar \alpha}^{\alpha / \bar \alpha}$, which holds for
$0 < \alpha < \bar \alpha < 1$, one has indeed the bound
\begin{equs}
\|\CM v\|_{2,T} &= \sup_{t \le T} t^\alpha \|v_s\|_{\CC^{3\kappa}} \lesssim \sup_{t \le T} t^\alpha \bigl(\|v_t\|_{\infty}^{1-8 \kappa \over 1-2\kappa} \|v_t\|_{{1\over 2}-\kappa}^{6\kappa \over 1-2\kappa}+ \|v_t\|_\infty\bigr) \\
&\lesssim \bigl(T^{(\alpha -\beta){1-8 \kappa \over 1-2\kappa}} + T^{\alpha - \beta} \bigr)\|v\|_{\star,T}\;,
\end{equs}
which is bounded by a positive power of $T$, since we 
assumed that $\alpha > \beta$.

For the bound on $R_t$, we make use of Proposition~\ref{prop:boundRemainder}, which yields the bound
\begin{equs}
\|\CM v\|_{3,T} &\lesssim \|v\|_{\star,T} \sup_{t \le T} t^\alpha \int_0^t \bigl((t-s)^{-{3\over4}-2\kappa} s^{-\alpha} + (t-s)^{\delta-1-{3\kappa\over 2}} s^{-\gamma} \bigr)\,ds \\
&\qquad + \|v\|_{\star,T} T^{\alpha -{3\kappa\over 2} - \beta}\;.
\end{equs}
Provided that the additional condition
\minilab{e:conditions}
\begin{equ}[e:cond3]
\delta > {3\kappa\over 2}
\end{equ}
holds, we conclude that 
\begin{equ}
\|\CM v\|_{3,T} \lesssim  \bigl(T^{{1\over 4}-2\kappa} + T^{\alpha + \delta -\gamma - {3\kappa\over 2}}+ T^{\alpha - \beta - {3\kappa\over 2}}\bigr)\|v\|_{\star,T}\;,
\end{equ}
yielding the additional conditions
\minilab{e:conditions}
\begin{equ}[e:cond4]
\alpha + \delta > \gamma + {3\kappa \over 2}\;,\quad \alpha > \beta + {3\kappa\over 2}\;.
\end{equ}

We now turn to the bound on $\|v_t\|_\infty$. It follows from \eref{e:boundSupv} that
\begin{equ}
\|(\CM v)_t\|_{\infty} \lesssim \|v\|_{\star,T} \int_0^t \bigl((t-s)^{-{\kappa\over 2}-{3\over 4}}s^{-\beta} + (t-s)^{-\kappa-{1\over 2}}s^{-\alpha}\bigr)\,ds\;,
\end{equ}
which yields the bound
\begin{equ}
\|\CM v\|_{4,T} \lesssim  \bigl(T^{{1\over 4}-{\kappa\over 2}} + T^{\beta-\alpha + {1\over 2} - \kappa}\bigr)\|v\|_{\star,T}\;,
\end{equ}
so that we have the additional condition
\minilab{e:conditions}
\begin{equ}[e:cond5]
\beta > \alpha - {1\over 2} + \kappa\;.
\end{equ}

The last bound turn out to be slightly less straightforward. Indeed, we obtain 
from Proposition~\ref{prop:diffMv} the bound
\begin{equs}
\|(\CM v)_t &- (\CM v)_s\|_{\infty} \lesssim  \|v\|_{\star,T} \int_0^s\int_s^t \Bigl({(q-r)^{-{7\over 4}-{\kappa\over 2}} \over r^{\beta}} + {(q-r)^{-{3\over 2}-\kappa}\over  r^{\alpha}}\Bigr)\,dq\,dr \\
&\quad + \|v\|_{\star,T} \int_s^t {dr\over r^\beta (t-r)^{{3\over 4}+{\kappa\over 2}}}+ \|v\|_{\star,T} \int_s^t {dr \over  r^{\alpha}(t-r)^{{1\over 2}+\kappa}}\;.\label{e:bounddiffInf}
\end{equs}
In order to bound the first term, we make use of the inequality
\begin{equ}
\int_s^t {dq \over (q-r)^{\zeta}} \lesssim {|t-s| \over  |s-r|^{\zeta}} \wedge |s-r|^{1-\zeta}
\le |t-s|^\delta |s-r|^{1-\delta-\zeta}\;,
\end{equ}
which is valid for every $\zeta > 1$, $\delta \in [0,1]$, and $r < s < t$. In particular, this implies that
\begin{equ}
\int_0^s\int_s^t \Bigl({(q-r)^{-{7\over 4}-{\kappa\over 2}} \over r^{\beta}} + {(q-r)^{-{3\over 2}-\kappa} \over r^{\alpha}}\Bigr)\,dq\,dr \lesssim |t-s|^\delta \bigl(s^{{1\over 4} - {\kappa\over 2} -\beta-\delta} + s^{{1\over 2}-\kappa - \alpha-\delta}\bigr)\;.
\end{equ}

A similar calculation allows to bound the terms on the second line of \eref{e:bounddiffInf}.
Indeed, for $\zeta, \eta \in (0,1)$, $\delta \in [0, 1-\zeta]$, and
$s < t$, one has the bound
\begin{equ}[e:boundTD1]
\int_s^t {dr\over r^\eta (t-r)^{\zeta}} \lesssim |t-s|^{1-\zeta}\bigl(s^{-\eta} \wedge |t-s|^{-\eta}\bigr) 
\lesssim |t-s|^\delta \bigl(1 \vee s^{1-\zeta-\eta-\delta}\bigr)\;.
\end{equ}

It follows from all of these considerations that, provided that
\minilab{e:conditions}
\begin{equ}[e:cond6]
 \delta  \le {1\over 4}-{\kappa\over 2}\;,
\end{equ}
one obtains the bound
\begin{equ}
\|\CM v\|_{5,T} \lesssim  \bigl(T^{\gamma -\beta-\delta + {1\over 4} - {\kappa\over 2} }
 + T^{\gamma - \alpha -\delta + {1\over 2} - \kappa}
\bigr)\|v\|_{\star,T}\;.
\end{equ}
As a consequence, we impose the condition
\minilab{e:conditions}
\begin{equ}[e:cond7]
\gamma > \Bigl(\beta + \delta - {1\over 4} + {\kappa\over 2}\Bigr) \vee \Bigl(\alpha + \delta - {1\over 2} + \kappa\Bigr)\;.
\end{equ}

It now remains to check that the conditions \eref{e:cond1}--\eref{e:cond7} can be satisfied
simultaneously for $\kappa$ small enough. For example, we can set
\begin{equ}[e:choiceExp]
\alpha = 1-2\kappa\;,\quad \beta = {1-\kappa \over 2}\;,\quad \gamma = 1-2\kappa\;,\quad \delta = 2\kappa\;.
\end{equ}
With these definitions, it is straightforward to check that the conditions \eref{e:cond1}--\eref{e:cond7}
are indeed satisfied, provided that one chooses $\kappa < {1\over 10}$.
\end{proof}

\begin{remark}
It follows from the proof of Proposition~\ref{prop:contraction} and from Proposition~\ref{prop:continuous} 
that the map $\CM$ is actually uniformly continuous on bounded sets.
\end{remark}

\subsection{Construction of solutions}
\label{sec:solutions}

We now have all the ingredients in place for the proof of our main uniqueness result.
We \textit{define} solutions to \eref{e:defv}
as solutions to the fixed point problem
\begin{equ}[e:fixedPointEquation]
v = \hat \CM(v)\;,
\end{equ}
where $\hat \CM$ is the nonlinear operator given by
\begin{equ}[e:fixedPoint]
\bigl(\hat \CM(v)\bigr)_t = P_t v_0 + \bigl(\CM G(v_{\cdot},\cdot)\bigr)_t + \d_x\int_0^t P_{t-s}F(v_s,s)\,ds \;,
\end{equ}
where $P_t$ denotes the heat semigroup. For fixed $t>0$, we will consider $ \bigl(\hat \CM(v)\bigr)_t$ as a path controlled
by $\Phi_t$ and we \textit{define} its derivative process as
\begin{equ}
\bigl(\hat \CM'(v)\bigr)_t = G(v_t,t)\;.
\end{equ}
We will assume in this section that the nonlinearity $F$ can be split into 
two parts $F = F_1 + F_2$, with different regularity properties. Our precise assumptions on $F_1$, $F_2$ and $G$ are 
summarised in the following three assumptions:

\begin{assumption}\label{ass:F1}
For every $t>0$, the map $F_1(\cdot,t)$ maps $\CC(S^1)$ into itself. Furthermore, it satisfies the bounds
\begin{equ}
\|F_1(v,t)\|_\infty \lesssim 1 + \|v\|_\infty^2\;,\;\; \|F_1(u,t)-F_1(v,t)\|_\infty \lesssim \|u-v\|_\infty \bigl(1 + \|u\|_\infty+ \|v\|_\infty\bigr)\;,
\end{equ}
for all $u$ and $v$ in $\CC(S^1)$, with a proportionality constant that is uniform over bounded time intervals.
\end{assumption}

\begin{assumption}\label{ass:F2}
There exists $\eta < {1\over 2}$ such that, for every $t>0$, the map $F_2(\cdot,t)$ maps 
$\CC(S^1)$ into $\CC^{-\eta}$. Furthermore, it satisfies the bounds
\begin{equ}
\|F_2(v,t)\|_{-\eta} \lesssim 1 + \|v\|_\infty\;,\quad \|F_2(u,t)-F_2(v,t)\|_{-\eta} \lesssim \|u-v\|_\infty \;,
\end{equ}
for all $u$ and $v$ in $\CC(S^1)$, with a proportionality constant that is uniform over bounded time intervals.
\end{assumption}

\begin{assumption}\label{ass:G}
For every $t>0$, the map $G(\cdot,t)$ maps $\CC(S^1)$ into itself. Furthermore, if $(v,v')$ is controlled by $\Phi_t$, then
this is also the case for $G(v,t)$, for some ``derivative process'' $G'(v,v',t)$. Denote by $R^v_t$ the remainder
for $(v,v')$  and by $R^G_t$ the remainder for $(G(v,t), G'(v,v',t))$. Then, there exists $\kappa \in (0,{1\over 4})$ such that, 
for every $\zeta \in (0,{1\over 2}-\kappa)$, one has the bounds
\begin{equ}
\|G(v,t)\|_{\zeta} \lesssim 1 + \|v\|_{\zeta}\;,\qquad \|G(u,t)-G(v,t)\|_{\zeta} \lesssim \|u-v\|_{\zeta} \;.
\end{equ}
Furthermore, for the same $\kappa > 0$, one has the bounds
\begin{equs}
\|G(v, t) - G(v,s)\|_\infty &\lesssim |t-s|^{2\kappa} \;,\\
\|G(v, t) - G(u,t) &- G(v,s) + G(u,s)\|_\infty \lesssim \|u-v\|_\infty\;,\\
\|G'(v,t)\|_{\CC^{3\kappa}} &\lesssim 1 + \|v'\|_{\CC^{3\kappa}}+ \|v\|_{\CC^{{1\over 2}-\kappa}} + \|R^v_t\|_{{1\over 2}+2\kappa}\;,\\
\|G'(u,u',t)-G'(v,v',t)\|_{\CC^{3\kappa}} &\lesssim  \|u'-v'\|_{\CC^{3\kappa}}+ \|u-v\|_{\CC^{{1\over 2}-\kappa}} + \|R^u_t-R^v_t\|_{{1\over 2}+2\kappa}\;,\\
\|R^{G(v)}_t\|_{{1\over 2}+2\kappa} &\lesssim 1 + \|v'\|_{\CC^{3\kappa}}+ \|v\|_{\CC^{{1\over 2}-\kappa}} + \|R^v_t\|_{{1\over 2}+2\kappa}\;,\\
\|R^{G(u)}_t(u)-R^{G(v)}_t(v)\|_{{1\over 2}+2\kappa} &\lesssim  \|u'-v'\|_{\CC^{3\kappa}}+ \|u-v\|_{\CC^{{1\over 2}-\kappa}} + \|R^u_t-R^v_t\|_{{1\over 2}+2\kappa}\;,
\end{equs}
with a proportionality constant that is uniform over bounded time intervals.
\end{assumption}

We now have all the necessary ingredients to solve \eref{e:fixedPoint} by a fixed point argument.

\begin{theorem}\label{theo:main}
Assume that there exist $\kappa < {1\over 12}$ and $\eta < {1\over 2} - 2\kappa$ such that Assumptions~\ref{ass:F1}--\ref{ass:G} hold.
Assume furthermore that $Y$, $\Phi$ and $\YY$ are as in \eref{e:defPhi} and \eref{e:consistencyXX} and that the bound 
\eref{e:normXY} holds for
some $\bar \kappa \in (0,\kappa)$.

Then,  for every initial condition $v_0 \in \CC^{\zeta -1}$ with $\zeta > 2\kappa$, there exists a choice of 
exponents $\alpha$, $\beta$, $\gamma$ and $\delta$ such that 
the nonlinear operator $\hat \CM$ maps $\CB_{\star,T}$ into itself for every $T>0$. Furthermore, there exists $T_\star > 0$ such 
that the fixed point equation \eref{e:fixedPointEquation} admits a solution in $\CB_{\star,T_\star}$, and this solution is unique.
\end{theorem}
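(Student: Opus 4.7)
The plan is to set up $\hat\CM$ as a strict contraction on a closed ball of $\CB_{\star,T}$ for $T$ small enough, and to invoke Banach's fixed point theorem. The three summands of $\hat\CM$ in \eref{e:fixedPoint} are estimated separately: the initial-datum term $P_tv_0$, the rough part $\CM G(v,\cdot)$, and the classical part $\d_x\int_0^t P_{t-s}F(v_s,s)\,ds$. The derivative process is dictated by the definition $\hat\CM(v)'_t = G(v_t,t)$, so the remainder of $\hat\CM(v)$ at time $t$ splits correspondingly.

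For the initial datum, with the choice \eref{e:choiceExp} of exponents and $v_0\in\CC^{\zeta-1}$, standard heat-kernel smoothing gives $\|P_tv_0\|_{{1\over2}+2\kappa}\lesssim t^{(\zeta-{3\over2}-2\kappa)/2}\|v_0\|_{\zeta-1}$ and analogous bounds in the other component norms; the singularities at $t=0$ are absorbed by the weights $t^\alpha,t^\beta,t^\gamma$ precisely because $\zeta>2\kappa$, and in fact one picks up a positive power $T^{\theta_0}$ beyond a fixed multiple of $\|v_0\|_{\zeta-1}$. The derivative process associated with this summand is taken to be $0$, so its remainder is just $\delta P_tv_0$, controlled by the same estimates.

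For the rough part, the strategy is to apply Proposition~\ref{prop:contraction} to the time-dependent controlled path $w_t:=G(v_t,t)$ with derivative process $w'_t=G'(v_t,v'_t,t)$ from Assumption~\ref{ass:G}. The five component seminorms of $\|w\|_{\star,T}$ are bounded linearly (plus a constant) by those of $\|v\|_{\star,T}$: the spatial H\"older, derivative-process, and remainder bounds come directly from the composition estimates of Assumption~\ref{ass:G}, while the temporal modulus $\|w\|_{5,T}$ is obtained by the splitting $G(v_t,t)-G(v_s,s)=(G(v_t,t)-G(v_s,t))+(G(v_s,t)-G(v_s,s))$, combining the Lipschitz bound in the first argument (applied with $\|v_t-v_s\|_\infty\lesssim s^{-\gamma}|t-s|^\delta\|v\|_{5,T}$) with the explicit H\"older-in-time bound of Assumption~\ref{ass:G}. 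Proposition~\ref{prop:contraction} then gives $\|\CM G(v,\cdot)\|_{\star,T}\lesssim T^\theta(1+\|v\|_{\star,T})$. For the classical part, the operator $\d_xP_{t-s}$ losing a half-derivative in an integrable way (via the kernel bounds of Section~\ref{sec:roughpaths}) lets us absorb the singularity of $F_2(v_s,s)\in\CC^{-\eta}$ thanks to $\eta<{1\over2}-2\kappa$; the quadratic growth of $F_1$ in $\|v\|_\infty$ is controlled by $\|v\|_{4,T}$ and only mildly increases the power of $T$ gained.

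\textbf{Contraction and conclusion.} All estimates above are bilinear in the input data in the sense that subtracting the identities for two candidates $u,v$ and invoking the Lipschitz versions of Assumptions~\ref{ass:F1}--\ref{ass:G} produces $\|\hat\CM(u)-\hat\CM(v)\|_{\star,T}\le C(R,\|v_0\|_{\zeta-1})\,T^\theta\,\|u-v\|_{\star,T}$ on any ball $\{\|\cdot\|_{\star,T}\le R\}$. Choosing $R$ large in terms of $\|v_0\|_{\zeta-1}$ and the quantity in \eref{e:normXY}, and then $T_\star$ small enough that $C(R,\cdot)T_\star^\theta<{1\over2}$ and the ball is left invariant, yields a unique fixed point in this ball; standard continuation together with the $T^\theta$ smallness gives uniqueness in all of $\CB_{\star,T_\star}$. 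The \emph{main obstacle} is verifying that $G(v,\cdot)$ meets the hypotheses of Proposition~\ref{prop:contraction} \emph{globally in time} with the correct scaling, especially producing the $\|\cdot\|_{5,T}$ bound from the weak time-regularity of $G$ in Assumption~\ref{ass:G} combined with only the supremum-norm control of $v$, rather than any spatial regularity of $v_t-v_s$; this is the point where the particular choice \eref{e:choiceExp} of $(\alpha,\beta,\gamma,\delta)$ and the restriction $\kappa<{1\over12}$ become essential.
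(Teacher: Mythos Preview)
Your proposal is correct and follows essentially the same approach as the paper: split $\hat\CM$ into the three summands, bound $P_tv_0$ via heat-kernel smoothing with zero derivative process, feed $G(v,\cdot)$ into Proposition~\ref{prop:contraction} after checking its $\|\cdot\|_{\star,T}$-norm is controlled linearly by that of $v$, and handle the classical part $\CN v$ by direct kernel estimates. One small correction of emphasis: the restriction $\kappa<{1\over12}$ is actually forced not by the $G$-part but by the time-regularity estimate for the classical term $\CN v$ involving $F_2$, where one needs $\delta<{1\over2}-{\eta\over2}$ in the application of \eref{e:boundTD1}; since $\delta=2\kappa$ and $\eta<{1\over2}-2\kappa$, this amounts to $\eta<1-8\kappa$, which is guaranteed precisely when $\kappa<{1\over12}$. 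The bound on $\|G(v,\cdot)\|_{5,T}$ that you flag as the main obstacle is in fact immediate from Assumption~\ref{ass:G} once $\delta=2\kappa$ is chosen.
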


\begin{proof}
We choose $\alpha$, $\beta$, $\gamma$ and $\delta$ as in \eref{e:choiceExp}.
With this choice, it suffices to show that there exists $T>0$ such that $\hat \CM$
maps some ball of $\CB_{\star, T}$ into itself and is a contraction there.

We first consider the first term in $\hat \CM$, namely $P_t v_0$. It follows from Proposition~\ref{prop:interpolation} that
one has the bounds
\begin{equ}
\|P_t v_0\|_{{1\over 2} + 2\kappa} \lesssim t^{-{3\over 4}} \|v_0\|_{\zeta-1} \;,\qquad
\|P_t v_0\|_{\infty} \lesssim t^{\kappa - {1\over 2}} \|v_0\|_{\zeta-1} \;,
\end{equ}
as well as
\begin{equs}
\|P_t v_0 - P_s v_0\|_{\infty} &= \bigl\|\bigl(P_{t-s} - 1\bigr) P_s v_0\bigr\|_{\infty} \lesssim |t-s|^{\delta} \|P_s v_0\|_{2\delta } \\
&\lesssim  |t-s|^\delta s^{\kappa - \delta - {1 \over 2} } \|v_0\|_{\zeta-1} \;. \label{e:boundTimeReg}
\end{equs}
Here, we made use of the fact that $\zeta > 2\kappa$ by assumption.
Since, by our assumptions, we have $\alpha > {3\over 4}$, $\beta > {1\over 2}-\kappa$, and $\gamma > {1\over 2}+\delta-\kappa$,
it follows that we have the bound
\begin{equ}
\|P_\cdot v_0\|_{\star, T} \lesssim T^\theta \|v_0\|_{\zeta - 1}\;,
\end{equ}
for some $\theta > 0$. (Note that we consider the derivative process of $P_t v_0$ to be simply $0$.)

In the next step, define a nonlinear map $\CN$ by 
\begin{equ}
\bigl(\CN v\bigr)_t = \d_x \int_0^t P_{t-s} F(v_s, s)\,ds\;.
\end{equ}
It then follows from Proposition~\ref{prop:interpolation} and the assumptions on $F$ that
\minilab{e:boundF}
\begin{equs}
\|(\CN v)_t\|_{{1\over 2} + 2\kappa} &\lesssim \int_0^t \Bigl((t-s)^{-\kappa - {3\over 4}} \|F_1(v_s,s)\|_\infty + (t-s)^{-{\eta \over 2} - {3\over 4} - \kappa}\|F_2(v_s,s)\|_{-\eta}\Bigr)\,ds \\
&\lesssim \bigl(1+\|v\|_{\star,T}\bigr)^2 \int_0^t \Bigl((t-s)^{-\kappa - {3\over 4}}s^{-2\beta} + (t-s)^{-{\eta \over 2} - {3\over 4} - \kappa}s^{-\beta}\Bigr)\,ds\\
&\lesssim \bigl(1+\|v\|_{\star,T}\bigr)^2 \Bigl(T^{{1\over 4}-\kappa-2\beta} + T^{{1\over 4} -{\eta \over 2} - \kappa-\beta}\Bigr)\;.\label{e:boundF1}
\end{equs}
Similarly, the supremum norm is bounded by
\minilab{e:boundF}
\begin{equ}[e:boundF2]
\|(\CN v)_t\|_{\infty} \lesssim \bigl(1+\|v\|_{\star,T}\bigr)^2 \Bigl(T^{{1\over 2}-2\beta} + T^{{1\over 2} -{\eta \over 2}-\beta}\Bigr)\;.
\end{equ}
Regarding the time regularity bound, we have as in \eref{e:boundTimeReg} the bound
\begin{equs}
\|(\CN v)_t - (\CN v)_s\|_{\infty} &\lesssim \int_s^t  \Bigl((t-r)^{- {1\over 2}} \|F_1(v_r,r)\|_\infty + (t-r)^{-{\eta \over 2} - {1\over 2}}\|F_2(v_r,r)\|_{-\eta}\Bigr)\,dr \\
&\quad +  |t-s|^\delta \int_0^s  (s-r)^{- {1\over 2}-\delta} \|F_1(v_r,r)\|_\infty\,dr\\
&\quad +  |t-s|^\delta \int_0^s  (s-r)^{-{\eta \over 2} - {1\over 2}-\delta}\|F_2(v_r,r)\|_{-\eta}\,dr\;. \label{e:boundN}
\end{equs}
Making use of the bound \eref{e:boundTD1} and otherwise proceeding as before, we conclude that
\minilab{e:boundF}
\begin{equ}[e:boundF3]
\|(\CN v)_t - (\CN v)_s\|_{\infty} \lesssim |t-s|^\delta \bigl(1+\|v\|_{\star,T}\bigr)^2 \Bigl(T^{{1\over 2}-2\beta-\delta} + T^{{1\over 2} -{\eta \over 2}-\beta-\delta}\Bigr)\;.
\end{equ}
Note here that, since $\kappa < {1\over 12}$ by assumption, we have $\eta < 1-8\kappa$. This ensures that the bound $\delta < {1\over 2}-{\eta \over 2}-\delta$,
which is required in \eref{e:boundTD1}, does indeed hold.

Collecting the bounds from \eref{e:boundF}, it is lengthy but straightforward to check that, thanks to our assumptions on $\kappa$, $\eta$, and $\delta$,
there exists $\theta>0$ such that one does have the bound
\begin{equ}
\|\CN v\|_{\star,T} \lesssim T^\theta \bigl(1+\|v\|_{\star,T}\bigr)^2\;.
\end{equ}
Similarly, one can verify in exactly the same way that one also has the bound
\begin{equ}
\|\CN u - \CN v\|_{\star,T} \lesssim T^\theta \|u-v\|_{\star,T} \bigl(1+\|u\|_{\star,T}+\|v\|_{\star,T}\bigr)\;.
\end{equ}
Furthermore, the assumptions on the map $G$ are set up precisely in such a way that one has
\begin{equ}
\|G(v_\cdot,\cdot)\|_{\star,T} \lesssim 1 + \|v\|_{\star,T}\;,\quad
\|G(u_\cdot,\cdot) - G(v_\cdot,\cdot)\|_{\star,T} \lesssim \|u-v\|_{\star,T}\;.
\end{equ}
Combining this with Proposition~\ref{prop:contraction}, as well as the bounds on $\CN$ and $P_t v_0$ that we just obtained,
we conclude that 
\begin{equs}[e:boundMhat]
\|\hat \CM v\|_{\star,T} &\lesssim T^\theta \bigl(1+\|v\|_{\star,T}\bigr)^2\;,\\ 
\|\hat \CM u-\hat \CM v\|_{\star,T} &\lesssim T^\theta \bigl(1+\|u\|_{\star,T}+\|v\|_{\star,T}\bigr)\|u-v\|_{\star,T}\;. 
\end{equs}
It follows immediately that, for $T>0$ small enough, there exists a ball around the origin in $\CB_{\star,T}$ which
is left invariant by $\hat \CM$ and such that $\hat \CM$ admits a unique fixed point in this ball.

The uniqueness of this fixed point in all of $\CB_{\star,T}$ now follows from the following argument. 
Denote by $T_\star$ and $v_\star$ the time horizon and fixed point that were just constructed and assume that there
exists a fixed point $v \neq v_\star$ for $\hat \CM$. Note now that, by the definition of the norm $\|\cdot\|_{\star,T}$, the natural
restriction operator from $\CB_{\star,T_\star}$ to $\CB_{\star,T}$ is a contraction for every $T<T_\star$.
Since it follows from \eref{e:boundMhat} that there exists some $T \in (0, T_\star)$ such that $\hat \CM$ is a contraction in
the ball of radius $\|v\|_{\star, T_\star}$ in $\CB_{\star,T}$, this shows that on the interval $[0,T]$, $v$ must agree with $v_\star$.
The uniqueness claim then follows by iterating this argument.
\end{proof}

%
%

Once we do have a unique solution to a PDE, we can perform the usual kind of bootstrapping argument to improve the
regularity estimates provided ``for free'' by the fixed point argument. In our case, we can certainly not expect the solution $v$ to be
more regular than the process $\Phi$, which in turn cannot be expected to be more regular than $Y$. However, it is possible to
slightly improve the regularity estimates for the remainder term $R^v_t(x,y)$ defined in \eref{e:defRt}. Our current bounds
show that $\|R^v_t\|_{{1\over 2}+2\kappa} < \infty$, which is not a very good bound in general.

Given the (lack of) regularity of $F_1$, we certainly do not expect $\|R^v_t\|_1$ to be finite, but
it turns out that this can be approached arbitrarily close:

\begin{proposition}\label{prop:maxReg}
Let the assumptions of Theorem~\ref{theo:main} hold, let $v$ be the unique maximal solution to \eref{e:fixedPoint} with lifetime $T_\star$. Then, for $0 < s < t < T_\star$, one has
\begin{equ}[e:boundtimereg]
\|v_t - v_s\|_\infty \lesssim |t-s|^{\gamma}\;,
\end{equ}
for every $\gamma < {1\over 4} - {\bar \kappa \over 2}$. The proportionality constant is uniform over every
compact time interval in $(0,T_\star)$. 
Furthermore, one has
\begin{equ}
\|R_t^v\|_{\bar \gamma} < \infty\;,
\end{equ}
for every $\bar \gamma < 1 - (2\bar \kappa \vee \eta)$ and every $t \in (0,T_\star)$.
\end{proposition}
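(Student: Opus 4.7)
The plan is to bootstrap on the mild formulation \eref{e:fixedPointEquation}, working on a compact subinterval $[t_0, T] \subset (0, T_\star)$ where all $\star$-norms of $v$ are uniformly bounded. A preliminary observation is that the fixed point construction of Theorem~\ref{theo:main} applies for any $\kappa' \in (\bar\kappa, 1/12)$; a uniqueness argument (pass through smooth approximating data, where the equation is classical, and use the continuity established in Proposition~\ref{prop:contraction}) identifies the resulting maximal solutions with $v$. Thus for any such $\kappa'$ the $\star$-norm bounds are available with $\kappa$ replaced by $\kappa'$, uniformly on $[t_0, T]$. This flexibility in choosing $\kappa'$ close to $\bar\kappa$ is what allows us to beat the exponents coming directly from the fixed point setup.

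Write $v_t = P_t v_0 + (\CM G(v_\cdot,\cdot))_t + I_t$ with $I_t = \d_x \int_0^t P_{t-s} F(v_s, s)\,ds$, and recall that $v'_t = G(v_t,t)$ serves as derivative process both for $v$ and for $\CM G(v_\cdot,\cdot)$, while $P_t v_0$ and $I_t$ contribute nothing to the derivative part. For the time regularity \eref{e:boundtimereg} I would estimate the three pieces separately: $\|P_t v_0 - P_s v_0\|_\infty \lesssim |t-s|$ by smoothing on $[t_0, T]$; an adaptation of \eref{e:boundN} using Assumptions~\ref{ass:F1}--\ref{ass:F2} gives $\|I_t - I_s\|_\infty \lesssim |t-s|^{(1-\eta)/2 - \varepsilon}$ for every $\varepsilon > 0$, which exceeds $1/4 - \bar\kappa/2$ since $\eta < 1/2$; and Proposition~\ref{prop:diffMv} applied with parameter $\kappa'$ close to $\bar\kappa$ produces $|t-s|^{1/4 - \kappa'/2}$, the bottleneck being $\int_s^t (t-r)^{-3/4 - \kappa'/2}\,dr$. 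Sending $\kappa' \downarrow \bar\kappa$ yields the desired exponent.

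For the remainder bound, identifying the derivative processes gives the decomposition
\begin{equ}
R_t^v(x,y) = \delta (P_t v_0)(x,y) + R_t^{\CM G}(x,y) + \delta I_t(x,y)\;.
\end{equ}
The first term is smooth in $(x,y)$ for $t \ge t_0$. For the third, direct heat-kernel estimates (in the spirit of \eref{e:boundF1}) yield $\|I_t\|_{\bar\gamma} \lesssim t^{(1-\bar\gamma)/2} + t^{(1-\bar\gamma-\eta)/2}$, finite whenever $\bar\gamma < 1 - \eta$. For $R_t^{\CM G}$, I would re-examine the proof of Proposition~\ref{prop:boundRemainder}, replacing the target H\"older exponent $1/2 + 2\kappa$ by a general $\bar\gamma$ and the internal parameter $\kappa$ by $\kappa'$: the $T^1$ bound from Proposition~\ref{prop:fracDer} produces an $s$-integrable singularity exactly when $\bar\gamma < 1 - 2\kappa'$; the $T^2$ bound is absorbed using the improved time regularity \eref{e:boundtimereg} to compensate the extra $(t-s)$-singularity of the heat-kernel difference; and $T^3$ is handled by smoothing of $P_t \Phi_0$. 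Choosing $\kappa'$ arbitrarily close to $\bar\kappa$ then gives $\bar\gamma < 1 - 2\bar\kappa$, and combining with the $I_t$ bound delivers $\bar\gamma < 1 - (2\bar\kappa \vee \eta)$ as claimed.

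The hard part will be the bookkeeping in the reproof of Propositions~\ref{prop:diffMv} and~\ref{prop:boundRemainder} with the finer exponents, and in particular checking that at each application of Proposition~\ref{prop:fracDer}, Corollary~\ref{cor:fracDer} or Corollary~\ref{cor:heatBasic} the relevant norms appearing in $\KK^{\kappa'}$ do remain bounded on $[t_0,T]$: the H\"older-exponent monotonicity on the torus ensures this for $\|Y\|_{{1\over 2}-\kappa'}$, $\|\YY\|_{1-2\kappa'}$, $\|v'\|_{\CC^{3\kappa'}}$ and $\|R^v\|_{{1\over 2}+2\kappa'}$, but the control of $\|v\|_{{1\over 2}-\kappa'}$ (where a smaller $\kappa'$ demands a \emph{stronger} norm) is precisely what is secured by the uniqueness-based bootstrap in the first paragraph.
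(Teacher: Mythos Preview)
Your proposal is correct but takes a more circuitous route than the paper. The paper does not rerun the fixed point for varying $\kappa'$ or invoke any uniqueness argument across different $\kappa'$-spaces. Instead, after restarting at a positive time (as you also do), it obtains the crucial bound $\|v_t\|_{{1\over 2}-\bar\kappa}<\infty$ directly from the controlled-path decomposition $\delta v_t = v_t'\,\delta\Phi_t + R_t^v$: since $\Phi_t \in \CC^{{1\over 2}-\bar\kappa}$ by assumption \eref{e:normXY} and both $\|v_t'\|_\infty$ and $\|R_t^v\|_{{1\over 2}+2\kappa}$ are already controlled from the fixed point, one reads off $\|v_t\|_{{1\over 2}-\bar\kappa} \lesssim \|v_t'\|_\infty \|\Phi_t\|_{{1\over 2}-\bar\kappa} + \|R_t^v\|_{{1\over 2}+2\kappa}$. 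This single observation replaces your entire first paragraph and supplies exactly the input $\KK_s^{\bar\kappa}<\infty$ needed to apply Propositions~\ref{prop:diffMv} and~\ref{prop:boundRemainder} with the sharp parameter $\bar\kappa$ in place of $\kappa$. Your detour through smooth approximation and identification of fixed points can be made to work, but it is heavier: the spaces $\CB_{\star,T}$ for different $\kappa'$ are not simply nested (the $v'$ and $R^v$ components go the wrong way), and Assumption~\ref{ass:G} is phrased for one fixed $\kappa$, so rerunning Theorem~\ref{theo:main} at $\kappa'$ strictly speaking requires reverifying the nonlinearity hypotheses at that value. Otherwise your treatment of the three pieces $P_t v_0$, $\CM G(v_\cdot,\cdot)$, $I_t$ and the bookkeeping for the remainder match the paper's argument.
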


\begin{proof}
Since it is possible to concatenate solutions to \eref{e:fixedPoint}, we can restart the solution at some
positive time. As a consequence, since we know that the solution belongs to $\CB_{\star,T}$, we can assume that
$\|v_t\|_{{1\over 2} - \kappa}$ and $\|R_t^v\|_{{1\over 2} + 2\kappa}$ are bounded uniformly in time.
Since we furthermore know that $v_t$ is controlled by $\Phi_t$ with remainder $R_t^v$,
we obtain that actually $\|v_t\|_{{1\over 2} - \bar \kappa}$ is bounded.
Furthermore, since $v_t' = G(v_t,t)$ by construction, we also have $\|v_t'\|_{{1\over 2} - \bar \kappa}$ uniformly bounded.

It then follows form \eref{e:boundN} that
\begin{equ}
\|\CN v_t - \CN v_s\| \lesssim |t-s|^\gamma\;,
\end{equ}
provided that $\gamma < {1\over 2} - {\eta\over 2}$. It follows from Proposition~\ref{prop:diffMv} that a similar bound 
holds for the term $\CM G(v_t,t)$, provided that $\gamma < {1\over 4} - {\bar \kappa \over 2}$, so that the first bound follows.

For the second bound, it follows from Proposition~\ref{prop:interpolation} that the bound holds for $\CN v_t$.
To show that it also holds for $\CM G(v_t,t)$, it suffices to apply Proposition~\ref{prop:boundRemainder} 
by noting that the right hand side of that bound is integrable
as soon as $\kappa <  {1\over 4} - \bar \kappa$, thanks to the bound \eref{e:boundtimereg}.
\end{proof}

An important special case is given by the case when 
\begin{equ}
G(v,t) = v + w_t\;,
\end{equ}
for some fixed process $w$ such that $w_t$ is controlled by $\Phi_t$ for every $t$ with
\begin{equ}[e:asswt]
\sup_{t \le T} \|w'_t\|_{\CC^{3\kappa}} < \infty\;,\quad \sup_{t \le T} \|R_t^w\|_{{1\over 2}+2\kappa} < \infty\;,\quad
\sup_{s,t \le T} {\|w_t-w_s\|_{\infty}\over |t-s|^\kappa} < \infty\;,
\end{equ}
One then has:

\begin{proposition}\label{prop:expPhi}
Let the assumptions of Theorem~\ref{theo:main} hold and let $G(v,t) = cv+w_t$ with $w$ as above and $c \in \R$. 
Let  $v$ be the unique maximal solution to \eref{e:fixedPoint} with lifetime $T_\star$. Then, for every $t \in (0,T_\star)$, one has the decomposition
\begin{equ}[e:defRtvv]
e^{-c\Phi_t(x)} v_t(x) = \iint_0^x e^{-c\Phi_t(z)} w_t(z) \,d\Phi_t(z) + R_t(x)\;,
\end{equ}
with $R_t \in \CC^{\bar \gamma}$ for every $\bar \gamma < 1 - (2\bar \kappa \vee \eta)$.
\end{proposition}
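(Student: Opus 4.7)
The strategy is to verify that $u_t(x) := e^{-c\Phi_t(x)} v_t(x)$ satisfies a rough ``chain rule'' whose right-hand side, after integration in $x$ from $0$, is precisely $\iint_0^x e^{-c\Phi_t(z)} w_t(z)\,d\Phi_t(z)$ modulo an error in $\CC^{\bar\gamma}$. The algebraic fact that drives the proof is that, since $v_t$ is controlled by $\Phi_t$ with derivative process $v_t' = c v_t + w_t$, the derivative of the product $e^{-c\Phi_t} v_t$ with respect to $\Phi_t$ equals
\begin{equ}
-c e^{-c\Phi_t}\cdot v_t + e^{-c\Phi_t}\cdot (c v_t + w_t) = e^{-c\Phi_t}\,w_t\;,
\end{equ}
the two $c v_t$ contributions cancelling exactly; this is precisely the derivative process on the right-hand side of \eref{e:defRtvv}.

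First, Proposition~\ref{prop:maxReg} provides $R^v_t \in \CC_2^{\bar\gamma}$ for every $\bar\gamma < 1 - (2\bar\kappa \vee \eta)$ and $v_t \in \CC^{1/2 - \bar\kappa}$, uniformly on compact subintervals of $(0,T_\star)$. A pointwise Taylor expansion of the exponential shows that $e^{-c\Phi_t}$ is controlled by $\Phi_t$ with derivative $-c e^{-c\Phi_t}$ and remainder in $\CC_2^{1-2\bar\kappa}$; the standard product bound for controlled paths, combined with the assumptions \eref{e:asswt} on $w_t$, then shows that $e^{-c\Phi_t} w_t$ is controlled by $\Phi_t$ with derivative $e^{-c\Phi_t}(w_t' - c w_t)$. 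Using the canonical self-area $\frac{1}{2}(\delta\Phi_t(x,y))^2$ for $\Phi_t$, Theorem~\ref{theo:integral} both defines $\iint_0^x e^{-c\Phi_t(z)}w_t(z)\,d\Phi_t(z)$ and yields the expansion
\begin{equ}
\iint_x^y e^{-c\Phi_t(z)} w_t(z)\,d\Phi_t(z) = e^{-c\Phi_t(x)} w_t(x)\,\delta\Phi_t(x,y) + \frac{1}{2} e^{-c\Phi_t(x)}(w_t'-cw_t)(x)\,(\delta\Phi_t(x,y))^2 + r_t(x,y)\;,
\end{equ}
with $r_t \in \CC_2^{\gamma_\star}$ for some $\gamma_\star > 1 - (2\bar\kappa \vee \eta)$.

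The core computation expands $\delta u_t(x,y)$ via $\delta(fg) = f\,\delta g + g\,\delta f + \delta f\,\delta g$, substituting $\delta v_t = (c v_t + w_t)\,\delta\Phi_t + R^v_t$ and the second-order Taylor expansion of $e^{-c\Phi_t}$. The cancellation noted above forces the $\delta\Phi_t$-coefficient to agree with that in the rough-integral expansion. The $(\delta\Phi_t)^2$-order terms come from three sources: the second-order Taylor contribution to $v_t(x)\,\delta(e^{-c\Phi_t})(x,y)$, the cross product $\delta(e^{-c\Phi_t})\,\delta v_t$, and, with opposite sign, the area correction in the rough integral. A direct calculation combining these three contributions yields a term of the form $\phi_t(x)\,(\delta\Phi_t(x,y))^2$, where $\phi_t$ is a bounded function of $x$ expressible in terms of $v_t$, $w_t$, $w_t'$ and $e^{-c\Phi_t}$, hence an element of $\CC_2^{1-2\bar\kappa}$. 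The only remaining genuinely rough contribution is $e^{-c\Phi_t(x)}\,R^v_t(x,y) \in \CC_2^{\bar\gamma}$. Summing, $\delta R_t(x,y) = \delta u_t(x,y) - \iint_x^y e^{-c\Phi_t(z)} w_t(z)\,d\Phi_t(z) \in \CC_2^{\bar\gamma}$, which is exactly the claim $R_t \in \CC^{\bar\gamma}$.

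The main obstacle is the algebraic bookkeeping in the expansion: three distinct mechanisms generate $(\delta\Phi_t)^2$-order contributions, and one must verify that, after combining them, the net coefficient is a bounded function of $x$, so that the estimate $(\delta\Phi_t(x,y))^2 \lesssim |y-x|^{1-2\bar\kappa}$ produces a regular remainder rather than an uncontrolled term. The precise form of the derivative process $v_t' = c v_t + w_t$, together with the chain-rule cancellation at the $\delta\Phi_t$ level, is what makes this work. A secondary point is that all controlled-path norms involved must be uniform in $t$ on compact subintervals of $(0,T_\star)$, which follows from Theorem~\ref{theo:main}, Proposition~\ref{prop:maxReg}, and \eref{e:asswt}.
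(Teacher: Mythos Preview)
Your proof is correct and follows essentially the same approach as the paper: both compute the increment of $e^{-c\Phi_t}v_t$ from the controlled-path expansion $\delta v_t = (cv_t+w_t)\,\delta\Phi_t + R^v_t$, exploit the cancellation of the $cv_t$ contributions at first order, and compare with the increment of the rough integral. The only difference is cosmetic: rather than tracking the $(\delta\Phi_t)^2$ contributions explicitly as you do, the paper rewrites $v_t(x)\bigl(1+c\,\delta\Phi_t(x,y)\bigr)$ as $v_t(x)\,e^{c\,\delta\Phi_t(x,y)}$ modulo a $\CC_2^{1-2\bar\kappa}$ remainder and then multiplies through by $e^{-c\Phi_t(y)}$, which absorbs all second-order terms into the remainders in one stroke.
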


\begin{remark}
The rough integral appearing on the right hand side is well-posed since, by assumption, 
 $w_t$ is controlled by $\Phi_t$, so that the same is true for the integrand in \eref{e:defRtvv}.
\end{remark}

\begin{remark}
It is not guaranteed that $\iint_0^{2\pi} e^{-c\Phi_t(z)} w_t(z) \,d\Phi_t(z) = 0$, so the two functions
appearing in the right hand side of \eref{e:defRtvv} are not necessarily periodic. This is irrelevant however,
since one can easily rectify this by adding to each of them a suitable multiple of $x$.
\end{remark}

\begin{proof}[of Proposition~\ref{prop:expPhi}]
Setting $\tilde v_t(x) = \iint_0^x e^{-c\Phi_t(z)} w_t(z) \,d\Phi_t(z)$,  it follows from \eref{e:asswt} and 
Theorem~\ref{theo:integral} that
\begin{equ}[e:boundvtilde]
\delta \tilde v_t(x,y) = e^{-c\Phi_t(x)} w_t(x)  \delta \Phi_t(x,y) + R_t^{\tilde v}(x,y)\;,
\end{equ}
with $\|R_t^{\tilde v}\|_{1-2\bar \kappa} < \infty$.

On the other hand, we know from Proposition~\ref{prop:maxReg} that 
\begin{equ}
\delta v_t(x,y) = \bigl(cv_t(x) + w_t(x)\bigr) \delta \Phi_t(x,y) + R_t^{v}(x,y)\;,
\end{equ}
with $\|R_t^v\|_{\bar \gamma} < \infty$. In particular, this implies that
\begin{equs}
v_t(y) &= v_t(x) \bigl(1 +  c\delta \Phi_t(x,y)\bigr) + w_t(x)\, \delta \Phi_t(x,y) + R_t^{v}(x,y) \\
&= v_t(x) e^{c\delta \Phi_t(x,y)} + w_t(x)\, \delta \Phi_t(x,y) + \tilde R_t^{v}(x,y) \;,
\end{equs}
where we also have $\|\tilde R_t^v\|_{\bar \gamma} < \infty$. Multiplying both sides by $e^{-c\Phi_t(y)}$ and
subtracting \eref{e:boundvtilde} from the resulting expression, 
we obtain the identity
\begin{equs}
e^{-c\Phi_t(y)} v_t(y) - e^{-c\Phi_t(x)} v_t(x) &= \iint_x^y e^{-c\Phi_t(z)} w_t(z) \,d\Phi_t(z) \\
&\qquad +  e^{-c\Phi_t(y)}\tilde R_t^{v}(x,y) - R_t^{\tilde v}(x,y)\;.
\end{equs}
It follows that the function $R_t$ defined in \eref{e:defRtvv} satisfies the identity $\delta R_t(x,y) = e^{-c\Phi_t(y)}\tilde R_t^{v}(x,y) - R_t^{\tilde v}(x,y)$,
so that  the claim follows at once.
\end{proof}

\section{Construction of the universal process}
\label{sec:constructX}

The aim of this section is to prove the convergence of the processes $Y^\tau_\eps$ to some
limiting processes $Y^\tau$. Actually, it turns out that the constant Fourier mode requires a separate treatment, 
so we only consider the centred processes $X^\tau_\eps$ here, which were defined in \eref{e:defXtau}.
The aim of this section is to show that, for every binary tree $\tau$, there exists a process $X^\tau$ such that
\begin{equ}
X^\tau = \lim_{\eps \to 0} X^\tau_\eps\;,
\end{equ}
in a suitable sense, and to obtain quantitative estimates on $X^\tau$.

\subsection[Construction of first level]{Construction of $X^\2$.}

A crucial observation for the sequel is that, for $k \neq 0$, the covariance of the Fourier modes of $X_\eps^\1$ is  given by
\begin{equ}
\E X^\1_{\eps,k}(s) X^\1_{\eps,\ell}(t) = \delta_{k,-\ell} {\phi^2(\eps k)\over k^2} \exp(-k^2 |t-s|)\;.
\end{equ}
Since $X_\eps$ and $X_\eps^\1$ will virtually always arise via their spatial derivatives, it will be convenient
to introduce a notation for this. We therefore define $\bar X^\tau \eqdef \d_x X^\tau$ as in \eref{e:defYbar}, and similarly for $\bar X^\tau_\eps$,
so that one has the identity
\begin{equ}[e:covZ1]
\E \bar X^\1_{\eps,k}(s) \bar X^\1_{\eps,\ell}(t) = \delta_{k,-\ell} \phi^2(\eps k) \exp(-k^2 |t-s|)\;,
\end{equ}
provided that $k \neq 0$.
This is where our convention \eref{e:convention} shows its advantage: this choice of normalisation for the driving noise
ensures that we do not have any
constant prefactor appearing in \eref{e:covZ1} so that, except for the constant mode, the space-time 
correlation function of $\bar X^\1$ is precisely equal
to the heat kernel.

With these notations at hand, the process $X^\2_\eps$ is given as the stationary solution to
\begin{equ}[e:equX2eps]
\d_t X^\2_\eps = \d_x^2 X^\2_\eps + \Pi_0^\perp |\bar X^\1_\eps|^2\;,
\end{equ}
so that its Fourier modes are given for $k\neq 0$ by the identity
\begin{equ}[e:X2eps]
X^\2_{\eps,k}(t) = \int_{-\infty}^t e^{-k^2(t-s)} \sum_{\ell\in \Z} \bar X^\1_{\eps,\ell}(s) \bar X^\1_{\eps,{k-\ell}}(s)\,ds\;.
\end{equ}

%
We now show that $X^\2_\eps$ converges to a limiting process $X^\2$
in the following sense:
\begin{proposition}\label{prop:procX1}
There exists a process $X^\2$ such that the weak convergence $X^\2_\eps \to X^\2$ takes place in $\CC([-T,T],\CC^\alpha)\cap \CC^\beta([-T,T],\CC)$
for every $\alpha < 1$, every $\beta < {1\over 2}$, and every $T>0$.
\end{proposition}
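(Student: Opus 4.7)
The plan is to work at the level of Fourier coefficients, exploiting the fact that $X^\2_\eps$ lives entirely in the second Wiener chaos with respect to $\xi$, so that equivalence of moments allows us to bound all $L^p(\Omega)$-norms by the $L^2(\Omega)$-norm. The key computations are then explicit second-moment calculations based on \eref{e:X2eps} and Wick's theorem, and the final upgrade from $L^2$-bounds to pointwise Hölder continuity (in space and time) goes through a standard Kolmogorov / Besov-embedding argument.

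First, fix $k\neq 0$ and use \eref{e:covZ1} together with Wick's theorem to compute
\begin{equ}
\E |X^\2_{\eps,k}(t)|^2 = {1 \over k^2}\sum_{\ell \in \Z} {\phi^2(\eps \ell)\,\phi^2(\eps(k-\ell)) \over k^2 + \ell^2 + (k-\ell)^2}\;.
\end{equ}
(The Wick contraction that pairs $\ell \leftrightarrow k-\ell$ at equal times is killed by $\Pi_0^\perp$ since $k \neq 0$; the two remaining pairings give identical contributions after the time integrations.) Using $k^2+\ell^2+(k-\ell)^2 = 2((\ell-k/2)^2 + 3k^2/4)$, the sum is uniformly bounded by $C|k|^{-1}$, so
$\E|X^\2_{\eps,k}(t)|^2 \lesssim |k|^{-3}$, uniformly in $\eps$. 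A completely analogous computation for
$\E|X^\2_{\eps,k}(t) - X^\2_{\eps,k}(s)|^2$ uses the identity
$|e^{-k^2(t-s)}-1| \le (k^2|t-s|)^{2\beta}$ valid for $\beta \in [0,\hf]$ to produce the bound
\begin{equ}
\E|X^\2_{\eps,k}(t) - X^\2_{\eps,k}(s)|^2 \lesssim |t-s|^{2\beta}\,|k|^{-3+4\beta}\;.
\end{equ}

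Second, to prove Cauchy-ness, note that $\phi$ is smooth with $\phi(0)=1$, so for any $\kappa \in [0,1]$ one has $|\phi(\eps \ell)\phi(\eps(k-\ell)) - \phi(\eps' \ell)\phi(\eps'(k-\ell))| \lesssim (|\eps|\vee|\eps'|)^\kappa (|\ell|+|k-\ell|)^\kappa$. Inserting this into the analogue of the above covariance calculation for the difference $X^\2_{\eps,k}-X^\2_{\eps',k}$ (which again lies in the second chaos) yields
\begin{equ}
\E|X^\2_{\eps,k}(t) - X^\2_{\eps',k}(t)|^2 \lesssim (|\eps|\vee|\eps'|)^{2\kappa} |k|^{-3+2\kappa}\;,
\end{equ}
for any sufficiently small $\kappa > 0$, and similarly for its time-increments.

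Third, since each coefficient lies in the second chaos, Nelson's hypercontractivity yields $\E|\,\cdot\,|^{2p} \lesssim_p (\E|\,\cdot\,|^2)^p$ for all $p\ge 1$. Choosing $p$ large enough and summing the Fourier bounds above, we obtain for every $\alpha < 1$, every $\beta < \hf$, every $p < \infty$, and $\kappa > 0$ small depending on $\alpha,\beta$,
\begin{equ}
\E \bigl( \|X^\2_\eps - X^\2_{\eps'}\|_{\CC([-T,T],\CC^\alpha)}^{2p} + \|X^\2_\eps - X^\2_{\eps'}\|_{\CC^\beta([-T,T],\CC)}^{2p} \bigr) \lesssim (|\eps|\vee|\eps'|)^{2p\kappa}\;,
\end{equ}
via a Kolmogorov continuity argument (equivalently, a Besov embedding of $B^{\alpha'}_{2p,2p}$ into $\CC^\alpha$ for $\alpha' > \alpha + 1/(2p)$ in the spatial variable, and the analogous one-dimensional embedding in time). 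This shows that $(X^\2_\eps)$ is Cauchy in $L^{2p}(\Omega, \CC([-T,T],\CC^\alpha)\cap \CC^\beta([-T,T],\CC))$ for every $p$, hence in probability, and one sets $X^\2$ to be its limit.

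The conceptually routine but computationally delicate step is the first one, getting the explicit bound $|k|^{-3}$ on the diagonal second moment with the right constants — this is where one needs to correctly enumerate the Wick pairings and verify that the $\Pi_0^\perp$ projection removes precisely the single divergent ``tadpole'' contraction. Once this is in place, everything else is a straightforward extrapolation: hypercontractivity upgrades $L^2$ to $L^p$, and Kolmogorov upgrades $L^p$-Fourier bounds to pointwise Hölder continuity, both in the space and in the time variable.
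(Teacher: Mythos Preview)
Your approach is correct and is essentially the one the paper takes: the paper simply packages the second-moment computation, hypercontractivity, and Kolmogorov step into the general criterion Proposition~\ref{prop:generalConvergence}, and then verifies the two summability conditions \eref{e:assumFalpha} by explicitly computing the covariance kernel
\begin{equ}
K_\kappa(s,t) = {k^2 e^{-(\ell^2 + (k-\ell)^2)|t-s|} - (\ell^2 + (k-\ell)^2) e^{-k^2|t-s|} \over k^2 \bigl(k^2 - \ell^2 - (k-\ell)^2\bigr)\bigl(k^2 + \ell^2 + (k-\ell)^2\bigr)}\;.
\end{equ}
One small caution on your time-increment step: the bound $\E|X^\2_{\eps,k}(t)-X^\2_{\eps,k}(s)|^2 \lesssim |t-s|^{2\beta}|k|^{4\beta-3}$ does not follow from $|e^{-k^2(t-s)}-1|\le (k^2|t-s|)^{2\beta}$ alone, because the covariance carries \emph{two} exponential terms and the prefactor $k^2-\ell^2-(k-\ell)^2$ in the denominator can vanish; the paper handles this cancellation via Proposition~\ref{prop:bounddiff}, and if you instead split $X^\2_{\eps,k}(t)-X^\2_{\eps,k}(s)$ into $(e^{-k^2(t-s)}-1)X^\2_{\eps,k}(s)$ plus a short-time integral, the latter piece still needs a short interpolation argument rather than a one-line estimate.
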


Before we proceed to the proof, we recall  
Wick's theorem (sometimes also called Isserlis's theorem) on the higher order moments of Gaussian random variables:
\begin{proposition}\label{prop:momentGaussian}
Let $T$ be a finite index set and let $\{X_\alpha\}_{\alpha \in T}$ be a collection of real or complex-valued 
centred jointly Gaussian random variables. Then,
\begin{equ}
\E \prod_{\alpha \in T} X_\alpha = \sum_{P \in \CP(T)} \prod_{\{\alpha,\beta\} \in P} \E X_\alpha X_\beta\;.
\end{equ}
\end{proposition}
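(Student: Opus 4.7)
The plan is to reduce to the real-valued case, dispose of the odd-order case by symmetry, and handle the even-order case by induction using Gaussian integration by parts.

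First, if the $X_\alpha$ are complex-valued, I would write $X_\alpha = U_\alpha + i V_\alpha$ with $\{U_\alpha, V_\alpha\}_{\alpha \in T}$ centred real jointly Gaussian. Expanding $\prod_\alpha X_\alpha$ produces $2^{|T|}$ monomials in the $U_\alpha, V_\alpha$, each of which is an expectation of a product of centred real jointly Gaussians. Since both sides of the claimed identity are multilinear functions of the individual factors, with covariances decomposing as $\E X_\alpha X_\beta = \E U_\alpha U_\beta - \E V_\alpha V_\beta + i(\E U_\alpha V_\beta + \E V_\alpha U_\beta)$, it suffices to prove the statement for real-valued $X_\alpha$.

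Second, if $|T|$ is odd, the right-hand side vanishes since a finite set of odd cardinality admits no perfect matching, so $\CP(T) = \emptyset$. The left-hand side vanishes as well, since the joint law of $(X_\alpha)_{\alpha \in T}$ is invariant under the global sign flip $X_\alpha \mapsto -X_\alpha$, forcing the odd-degree monomial expectation to equal its own negative.

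Third, for $|T|$ even I would proceed by induction on $|T|$, with the base case $|T|=2$ being immediate. For the inductive step, fix $\alpha_0 \in T$ and invoke the Gaussian integration by parts formula: for any polynomial $f$ in $(X_\beta)_{\beta \neq \alpha_0}$,
\begin{equ}
\E[X_{\alpha_0}\, f] = \sum_{\beta \neq \alpha_0} \E[X_{\alpha_0} X_\beta]\, \E\Bigl[{\partial f \over \partial X_\beta}\Bigr]\;,
\end{equ}
which follows from the identity $\nabla \log \rho(x) = -\Sigma^{-1} x$ for the density $\rho$ of a non-degenerate centred Gaussian (the degenerate case being handled by adding an independent small perturbation and passing to the limit, or by diagonalising the covariance and reducing to a product of independent one-dimensional Gaussians). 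Applying this with $f = \prod_{\beta \neq \alpha_0} X_\beta$ yields
\begin{equ}
\E \prod_{\alpha \in T} X_\alpha = \sum_{\beta \neq \alpha_0} \E[X_{\alpha_0} X_\beta]\, \E \prod_{\gamma \in T \setminus \{\alpha_0, \beta\}} X_\gamma\;,
\end{equ}
to which the inductive hypothesis applies on the set $T \setminus \{\alpha_0,\beta\}$ of cardinality $|T|-2$. Since every $P \in \CP(T)$ is uniquely specified by the pair $\{\alpha_0, \beta\} \in P$ containing $\alpha_0$ together with a perfect matching of $T \setminus \{\alpha_0,\beta\}$, summing over $\beta$ and over these induced matchings reproduces exactly the right-hand side of the Wick formula.

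This is a classical identity and there is no substantive obstacle; the one point requiring mild care is the justification of the Gaussian integration by parts formula in the possibly degenerate case, handled as indicated above. An equivalent alternative route, which avoids any induction, would be to compare coefficients of the multilinear monomial $\prod_\alpha t_\alpha$ on both sides of the characteristic function identity $\E \exp(i \sum_\alpha t_\alpha X_\alpha) = \exp(-\hf \sum_{\alpha,\beta} t_\alpha t_\beta\, \E X_\alpha X_\beta)$, upon expanding the right-hand exponential as a power series in the quadratic form.
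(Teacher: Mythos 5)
Your proof is correct. The paper itself gives no argument for this proposition: it is simply recalled as the classical Wick/Isserlis theorem (with the proof left to the literature), so there is no internal proof to compare against. Your route -- reduction to the real case by multilinearity, the odd case by the symmetry $X \mapsto -X$, and induction via the multivariate Gaussian integration-by-parts (Stein) identity -- is one of the two standard arguments, and the characteristic-function expansion you sketch at the end is the other. The details check out: the right-hand side is indeed multilinear in the individual factors because every index occurs in exactly one pair of each matching, so the complex case does reduce to the real one (and note the statement involves $\E X_\alpha X_\beta$ without conjugation, which is exactly what your bilinear decomposition respects); restricting the sum in the integration-by-parts identity to $\beta \neq \alpha_0$ is legitimate since $f$ does not depend on $X_{\alpha_0}$; and the possibly degenerate covariance is correctly handled by a perturbation or diagonalisation argument. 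The final combinatorial step, that a matching of $T$ is the choice of the partner of $\alpha_0$ together with a matching of the remaining $|T|-2$ indices, is exactly what closes the induction.
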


\begin{proof}[of Proposition~\ref{prop:procX1}]
The proof is an almost direct application of Proposition~\ref{prop:generalConvergence} below.
Indeed, writing $\Z_\star = \Z\setminus \{0\}$ as a shorthand,
we can set $\JJ = \Z_\star^2$ and write elements in $\JJ$ as $\kappa = (k,\ell) \in \JJ$.
For $\kappa = (k,\ell)$, we furthermore set $g_\kappa(x) = \exp(ik x)$ and
$C_\eps(\kappa) = \phi(\eps \ell)\phi(\eps (k-\ell))$. With this notation, the involution $\iota$ appearing in the assumptions
is given by $(k,\ell) \leftrightarrow (-k,\ell)$.

Since it follows from \eref{e:X2eps} and Proposition~\ref{prop:momentGaussian} that
\begin{equ}
X^\2_{\eps}(x,t) =  \sum_{\kappa = (k,\ell) \in \JJ}{g_\kappa(x) C_\eps(\kappa)} \int_{-\infty}^te^{-k^2(t-s)} \bar X^\1_\ell(s)\bar X^\1_{k-\ell}(s)\,ds\;,
\end{equ}
we set
\begin{equ}
f_\kappa(t) = \int_{-\infty}^te^{-k^2(t-s)} \bar X^\1_\ell(s)\bar X^\1_{k-\ell}(s)\,ds\;,
\end{equ}
in order to be in the framework of Proposition~\ref{prop:generalConvergence}. It then follows from \eref{e:covZ1} that
\begin{equ}
\E f_\kappa(t) f_{\bar \kappa}(s) = \delta_{k,-\bar k} \bigl(\delta_{\ell,-\bar \ell} + \delta_{\ell-k,\bar \ell}\bigr) K_\kappa(s,t)\;, 
\end{equ}
where the kernels $K_\kappa(s,t)$ are given for $\kappa=(k,\ell)$ by
\begin{equs}
K_\kappa(s,t) &= \int_{-\infty}^t\int_{-\infty}^se^{-k^2(t+s-r-r') -\ell^2 |r-r'| -(k-\ell)^2 |r-r'|}\,dr\,dr' \\
&= \int_{-\infty}^{t-s}\int_{-\infty}^0e^{-k^2(t-s-r-r') -\ell^2 |r-r'| -(k-\ell)^2 |r-r'|}\,dr\,dr'\;.
\end{equs}
Here we assumed $t > s$ for simplicity, but the kernels are of course symmetric in $s$ and $t$. 
A lengthy but straightforward calculation then shows that one has the identity
\begin{equ}[e:exprKm]
K_\kappa(s,t) = {k^2 e^{-(\ell^2 + (k-\ell)^2)|t-s|}
- (\ell^2 + (k-\ell)^2) e^{-k^2|t-s|} \over k^2 \bigl(k^2 - \ell^2 - (k-\ell)^2\bigr) \bigl(k^2 + \ell^2 + (k-\ell)^2\bigr)} \;.
\end{equ}
It will be convenient in the sequel to introduce the shorthand notation
\begin{equ}
\Delta_{\kappa,\bar \kappa} = \delta_{k,-\bar k} \bigl(\delta_{\ell,-\bar \ell} + \delta_{\ell-k,\bar \ell}\bigr)\;,\qquad \kappa = (k,\ell)\;,\quad \bar \kappa = (\bar k, \bar \ell)\;.
\end{equ}
With this notation, we then have, for $F_{\kappa\eta}$ as in Proposition~\ref{prop:generalConvergence}, the identity
\begin{equ}
F_{\kappa\eta}(t) \propto \Delta_{\kappa,\eta}K_\kappa(t,t) = {\Delta_{\kappa,\eta} \over k^2 \bigl(k^2 + \ell^2 + (k-\ell)^2\bigr)} = {\Delta_{\kappa,\eta} \over 2k^2 \bigl(k^2 -k\ell+ \ell^2\bigr)}\;.
\end{equ}
Using Proposition~\ref{prop:bounddiff} below, we furthermore obtain from \eref{e:exprKm} the bound
\begin{equ}
\hat F_{\kappa\eta}(s,t) \propto  \Delta_{\kappa,\eta} |K_\kappa(s,t) - K_\kappa(0,0)| \le F_{\kappa\eta} \wedge {\ell^2 + (k-\ell)^2 \over k^2 -k\ell+ \ell^2}|t-s|^2\;.
\end{equ}
In particular, for every $\beta \le 1$, one has
\begin{equ}
\hat F_{\kappa\eta}(s,t) = \Delta_{\kappa,\eta} |t-s|^{2\beta} {|\ell^2 + (k-\ell)^2|^\beta |k|^{2\beta-2} \over k^2 -k\ell + \ell^2}\;.
\end{equ}
Since furthermore the Lipschitz constant of $g_\kappa$ is given by $G_\kappa = |k|$, the conditions \eref{e:assumFalpha}
boil down to
\begin{equs}
\sum_{k \neq 0} |k|^{2\alpha} \sum_{\ell \neq 0} {1\over k^2 \bigl(k^2 -k\ell+ \ell^2\bigr)} &< \infty\;, \\
\sum_{k \neq 0} |k|^{2\beta-2} \sum_{\ell \neq 0} {|\ell^2 + (k-\ell)^2|^\beta \over k^2 -k\ell+ \ell^2}&< \infty\;.
\end{equs}
Approximating the sum by an integral, one can check that
\begin{equ}
\sum_{\ell \neq 0} {1\over k^2 \bigl(k^2 -k\ell+ \ell^2\bigr)} \lesssim {1\over k^3}\;,
\end{equ}
so that the first condition is indeed satisfied as soon as $\alpha < 1$.

Regarding the second condition, one
similarly obtains
\begin{equ}
 \sum_{\ell \in \Z}{|\ell^2 + (k-\ell)^2|^\beta \over k^2 -k\ell+ \ell^2} \lesssim {1\over k^{1-2\beta}}\;,
\end{equ}
provided that $\beta < {1\over 2}$ (otherwise, the expression is not summable).
As a consequence, the second condition reduces to $k^{4\beta-3}$ being summable, which 
is again the case if and only if $\beta < {1\over 2}$. This concludes the proof.
\end{proof}

\begin{remark}
It also follows from the proof that, for any fixed $t$, $X^\2(t) \not \in H^1$ almost surely, since one has
$\E \bigl(X^\2_k(t)\bigr)^2 \sim {1\over k^3}$.
\end{remark}

\begin{remark}\label{rem:renorm}
In light of the construction just explained, we can understand how the limit $\alpha = {1\over 8}$ arises in
\eref{e:KPZalpha}. Indeed, it turns out that $\alpha > {1\over 8}$ is precisely the borderline for which the right hand side
in \eref{e:equX2eps} converges to a limit for every \textit{fixed} value of $t$. The reason why we can break through
this barrier is that, instead of making sense of the right hand side for fixed $t$, we only need to make sense of
its time integral. (This was already remarked in \cite{Milton,SigurdRecent}.) 

If we  use this trick and then continued with the classical tools as in \cite{MR2365646}, we
would however hit another barrier at $\alpha = {1\over 20}$ when the product $\bar X^\1\, \bar X^\2$ ceases
to make sense classically (i.e.\ in the sense of Proposition~\ref{prop:Holder}). 
Treating this term also ``by hand'' in order to overcome that barrier, 
it would not be too difficult to make sense of \eref{e:KPZalpha} for every $\alpha > 0$. The most difficult barrier
to break is the passage from $\alpha > 0$ to $\alpha = 0$ since there are then \textit{infinitely many} products
that cease to make sense classically. More precisely, it will be clear from the remainder of this section that
if $\tau$ is any tree of the form $\tau = [\bullet,\bar \tau]$, then the product $\bar X^\1\,\bar X^\tau$ does not
make sense classically. 
\end{remark}

\subsection{A more systematic approach}
\label{sec:systematic}

We would now like to similarly construct a process $X^\3$ that is the limit of $X^\3_\eps$
as $\eps \to 0$. For $k \neq 0$, it follows from the definitions that
\begin{equs}
X^\3_{\eps,k}(t) &= i\sum_{\ell \in \Z} \int_{-\infty}^t e^{-k^2(t-s)} (k-\ell) \bar X^\1_{\eps,\ell}(s)X^\2_{\eps,k-\ell}(s)\,ds\\
&= i\sum_{\ell + m + p = k} \int_{-\infty}^t \int_{-\infty}^s e^{-k^2(t-s) - (k-\ell)^2(s-r)} (k-\ell) \\
&\qquad\qquad\qquad\qquad\qquad\qquad \times \bar X^\1_{\eps,\ell}(s)\bar X^\1_{\eps,m}(r)\bar X^\1_{\eps,p}(r)\,dr\,ds\;.
\end{equs}
At this stage, it becomes clear that a somewhat more systematic approach to the estimation of the 
correlations is needed. In principle, one could try the same ``brute force'' approach as in Proposition~\ref{prop:procX1} 
and obtain exact expressions for the
correlations of $X^\3_\eps$, but it rapidly becomes clear that bounding the resulting 
expressions is a rather boring and not very instructive task. By the time we want to construct $X^\5$,
a brute force approach is definitely out of the question.
Instead, we will now provide a more systematic approach to estimating the correlations of $X^\tau_\eps$ for
more complicated trees $\tau$. 

\begin{table*}\centering
\renewcommand{\arraystretch}{1.1}
\begin{tabular}{ll}\toprule
Symbol & Meaning\\
\midrule
$\CE(\tau)$ & Edges of $\tau$\\
$\CV(\tau)$ & Vertices of $\tau$ (including leaves)\\
$\ell(\tau)$ & Leaves of $\tau$\\
$i(\tau)$ & Inner vertices of $\tau$\\
$\LL^\tau$ & Proper integer labelling of edges of the tree $\tau$\\
$\TT^\tau$ & Ordered real labelling of vertices of the tree $\tau$\\
$\CP^\tau$ & Pairings of two copies of the leaves of $\tau$\\
$\LL_P^\tau$ & Elements in $\LL^\tau \times \LL^\tau$ respecting the pairing $P$\\
$S_\tau$& Group of isometries of $\tau$\\
\bottomrule
\end{tabular}
\caption{Notations for various objects associated to a given tree $\tau$.}
\end{table*}

Although the setting is quite different, our approach is inspired by 
the classical construction of Feynman diagrams in perturbative quantum field theory 
(see for example \cite{MR2131010} for an introduction), with the heat kernel playing the role
of the propagator.
We will associate to any given process $X^\tau$ a number of ``Feynman diagrams'', that turn out 
in our case to be graphs with certain properties. Each of these graphs encodes a multiple sum of a multiple
integral that needs to be bounded in order to ascertain the convergence of the corresponding process 
$X^\tau_\eps$ to a limit. The main achievement of this section is to describe a very simple ``graphical'' 
algorithm that provides a sufficient condition for this convergence which is not too difficult to check in
practice.

First, for a given binary tree $\tau \neq \bullet$, we introduce the set $\LL^\tau$ of ``proper'' labellings of $\tau$
which consists of all possible ways of associating to each edge $e$ of $\tau$ a non-zero integer $L_e \in \Z_\star$,
with the additional constraints that Kirchhoff's law should be satisfied. In other words, for every node $v$ that is neither the
root nor a leaf, the sum of the labels of the two edges connecting $v$ to its children should be equal to the label of 
the edge connecting $v$ to its parent.
For example, we have
\begin{equ}
\mhpastefig{tree12label1} \in \LL^\3\;,\qquad \mhpastefig{tree12label2} \not \in \LL^\3\;.
\end{equ}
Given a labelling $L \in \LL^\tau$, we also denote by $\rho$ the root vertex and by $\rho(L)$ the sum of the labels of the edges attached to $\rho$.
(In the first example above, we would have $\rho(L) = 7$.) Each label of a proper labelling should be
thought of as a ``Fourier mode'' and the reason behind Kirchhoff's law is the way exponentials behave under multiplication. 
The precise meaning of this will soon become clear.

For a given binary tree $\tau$, we denote by $\ell(\tau)$ the set of leaves and by $i(\tau)$
the set of inner vertices (the complement of $\ell(\tau)$ in the set $\CV(\tau)$ of all vertices of $\tau$).
It will then be useful to introduce a labelling of the interior vertices of a binary tree 
by real numbers, which should this time be thought of as ``times''
instead of ``Fourier modes''. Denoting by ``$\le$'' the canonical partial order of a rooted tree
(i.e. $u \le v$ if $u$ lies on the path from $v$ to the root), 
we denote by $\TT^\tau$ the set of all labellings that associate to each vertex
$v \in i(\tau)$ a real number $T_v \in \R$ with the constraints that $T_v \le T_{\bar v}$ if $\bar v \le v$.
In our example, we have

\begin{equ}[e:exampletreelabel]
\mhpastefig{tree12label3} \in \TT^\3\;,
\end{equ}
provided that $r \le s$. We furthermore denote by $\mu_t^\tau$ the restriction of Lebesgue measure to the
subset $\TT^\tau_t$ of $\TT^\tau$ given by $\{T_\rho \le t\}$. With this notation, the example shown in 
\eref{e:exampletreelabel}
belongs to $\TT^\3_t$, provided that $s \le t$. A special case is given by $\tau = \bullet$ in which case
we set $\TT^\1_t = \TT^\1 = \{0\}$ and $\mu_t^\1 = \delta_0$. 

Denoting by $\hat \iota(\tau) = i(\tau) \setminus \{\rho\}$ the set of those interior vertices of $\tau$
that are not the root vertex, we define, for each $L \in \LL^\tau$, a stochastic process $Z^\tau_L$ 
on $\{(t,T) \in \R \times \TT^\tau\,:\, T \in \TT^\tau_t\}$ by
\begin{equ}[e:defZ]
Z^\tau_L(t,T) = e^{-\rho(L)^2 (t - T_{\rho})} \Bigl(\prod_{v \in \hat\iota(\tau)}i L_{e(v)} e^{-L_{e(v)}^2 \,|\delta T_{e(v)}|} \Bigr)\Bigl(\prod_{v \in \ell(\tau)} \bar X^\1_{L_{e(v)}}(T_{v_\downarrow})\Bigr)\;,
\end{equ}
where $v_\downarrow$ denotes the parent of $v$, $e(v)$ denotes the edge $(v, v_\downarrow)$, and 
$\delta T_e = T_v - T_u$ for an edge $e = (u,v)$.

Even though the tree $\bullet$ has an empty edge set, we set $\LL^\1 \sim \Z_\star$ by convention, by specifying $\rho(L)$
as an arbitrary value in $\Z_\star$.
With this convention in place, we also set 
\begin{equ}
Z^\1_L(t,T) =  X^\1_{\rho(L)}(t)\;.
\end{equ}
Finally, for $L \in \LL^\tau$, we write
\begin{equ}[e:defCeps]
C_\eps(L) \eqdef \prod_{v \in \ell(\tau)} \phi(\eps L_{e(v)})\;,
\end{equ}
with the additional convention $C_\eps(L) = \phi(\eps \rho(L))$ for $L \in \LL^\1$.

With all of these notations at hand, we then have the following identity:

\begin{proposition}\label{prop:decompX}
For every binary tree $\tau$ and every index $k \neq 0$, one has the identity
\begin{equ}[e:wanted]
 X^\tau_{\eps,k}(t) = \sum_{L \in \LL^\tau\atop \rho(L) = k} C_\eps(L) \int_{\TT^\tau_t} Z^\tau_L(t,T)\,\mu^\tau_t(dT)\;.
\end{equ}
\end{proposition}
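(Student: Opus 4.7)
The natural strategy is induction on the tree $\tau$. For the base case $\tau = \bullet$, I would check the formula directly: by definition $X^\1_\eps$ is the stationary mild solution to $\d_t X^\1_\eps = \d_x^2 X^\1_\eps + \Pi_0^\perp \xi_\eps$, so for $k \neq 0$ one has $X^\1_{\eps,k}(t) = \phi(\eps k)X^\1_k(t)$. With the convention $\LL^\1 \simeq \Z_\star$ via the value of $\rho(L)$, the sum on the right collapses to the single term indexed by $L$ with $\rho(L)=k$, giving $C_\eps(L) Z^\1_L(t,T) = \phi(\eps k) X^\1_k(t)$, which matches.

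For the inductive step with $\tau = [\tau_1,\tau_2]$, I would first use that for $k\neq 0$ the projection $\Pi_0^\perp$ in \eref{e:defXtau} acts as the identity, so taking Fourier components and solving the resulting linear ODE with the stationarity condition yields
\begin{equ}
X^\tau_{\eps,k}(t) = \int_{-\infty}^t e^{-k^2(t-s)} \sum_{\ell + m = k}(i\ell)(im)\, X^{\tau_1}_{\eps,\ell}(s)\,X^{\tau_2}_{\eps,m}(s)\,ds\;,
\end{equ}
where the sum effectively runs over $\ell,m \in \Z_\star$ (contributions with a zero index vanish since $\bar X^\1_0 = 0$). I would then substitute the induction hypothesis for each of $X^{\tau_1}_{\eps,\ell}$ and $X^{\tau_2}_{\eps,m}$, introducing independent proper labellings $L_j$ of $\tau_j$ and time labellings $T_j \in \TT^{\tau_j}_s$.

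The core of the argument is then a matching of combinatorial data. Renaming the outer integration variable to $T_\rho$, one observes three factorisations:
\emph{(i)} proper labellings $L \in \LL^\tau$ with $\rho(L)=k$ are in bijection with triples $(\ell,m,L_1,L_2)$ with $\ell+m=k$ and $L_j \in \LL^{\tau_j}$, $\rho(L_j)\in\{\ell,m\}$ (Kirchhoff's law at $\rho$ is exactly the condition $\ell+m=k$, while Kirchhoff at $\rho_j$ is inherited from the properness of $L_j$);
\emph{(ii)} the domain $\TT^\tau_t$ fibers as $\{T_\rho \le t\} \times \TT^{\tau_1}_{T_\rho} \times \TT^{\tau_2}_{T_\rho}$, since the ordering constraints in $\TT^\tau_t$ decouple into the root constraint and independent constraints on the two subtrees;
\emph{(iii)} the cutoff factorises, $C_\eps(L) = C_\eps(L_1)C_\eps(L_2)$, directly from \eref{e:defCeps}.
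Finally, one reads off from \eref{e:defZ} that
\begin{equ}
Z^\tau_L(t,T) = e^{-k^2(t-T_\rho)}\cdot A_1 \cdot A_2\;,
\end{equ}
where $A_j = (iL_{e_j})e^{-L_{e_j}^2|T_{\rho_j}-T_\rho|} Z^{\tau_j}_{L_j}(T_\rho, T_j)$ when $\tau_j \neq \bullet$, and $A_j = \bar X^\1_{L_{e_j}}(T_\rho)$ when $\tau_j = \bullet$; in the latter case the factor $(i\ell)$ coming from the outer $(i\ell)(im)$ precisely converts $X^\1_{L_{e_j}}$ into $\bar X^\1_{L_{e_j}}$. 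Multiplying everything out reproduces the right hand side of \eref{e:wanted}.

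The main obstacle is purely notational: one has to handle uniformly the two structurally different cases in which a child of $\rho$ is an inner vertex versus a leaf, because in the first case the corresponding factor in $Z^\tau_L$ comes from the $\hat\iota(\tau)$ product (providing both $iL_{e}$ and the heat propagator), whereas in the second case it comes from the leaf product and is merely $\bar X^\1_{L_e}(T_\rho)$. Once one checks that the prefactor $(i\ell)(im)$ together with the identity $\bar X^\1_\ell = i\ell\, X^\1_\ell$ makes the two accounting schemes compatible, there is nothing else to verify.
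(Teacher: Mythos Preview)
Your approach is the same induction as the paper's proof, and the three factorisations you identify (labellings, time domains, cutoffs) are exactly the ingredients used there. There is one slip in your formula for $A_j$ when $\tau_j \neq \bullet$: the exponential $e^{-L_{e_j}^2|T_{\rho_j}-T_\rho|}$ is already the leading factor of $Z^{\tau_j}_{L_j}(T_\rho, T_j)$ (this is the prefactor $e^{-\rho(L_j)^2(T_\rho - (T_j)_{\rho_j})}$ in the definition of $Z^{\tau_j}$), so writing it again double-counts it. Either drop that exponential, or replace the first argument $T_\rho$ by $T_{\rho_j}$. With that correction the recursive identity reads uniformly as
\[
Z^\tau_L(t,T) = e^{-k^2(t-T_\rho)}\,(iL_{e_1})\, Z^{\tau_1}_{L_1}(T_\rho,T_1)\,(iL_{e_2})\, Z^{\tau_2}_{L_2}(T_\rho,T_2)\;,
\]
and this also covers the case $\tau_j = \bullet$ directly via $\bar X^\1_\ell = i\ell\, X^\1_\ell = i\ell\, Z^\1_\ell$, so the case distinction you worry about at the end is in fact unnecessary.
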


\begin{proof}
We proceed by induction over the set of all binary trees, taking as induction parameter the number of leaves of $\tau$.
The identity is true by definition if $\tau = \bullet$. If $\tau \neq \bullet$, 
we can always write $\tau = [\kappa,\bar \kappa]$, where $\kappa$ and $\bar \kappa$ are trees
that have less leaves than $\tau$, so that
we assume that the identity \eref{e:wanted} holds true when $\tau$ is replaced by either 
$\kappa$ or $\bar \kappa$.

We then have the identity
\begin{equs}
 X^\tau_{\eps,k}(t) &= \sum_{\ell + m = k} \int_{s \le t}e^{-k^2|t-s|} \bar X^{\kappa}_{\eps,\ell}(s)\,\bar X^{\bar \kappa}_{\eps,m}(s)\,ds\\
&= \sum_{\ell + m = k}\sum_{L \in \LL^{\kappa}\atop \rho(L) = \ell}\sum_{\bar L \in \LL^{\bar \kappa}\atop \rho(\bar L) = m} C_\eps(L)C_\eps(\bar L) \\
&\quad \times \int_{s \le t}(i\ell)(im) e^{-k^2|t-s|}Z^\kappa_L(s,T) Z^{\bar\kappa}_{\bar L}(s,\bar T)\,\mu^{\kappa}_s(dT)\,\mu^{\bar \kappa}_s(d\bar T)\,ds\;,
\end{equs}
where we used the induction hypothesis and the fact that $k = \ell + m$ to go from the first to the second line.
Note now that one has the following simple facts:
\begin{claim}
\item For $\tau = [\kappa,\bar\kappa]$, there is a natural bijection $K\colon \LL^{\kappa}\times \LL^{\bar\kappa} \to \LL^\tau$ as follows.
Given $L \in\LL^{\kappa}$ and $\bar L\in \LL^{\bar\kappa}$,  
 one identifies the edges of $\kappa$ and $\bar \kappa$ with the corresponding subset of the edges of $\tau$ and uses the 
 labels $L$ and $\bar L$ to label them. One then labels the two edges connecting the root of $\tau$ to $\kappa$ and $\bar \kappa$ by $\rho(L)$ and
 $\rho(\bar L)$ respectively. Note that thanks to our convention for $\LL^\1$, this recipe also yields a bijection when one of the trees is the trivial tree.
\item For $\tau = [\kappa,\bar\kappa]$ and $s \in \R$, there is a natural map $\bar K_s\colon \TT^{\kappa}_s\times \TT^{\bar\kappa}_s \to \TT^\tau_s$
 obtained by associating $s$ to the root vertex of $\tau$, but otherwise leaving the labels of the interior vertices of $\kappa$ and $\bar \kappa$ untouched. Furthermore, one has the disintegration 
 \begin{equ}
\int_{\TT_t^\tau} F(T) \mu_t^\tau(dT) = \int_{-\infty}^t \int_{\TT_s^\kappa}\int_{\TT_s^{\bar \kappa}} F(\bar K_s(T, \bar T))\,\mu_s^\kappa(dT)\, \mu_s^{\bar \kappa}(d\bar T)\,ds\;,
 \end{equ}
 for every integrable function $F\colon \TT^\tau_t \to \R$.
\end{claim}
As a consequence, we can rewrite the desired identity \eref{e:wanted} as
\begin{equs}
 X^\tau_{\eps,k}(t) &= \sum_{\ell + m = k}\sum_{L \in \LL^{\kappa}\atop \rho(L) = \ell}\sum_{\bar L \in \LL^{\bar \kappa}\atop \rho(\bar L) = m} C_\eps(K(L,\bar L))\\
&\quad\times \int_{s \le t}Z^\kappa_{K(L,\bar L)}(\bar K_s(T,\bar T))\,\mu^{\kappa}_s(dT)\,\mu^{\bar \kappa}_s(d\bar T)\,ds\;.
\end{equs}
However, it follows from the definition \eref{e:defZ} of $Z$ and from the definition of the isometry $K$ that one has
the identity 
\begin{equ}
Z^\tau_{K(L,\bar L)}(t, \bar K_s(T,\bar T)) = ike^{-k^2|t-s|}Z^\kappa_L(s,T) Z^{\bar\kappa}_{\bar L}(s,\bar T)\;,
\end{equ}
whenever $\tau = [\kappa,\bar \kappa]$ and $L_\rho + \bar L_\rho = k$. Our conventions are set up
in such a way that this is true even if some of the trees involved are the trivial tree. Since
one has furthermore the identity $C_\eps(K(L,\bar L)) = C_\eps(L)C_\eps(\bar L)$, the claim follows.
\end{proof}

The computation of the correlations of $X_\eps^\tau$ is thus 
reduced to the computation of the correlations of $Z_L^\tau$, even though these then have to be integrated
over $\TT_t^\tau$ and summed over $L$, which is potentially no easy task. 

In order to compute correlations of polynomials of Gaussian random variables,
a useful notion is that of a \textit{pairing} of a set $T$ with $|T| \in 2\N$. We first denote the set
of all possible pairs of $T$ by $\CS_2(T) \eqdef \{A\subset T\,:\, |A| = 2\}$. With this notation,
the set of all pairings of $T$ is given by 
\begin{equ}
\CP(T) \eqdef \Bigl\{P \subset \CS_2(T)\,:\, \bigcup P = T \quad\&\quad p\cap q = \emptyset\;\forall p\neq q \in P\Bigr\}\;.
\end{equ}
In other words, $\CP(T)$ consists of all partitions of $T$ that are made up of pairs. By definition, 
$\CP(T) = \emptyset$ whenever $|T|$ is odd.

Since we want to estimate second moments of the processes $Z_L^\tau$,
the relevant notion of pairing arising from Wick's theorem will be that of a pairing of 
two copies of the leaves of a binary tree $\tau$. We thus introduce the shorthand notation 
\begin{equ}
\CP^\tau = \CP(\ell(\tau)\sqcup \ell(\tau))\;.
\end{equ}
See \eref{e:somepairing} below for a graphical representation of an element of $\CP^\tau$ for $\tau = \bigtree$.

\begin{definition}
Given two
labellings $L, \bar L \in \LL^\tau$, we denote by $L \sqcup \bar L$ the map 
\begin{equ}
(L \sqcup \bar L)\colon \CE(\tau)\sqcup \CE(\tau) \to \Z\;,
\end{equ}
which restricts to $L$ (respectively $\bar L$) on the first (respectively second) copy of $\CE(\tau)$.
Here, $\CE(\tau)$ denotes the set of edges of the binary tree $\tau$.
\end{definition}

\begin{definition}
Given a pairing $P \in \CP^\tau$ and $L, \bar L \in \LL^\tau$, we say that $L \sqcup \bar L$ is adapted to 
the pairing $P$ if 
\begin{equ}[e:adapted]
(L \sqcup \bar L)_{e(u)} + (L \sqcup \bar L)_{e(v)} = 0\;,\qquad \forall \{u,v\} \in P\;.
\end{equ}
We denote by $\LL^\tau_P$ the set of all labellings of the form $L\sqcup \bar L$ 
that are adapted to $P$. 
\end{definition}

\begin{remark}\label{rem:derrhoL}
Since $\rho(L) = \sum_{v \in \ell(\tau)} L_{e(v)}$, it follows from the definition that one automatically has the identity
$\rho(L) + \rho(\bar L) = 0$ if $L \sqcup \bar L \in \LL^\tau_P$. As a consequence, for every $\hat L = L \sqcup \bar L \in \LL^\tau_P$,
the quantity $|\rho(\hat L)|$ is well-defined by $|\rho(\hat L)| = |\rho(L)| = |\rho(\bar L)|$.
\end{remark}

Given an inner node $u \in i(\tau)$, we denote by $D(u) = \{v \in \ell(\tau)\,:\, u \le v\}$ the set of its descendants.
In the case where two copies of a tree are considered, we extend this definition in the natural way.
A very important remark is the following:

\begin{lemma}\label{lem:allowedPairs}
One has $\LL^\tau_P \neq \emptyset$ if and only if, for every inner node
$u \in i(\tau) \sqcup i(\tau)$, there exists at least one pair $\{v,\bar v\} \in P$ such that $v \in D(u)$ and $\bar v \not \in D(u)$.
\end{lemma}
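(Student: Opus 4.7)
My plan is to reparametrise $\LL^\tau_P$ by integer parameters indexed by $P$, and then to identify the inner edge labels as explicit linear forms in these parameters. Pick an arbitrary orientation of each pair $p = \{v_1,v_2\} \in P$; the adaptedness constraint then forces $(L \sqcup \bar L)_{e(v_1)} = n_p$ and $(L \sqcup \bar L)_{e(v_2)} = -n_p$ for some free parameter $n_p \in \Z$, and these parameters determine $L \sqcup \bar L$ completely since Kirchhoff's law recovers every inner edge label as
\begin{equ}
(L \sqcup \bar L)_{e(u)} = s_u(n) := \sum_{v \in D(u)} (L \sqcup \bar L)_{e(v)}\;,
\end{equ}
for every non-leaf vertex $u$.

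The key combinatorial observation is that, grouping the sum defining $s_u(n)$ by pair, each pair $p = \{v_1, v_2\}$ contributes either $0$ (either because $D(u)$ contains both of $v_1, v_2$, in which case the two leaf labels cancel, or because $D(u)$ contains neither) or $\pm n_p$ (precisely when exactly one of $v_1, v_2$ lies in $D(u)$). Hence $s_u$ is an integer linear form in the $(n_p)_{p \in P}$, and the coefficient of $n_p$ is non-zero precisely when the pair $p$ crosses $D(u)$ in the sense of the lemma. The same analysis applies at the root of each copy, where $s_\rho$ equals $\rho(L)$ or $\rho(\bar L)$, and the crossing condition is exactly what forces $\rho(L), \rho(\bar L) \neq 0$, as required by the intended application to Fourier modes indexed by $k \neq 0$.

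The forward direction is then immediate: if $L \sqcup \bar L \in \LL^\tau_P$, every edge label is non-zero, so each $s_u$ takes a non-zero value and therefore cannot be identically zero, which in turn forces the existence of a crossing pair for every inner $u$. For the reverse direction, assume that every $s_u$ (with $u$ running over inner vertices in either copy) is a non-trivial linear form. Call $(n_p) \in \Z^{|P|}$ \emph{bad} if either some coordinate $n_p$ vanishes or some $s_u(n)$ vanishes; the set of bad tuples is then a finite union of proper subgroups of $\Z^{|P|}$. Since a finitely generated free abelian group of positive rank is never a finite union of its proper subgroups, there exists a non-bad tuple $(n_p)$, and the corresponding $L \sqcup \bar L$ lies in $\LL^\tau_P$. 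The only substantive step is the pair-wise cancellation identity that produces the explicit form of $s_u$; the rest is a routine genericity argument over $\Z$, so I do not anticipate any real obstacle.
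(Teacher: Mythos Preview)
Your proof is correct. The necessity direction is essentially identical to the paper's: both compute the inner edge label as $L_{e(u)} = \sum_{v \in D(u)} (L\sqcup\bar L)_{e(v)}$ and observe that the pairwise cancellations force this sum to vanish when no pair crosses $D(u)$. For sufficiency the two arguments diverge. The paper gives an explicit construction: it orders the pairs and assigns the $p$th pair the labels $(3^p,-3^p)$, then uses the fact that a sum $\sum_p a_p 3^p$ with $a_p\in\{-1,0,1\}$ vanishes only if every $a_p$ does (balanced ternary uniqueness) to conclude that all the linear forms $s_u$ are non-zero at this point. Your route is instead a genericity argument: the bad set is a finite union of kernels of non-trivial integer linear forms, hence of proper subgroups, and $\Z^{|P|}$ cannot be covered by finitely many proper subgroups. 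The paper's version is shorter and entirely constructive, which is occasionally handy (one immediately has a concrete labelling in hand); your version is non-constructive but more conceptual and would transfer unchanged to settings with more complicated linear constraints where an explicit base-$3$ trick is less obvious.
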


\begin{proof}
To see that the condition is necessary, we note that if it fails, there exists at least one inner node $u$ such that
all of its descendants are paired together. It then follows from \eref{e:adapted} and the definition of a proper
labelling that $L_{e(u)} = 0$, which is excluded.

To see sufficiency, we can construct $(L,\bar L)$ as follows. First, we order the pairs in $P$, so that each 
one is assigned a strictly positive integer $p$, and we label the $p$th pair by 
$(3^p, -3^p)$. The claim now follows form the fact that a sum of the form $\sum_{p=0}^n a_p 3^p$ with
$a_p \in \{-1,0,1\}$ vanishes if and only if all the $a_p$ vanish. (This can be seen  by
expressing the number $\sum_{p=0}^n (a_p+1) 3^p$ in basis $3$.)
\end{proof}

\begin{remark}\label{rem:equiv}
We can introduce an equivalence relation on nodes of $\tau$ by setting $u \sim v$ if and only if
$u_\downarrow = v_\downarrow$, i.e.\ if $u$ and $v$ share the same parent.
As a consequence of Lemma~\ref{lem:allowedPairs}, we then note that if $P\in \CP^\tau$ contains a pair $\{u,v\}$
with $u \sim v$, then $\LL^\tau_P=\emptyset$.
\end{remark}

The importance of knowing for which pairings $P$ one has $\LL^\tau_P \neq \emptyset$ is
illustrated by the following result:
\begin{lemma}\label{lem:pairCross}
Let $\tau$ be a binary tree and let $L, \bar L \in \LL^\tau$, $T\in \TT_t^\tau$ and $\bar T \in \TT_{\bar t}^\tau$. Then, $\E Z_L^\tau(t,T) Z_{\bar L}^\tau(\bar t,\bar T) \neq 0$ if and only if there exists at least one pairing $P \in \CP^\tau$ such that $L\sqcup \bar L \in \LL^\tau_P$.
\end{lemma}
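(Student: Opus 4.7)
The plan is to obtain the claim as a direct application of Wick's theorem (Proposition~\ref{prop:momentGaussian}) combined with the positivity of the Gaussian covariance kernel of $\bar X^\1$.

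First I would separate $Z_L^\tau(t,T)$ into its deterministic part, namely the prefactor
\begin{equ}
e^{-\rho(L)^2(t-T_\rho)}\prod_{v\in\hat\iota(\tau)} iL_{e(v)}\,e^{-L_{e(v)}^2|\delta T_{e(v)}|}\;,
\end{equ}
(which, crucially, is always \emph{nonzero}, since every label of a proper labelling lies in $\Z_\star$), and its random part $\prod_{v\in\ell(\tau)}\bar X^\1_{L_{e(v)}}(T_{v_\downarrow})$. The analogous decomposition holds for $Z_{\bar L}^\tau(\bar t, \bar T)$, and the trivial case $\tau=\bullet$ is dealt with identically using $X^\1$ in place of $\bar X^\1$. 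Consequently, $\E Z_L^\tau(t,T)\,Z_{\bar L}^\tau(\bar t,\bar T)$ is a nonzero deterministic multiple of
\begin{equ}
\E \prod_{v\in\ell(\tau)}\bar X^\1_{L_{e(v)}}(T_{v_\downarrow}) \prod_{v\in\ell(\tau)}\bar X^\1_{\bar L_{e(v)}}(\bar T_{v_\downarrow})\;,
\end{equ}
and everything reduces to showing that this Gaussian expectation is nonzero precisely under the stated condition.

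Next, I would apply Proposition~\ref{prop:momentGaussian} to expand this expectation as a sum, indexed by $P\in\CP^\tau$, of products of two-point functions of the form $\E \bar X^\1_{(L\sqcup\bar L)_{e(u)}}(\cdot)\,\bar X^\1_{(L\sqcup\bar L)_{e(v)}}(\cdot)$, one factor for each pair $\{u,v\}\in P$. By \eqref{e:covZ1}, each such two-point function equals $\delta_{(L\sqcup\bar L)_{e(u)},-(L\sqcup\bar L)_{e(v)}}$ times a \emph{strictly positive} exponential. In particular, the contribution associated with a given pairing $P$ is nonzero if and only if all of the Kronecker deltas are satisfied simultaneously, which is exactly the requirement that $L\sqcup\bar L\in\LL^\tau_P$.

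Finally, since every nonzero term in the Wick expansion is a product of positive real numbers, there can be no cancellation between different pairings. The total sum is therefore nonzero if and only if at least one term is nonzero, which gives the equivalence claimed in the lemma. The ``hard part'' here is essentially a bookkeeping issue: one must observe that because our covariance is a positive real kernel (owing to our normalisation \eqref{e:convention} and the heat-kernel form of \eqref{e:covZ1}), different Wick contractions cannot cancel one another. Once this is noted, the lemma follows from the definition of $\LL^\tau_P$.
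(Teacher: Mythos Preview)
Your proof is correct and follows essentially the same approach as the paper: factor out the nonzero deterministic prefactor, apply Wick's theorem to the remaining Gaussian product, identify the nonvanishing contractions with the condition $L\sqcup\bar L\in\LL^\tau_P$, and use the positivity of the covariance \eqref{e:covZ1} to rule out cancellations. The paper's argument is the same, only more terse.
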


\begin{proof}
It follows from \eref{e:defZ} that, up to a non-vanishing numerical factor (that still depends on $\tau$, $t$, $\bar t$, $T$ and $\bar T$ but is not random),
one has
\begin{equ}
Z_L^\tau(t,T) Z_{\bar L}^\tau(\bar t,\bar T) \propto \prod_{u,v \in \ell(\tau)} \bar X^\1_{L_{e(u)}}(T_{u_\downarrow})\bar X^\1_{\bar L_{e(v)}}(\bar T_{v_\downarrow})\;.
\end{equ}
Writing $\hat L = L \sqcup \bar L$ and similarly $\hat T = T \sqcup \bar T$,
it then follows from \eref{prop:momentGaussian} that
\begin{equ}[e:momentZ]
\E Z_L^\tau(t,T) Z_{\bar L}^\tau(\bar t, \bar T) \propto \sum_{P \in \CP^\tau} \prod_{\{u,v\} \in P} \E \bar X^\1_{\hat L_{e(u)}}(\hat T_{u_\downarrow})\bar X^\1_{\hat L_{e(v)}}(\hat T_{v_\downarrow})\;.
\end{equ}
This shows that the condition is necessary since, by \eref{e:covZ1}, the terms in this product are all non-vanishing
if and only if $\hat L \in \LL^\tau_P$.
Its sufficiency is then a consequence of the positivity
of \eref{e:covZ1}.
\end{proof}

We finally introduce a notion of isometry of a tree $\tau$ that will be useful to identify terms that yield
identical contributions. 

\begin{definition}
Denote by $\CV(\tau) = i(\tau)\sqcup \ell(\tau)$ the set of all vertices of $\tau$.
A bijection $\sigma\colon \CV(\tau) \to \CV(\tau)$ is called an \textit{isometry} of $\tau$ if $v \sim u$ if and only if 
$ \sigma(u)\sim  \sigma(v)$, with ``$\sim$'' as in Remark~\ref{rem:equiv}. In other words, it is an isometry if it preserves ``family relations''.
We denote by $S_\tau$ the group of all isometries of $\tau$.
\end{definition}

\begin{remark}
Any isometry $\sigma$ extends in a natural way to $\CE(\tau)$  by
$\sigma(u,v) = (\sigma(u),\sigma(v))$, where the definition of an isometry ensures that the object on the right
is again an edge of $\tau$.
\end{remark}

\begin{remark}
The tree $\bigtree$ contains only one non-trivial isometry, which is the one that exchanges the two top leaves.
The tree $\nicetree$ on the other hand contains many more isometries since one can also exchange the two 
branches attached to the root for example.
\end{remark}

As a consequence, there are natural actions of $S_\tau$ on $\LL^\tau$ and $\TT^\tau$ by
\begin{equ}
\bigl(\sigma L\bigr)_e = L_{\sigma^{-1} e}\;,\qquad \bigl(\sigma T\bigr)(v) = T( \sigma^{-1} v)\;,
\end{equ}
for $L \in \LL^\tau$ and $T \in \TT^\tau$. 
There is also a natural action of $S_\tau \times S_\tau$ on $\CP^\tau$ by
\begin{equ}
\hat \sigma P = \{\{ \hat\sigma u, \hat \sigma v\}\,:\, \{u,v\} \in P\}\;,
\end{equ}
where we interpret elements in $S_\tau \times S_\tau$ as bijections of $\ell(\tau)\sqcup \ell(\tau)$.
Note that $\LL^\tau_P$ is covariant under this action in the sense that, for $\sigma, \bar \sigma \in S_\tau$, one has
\begin{equ}
L \sqcup \bar L \in \LL^\tau_P \quad\Leftrightarrow\quad (\sigma L) \sqcup (\bar \sigma \bar L) \in \LL^\tau_{(\sigma \times \bar \sigma)P}\;.
\end{equ}

For every binary tree $\tau$, every $P \in \CP^\tau$, and every $\hat L \in \LL^\tau_P$, we now define a quantity
$\CK^\tau(P,\hat L;\delta)$, which will be the basic building block for computing the correlations
of $X^\tau_\eps$, by
\begin{equs}
\CK^\tau(P,\hat L;\delta) &\eqdef \int_{\TT_0^\tau}\int_{\TT_{\delta}^\tau} e^{-\rho(\hat L)^2(\delta - T_\rho - \bar T_{\bar \rho})} \Bigl(\prod_{v \in \hat \iota(\tau)\sqcup \hat \iota(\tau)}\hat L_{e(v)} e^{-\hat L_{(v)}^2 (\hat T_{v_\downarrow} - \hat T_{v})} \Bigr)\\
&\qquad \times \Bigl(\prod_{\{u,v\} \in P} e^{-\hat L_{e(v)}^2 |\hat T_{u_\downarrow} - \hat T_{v_\downarrow}|} \Bigr) \,\mu_\delta^\tau(dT)\,\mu_{0}^\tau(d\bar T)\;, \label{e:defKernel}
\end{equs}
where we used the shorthand notation $\hat T = T \sqcup \bar T$ and where we denoted by $\rho$ and $\bar \rho$ the two copies of the root of $\tau$. For $v$ belonging to one of the two copies of the original
tree $\tau$, we again denote by $v_\downarrow$ its parent and by $e(v)$ the edge that connects it to its parent.
On the second line, we could of course have written $\hat L_{e(u)}$ instead of $\hat L_{e(v)}$ since, by the definition
of $\LL^\tau_P$, they only differ by a sign.

One then has the following fact:

\begin{lemma}\label{lem:isom}
For every $\hat \sigma \in S_\tau \times S_\tau$, one has $\CK^\tau(\hat \sigma P,\hat \sigma \hat L;\;\cdot\;) = \CK^\tau(P,\hat L;\;\cdot\;)$.
\end{lemma}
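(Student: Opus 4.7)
The plan is to recognise the claimed equality as the invariance of the integral defining $\CK^\tau$ under the change of variable $\hat T = T \sqcup \bar T \mapsto \hat\sigma \hat T$. The first thing to verify is that this change of variable is legitimate. Any isometry $\sigma$ fixes the root (the unique vertex without a sibling), and by the convention that $\sigma$ extends to $\CE(\tau)$ by $\sigma(u,v) = (\sigma u,\sigma v)$, it sends edges to edges, and therefore commutes with the parent operation $v \mapsto v_\downarrow$. It follows that $\sigma$ preserves the partial order on $\CV(\tau)$ and maps $\TT^\tau_\delta$ bijectively onto itself; moreover, the map $T \mapsto \sigma T$ is just a coordinate permutation of $\R^{i(\tau)}$, so it preserves Lebesgue measure. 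The same applies to the product action of $\hat\sigma = (\sigma,\bar\sigma)$ on $\TT^\tau_\delta \times \TT^\tau_0$, so the substitution is admissible.

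Granted this, I would substitute $\hat T \mapsto \hat\sigma \hat T$ in the integral defining $\CK^\tau(\hat\sigma P, \hat\sigma \hat L;\delta)$ and verify term by term that the integrand reduces to the integrand of $\CK^\tau(P, \hat L;\delta)$. For the root-exponential, I would use that $\rho$ is fixed by $\hat\sigma$, so $(\hat\sigma \hat T)_\rho = \hat T_\rho$, together with the fact that $\rho(\hat L)$, being the sum of the two edge-labels at the root, is invariant (up to a sign that disappears upon squaring) under the permutation of these edges induced by $\sigma$. For the product over $v \in \hat\iota(\tau) \sqcup \hat\iota(\tau)$, the identities $(\hat\sigma \hat L)_{e(v)} = \hat L_{e(\hat\sigma^{-1} v)}$ and $(\hat\sigma \hat T)_{v_\downarrow} - (\hat\sigma \hat T)_v = \hat T_{(\hat\sigma^{-1} v)_\downarrow} - \hat T_{\hat\sigma^{-1} v}$, where the latter uses that $\hat\sigma^{-1}$ commutes with the parent map, allow one to reindex by $w = \hat\sigma^{-1} v$ and recover the original product. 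The product over the pairing is treated identically: elements of $\hat\sigma P$ are of the form $\{\hat\sigma u, \hat\sigma v\}$ with $\{u,v\} \in P$, and the absolute value in the exponent absorbs the sign ambiguity in $\hat L_{e(v)} = \pm \hat L_{e(u)}$.

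I do not anticipate any substantive obstacle beyond careful index bookkeeping; there is no analytic content. The conceptual point is simply that every ingredient used to build $\CK^\tau$ — the root, the edges, the parent operation, the partial order on inner vertices, and the pairing structure — is intrinsic to the rooted tree together with its $S_\tau \times S_\tau$ action, and so the integral is manifestly invariant under that action.
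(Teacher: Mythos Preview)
Your proposal is correct and follows exactly the approach of the paper: the paper's proof simply observes that the integrand is preserved under isometries once one also applies the isometry to $T\sqcup \bar T$, and that isometries leave $\mu_t^\tau$ invariant. You have merely unpacked these two observations in greater detail, which is fine.
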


\begin{proof}
It suffices to notice that the integrand is preserved under isometries, provided that one also applies it to $T\sqcup \bar T$.
The claim now follows from the fact that isometries leave $\mu_t^\tau$ invariant.
\end{proof}

We are now almost ready to state the main result in this section. Before we do so however, we still need to
introduce one final notation. Given a tree $\tau$ and a pairing $P \in \CP^\tau$, we denote by $\ell_\ell^P(\tau)$
the set of leaves in $\ell(\tau)\sqcup \ell(\tau)$ with the  property that, for every $v \in \ell_\ell^P(\tau)$, there
exists a pair $\{u,\bar u\} \in P$ such that $v \in \{u,\bar u\}$ and such that the parent of $u$ is equal to the grandparent of $\bar u$. (In terms of genealogy, $\ell_\ell^P(\tau)$ contains all pairings 
between a nephew and his uncle.)
For example, in the following pairing of the tree $\bigtree$, the set $\ell_\ell^P(\tau)$ consists of exactly two leaves 
that are distinguished by being filled with white:
\begin{equ}[e:somepairing]
\mhpastefig{tree112pair4b}\;.
\end{equ}
Given any two labellings $L, \bar L \in \LL_P^\tau$,
we then write $L \sims \bar L$ if $|L_{e(v)}| = |\bar L_{e(v)}|$ for all $v \in \ell(\tau)\sqcup \ell(\tau)$ and furthermore
$L_{e(v)} = \bar L_{e(v)}$ for all $v \in \bigl(\ell(\tau)\sqcup \ell(\tau)\bigr) \setminus \ell_\ell^P(\tau)$.
In other words, $L$ and $\bar L$ are only allowed to differ by changing the signs of the labels 
adjacent to $\ell_\ell^P(\tau)$.
This allows to define a ``symmetrised'' kernel $\CK^\tau_\sym$ by
\begin{equ}
\CK_\sym^\tau(P, L; \delta) \eqdef {1\over |[L]_P|}\sum_{\bar L \sims L} \CK^\tau(P, \bar L; \delta)\;,
\end{equ}
where $[L]_P$ denotes the equivalence class of $L$ under $\sims$ and $|\cdot|$ denotes its cardinality.

With this final notation at hand, the main result of this section is the following:

\begin{theorem}\label{theo:abstractConv}
For a given $\tau \in \CT_2 \setminus \{\bullet\}$, if there exist $\alpha > 0$ and $\beta \in (0,1)$ such that, for every $P \in \CP^\tau/(S_\tau\times S_\tau)$
\minilab{e:boundKernel}
\begin{equs}
\sum_{\hat L \in \LL^\tau_P} \sup_{\delta\in \R} |\rho(\hat L)|^{2\alpha} |\CK^\tau_\sym(P,\hat L;\delta)| &< \infty \;, \label{e:boundKernel1}\\ 
\sum_{\hat L \in \LL^\tau_P} \sup_{|\delta| \le 1} {|\CK_\sym^\tau(P,\hat L;0) - \CK_\sym^\tau(P,\hat L;\delta)| \over \delta^{2\beta}} &< \infty\;,\label{e:boundKernel2}
\end{equs}
then the sequence of processes $X^\tau_\eps$ converges to a limit $X^\tau$ in probability in
$\CC([0,T],\CC^\gamma) \cap \CC^\delta([0,T],\CC)$, provided that $\gamma < \alpha$ and $\delta < \beta$.
\end{theorem}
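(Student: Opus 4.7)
The plan is to follow closely the strategy of Proposition~\ref{prop:procX1}, reducing the convergence statement to an abstract $L^2$-type convergence criterion (of the kind formulated in the Proposition~\ref{prop:generalConvergence} used there) applied to the Fourier modes of $X^\tau_\eps$. Since each $X^\tau_\eps$ lives in a fixed inhomogeneous Wiener chaos of order $|\ell(\tau)|$ with respect to the driving noise, hypercontractivity ensures that all $L^p$ norms are equivalent, and so $L^2$-bounds on the covariances will automatically upgrade to convergence in probability in the target Hölder-type space.

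First I would use Proposition~\ref{prop:decompX} to write each Fourier mode $X^\tau_{\eps,k}(t)$ explicitly as a sum over proper labellings $L \in \LL^\tau$ with $\rho(L) = k$, with a random integrand given by $C_\eps(L)\int_{\TT_t^\tau}Z^\tau_L(t,T)\,\mu_t^\tau(dT)$. The covariance $\E X^\tau_{\eps,k}(s)\,\overline{X^\tau_{\eps,k}(t)}$ then becomes a sum over pairs $(L,\bar L)$, and Wick's theorem (Proposition~\ref{prop:momentGaussian}) further expands it as a sum indexed by pairings $P \in \CP^\tau$ of two copies of the leaves. By Lemma~\ref{lem:pairCross}, only those $P$ for which $\LL^\tau_P \neq \emptyset$ contribute. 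For each such $P$, the resulting multiple integral over $\TT_t^\tau \sqcup \TT_s^\tau$ of a product of heat-kernel exponentials is, up to the factor $C_\eps(L)C_\eps(\bar L)$, exactly $\CK^\tau(P,\hat L;|t-s|)$ as defined in \eref{e:defKernel}.

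The next step is to pass from $\CK^\tau$ to its symmetrised version $\CK^\tau_\sym$. Lemma~\ref{lem:isom} provides the invariance under the diagonal action of $S_\tau \times S_\tau$, so that summation over pairings can be reorganised as a sum over equivalence classes in $\CP^\tau/(S_\tau \times S_\tau)$, each counted with multiplicity equal to the orbit size. The symmetrisation over the relation $\sims$ arises from the imaginary prefactors $iL_{e(v)}$ at inner nodes in the definition \eref{e:defZ} of $Z^\tau_L$: flipping simultaneously the signs of the labels adjacent to an uncle-nephew pair $\{u,\bar u\} \in \ell_\ell^P(\tau)$ preserves membership in $\LL^\tau_P$ but changes the sign of a pair of such factors, so only the symmetric combination contributes to the real-valued covariance. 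I would verify by a direct inspection of \eref{e:defZ} that these are the only such sign symmetries available for $\hat L$ within a fixed pairing $P$.

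Once this algebraic reduction identifies the covariance of $X^\tau_{\eps,k}$ with the corresponding sum of $C_\eps^2(\cdot)\,\CK^\tau_\sym(P,\hat L;|t-s|)$ terms, hypothesis \eref{e:boundKernel1} directly yields a uniform bound on $\sum_k |k|^{2\gamma} \E |X^\tau_{\eps,k}(t)|^2$ for $\gamma < \alpha$, while \eref{e:boundKernel2} controls the time increments with exponent $\delta < \beta$; the same estimates applied to the difference $X^\tau_\eps - X^\tau_{\bar \eps}$ give a Cauchy property in probability, since $C_\eps(L) \to 1$ pointwise. The proof concludes by invoking the abstract convergence result of the type used in Proposition~\ref{prop:procX1}, together with Kolmogorov's continuity criterion in the time variable. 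The main obstacle I anticipate is the combinatorial step identifying the symmetrisation class $[L]_P$: one must check that \emph{all} symmetries producing identical contributions are accounted for by $S_\tau \times S_\tau$ and by $\sims$, so that the assumption on $\CK^\tau_\sym$ genuinely controls the full covariance and not merely a lower bound for it.
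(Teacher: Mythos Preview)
Your overall strategy coincides with the paper's: write $X^\tau_\eps$ via Proposition~\ref{prop:decompX}, cast it in the framework of Proposition~\ref{prop:generalConvergence} with index set $\JJ=\LL^\tau$, expand the covariances by Wick's formula into sums over pairings, and use Lemma~\ref{lem:isom} to reduce to $\CP^\tau/(S_\tau\times S_\tau)$.

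The gap is in your passage from $\CK^\tau$ to $\CK^\tau_\sym$. Your sign-flip argument is incorrect: for an uncle--nephew pair $\{v,w\}$ with $v_\downarrow=w_{\downarrow\downarrow}$ and $w'$ the sibling of $w$, flipping the leaf labels $L_{e(v)},L_{e(w)}$ sends the inner label $L_{e(w_\downarrow)}$ from $L_{e(w)}+L_{e(w')}$ to $-L_{e(w)}+L_{e(w')}$, which is not a sign change of any factor in \eref{e:defKernel}, so no cancellation occurs. More fundamentally, even a correct pointwise identity would not suffice: the summability hypothesis of Proposition~\ref{prop:generalConvergence} carries the absolute value \emph{inside} the sum over $\kappa,\eta$, so if you take $f_L=\int Z^\tau_L\,d\mu$ without any grouping you are left with $\sum_P\sum_{\hat L}|\rho(\hat L)|^{2\alpha}|\CK^\tau(P,\hat L;\cdot)|$, whereas the assumption \eref{e:boundKernel1} only controls $|\CK^\tau_\sym|$, which can be strictly smaller due to cancellations (this is exactly the point of the remark following the theorem).

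The paper's remedy is to pre-symmetrise the process itself. One introduces a coarser relation $\sim$ on $\LL^\tau$ by declaring $L\sim\bar L$ whenever $|L_{e(v)}|=|\bar L_{e(v)}|$ for every leaf $v$ and $\rho(L)=\rho(\bar L)$, and replaces $f_L$ by the class average $F_L=|[L]|^{-1}\sum_{\bar L\in[L]}\int Z^\tau_{\bar L}\,d\mu$. Since $C_\eps$ and $g_L$ are $\sim$-invariant this does not alter $X^\tau_\eps$, but now $\E F_L(t)F_{L'}(t')$ is itself an average over $[L]\times[L']$. The key observation is that uncle--nephew pairs always lie within a single copy of $\tau$ and carry opposite leaf labels, so any $\hat L'\sims\hat L$ factors as $\bar L\sqcup\bar L'$ with $\bar L\sim L$ and $\bar L'\sim L'$; hence each $\sims$-class is contained in a single $[L]\times[L']$, and $\E F_LF_{L'}$ can legitimately be rewritten in terms of $\CK^\tau_\sym$ \emph{before} the absolute value is taken.
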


\begin{proof}
This is a rather straightforward application of Proposition~\ref{prop:generalConvergence}.
Without loss of generality, we can assume that  $\alpha \le 1$, since otherwise it suffices to 
consider the appropriate derivative of $X^\tau_\eps$.
We first introduce an equivalence relation $\sim$ on $\LL^\tau$ by stipulating that 
$L \sim \bar L$ if and only if $|L_{e(v)}| = |\bar L_{e(v)}|$
for every $v \in \ell(\tau)$ and $\rho(L) = \rho(\bar L)$. We then define a symetrised family of processes $F_L$ by
\begin{equ}
F_L(t) = {1\over |[L]|} \sum_{\bar L \in [L]} \int_{\TT^\tau_t} Z_{\bar L}^\tau(T)\,\mu^\tau_t(dT)\;,
\end{equ}
where we denote by $[L]$ the equivalence class of $L$ under $\sim$. 
Writing furthermore $g_L(x) = e^{i\rho(L) x}$
and noting that both $C_\eps(L) = C_\eps(\bar L)$ and $g_L = g_{\bar L}$ for $L \sim \bar L$ by definition, 
it then follows from Proposition~\ref{prop:decompX} that
\begin{equ}
X^\tau_\eps(x,t) = \sum_{L \in \LL^\tau} C_\eps(L)\,F_L(t)\, g_L(x)\;,
\end{equ}
so that we are precisely in the framework considered in Proposition~\ref{prop:generalConvergence}, provided
that we set $\JJ = \LL^\tau$ for our index set.

With these notations at hand, it then
 follows from \eref{e:defZ}, Proposition~\ref{prop:momentGaussian}, and the definition of $\CK^\tau$ that
\begin{equ}[e:corX]
\E F_L(t)F_{L'}(t')
= {1 \over |[L]|\,|[L']|}\sum_{\bar L \in [L]\atop \bar L' \in [L']} \sum_{P \in \CP^\tau} \one_{\bar L \sqcup \bar L' \in \LL^\tau_P}  \CK^\tau(P,\bar L \sqcup \bar L';t-t')\;.
\end{equ}
Note now that if $\bar L \sqcup \bar L' \sims L \sqcup L'$ then, by the definition of $\sim$, one also
has $\bar L \sim L$ and $\bar L' \sim L'$. As a consequence, we can replace $\CK$ by $\CK_\sym$ in \eref{e:corX},
so that the claim follows Proposition~\ref{prop:generalConvergence}, noting that we can
restrict ourselves to equivalence classes of $\CP^\tau$ under isometries by Lemma~\ref{lem:isom}.
\end{proof}

\begin{remark}
Of course, since $\CK_\sym^\tau$ is constructed from a finite number of copies of $\CK^\tau$, we 
also have the same criterion with $\CK^\tau$ instead. However, it turns out that in some of the
situations that we are lead to consider, $\CK^\tau$ fails to satisfy \eref{e:boundKernel}, while $\CK^\tau_\sym$ 
does, due to some cancellations.
\end{remark}

\subsection{Reduction to simpler trees}

There is one situation in which the estimate of $\CK^\tau_\sym(P,\cdot;\cdot)$ for one tree can benefit 
from bounds on a simpler tree. This is when we consider a tree $\bar \tau$ of the form $\bar \tau = [\tau,\bullet]$
and a pairing $\bar P$ consisting of pairing the two copies of $\tau$ according to some pairing $P \in \CP^\tau$
and then pairing the two remaining leaves. We denote by $\CP_s^{\bar \tau}$ the set of all such pairings.

In this case, we have:

\begin{proposition}\label{prop:simplePairing}
Let $\bar \tau$ and $\bar P \in \CP_s^{\bar \tau}$ be as above and assume that the bound \eref{e:boundKernel1}
holds for $\CK_\sym^\tau(P,\cdot;\cdot)$ with some $\alpha \ge 0$. Then, the bounds \eref{e:boundKernel}
hold for $\CK_\sym^{\bar \tau}(\bar P,\cdot;\cdot)$ with $\bar \alpha < {3\over 2} \wedge \bigl(\alpha + {1\over 2}\bigr)$ and $\bar \beta < {1 \wedge \bar \alpha\over 2}$.
\end{proposition}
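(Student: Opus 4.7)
The plan is to derive an explicit integral representation of $\CK^{\bar\tau}$ in terms of $\CK^\tau$, exploiting the factorised structure of $\bar\tau = [\tau,\bullet]$ and $\bar P \in \CP_s^{\bar\tau}$. Writing $r$ for the root of the $\tau$-subtree in $\bar\tau$, $\ast$ for the extra leaf attached to the new root $\bar\rho$, and setting $\ell = \hat{\bar L}_{e(r)}$ and $m = \hat{\bar L}_{e(\ast)}$, the adaptation condition on the pair $\{\ast,\ast'\}$ together with Kirchhoff's law yields $\rho(\hat{\bar L}) = \ell + m$ and forces the corresponding second-copy labels to be $-\ell$ and $-m$. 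Setting $s = T_{\bar\rho}$ and $\bar s = \bar T_{\bar\rho}$ (the times of the new root in the two copies) and performing the shift $T\mapsto T - \bar s$, $\bar T\mapsto \bar T - \bar s$ in both copies of $\tau$, the inner integral over the remaining subtree times matches exactly the defining integral of $\CK^\tau(P, \hat L; s-\bar s)$, with $\hat L$ the restriction of $\hat{\bar L}$ to the edges of $\tau\sqcup\tau$. One obtains
\begin{equ}[e:factorKbar]
\CK^{\bar\tau}(\bar P,\hat{\bar L};\delta) = -\ell^2\int_{-\infty}^\delta\!\!\int_{-\infty}^0 e^{-(\ell+m)^2(\delta-s-\bar s)}\,e^{-m^2|s-\bar s|}\,\CK^\tau(P,\hat L;s-\bar s)\,ds\,d\bar s\;.
\end{equ}
Since the new pair $\{\ast,\ast'\}$ is not of nephew--uncle type (the parent of $\ast$ is itself a root), one has $\ell_\ell^{\bar P}(\bar\tau) = \ell_\ell^P(\tau)$, so the symmetrisation ``$\sims$'' on $\bar\tau$ acts only on labels internal to the $\tau$-subtree, and \eref{e:factorKbar} remains valid with both kernels replaced by their symmetrised counterparts.

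For \eref{e:boundKernel1}, taking absolute values in \eref{e:factorKbar} and substituting $u = \delta - s$, $\bar u = -\bar s$, the main input is the explicit evaluation
\begin{equ}[e:ILM]
I(\ell,m) \eqdef \int_0^\infty\!\!\int_0^\infty e^{-(\ell+m)^2(u+\bar u)}\,e^{-m^2|u-\bar u|}\,du\,d\bar u = \frac{1}{(\ell+m)^2\bigl((\ell+m)^2+m^2\bigr)}\;,
\end{equ}
obtained by a routine integration in the $(u+\bar u,u-\bar u)$ coordinates. Combined with the assumption on $\CK_\sym^\tau$, the bound \eref{e:boundKernel1} for $\bar\tau$ reduces to the estimate
\begin{equ}
\sum_{k \in \Z_\star\setminus\{\ell\}}\frac{\ell^2\,|k|^{2\bar\alpha}}{k^2\bigl(k^2 + (k-\ell)^2\bigr)} \lesssim 1\vee|\ell|^{2\alpha}\;,
\end{equ}
uniformly in $\ell\in\Z_\star$, with $k = \ell + m$. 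Splitting this sum into the three regimes $|k|\ll|\ell|$, $|k|\sim|\ell|$, and $|k|\gg|\ell|$ shows that the left hand side is of order $1 \vee |\ell|^{2\bar\alpha - 1}$; the upper cap $\bar\alpha < \frac{3}{2}$ is forced by summability of the tail at $|k|\to\infty$, and the cap $\bar\alpha < \alpha + \frac{1}{2}$ by matching the desired growth $|\ell|^{2\alpha}$. For \eref{e:boundKernel2}, I would use the same representation: after substituting $s = \delta - t$ to isolate the $\delta$-dependence into the integrand, the difference $\CK_\sym^{\bar\tau}(\bar P,\hat{\bar L};0) - \CK_\sym^{\bar\tau}(\bar P,\hat{\bar L};\delta)$ is controlled by the moduli of continuity of $e^{-(\ell+m)^2|u|}$ and $e^{-m^2|u|}$ under a shift of $\delta$, each bounded by $\min\bigl(1,(\ell+m)^2|\delta|\bigr)$ and $\min(1,m^2|\delta|)$ respectively; interpolating via $\min(1,x) \le x^{2\bar\beta}$ for $2\bar\beta \in [0,1]$ and rerunning the above summation estimate yields \eref{e:boundKernel2} for every $\bar\beta < \frac{1}{2} \wedge \frac{\bar\alpha}{2}$.

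The main obstacle is the borderline nature of the summation over $k$ near $\bar\alpha = \frac{3}{2}$: the summand decays only like $|k|^{2\bar\alpha - 4}$ at infinity and is only marginally summable there, so one cannot afford the naive bound $I(\ell,m) \le (\ell+m)^{-4}$ but must exploit the precise form of \eref{e:ILM}, in which the factor $(k-\ell)^2$ in the denominator provides the additional decay needed when $|k|$ is bounded but $|\ell|$ is large. A secondary subtlety is verifying that the symmetrisations on $\bar\tau$ and $\tau$ are compatible, which is the content of the identification $\ell_\ell^{\bar P}(\bar\tau) = \ell_\ell^P(\tau)$.
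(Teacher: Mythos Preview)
Your approach is essentially identical to the paper's. Both derive the factorised integral representation of $\CK_\sym^{\bar\tau}$ in terms of $\CK_\sym^\tau$ (your $\ell$ is the paper's $k$, your $\ell+m$ its $m$), bound the double time-integral by the elementary kernel identity that yields $I(\ell,m)$, and then reduce the summability to the same one-parameter sum over the new root label; for the time-increment bound the paper invokes Lemma~\ref{lem:diffKernel} where you argue via moduli of continuity and interpolation, which amounts to the same thing. Your explicit verification that $\ell_\ell^{\bar P}(\bar\tau)=\ell_\ell^P(\tau)$, ensuring the symmetrisations commute with the factorisation, is a point the paper leaves implicit.
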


\begin{proof}
Let $L \in \LL^\tau_P$ be a labelling with $\rho(L) = k$. Then, for every $m \in \Z_\star$, we can construct a
corresponding labelling $\bar L \in \LL^{\bar\tau}_{\bar P}$ with $\rho(\bar L) = m$ by labelling the two paired copies of $\tau$ according
to $L$, using the labels $(k,-k)$ for the edges joining the roots of the copies of $\tau$ to the roots of the copies of $\bar \tau$,
and assigning the labels $(m-k, k-m)$ to the two remaining edges.
With this notation, it follows from the definition of $\CK^\tau_\sym$ that we have the identity
\begin{equ}
\CK_\sym^{\bar \tau}(\bar P,\bar L;\delta) = k^2\int_{-\infty}^0\int_{-\infty}^\delta \CK_\sym^\tau(P,L,s-s') e^{-(k-m)^2|s-s'| - m^2(\delta-s-s')}\,ds\,ds'\;.
\end{equ}
In particular, it follows from Lemma~\ref{lem:diffKernel} that
\begin{equs}
\CK_\sym^{\bar \tau}(\bar P,\bar L;0) &\lesssim  {k^2\|\CK_\sym^{\tau}(P,L;\cdot)\|_\infty \over m^2 (k^2 + m^2)}\;, \\
|\CK_\sym^{\bar \tau}(\bar P,\bar L;\delta) - \CK_\sym^{\bar \tau}(\bar P,\bar L;0)| &\lesssim 
\bigl(1\wedge \delta m^2\bigr){k^2\|\CK_\sym^{\tau}(P,L;\cdot)\|_\infty \over m^2(k^2+m^2)}\;.
\end{equs}
For $\alpha \ge 1$, the numerators of these expressions are summable by assumption, so that the corresponding
bound holds. 
For $\alpha \le 1$, we use the bound
\begin{equ}
{k^2 \over m^2(k^2+m^2)} \le {k^{2\alpha} \over m^{2+2\alpha}}\;,
\end{equ}
so that the claim follows at once.
\end{proof}

\subsection{General summability criterion}
\label{sec:constructGraph}

In this section, we introduce a graphical criterion to verify whether
 $\CK_\sym^\tau$ satisfies the bounds  of Theorem~\ref{theo:abstractConv} for a given pairing $P \in \CP^\tau$.
Our criterion consists of two steps: in a first step, we associate to a given pair 
$(\tau,P)$ a family of weighted graphs $\GG_\kappa^\tau(P)$.
Elements of $\GG_\kappa^\tau(P)$ all share the same underlying graph and only differ by the weights given to their
edges. In a second step, we need to check that $\GG_\kappa^\tau(P)$ contains at least one element that can be reduced to
a loop-free graph by a certain reduction procedure.

The weighted graphs in $\GG_\kappa^\tau(P)$ are built in several steps in the following way.\\[.3em]

\noindent\textbf{1.~Construction of the underlying graph.} 
Informally, we build a graph $(\CG,\CE)$ by taking two disjoint copies of $\tau$, joining all the
vertices that belong to the same pair of $P$, as well as the two roots, and then erasing all
``superfluous'' vertices that only have two incoming edges.
We will also henceforth denote by $\CT \subset \CE$ the spanning tree given by 
the interior edges of the two copies of the original tree $\tau$, together with
the new edge $\bar e$ connecting the two roots. 

\begin{example}
For a typical pairing of the tree $\bigtree$, we obtain the following graph, where some oriented
pairing $P$ is depicted on the left (with two nodes belonging to $P$ if they are connected by a black arc)
and the corresponding oriented graph $(\CG,\CE)$ is depicted on the right, with the
spanning tree $\CT$ drawn in grey and the glued edges in black:
\begin{equ}[e:exampleTree]
\mhpastefig{tree112pair4}\qquad
\mhpastefig{tree112pair4abstract}
\end{equ}
The distinguished edge $\bar e$ joining
the two copies of the root vertex is drawn as a thicker grey line.
\end{example}

Formally, this can be achieved by
setting $\CG = (\CV(\tau)\sqcup \CV(\tau))/\simp$ and 
\begin{equ}
\CE = \{(\rho,\bar \rho)\} \cup (\CE(\tau)\sqcup \CE(\tau)) / \simp\;,
\end{equ}
where $\CV(\tau)$ and $\CE(\tau)$ are the vertex (resp.\ edge) set of $\tau$ and $\rho$ and $\bar \rho$
are the two copies of the root. We will henceforth use the shorthand $\bar e = (\rho,\bar \rho)$ for the distinguished
edge connecting the two roots, as this will sometimes play a special role.

Here, the equivalence relation $\simp$ is defined on $\CV(\tau)\sqcup \CV(\tau)$ 
by setting $u \simp v_\downarrow$ and $v \simp u_\downarrow$ for every pair $(u,v)\in P$. 
This then induces a natural equivalence relation on the edge set by $(u,u_\downarrow) \simp (v,v_\downarrow)$.
Since $\tau$ is a binary tree, every vertex of the graph $(\CG,\CE)$ constructed in this way is of degree exactly $3$.
It inherits the ordering of $\tau$, but this ordering does not
extend to the edges ``glued'' by $\simp$, since they are always glued in ``opposite directions''. However,
if we order the pairs in $P$, then this naturally defines an ordering on all of $\CE$.\\[.3em]

\noindent\textbf{2.~Temporarily weigh edges.} Build a weighting $\bar \CL_0 \colon \CE \to \R$ of
the graph by
giving the weight $\kappa$ to the edge $\bar e$ connecting the two roots, the weight $-1$
to the remaining edges in $\CT$, and the weight $0$ to the remaining edges in $\CE \setminus \CT$.\\[.3em]

\noindent\textbf{3.~Treat small loops.}
It turns out that occurrences of certain small loops (a pair of vertices connected by two edges)
cause summability problems that have to be cured by
a special procedure.

There are two types of such loops: either one of its edges belongs to $\CT$ (let's call these ``type $1$''),
or both of its edges belong to $\CE \setminus \CT$ (``type $2$'').
Loops of the first kind are the only ``dangerous'' ones, and they are handled by the following special procedure. 
For each such loop, we shift a weight ${1\over 3}$ into the loop from one of its adjacent 
edges. More precisely, we build a weight $\CL_0$ from $\bar \CL_0$
by performing the substitution
\begin{equ}
\mhpastefig{loopType1}
\end{equ}
Note that the small loop appearing in the example \eref{e:exampleTree} is of  type 1.
It does not matter which one of the two adjacent edges we shift the weights to.\\[.3em]

\noindent\textbf{4.~Finalise the weights of the edges.}
We now finally construct the family $\GG_\kappa^\tau$ of weightings of the graph $(\CG,\CE)$. 
Denote by $\CL_0\colon \CE\to\R$ the weighting obtained
at the end of the previous step and denote by $\CW \colon \CG \to \R_+$ the map defined by
$\CW(v) = 0$ for $v \in \bar e$ and $\CW(v) = 2$ otherwise. In other words, we associate a weight $2$
to every vertex except the two roots.

Define now $\CE_0 = \emptyset$ and $\CG_0 = \emptyset$ and
recursively construct subsets $\CE_n\subset  \CE$ and $\CG_n\subset  \CG$ in the following way.
Assuming that $\CE_n$ and $\CG_n$ have already been constructed and that $\CL_n$ has been defined, 
we pick an arbitrary 
vertex $v \in \CG \setminus \CG_n$ and consider the set $E_v^n$ of edges attached to $v$
that are not in $\CE_n$. We then choose an arbitrary function $W_n \colon E_v^n \to \R_+$ with 
$\sum_{e \in E_v^n} W_n(e) = \CW(v)$ and we set $\CL_{n+1} = \CL_n + W_n$, $\CE_{n+1} = \CE_n \cup E_v^n$,
$\CG_{n+1} = \CG_n \cup \{v\}$. The construction terminates when $\CE_n =  \CE$,
and we denote the weight constructed in this way by $\CL$.

Loosely speaking, we distribute the weights ``$2$'' given by $\CW$ 
among each vertex's neighbouring edges, with the constraint that once the weight of a given vertex
has been distributed, none of its adjacent edges can receive any more weight from its other vertex.

\begin{definition}\label{def:labels}
The set $\GG_\kappa^\tau(P)$ consists of all the possible weightings
$\CL\colon \CE \to \R$ that can be obtained from the procedure outlined above
and that are such that $\CL(e) > 0$ for all edges $e$ belonging to a small loop.
\end{definition}

\begin{remark}
Since it will usually be advantageous to have only positive weights left, and since the elements in the 
original spanning tree $\CT$ have weight $-1$ before the vertex weights are distributed, 
it is usually be a good idea in Step~4
to traverse vertices in $\CG$ in a way that respects the ordering of $\CT$, i.e.\ from the outside 
of $\CT$ towards the two root vertices. 
\end{remark}

The point of this construction is that it is quite straightforward to obtain a bound on
$\CK^\tau_\sym(P,\cdot,\cdot)$ from the weights in $\GG_\kappa^\tau(P)$, but understanding why
this is so requires a few notions of elementary graph theory, which we now present. 

Given an arbitrary directed graph $(\CG,\CE)$, we write $e \to v$ for an edge $e$ entering a vertex $v$ 
(i.e.\ $e = (u,v)$ for some $u\in\CG$) and
$e \leftarrow v$ for an edge $e$ exiting $v$ (i.e.\ $e = (v,u)$ for some $u\in\CG$).
With this notation, the \textit{integral cycle group} $\cC(\CG,\CE)$ of a graph is given by all labellings
$L \colon \CE \to \Z$ such that, for each vertex $v \in \CG$, Kirchhoff's law is satisfied in the sense that
\begin{equ}
\sum_{e \to v} L_e = \sum_{e \leftarrow v} L_e\;.
\end{equ}
Note that even though we used the fact that we specified an orientation to define $\cC(\CG,\CE)$, 
it really does not depend on it. Indeed, if we consider two different orientations on the same graph,
we should identify elements in their respective cycle groups if they agree on those edges that do not
change orientation and have opposite signs on those edges that do change orientation. 
We also introduce a notation for the set of nowhere vanishing elements of the cycle group:
\begin{equ}
\cC_\star(\CG,\CE) = \{L \in \cC(\CG,\CE) \,:\, L_e \neq 0\;\forall\; e \in \CE\}\;.
\end{equ}
The reason why we introduced this notation is the following fact:

\begin{lemma}\label{lem:identif}
There is a canonical identification of $\LL^\tau_P$ with $\cC_\star(\CG,\CE)$, 
where $(\CG,\CE)$ is the graph associated to $\tau$ and $P$ as in Step~1 above.
\end{lemma}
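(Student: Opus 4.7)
The plan is to exhibit explicit mutually inverse bijections between $\LL^\tau_P$ and $\cC_\star(\CG,\CE)$ by the obvious ``push forward labels along the quotient'' construction, with all of the work consisting in verifying that Kirchhoff's law at every vertex of $\CG$ translates precisely into the conjunction of (i) the proper-labelling condition at interior non-root vertices of each copy of $\tau$, (ii) the adaptedness to $P$, and (iii) the definition of $\rho(L)$ at the two roots.

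First I would define the forward map. Given $\hat L = L \sqcup \bar L \in \LL^\tau_P$, I set $\tilde L(\bar e) = \rho(L) = -\rho(\bar L)$ (well defined and of common absolute value by Remark~\ref{rem:derrhoL}), and on each class $[e]$ of $(\CE(\tau)\sqcup\CE(\tau))/\simp$ I assign the common value of $\hat L$, with signs dictated by a chosen orientation of $[e]$. The key consistency check is that, for $\{u,v\}\in P$, the two edges $e(u)$ and $e(v)$ map to the same quotient edge but with opposite orientations, so the adaptedness condition $\hat L_{e(u)}+\hat L_{e(v)}=0$ is exactly what is needed for a single signed label on $[e(u)]=[e(v)]$ to exist. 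The labels are automatically in $\Z_\star$ because $L,\bar L$ take values in $\Z_\star$, and $\tilde L(\bar e)\neq 0$ is what distinguishes elements of $\cC_\star(\CG,\CE)$ from the full cycle group.

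Next I would verify Kirchhoff at each vertex of $\CG$. For an interior non-root vertex of $\tau$ that survives in $\CG$ without being merged with a leaf, Kirchhoff is directly inherited from the proper-labelling condition on $L$ or $\bar L$. At a vertex of $\CG$ that arises from the identification of a leaf $u$ with the parent $v_\downarrow$ of its $P$-partner $v$ in the opposite copy, a short orientation-tracking computation shows that Kirchhoff at the merged vertex is equivalent to Kirchhoff at $v_\downarrow$ in the opposite copy of $\tau$, once one uses the sign-reversal imposed by the adaptedness condition on the paired labels. At each of the two root vertices, Kirchhoff reads ``sum of the two root-child labels equals the label of $\bar e$'', which is the very definition of $\rho(L)$ and $\rho(\bar L)$. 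Conversely, starting from $M\in\cC_\star(\CG,\CE)$, I pull back to $L$ and $\bar L$ on each copy of $\tau$ by inverting the sign conventions; the nowhere-zero property of $M$ gives $L,\bar L \in \LL^\tau$, the sign-opposition on identified edges gives adaptedness to $P$, and the vertex-by-vertex argument run in reverse yields the proper-labelling condition.

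Mutual inversion of the two maps is immediate from the explicit formulas. The main obstacle is purely notational: the bookkeeping of the orientations induced on the quotient by the cross-identification $u\simp v_\downarrow$, $v\simp u_\downarrow$. This is entirely mechanical once one fixes conventions (orient each $e(w)$ from child to parent, orient $\bar e$ from $\rho$ to $\bar\rho$, and, inside each $\simp$-class, pick one of the two representatives as the reference orientation), but it is where any honest write-up of the proof will have to spend most of its ink.
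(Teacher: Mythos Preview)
Your proposal is correct and follows essentially the same approach as the paper: push the labelling through the quotient, using adaptedness to $P$ to resolve the sign on glued edges and the definition of $\rho(L)$ to label $\bar e$, then check Kirchhoff vertex by vertex. The paper's own proof is terser (it does not spell out the case analysis at merged vertices or the inverse map), but the construction is identical; one small imprecision in your write-up is the phrase ``in the opposite copy'', since pairs in $P$ may also lie within a single copy of $\ell(\tau)$, though the argument goes through verbatim in that case too.
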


\begin{proof}
Elements $L \in \LL^\tau_P$ are defined on $\CE(\tau)\sqcup \CE(\tau)$ with the canonical 
orientation that goes from the leaves to the roots, and they do satisfy Kirchhoff's law there. 
Furthermore, for any two edges $e$, $e'$ that are identified under $\simp$, one has
$L_e = -L_{e'}$. As a consequence, a choice of orientation on $P$ corresponds to a choice
of representative in each equivalence class of $\CE(\tau)\sqcup \CE(\tau)$ under $\simp$.
Denoting this choice of representative by $\pi$, we then define an element 
$C\in\cC_\star(\CG,\CE)$ by $C_e = L_{\pi(e)}$ for $e \neq \bar e$ and $C_{\bar e} = \pm |\rho(L)|$,
with the sign determined in such a way that Kirchhoff's law also holds at the roots.
\end{proof}

As an abelian group, $\cC(\CG,\CE)$ is isomorphic to $\Z^n$ for some $n\ge 0$, called the 
\textit{dimension} of $\cC$. An \textit{integral basis} $\CB \subset\cC(\CG,\CE)$ then consists of exactly $n$
elements, with the property that every element of $\cC(\CG,\CE)$ can be written uniquely as
\begin{equ}
L = \sum_{B \in \CB} L_B\,B\;,\qquad L_B \in \Z\;.
\end{equ}
In other words, an integral basis provides a decomposition that realises the isomorphism with $\Z^n$.

We call an element $L$ of the cycle group \textit{elementary} if there exists a simple cycle of $\CG$ 
(i.e.\ one traversing each edge at most once) such
that $L$ takes the values $\pm 1$ on the edges belonging to the cycle and $0$ otherwise.
There are exactly two elementary elements in $\cC(\CG,\CE)$ for each simple cycle, one for each orientation.
Finally, for any spanning tree $\CT$ of a graph $(\CG,\CE)$, we can construct a collection $\CB_\CT$ 
of elementary cycles by considering, for each edge $e \in \CE \setminus \CT$, the unique (modulo orientation) 
cycle passing through
$e$ that otherwise only traverses edges in $\CT$.
One then has the following classical result, which can be found for example
in \cite{Graphs}:

\begin{proposition}\label{prop:cycleBasis}
For each spanning tree $\CT$ of $(\CG,\CE)$, the collection $\CB_\CT$ forms an integral basis of $\cC(\CG,\CE)$.
\end{proposition}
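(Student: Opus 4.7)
The plan is to establish the two defining properties of an integral basis: linear independence of $\CB_\CT$ over $\Z$, and the fact that $\CB_\CT$ generates all of $\cC(\CG,\CE)$. Both properties follow from a single structural observation, namely that for each $e \in \CE \setminus \CT$, the elementary cycle $B_e \in \CB_\CT$ takes the value $\pm 1$ on $e$ and vanishes on every other edge in $\CE \setminus \CT$ (since by construction all other edges it traverses lie inside $\CT$). Once this observation is in hand, the rest is essentially bookkeeping.

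For linear independence I would argue as follows. Fix any orientation of the cycles in $\CB_\CT$, and suppose that $\sum_{e \in \CE \setminus \CT} c_e B_e = 0$ for some integer coefficients $c_e$. Evaluating this identity on a fixed edge $e_0 \in \CE \setminus \CT$, only the $e_0$-term survives, yielding $\pm c_{e_0} = 0$; since $e_0$ was arbitrary, all coefficients vanish.

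For the spanning property, let $L \in \cC(\CG,\CE)$ be arbitrary and define
\begin{equ}
\tilde L \eqdef L - \sum_{e \in \CE \setminus \CT} \eps_e L_e\, B_e\;,
\end{equ}
where $\eps_e \in \{\pm 1\}$ is chosen so that $\eps_e B_e$ takes the value $+1$ on $e$. Then $\tilde L$ is again an element of $\cC(\CG,\CE)$ (as a $\Z$-linear combination of elements of the cycle group), and by construction $\tilde L$ vanishes on every edge in $\CE \setminus \CT$. It therefore suffices to show that any element of the cycle group supported on a spanning tree is identically zero. This in turn follows by induction, peeling off leaves of $\CT$: at any leaf $v$ of the tree, there is only one edge of $\CT$ incident to $v$, and (since $\tilde L$ vanishes off $\CT$) this is the only edge of $\CE$ incident to $v$ carrying non-zero weight; Kirchhoff's law at $v$ then forces the weight on that edge to be zero. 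Removing $v$ from $\CT$ gives a smaller spanning tree on which the restriction of $\tilde L$ still satisfies Kirchhoff's law, and the induction proceeds. Hence $\tilde L = 0$, which shows that $L$ lies in the $\Z$-span of $\CB_\CT$.

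Neither step presents any real obstacle; the mild point to be careful about is the orientation convention, since the elements of $\CB_\CT$ are only defined up to sign, but as the argument above shows, an arbitrary but fixed choice of sign for each $B_e$ is all that is needed.
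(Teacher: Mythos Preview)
Your proof is correct and follows the standard textbook argument for this classical result. The paper itself does not give a proof at all: it simply states that the result ``can be found for example in \cite{Graphs}'' and moves on, so there is no approach to compare against beyond noting that you have supplied the elementary details that the paper delegates to a reference.
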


Let now $(\CG,\CE,\CL)$ be a weighted graph, i.e.\ $\CL$ is a real-valued function over the edge set
$\CE$. We then introduce the following definition:

\begin{definition}
A weighted graph $(\CG,\CE,\CL)$ is \textit{summable} if
\begin{equ}
\sum_{L\in \cC_\star (\CG,\CE)} \prod_{e\in \CE} |L_e|^{-\CL_e} < \infty\;.
\end{equ}
\end{definition}

With all of these definitions in place, we are now finally ready to state the
criterion for the bounds on $\CK_\sym$ announced earlier.

\begin{theorem}\label{theo:graphKernel}
Let $\GG_\kappa^\tau(P)$ be as above and let $\kappa \in (0,2)$. 
If there exists an element of $\GG_\kappa^\tau(P)$ that is summable,  
then the kernel $\CK_\sym^\tau(P,\cdot)$ satisfies the bounds \eref{e:boundKernel} with $\alpha = 2 - {\kappa \over 2}$
and $\beta <{1\wedge \alpha \over 2}$.
\end{theorem}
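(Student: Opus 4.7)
The plan is to establish a pointwise bound of the form
$$|\rho(\hat L)|^{2\alpha}|\CK_\sym^\tau(P,\hat L;\delta)| \lesssim \prod_{e \in \CE}|L_e|^{-\CL_e},$$
valid for $\hat L \in \LL_P^\tau$ identified with $\cC_\star(\CG,\CE)$ via Lemma~\ref{lem:identif}, where $\CL$ is the chosen summable element of $\GG_\kappa^\tau(P)$. Combined with the summability hypothesis, this directly yields \eref{e:boundKernel1}. The argument proceeds in three main movements: integrating out time variables, converting the resulting sums of squared labels into products of absolute values via weighted AM--GM, and then summing over labels using the integral cycle group structure.

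First I would integrate the time variables, peeling off vertices of the two copies of $\tau$ from the leaves inward. The key observation is that integrating $T_v$ against an integrand of the form $\prod_i e^{-L_{e_i}^2|T_v-T_{v_i}|}$ produces a factor controlled by $(\sum_i L_{e_i}^2)^{-1}$, where the sum runs over edges adjacent to $v$. Iterating this from the outermost leaves toward the two roots tracks exactly the structure of the spanning tree $\CT$. The negative initial weight $-1$ on edges of $\CT\setminus\{\bar e\}$ accounts for the linear factors $\hat L_{e(v)}$ present in \eref{e:defZ} for non-leaf edges, which become $|L_e|$ in the numerator once absolute values are taken. The root-adjacent structure, together with the top-level exponential $e^{-\rho(\hat L)^2(\delta-T_\rho-\bar T_{\bar\rho})}$, provides a decay in $|\rho(\hat L)|$ which, after absorbing the factor $|\rho(\hat L)|^{2\alpha}=|\rho(\hat L)|^{4-\kappa}$, leaves residual weight $\kappa$ on the distinguished edge $\bar e$, matching Step~2 of the construction.

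Next I would apply weighted AM--GM: for positive weights $w_i$ summing to~$1$, one has $(\sum_i L_{e_i}^2)^{-1}\le\prod_i|L_{e_i}|^{-2w_i}$. Distributing the vertex weight $\CW(v)=2$ among neighbouring edges corresponds to choosing the exponents $2w_i$ in this inequality, which is precisely the distribution performed in Step~4 of the construction of $\CL$. Small loops of type~1 require special care because a naive split would leave weight zero on the loop edge, whose associated interior-edge linear factor $|L_e|$ would then cause divergence; the trick of shifting a weight $1/3$ into the loop (Step~3) corresponds to using the sharper inequality $(L_e^2+L_{e'}^2)^{-1}\lesssim|L_e|^{-2/3}|L_{e'}|^{-2/3}|L_e\pm L_{e'}|^{-2/3}$ available when $e,e'$ share both endpoints, forcing positive weights throughout the loop. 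The requirement that $\CL$ be positive on loop edges in Definition~\ref{def:labels} is imposed for exactly this reason.

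Once the pointwise bound is in place, the sum over $\hat L$ becomes $\sum_{L\in\cC_\star}\prod_e|L_e|^{-\CL_e}$, which is finite by hypothesis, giving \eref{e:boundKernel1}. The time-regularity bound \eref{e:boundKernel2} follows by interpolation between the uniform bound just obtained and the crude estimate $|\partial_\delta\CK^\tau(P,\hat L;\delta)|\lesssim\rho(\hat L)^2\,\|\CK^\tau(P,\hat L;\cdot)\|_\infty$, yielding any exponent $\beta<(1\wedge\alpha)/2$. The main technical difficulty I anticipate is the role of the symmetrization defining $\CK_\sym^\tau$: for pairings containing nephew--uncle pairs (i.e.\ $\ell_\ell^P(\tau)\neq\emptyset$), individual kernels $\CK^\tau$ carry orphan linear factors $i L_{e(v)}$ without a matching quadratic decay, and without averaging over sign flips in $\sims$ the pointwise bound would fail at precisely these edges. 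Verifying that the cancellations produced by $\sum_{\bar L\sims L}$ exactly compensate the missing quadratic factors, for every combinatorial type of pair and every possible placement of such pairs in the graph, is what I expect to be the hard part.
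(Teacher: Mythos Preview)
Your outline largely matches the paper. The paper first strips off the two root-level time integrals via Lemma~\ref{lem:diffKernel}, which reduces both \eref{e:boundKernel1} and \eref{e:boundKernel2} simultaneously to the summability of $|L_{\bar e}|^{-\kappa}\sup_\eta|\CF^\tau_\sym(P,L;\eta)|$ (cleaner than your interpolation for time regularity, but equivalent). It then extends the time domain to all of $\R^{\bar\CG}$ by positivity and integrates out vertex times one by one using the H\"older bound \eref{e:boundHolder}, which is your weighted AM--GM and is exactly Step~4.

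Where you go astray is the mechanism for Step~3. Your inequality $(L_e^2+L_{e'}^2)^{-1}\lesssim|L_e|^{-2/3}|L_{e'}|^{-2/3}|L_e\pm L_{e'}|^{-2/3}$ is true but is not what Step~3 encodes, and it does not suffice: applied to a type-1 loop it still leaves the interior edge with net weight $-1+2/3=-1/3<0$. The actual source of the $1/3$-shift is the symmetrisation you flag as the hard part. In a type-1 loop both edges share the \emph{same} time increment $|s-r|$, so the integrand carries the factor $(k+m)\,e^{-((k+m)^2+m^2)|s-r|}$; averaging over $m\leftrightarrow -m$ (these labellings are $\sims$-equivalent) produces $\CJ_{k,m}=\sum_\pm(k\pm m)e^{-((k\pm m)^2+m^2)|s-r|}$, and Lemma~\ref{lem:normaliseLoop} gives $|\CJ_{k,m}|\lesssim e^{-c(\cdot)|s-r|}\,|k|^{1/3}(|m|+|k+m|)^{2/3}$. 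This replaces the bad linear factor $|k+m|$ by one that puts weight $1/3$ onto the adjacent external edge, which is precisely the shift in Step~3. So your diagnosis that the cancellation from $\sims$ is the heart of the matter is correct, but the issue is not ``orphan linear factors without matching quadratic decay''---every interior edge has its own exponential---rather, in a loop the two exponentials collapse onto a single time increment, so one round of integration yields only one unit of gain where the linear factor needs more, and the cancellation in $\CJ_{k,m}$ is what recovers the shortfall.
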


\begin{proof}
It follows from the definition \eref{e:defKernel} that one can rewrite $\CK_\sym^\tau$ 
in a natural way as
\begin{equ}
\CK_\sym^\tau(P,L;\delta) = \int_{-\infty}^0\int_{-\infty}^\delta e^{L_\rho^2(\delta - s - s')} \CF^\tau_\sym(P,L;s-s')\,ds\,ds'\;.
\end{equ}
Here, the fact that $\CF^\tau_\sym$ only depends on the difference between $s$ and $s'$ is a consequence of the 
invariance of the integrand in \eref{e:defKernel} under translations, but this is not relevant.
Both claimed bounds then follow at once from Lemma~\ref{lem:diffKernel} if we are able to show that the
constants $\CF^\tau_\sym(P,L) \eqdef \sup_\delta |\CF^\tau_\sym(P,L;\delta)|$ satisfy the summability condition
\begin{equ}[e:wantedK]
\sum_{L\in \cC_\star(\CG,\CE)} |L_{\bar e}|^{-\kappa}\CF^\tau_\sym(P,L) < \infty\;,
\end{equ}
where we made the slight abuse of notation of considering $L$ as an element in $\cC_\star(\CG,\CE)$ instead
of $\LL^\tau_P$, which is justified by Lemma~\ref{lem:identif}.

Denote now $\bar \CE = \CE \setminus \{\bar e\}$ and $\bar \CT = \CT \setminus \{\bar e\}$.
It then follows from the expression \eref{e:defKernel} that $\CF^\tau_\sym(P,L,\eta)$ can be written as
\begin{equs}
\CF^\tau_\sym(P,L;\eta) &=  {1\over |[L]_P|}\sum_{\bar L \sims L} \Bigl(\prod_{e \in \bar \CT} \bar L_{e}\Bigr) \int 
\exp \Bigl(- \sum_{e\in \bar \CE} |\bar L_e|^2 |\delta T_e| \Bigr)\,\mu_\eta(dT)\\
&\eqdef \int \CG^\tau_P(L) \,\mu_\eta(dT)\;.\label{e:defG}
\end{equs}
Here, $\mu_\eta$ is the measure on $\R^{\CG}$ which fixes the vertices adjacent to $\bar e$ to 
$0$ and $\eta$ respectively, and is given by Lebesgue measure, restricted to $\TT^\tau_0\times \TT_\eta^\tau$,
for the remaining components. As before, $\delta T_e = T_v - T_u$ for any edge
$e = (u,v)$.

Consider now the case when the graph contains loops of type $1$. 
By Proposition~\ref{prop:cycleBasis}, we can assume without loss of generality that the 
graph $(\CG,\CE)$, the cycle $L$,
and the collection of ``times'' $T$
are locally given by the configuration
\begin{equ}[e:configG]
\mhpastefig{looplabel}
\end{equ}
with $r \le s$, and $k,m \in \Z_\star$ with $k+m \neq 0$. The left edge necessarily belongs to $\CT$, 
but the right edge could be either in $\CT$ or not.
With this notation, let us write $M$ for the subset of $\CE$ containing the two edges that 
form the loop under consideration. It then follows from \eref{e:defG} that $\CG^\tau_P$ can be factored as
\begin{equ}
\CG^\tau_P(L) = {1\over 2}\CJ_{k,m}(s-r)\, \CG^\tau_{P,M}(L)\;,
\end{equ}
where he prefactor $\CJ$ is given by
\begin{equ}
\CJ_{k,m}(s-r) \eqdef (k+m)e^{- ((k+m)^2+m^2) |s-r|} + (k-m)e^{-((k-m)^2+m^2) |s-r|}\;,
\end{equ}
and the remainder $\CG^\tau_{P,M}$ is given by
\begin{equ}
\CG^\tau_{P,M}(L) = {1\over |[L]_P|}\sum_{\bar L \sims L} \Bigl(\prod_{e \in \bar \CT\setminus M} \bar L_{e}\Bigr)  
\exp \Bigl(- \sum_{e\in \bar \CE\setminus M} |\bar L_{(u,v)}|^2 |\delta T_e| \Bigr)\;.
\end{equ}
This follows from the fact that the configuration with $m$ replaced by $-m$ in \eref{e:configG} belongs to $[L]_P$ by the definition
of $\sims$, and that the factor $\CG^\tau_{P,M}(L)$ does not depend on $m$.
We then have the bound

\begin{lemma}\label{lem:normaliseLoop}
For every $\eps \in [0,1]$, there exist constants $c$ and $C$ such that the bound
\begin{equ}
|\CJ_{k,m}(\delta)| \lesssim e^{ -c(m^2+(k+m)^2) \delta } |k|^\eps \bigl(|m| + |k+m|\bigr)^{1-\eps}\;,
\end{equ}
holds for all $k,m \in \Z_\star$ with $|k| \neq |m|$ and for all $\delta > 0$.
\end{lemma}

\begin{proof}
Using the identity
\begin{equ}
ac + bd = {1\over 2} \bigl((a+b)(c+d) + (a-b)(c-d)\bigr)\;,
\end{equ}
and the fact that $|e^{-x} - e^{-y}| \le (1\wedge |x-y|) e^{-(x\wedge y)}$, we obtain the bound
\begin{equ}
|\CJ_{k,m} | \le e^{ -(k^2 + 2m^2-2|km|) |s-r|} \bigl(2|k| + |m|(1\wedge 4|km| |s-r|)\bigr)\;.
\end{equ}
At this stage, we make use of the fact that 
$\sup_{x> 0} xe^{-ax} \le 1/a$ and that
there exists a constant $c> {1\over 3}$ such that 
$k^2+2m^2 - 2|km| > c(k^2 + m^2)$. This implies that 
\begin{equ}
 |s-r| e^{ -(k^2 + 2m^2-2|km|) |s-r|} \lesssim {e^{-{1\over 3}(k^2+m^2)|s-r|}  \over k^2 + m^2}\;,
\end{equ}
so that we conclude that the bound
\begin{equs}
|\CJ_{k,m} | &\lesssim e^{ -c(m^2+(k+m)^2) |s-r|} \Bigl(|k| + |m|\Bigl(1\wedge {|km| \over k^2 + m^2}\Bigr)\Bigr)\\
&\lesssim e^{ -c(m^2+(k+m)^2) |s-r|} \bigl(|k| + |m|^{1-\eps} |k|^\eps\bigr)\;,
\end{equs}
holds for every $\eps \in [0,1]$. This bound is equivalent to the one in the statement. 
\end{proof}

For any $L \in \cC_\star(\CG,\CE)$, denote now by $\CS L\colon \CE \to \R$ the function 
given by $\CS L_e = |L_e| + |L_{e'}|$ if the two edges $e$ and $e'$ are part of a loop
of type $1$, and $\CS L_e = |L_e|$ otherwise. The above considerations
show that with this notation, there exists a constant $c>0$ such that one then has the bound 
\begin{equs}\label{e:boundFT}
\CF^\tau_\sym(P,L;\eta) &\lesssim \int \Bigl(\prod_{e \in \bar \CE} |\CS L_{e}|^{-\CL_0(e)} e^{-c (\CS L)_e^2|\delta T_e|} \Bigr)\, \mu_\eta(dT)\\
&\lesssim \int_{\R^{\bar \CG}} \Bigl(\prod_{e \in \bar \CE} |\CS L_{e}|^{-\CL_0(e)} e^{-c (\CS L)_e^2|\delta T_e|} \Bigr)\, \prod_{u \in \bar \CG} dT_u\;,
\end{equs}
where $\CL_0$ is the weighting constructed in Step~3. Here, the passage from the first to the second line is trivial
since the integrand is positive by construction, so that integrating over a larger domain can only increase the value of the integral.
(Here, we use the convention that $T_u = 0$ or $\eta$ respectively for $u \in \bar e$.)

To conclude the proof, we note that a repeated application of H\"older's inequality yields the bound
\begin{equ}[e:boundHolder]
\int_\R \exp \Bigl(- \sum_{j=1}^n a_j |x - x_j|\Bigr) \lesssim \prod_{j=1}^m a_j^{-\ell_j}\;,
\end{equ}
for any $a_1,\ldots, a_n \in \R$ and any exponents $\ell_j > 0$ with $\sum_{j=1}^n \ell_j = 1$.
We now fix an arbitrary order on $\bar \CG$ and we apply \eref{e:boundHolder} repeatedly, every time integrating
over the time variable associated to the corresponding element of $\bar \CG$. 

Each such integration corresponds exactly to one iteration of Step~4 of the
construction of $\CL\in \GG_\kappa^\tau$.
This shows that, for any of the weights $\CL\in \GG_\kappa^\tau$, one has indeed the bound
\begin{equ}
\CF^\tau_\sym(P,L) \le \prod_{e \in \bar \CE} |\CS L_{e}|^{-\CL(e)}\;.
\end{equ}
Since $|\CS L_e| \ge |L_e|$ and since we only retain weights such that $\CL(e) > 0$ 
for $e$ belonging to a loop, the claim then follows.
\end{proof}

\subsection{On the summability of graphs}

As a consequence of the results in the previous subsection, 
the construction of $X^\tau$ is now reduced to verifying the existence 
of a summable graph in $\GG_\kappa^\tau(P)$ for every pairing $P \in \CP^\tau$.
It is therefore useful to have a simple criterion to check the summability of a graph.
This is achieved by the following algorithm:

\begin{algorithm}\label{algo}
Apply the following operations successively, until the procedure stabilises.
Edges with weight $0$ are removed and consecutive edges without intermediate 
branching point are merged:
\minilab{e:merge}
\begin{equs}
\mhpastefig{mergeedge}\label{e:merge1}\\[.5em]
\mhpastefig{eraseedge}\label{e:merge2}
\end{equs}
Simple loops are erased, provided that their total weight 
is strictly greater than $1$:
\begin{equ}[e:loop]
\mhpastefig{eraseloop}
\end{equ}
Small loops are ``flattened'', provided that their weights $\alpha$ and $\beta$ add to a value strictly greater than $1$:
\begin{equ}[e:cycle]
\mhpastefig{mergeloop}
\end{equ}
with $\gamma = \alpha + \beta - 1$ if $\alpha\vee \beta < 1$,
$\gamma = \alpha \wedge \beta$ if $\alpha \vee \beta > 1$,
and $\gamma < \alpha \wedge \beta$ if $\alpha \vee \beta = 1$.
\end{algorithm}

The main result of this subsection is then the following.

\begin{proposition}\label{prop:criterionSum}
Let $(\CG, \CE,\CL)$ be a weighted graph such that the application of Algorithm~\ref{algo} yields
a loop-free graph. Then, $(\CG, \CE,\CL)$ is summable.
\end{proposition}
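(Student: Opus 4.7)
The approach is to show that each of the four operations comprising Algorithm~\ref{algo} reduces the question of summability of one weighted graph to that of another simpler one. Since a loop-free weighted graph has trivial cycle group $\cC(\CG, \CE) = \{0\}$, its set $\cC_\star(\CG, \CE)$ of nowhere-vanishing integer flows is empty and the defining sum is vacuously finite. Thus, provided that the algorithm terminates at a loop-free graph, iterating these reductions backwards yields the summability of the original weighted graph.

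The first three operations are straightforward consequences of Kirchhoff's law. For \eref{e:merge1}, if two edges $e_1, e_2$ meet at a degree-2 vertex $v$, then at $v$ one has $L_{e_1} = \pm L_{e_2}$, so writing $k$ for their common absolute value the joint contribution is $|k|^{-\alpha}|k|^{-\beta} = |k|^{-(\alpha+\beta)}$; this matches the weight assigned to the merged edge, and the two cycle groups are in natural bijection via contraction of $e_1, e_2$. The removal \eref{e:merge2} of weight-zero edges is handled by the same kind of bijection of cycle groups, the trivial factor $|L_e|^0 = 1$ being harmless whenever the constraint on $L_e$ is inherited from the remaining graph via Kirchhoff. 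For a self-loop $e$ as in \eref{e:loop}, Kirchhoff's law at its endpoint imposes no constraint on $L_e$, so $\cC_\star(\CG,\CE)$ factors as $\Z_\star \oplus \cC_\star(\CG', \CE')$ with $(\CG',\CE')$ denoting the graph with $e$ removed, and the total sum factors correspondingly; summability is thus equivalent to summability of $(\CG', \CE')$ together with convergence of $\sum_{k \in \Z_\star} |k|^{-\gamma}$, which holds precisely when $\gamma > 1$.

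The main technical step is the digon flattening \eref{e:cycle}. When two edges $e_1, e_2$ share both endpoints $u, v$ with weights $\alpha, \beta$, the Kirchhoff conditions at $u$ and $v$ together force $L_{e_1} + L_{e_2} = m$, where $m$ is determined by the flows on the other edges incident to $u$. The factor contributed by $\{e_1, e_2\}$ to the summability sum, for fixed $m$, is therefore
\begin{equ}
S(m) = \sum_{\substack{k + \ell = m \\ k,\ell \in \Z_\star}} |k|^{-\alpha}|\ell|^{-\beta}\;.
\end{equ}
A direct comparison with the continuous convolution $\int |x|^{-\alpha}|m-x|^{-\beta}\,dx$ shows that $S(m) \lesssim |m|^{-\gamma}$ uniformly in $m$, with $\gamma$ in each of the three prescribed regimes: if $\alpha \vee \beta < 1$, standard Beta-function scaling gives $\gamma = \alpha + \beta - 1$; if $\alpha \vee \beta > 1$, the sum over the index carrying the larger weight converges absolutely and the dominant asymptotic is governed by the other exponent, yielding $\gamma = \alpha \wedge \beta$; in the borderline case $\alpha \vee \beta = 1$ a logarithmic factor appears, which is absorbed by allowing any $\gamma$ strictly smaller than $\alpha \wedge \beta$. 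This bound identifies the merged edge as carrying effective weight $\gamma$ for summability purposes, completing the reduction step. The principal obstacle is the uniform-in-$m$ verification of this sharp asymptotic, particularly at the endpoint, together with the bookkeeping needed to ensure that the hypothesis $\alpha + \beta > 1$ keeps the new weight strictly positive so that subsequent iterations of the algorithm remain admissible.
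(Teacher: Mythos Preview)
Your overall strategy matches the paper's: show that each operation of Algorithm~\ref{algo} has the property that summability of the simplified graph implies summability of the original, and invoke the trivial base case for loop-free graphs. Your treatment of \eref{e:merge1}, \eref{e:merge2} and the self-loop step \eref{e:loop} is essentially fine.

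There is, however, a genuine gap in the digon step \eref{e:cycle}. The projection from $\cC_\star(\CG,\CE)$ to cycles of the flattened graph does \emph{not} land inside $\cC_\star(\bar\CG,\bar\CE)$: if the two parallel edges carry labels $k$ and $-k$ (both nonzero, so perfectly allowed in $\cC_\star(\CG,\CE)$), the merged edge $\bar f$ receives label $m=0$. Your estimate $S(m)\lesssim|m|^{-\gamma}$ is only useful for $m\neq0$; the contribution from $m=0$ is $S(0)=\sum_{k\neq0}|k|^{-(\alpha+\beta)}$, finite but multiplying the sum over $\{\bar L:\bar L_{e'}\neq0\ \forall e'\neq\bar f,\ \bar L_{\bar f}=0\}$, which is the summability sum for the graph with $\bar f$ \emph{deleted}, not the reduced graph $(\bar\CG,\bar\CE,\bar\CL)$. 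That extra term is not controlled by your induction hypothesis, and the defect compounds at every subsequent flattening.

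The paper sidesteps this by replacing $\prod_e|L_e|^{-\CL_e}$ with the regularised $F(C)=\prod_e(1\vee|C_e|)^{-\CL_e}$ and summing over the \emph{full} cycle group $\cC(\CG,\CE)$ rather than $\cC_\star$. Since $\cC_\star\subset\cC$ and the two products agree on $\cC_\star$, this only enlarges the sum. The payoff is that the projection associated to each step is now a clean map between full cycle groups, and the required convolution estimate becomes
\begin{equ}
\sup_{a\in\Z}\sum_{k\in\Z}\frac{(1\vee|a|)^{\gamma}}{(1\vee|k|)^{\alpha}(1\vee|a-k|)^{\beta}}<\infty\;,
\end{equ}
valid for all $a\in\Z$ including $a=0$ (where it reduces to $\sum_k(1\vee|k|)^{-(\alpha+\beta)}<\infty$, guaranteed by $\alpha+\beta>1$). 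With this device in place your argument goes through unchanged.
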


\begin{proof}
Since, for a loop-free graph, $\cC(\CG,\CE)$ contains only one element (the one that associates $0$ to every edge),
it suffices to check that for each step of the algorithm, we can show that the original graph is summable, provided that the
simplified graph is summable.

We define a function $F \colon \cC (\CG,\CE) \to \R$ by
\begin{equ}[e:defF]
F(C) = \prod_{e\in \CE} (1 \vee |C_e|)^{-\CL_e}\;,
\end{equ}
so that we want to verify the summability of $F$.
Consider now one of the steps of the algorithm and denote by $(\CG,\CE,\CL)$ the graph before the
step and by $(\bar \CG, \bar \CE,\bar \CL)$ the graph after the step. Similarly, we denote by
$\bar F$ the function associated as in \eref{e:defF} to the weighted graph $(\bar\CG, \bar \CE, \bar \CL)$. 
Again, orientations do not matter,
but it is convenient to fix an orientation for the sake of definiteness. We will therefore assume from now
on that all the edges appearing in \eref{e:merge} and \eref{e:cycle} are oriented from left to right.

For each of the operations appearing in Algorithm~\ref{algo}, there is an obvious projection
operator
\begin{equ}
\Pi \colon  \cC (\CG,\CE) \to  \cC (\bar \CG,\bar \CE) \;.
\end{equ}
For those edges unaffected by the merging / erasing operation, we identify $\Pi C$ with $C$ in the obvious way.
In the case of the  merging operation \eref{e:merge1}, if we denote by $f$ and $f'$ the two edges being merged
and by $\bar f$ the resulting edge in $\bar \CE$, we set $(\Pi C)_{\bar f} = C_f = C_{f'}$. 
In the case of \eref{e:merge2} and \eref{e:loop} there is nothing to do since $\bar \CE$ is identified with a subset of $\CE$.
In the case \eref{e:cycle}, denoting by $f$, $f'$ and $\bar f$ the old and new edges as before,
we set $(\Pi C)_{\bar f} = C_f + C_{f'}$. (In all cases, the identification is very natural if we think of elements in 
$\cC (\CG,\CE)$ as describing flows on the graph. It is also clear that $\Pi C$ then again describes a flow on the new graph.)

For the first two operations, the preservation of summability is now obvious, since $\Pi$ is a bijection and one
has the identity
\begin{equ}
\bar F(\Pi C) = F(C)\;.
\end{equ}
For the  operation \eref{e:loop}, observe that, denoting the flow in the loop by $k$, one has the identity
\begin{equ}
\sum_{\cC (\CG,\CE)} F(C) =  \sum_{\bar C \in \cC (\bar \CG,\bar \CE)} \bar F(\bar C) {\sum_{k \in \Z}} (1\vee|k|)^{-\alpha}\;.
\end{equ}
Therefore, since $\alpha > 1$ by assumption, it does follow that the summability of $F$ implies that of $\bar F$.

Finally, for the  operation \eref{e:cycle}, denote by $C_0$ the elementary cycle going through 
the two edges that are being merged so that any two elements in $\Pi^{-1} \bar C$ differ
by an integer multiple of $C_0$.
With this notation, one then has the identity
\begin{equs}
\sum_{\cC (\CG,\CE)} F(C) &= \sum_{\bar C \in \cC (\bar \CG,\bar \CE)} \sum_{C\in \Pi^{-1}\bar C} F(C)\\
&= \sum_{\bar C \in \cC (\bar \CG,\bar \CE)} \bar F(\bar C) {\sum_{k \in \Z}} {(1\vee |\bar C_{\bar f}|)^\gamma \over (1\vee|k|)^{\alpha} (1\vee |\bar C_{\bar f}-k|)^{\beta}}\;,
\end{equs}
where as before $\bar f$ is the new edge replacing the loop.
It is straightforward to check that the conditions on $\alpha$, $\beta$, and $\gamma$ given below \eref{e:merge2}
are precisely the conditions  guaranteeing that
\begin{equ}
\sup_{a \in \Z} {\sum_{k \in \Z}} {(1\vee |a|)^\gamma \over (1\vee|k|)^{\alpha} (1\vee|a-k|)^{\beta}} < \infty\;,
\end{equ}
so that the summability of $\bar F$ does indeed imply that of $F$, thus concluding the proof.
\end{proof}

\begin{remark}
It is clear from the proof that another allowed step would be to decrease the weight of any edge.
In particular, edges with positive weights can also be contracted to a node. However, we will always consider weights
in $\GG_\kappa^\tau$ such that this step is unnecessary.

One may legitimately ask whether the criterion given in Proposition~\ref{prop:criterionSum} is sharp.
This is not known to the author and is probably not the case, even though the author is not
aware of any counterexample. An obvious necessary condition for summability is that 
$\sum_{e \in \CC} \CL_e > 1$ for every elementary cycle $\CC\subset \CT$, but it is unfortunately
easy to construct counterexamples showing that this naive condition is not sufficient, even within the class of homogeneous graphs of degree $3$. 
(Take the tetrahedron and give each edge the same weight $\alpha \in ({1\over 3}, {1\over 2})$.)
\end{remark}

Before we proceed, we summarise the results of the preceding subsections in one convenient statement:
\begin{proposition}\label{prop:graphicalAlgorithm}
Let $\tau$ be a binary tree with at least two interior vertices.
Set $\kappa = 4-2\alpha$ and,
for any pairing $P \in \bar \CP^\tau$, denote by $(\CE,\CG)$ and $\GG_\kappa^\tau(P)$ the graph
and set of weightings constructed in Section~\ref{sec:constructGraph}. 
If, for a given $P\in \bar \CP^\tau$, there exists $\CL \in \GG_\kappa^\tau(P)$ such that the application of Algorithm~1
allows to contract the graph to a point, then the bounds \eref{e:boundKernel} do hold for $\CK_\sym^\tau(P,\cdot;\cdot)$.
\end{proposition}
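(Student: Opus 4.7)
The plan is to obtain Proposition~\ref{prop:graphicalAlgorithm} as a direct chaining of the two preceding results, Proposition~\ref{prop:criterionSum} (the graph-reduction summability criterion) and Theorem~\ref{theo:graphKernel} (summability of some element of $\GG_\kappa^\tau(P)$ implies the required bounds on $\CK_\sym^\tau$). First I would observe that if Algorithm~\ref{algo} can be applied to the weighted graph $(\CG,\CE,\CL)$ so as to contract it to a single point, then in particular the final graph has no cycles, so it is loop-free in the sense required by Proposition~\ref{prop:criterionSum}. Applying that proposition yields the summability
\begin{equ}
\sum_{L \in \cC_\star(\CG,\CE)} \prod_{e \in \CE} |L_e|^{-\CL_e} < \infty\;.
\end{equ}

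Next I would verify that the substitution $\kappa = 4 - 2\alpha$ ensures that we are in the regime $\kappa \in (0,2)$ covered by Theorem~\ref{theo:graphKernel}, which is the implicit admissibility assumption on $\alpha$. Under Lemma~\ref{lem:identif}, the nonvanishing cycle group $\cC_\star(\CG,\CE)$ is canonically identified with $\LL^\tau_P$, and under this identification $|L_{\bar e}| = |\rho(\hat L)|$ for the distinguished edge $\bar e$ joining the two roots. Since the construction in Step~2--Step~4 of Section~\ref{sec:constructGraph} assigns exactly the weight $\kappa$ to $\bar e$ (the vertices of $\bar e$ contributing zero additional weight via $\CW$), the abstract summability just obtained translates exactly into the condition \eref{e:wantedK} that is required in the proof of Theorem~\ref{theo:graphKernel}. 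The conclusion of that theorem then delivers the bounds \eref{e:boundKernel} for $\CK_\sym^\tau(P,\cdot;\cdot)$ with the desired exponent $2 - \kappa/2 = \alpha$ (and a matching time-regularity exponent $\beta < (1 \wedge \alpha)/2$, which is what \eref{e:boundKernel2} requires).

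There is essentially no genuine obstacle here: this proposition is a bookkeeping statement packaging the preceding theory into a single graphical criterion that can be applied tree-by-tree and pairing-by-pairing in the subsequent sections. The only point worth checking carefully is that ``contraction to a point'' by Algorithm~\ref{algo} is a \emph{strictly stronger} hypothesis than the ``loop-free endpoint'' required by Proposition~\ref{prop:criterionSum}, so no strengthening of that proposition is needed; the implication is immediate.
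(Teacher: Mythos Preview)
Your proposal is correct and follows exactly the paper's approach: the paper's proof is the single sentence ``This is just a combination of Theorem~\ref{theo:graphKernel} with Proposition~\ref{prop:criterionSum},'' and your write-up simply spells out that combination with additional detail (noting that contraction to a point is a loop-free endpoint, and tracing the identification via Lemma~\ref{lem:identif}).
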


\begin{proof}
This is just a combination of Theorem~\ref{theo:graphKernel} with 
Proposition~\ref{prop:criterionSum}.
\end{proof}

\subsection[Construction of $X^3$]{Construction of $X^\3$}

Were are now in a position to apply the abstract result of the previous two subsections to the construction of $X^\3$.
If we discard pairings such that $\LL_P^\tau$ is empty (by Remark~\ref{rem:equiv}, these are the pairings containing at least one of 
the two top pairs of leaves),
it can be checked by inspection that the remainder of $P \in \CP^\tau/(S_\tau\times S_\tau)$ for $\tau = \smalltree$ consists
of exactly three elements,
which can be represented graphically as follows:
\begin{equ}[e:pairings3]
\mhpastefig{tree12pair1}\;,\qquad
\mhpastefig{tree12pair3}\;,\qquad
\mhpastefig{tree12pair2}\;.
\end{equ}

\begin{proposition}\label{prop:boundX3}
For every $P \in \CP^\3$, the bounds \eref{e:boundKernel} hold for $\CK_\sym^\3(P,\cdot;\cdot)$ 
for every $\alpha < {3\over 2}$ and every $\beta < {1\over 2}$.

As a consequence, there exists a process $X^\3$ with sample paths that are almost surely continuous with values in $\CC^\alpha$ for every $\alpha < {3\over 2}$. Furthermore, $X^\3_\eps \to X^\3$ in probability in $\CC([-T,T],\CC^\alpha)\cap \CC^\beta([-T,T],\CC)$
for every $\beta < {1\over 2}$ and every $T>0$.
\end{proposition}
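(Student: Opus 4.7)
The statement has two parts: first, the abstract bounds \eref{e:boundKernel} for $\CK_\sym^\3(P,\cdot;\cdot)$; second, the convergence of $X^\3_\eps$ to a continuous limit $X^\3$. The second part is essentially automatic: granted the first part with exponents $\alpha < 3/2$ and $\beta < 1/2$, Theorem~\ref{theo:abstractConv} immediately delivers convergence of $X^\3_\eps$ in $\CC([-T,T],\CC^\gamma) \cap \CC^\delta([-T,T],\CC)$ for every $\gamma < 3/2$ and $\delta < 1/2$, which in particular yields sample paths that are almost surely continuous with values in $\CC^\gamma$. Thus the work is to verify the bounds on $\CK_\sym^\3$.

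By Proposition~\ref{prop:graphicalAlgorithm}, applied with $\kappa = 4 - 2\alpha$ (so that $\kappa \in (1,2)$ corresponds to $\alpha \in (1,3/2)$), it suffices to show that for each of the three equivalence class representatives $P \in \CP^\3/(S_\3 \times S_\3)$ displayed in \eref{e:pairings3}, the family of weightings $\GG_\kappa^\3(P)$ of the associated graph contains at least one element on which Algorithm~\ref{algo} terminates at a loop-free graph. I would treat the three pairings separately. For the ``simple'' pairing that belongs to $\CP_s^\3$ (that is, the one obtained by pairing the two leaves adjacent to the roots of the two copies of $\3 = [\bullet,\2]$ with each other, and pairing the remaining four leaves as two matched copies of $\2$), the result is a direct application of Proposition~\ref{prop:simplePairing}: since the induced pairing on $\2$ is precisely the one handled in the proof of Proposition~\ref{prop:procX1}, which furnishes $\alpha < 1$ for $\CK_\sym^\2$, Proposition~\ref{prop:simplePairing} lifts this to $\bar \alpha < 3/2 \wedge (\alpha + 1/2) = 3/2$ and $\bar \beta < 1/2$, as desired.

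The main work, and the main obstacle, lies in the two remaining ``crossed'' pairings. For each of these, I would build the graph $(\CG,\CE)$ as in Step~1 of Section~\ref{sec:constructGraph} (two copies of $\3$, glued along paired leaves and joined at the roots by the distinguished edge $\bar e$, with degree-two vertices contracted), identify any type-1 small loops and apply the weight-shift of $1/3$ from Step~3, and then distribute the vertex weights $2$ in Step~4 so as to give every edge inside a small loop a strictly positive weight. The bookkeeping has to respect the constraint $\CL(\bar e) = \kappa$ while leaving enough weight on the other edges for Algorithm~\ref{algo} to terminate at a point: repeated application of the loop-flattening rule \eref{e:cycle} requires strict inequalities of the form $\alpha + \beta > 1$ at each step, and it is precisely here that the threshold $\alpha = 3/2$ (equivalently $\kappa = 1$) is critical, since pushing below it would make one of these inequalities fail at the last reduction. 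Once contraction to a point is verified for each representative, Proposition~\ref{prop:graphicalAlgorithm} yields the bounds \eref{e:boundKernel} with $\alpha < 3/2$ and $\beta < (1 \wedge \alpha)/2 = 1/2$, completing the proof.
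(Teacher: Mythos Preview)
Your plan is correct for the first two pairings and matches the paper exactly there: the simple pairing in $\CP_s^\3$ is reduced to $\CK_\sym^\2$ via Proposition~\ref{prop:simplePairing}, and one of the two crossed pairings is handled by exhibiting a summable weighting in $\GG_{1+\delta}^\3(P)$ and invoking Proposition~\ref{prop:graphicalAlgorithm}.

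The gap is in your treatment of the \emph{last} crossed pairing. You assert that for both crossed pairings one can ``distribute the vertex weights $2$ \ldots\ so as to give every edge inside a small loop a strictly positive weight'' and then contract to a point, but you never actually exhibit such a weighting. For the third pairing in \eref{e:pairings3} the paper's author was unable to find any element of $\GG_\kappa^\3(P)$ on which Algorithm~\ref{algo} succeeds, and instead treats this case by hand: the kernel $\CK^\tau(P,\hat L;\cdot)$ is written out explicitly with the labelling $(k,\ell,m)$, the time integrals are estimated using the five-parameter exponential bound of Proposition~\ref{prop:boundExpGeneral}, and summability of the resulting quantity
\[
K_{k\ell m} = \frac{|k+\ell|\,|k+m|\,(k+\ell+m)^{1-2\kappa}}{\bigl((k+\ell)^2+\ell^2\bigr)\bigl((k+m)^2+m^2\bigr)\bigl((k+\ell+m)^2+(\ell^2\wedge m^2)\bigr)}
\]
is then checked directly. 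Your proposal, as written, glosses over precisely this obstruction; the ``bookkeeping'' you describe does not go through for this pairing, and a separate analytic estimate is required.
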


\begin{proof}
The second claim follows from Theorem~\ref{theo:abstractConv}, so that it suffices to check that the bounds \eref{e:boundKernel} hold 
for each of the pairings $P$ depicted in \eref{e:pairings3}.
The first pairing is treated by Proposition~\ref{prop:simplePairing}, noting that the required bounds on
$\CK_\sym^\2$ were already obtained in the proof of Proposition~\ref{prop:procX1}.

The second pairing is treated by Proposition~\ref{prop:graphicalAlgorithm}, noting that the following element
belongs to $\GG^\tau_{1+\delta}(P)$ for every $\delta > 0$:
\begin{equ}[e:pairingDangle]
\mhpastefig{tree12pair1abstract}
\end{equ}
It is straightforward to verify that Algorithm~\ref{algo} terminates and yields a loop-free graph.

Unfortunately, the last remaining pairing does not seem to be covered by Proposition~\ref{prop:graphicalAlgorithm},
so we need to treat it ``by hand''.
A generic labelling for this pairing looks like the following:
\begin{equ}[e:secondPairing]
\mhpastefig{tree12pair2label}
\end{equ}
The kernel $K^\tau$ associated to \eref{e:secondPairing} can then be written as
\begin{equ}[e:exprKernel]
K^\tau(P,\hat L; t'-t) = \CF(k,\ell,m) \int_{\TT_t^\tau}\int_{\TT_{t'}^\tau}\exp(-\CI_{k,\ell,m}(T,T'))\,\mu_t(dT)\,\mu_{t'}(dT')\;,
\end{equ}
where the prefactor $\CF$ is given by
\begin{equ}
\CF(k,\ell,m) = (k+\ell)(k+m)(k+\ell+m)^2\;,
\end{equ}
whereas the exponent $\CI$ is given by $\CI = \CI_1 + \CI_2$ with
\begin{equs}
\CI_1 &= k^2|r-r'| + (k+\ell)^2 (s-r) + (k+m)^2 (s'-r') + \ell^2 |s'-r| + m^2 |r'-s|\;,\\
\CI_2 &= (k+\ell+m)^2 (t+t'-s-s')\;.
\end{equs}
This time, we make use of Proposition~\ref{prop:boundExpGeneral} in order to bound the integral of $\CI_1$, which 
yields the bound
\begin{equ}
\int_{-\infty}^s \int_{-\infty}^{s'} e^{-\CI_1}\,dr'\,dr \lesssim {e^{-(\ell^2\wedge m^2)|s-s'|}\over \bigl((k+\ell)^2 + \ell^2\bigr)\bigl((k+m)^2 + m^2\bigr)}\;.
\end{equ}
As in the proof of Theorem~\ref{theo:graphKernel}, it follows from Lemma~\ref{lem:diffKernel} that, in 
order to verify the assumptions of Theorem~\ref{theo:abstractConv},
it suffices to verify the summability of
\begin{equ}
K_{k\ell m} \eqdef {|k+\ell| |k+m| (k+\ell+m)^{1-2\kappa} \over \bigl((k+\ell)^2+ \ell^2\bigr)\bigl((k+m)^2+ m^2\bigr) \bigl((k+\ell+m)^2 + (\ell^2 \wedge m^2)\bigr)}\;,
\end{equ}
for every $\kappa > 0$.
Since this expression is symmetric in $(\ell,m)$, it suffices to check summability over $|\ell| > |m|$, say.
In this case, one has the bound
\begin{equs}
K_{k\ell m} &\lesssim {1 \over \bigl(|k+\ell|+ |\ell|\bigr) \bigl(|k+m|+ |m|\bigr) \bigl(|k+\ell+m| + |m|)^{1+2\kappa}}\\
&\le {1 \over |\ell| |k+m|^{\kappa\over 2}|m|^{1+{\kappa\over 2}}  |k+\ell+m|^{1+\kappa}}\;.
\end{equs}
Since, for every $\kappa > 0$ and $a \in \Z_\star$,  one has the bound
$\sum_{\ell \not\in \{0,a\}} {1\over |\ell| |\ell-a|^{1+\kappa}} \lesssim {1\over |a|}$,
it follows that
\begin{equ}
\sum_{\ell} K_{k\ell m} \lesssim {1 \over |m|^{1+{\kappa\over 2}}  |k+m|^{1+{\kappa\over 2}}}\;,
\end{equ}
which is indeed summable in $k$ and $m$ for every $\kappa > 0$, and the claim follows.
\end{proof}

\subsection[Construction of $X^4$]{Construction of $X^\4$}

Since the tree $\mhpastefig[3/5]{tree22}$ has many symmetries, the calculations for this tree
turn out to be easier than for the previous case, even though this is a larger tree. 
Discarding pairings such that $\LL_P^\tau$ is empty,
it can again be checked by inspection that the remainder of 
$P \in \CP^\tau/(S_\tau\times S_\tau)$ consists of the following three elements:
\begin{equ}[e:pairings4]
\mhpastefig{tree22pair1}\;,\qquad \mhpastefig{tree22pair2}\;,\qquad 
\mhpastefig{tree22pair3}\;.
\end{equ}
We have the following result:

\begin{proposition}
There exists a process $X^\4$ with sample paths that are almost surely continuous with values in $\CC^\alpha$ for every $\alpha < 2$. Furthermore, $X^\4_\eps \to X^\4$ in probability in $\CC([-T,T],\CC^\alpha)\cap \CC^\beta([-T,T],\CC)$
for every $\beta < {1\over 2}$ and every $T>0$.
\end{proposition}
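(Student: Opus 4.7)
The plan is to apply Theorem~\ref{theo:abstractConv} with $\alpha$ arbitrarily close to $2$ and $\beta$ arbitrarily close to $1/2$. After quotienting out the action of $S_\tau \times S_\tau$ and discarding those pairings with $\LL_P^\tau = \emptyset$ (by Remark~\ref{rem:equiv}, this eliminates all pairings containing a pair of sibling leaves), the inspection in the statement shows that one only needs to verify the bounds \eref{e:boundKernel} for the three representative pairings depicted in \eref{e:pairings4}. Setting $\kappa = 4 - 2\alpha$ small and positive, by Proposition~\ref{prop:graphicalAlgorithm} it is enough to exhibit, for each such $P$, a weighting $\CL \in \GG_\kappa^\4(P)$ whose underlying graph can be contracted to a point via Algorithm~\ref{algo}. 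The time-regularity bound \eref{e:boundKernel2} then follows automatically from the same graphical estimate combined with Lemma~\ref{lem:diffKernel}, exactly as in the proofs of Propositions~\ref{prop:procX1} and \ref{prop:boundX3}.

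For the first pairing (the symmetric ``ladder'' which pairs each leaf of a $\2$-subtree with the corresponding leaf of the matching $\2$-subtree in the other copy), the quotient graph decomposes into two small loops hanging off the distinguished edge $\bar e$, connected through the two copies of the root. Both loops are of type $1$, so Step~3 of the construction shifts a weight $1/3$ into each loop, and distributing the vertex weights outward-in then leaves enough on every edge (in particular strictly more than $1$ on each loop) so that the loop-elimination step \eref{e:loop} applies; the remaining graph is a tree and collapses trivially. A cleaner alternative is to view this pairing as an iterated version of the situation covered by Proposition~\ref{prop:simplePairing}, applied once for each of the two $\2$-subtrees, starting from the already-known bound on $\CK_\sym^\2$.

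For the second pairing, which mixes the two $\2$-subtrees once, the resulting graph contains one additional non-trivial cycle but no type-$1$ small loops beyond those handled by Step~3. I would choose an order of vertex traversal in Step~4 that first deals with the two leaf-clusters (dumping their weights onto the four ``matched'' edges), then the two mid-level vertices, and finally the two roots, pushing the remaining weight onto the edges of the extra cycle; Algorithm~\ref{algo} then reduces the small loops via \eref{e:cycle}, the resulting simple cycle via \eref{e:loop} since its total weight exceeds $1$, and the rest via \eref{e:merge1}--\eref{e:merge2}.

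The main obstacle, by analogy with what happened for $X^\3$, will be the third pairing, whose graph is the most densely connected and may resist the purely graphical reduction for some choices of $\kappa$. If this is the case, I would fall back to the direct approach used at the end of the proof of Proposition~\ref{prop:boundX3}: write the kernel as an explicit multiple integral of the form \eref{e:exprKernel}, perform the innermost time integrations using Proposition~\ref{prop:boundExpGeneral} to obtain a bound of the schematic type
\begin{equ}
|\CK_\sym^\4(P,\hat L; 0)| \lesssim \prod_{e \in \CE} \bigl(|\hat L_e|^2 + q_e(\hat L)^2\bigr)^{-\gamma_e}\;,
\end{equ}
for suitable exponents $\gamma_e > 0$ summing correctly, and then verify Fourier-label summability by iterated use of elementary inequalities of the form $\sum_\ell (1\vee|\ell|)^{-1}(1\vee|\ell-a|)^{-1-\delta} \lesssim (1\vee|a|)^{-1}$. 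The fact that $\alpha_\4 = 2$ gives us a much more generous scaling budget than in Proposition~\ref{prop:boundX3}, so one extra spatial derivative of regularity is available throughout, which should be more than enough to absorb the loss from any cycle that the graphical algorithm fails to handle.
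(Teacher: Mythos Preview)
Your main approach---apply Theorem~\ref{theo:abstractConv} via the graphical criterion of Proposition~\ref{prop:graphicalAlgorithm} to each of the three pairings---is exactly what the paper does, and it works uniformly for all three. Two corrections are worth making.

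First, your alternative for the first pairing via Proposition~\ref{prop:simplePairing} does not apply: that proposition is stated only for trees of the form $\bar\tau = [\tau,\bullet]$, whereas $\nicetree = [\2,\2]$ has no trivial branch. There is no obvious way to ``iterate'' it over the two $\2$-subtrees as you suggest, so this route should simply be dropped.

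Second, your anticipated obstacle for the third pairing does not materialise. Unlike the case of $\smalltree$, the tree $\nicetree$ has enough internal symmetry that the graphical algorithm succeeds for \emph{all three} pairings: in each case one can exhibit an explicit weighting in $\GG_\kappa^\tau(P)$ (the paper draws them) for which Algorithm~\ref{algo} reduces the graph to a loop-free one for every $\kappa>0$. No fallback to a direct kernel computation \`a la Proposition~\ref{prop:boundExpGeneral} is needed here. The extra regularity budget you mention ($\alpha_\4 = 2$ versus $\alpha_\3 = 3/2$) is precisely what makes the purely graphical route go through without exception.
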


\begin{proof}
For the three pairings shown in \eref{e:pairings4}, the algorithm of Section~\ref{sec:constructGraph} allows to verify 
that the following elements belongs to  $\GG_\kappa^\tau$:
\begingroup\scriptsize
\begin{equ}
\mhpastefig[8/9]{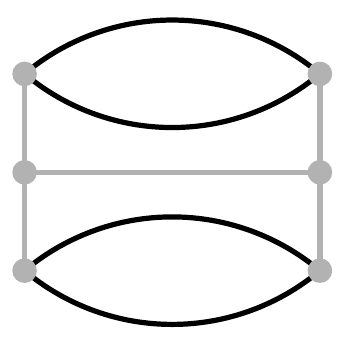}\qquad
\mhpastefig[8/9]{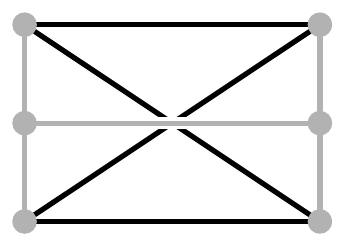}\qquad
\mhpastefig[8/9]{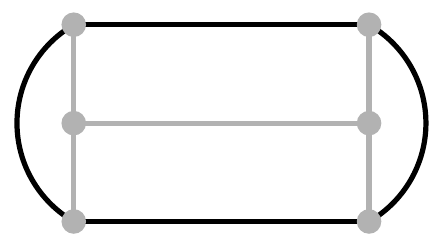}
\end{equ}\endgroup
In each case, we go through the vertices in Step~2 by ordering them from left to right and top to bottom.
For each of these three elements, it is then straightforward to verify that Algorithm~\ref{algo} indeed yields a loop-free graph
for every $\kappa > 0$.
\end{proof}

\subsection[Construction of $X^5$]{Construction of $X^\5$}

This time, because of the lack of symmetry of $\bigtree$,
there are many more cases to consider.
Indeed, if we discard again those pairings such that $\LL_P^\tau$ is empty,
it can be checked by inspection that the remainder of 
$P \in \CP^\tau/(S_\tau\times S_\tau)$ consists of $15$ elements.

However, with the tools of the previous subsections at hand, it turns out to be 
relatively straightforward to show that the following holds, where $\CP_s$ is as in Proposition~\ref{prop:simplePairing}.

\begin{proposition}\label{prop:boundK5}
For every $P \in \CP^\5 \setminus \CP_s^\5$ and for every $\kappa > 0$, one has
\begin{equ}
\sum_{L \in \LL_P^\5} |\rho(L)|^{-\kappa} \CF_\sym^\5(P,L) < \infty\;.
\end{equ}
\end{proposition}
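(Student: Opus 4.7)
The plan is to proceed in direct parallel with the proofs of Proposition~\ref{prop:boundX3} and the analogous result for $X^\4$: enumerate a complete list of representatives of $\CP^\5/(S_{\5}\times S_{\5})$, discard the simple pairings (those in $\CP_s^\5$ plus those that are empty by Lemma~\ref{lem:allowedPairs} and Remark~\ref{rem:equiv}), and for each of the remaining ones either apply Proposition~\ref{prop:graphicalAlgorithm} or fall back on the more delicate hand calculation used in \eref{e:exprKernel}--\eref{e:secondPairing}. Concretely, I would first list the 15 classes, tagging each by which pair of leaves of $\bigtree$ is involved, and identify the subset $\CP_s^\5$ as those pairings where the two top leaves of one copy of $\5$ are matched with the two top leaves of the other copy (these are covered by Proposition~\ref{prop:simplePairing} since they reduce to an estimate on $\CK^\3_\sym$ obtained in Proposition~\ref{prop:boundX3}).

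For each remaining pairing $P$, I would draw the associated three-regular graph $(\CG,\CE)$ following Step~1 of Section~\ref{sec:constructGraph}, mark the two type--1 small loops (there is typically at least one, since $\5$ has two ``tall'' leaves whose siblings must pair somewhere), shift the ${1\over 3}$-weights into them as in Step~3, and then distribute the vertex weights according to Step~4 in an order that goes from the outer leaves of the spanning tree $\CT$ inward toward the distinguished edge $\bar e$. Since we only need the bound with $|\rho(\hat L)|^{-\kappa}$ for arbitrary $\kappa > 0$, we are working with $\kappa$ arbitrarily small in the sense of Theorem~\ref{theo:graphKernel}, i.e.\ we may give the edge $\bar e$ an arbitrarily small positive weight, which leaves plenty of budget elsewhere. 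For each such $\CL \in \GG_\kappa^\5(P)$ I would then apply Algorithm~\ref{algo}: the pattern is always the same, merge all edges of weight zero, flatten the small loops (whose combined weight is now strictly greater than $1$ thanks to the ${1\over 3}$-shift plus the weight inherited from Step~4), and erase any resulting simple loop of total weight $>1$. In all the ``graph-reducible'' cases this terminates at a loop-free graph and Proposition~\ref{prop:criterionSum} gives summability.

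The main obstacle, as in the proof of Proposition~\ref{prop:boundX3}, will be the one or two pairings whose associated graph has a ``theta'' subgraph of three parallel paths joining the same two vertices, where Algorithm~\ref{algo} gets stuck because no single loop can be flattened with positive weight to spare. The prototypical example is the pairing in which each leaf of the first copy is matched with a leaf of the second copy lying on the ``opposite side'' of the tree, creating a configuration structurally identical to \eref{e:secondPairing}. For these I would treat the kernel by hand as in \eref{e:exprKernel}: factor $\CF^\5_\sym(P,\hat L;\eta)$ as in \eref{e:defG}, apply Proposition~\ref{prop:boundExpGeneral} to integrate out the intermediate ``time'' variables one by one, and reduce to checking summability of an explicit rational expression in five or six Fourier indices, which is handled by iterated use of the elementary bound $\sum_\ell (1\vee|\ell|)^{-1}(1\vee|\ell-a|)^{-1-\kappa} \lesssim (1\vee|a|)^{-1}$.

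Finally, and this is where $\CF^\5_\sym$ (as opposed to $\CF^\5$) becomes essential, there are pairings of ``nephew--uncle'' type, i.e.\ $P \cap \ell_\ell^P(\5) \ne \emptyset$, for which the individual term $\CF^\5(P,\hat L;\cdot)$ is \emph{not} summable in $\hat L$; this is the origin of the logarithmic divergence $C^\5_\eps \sim |\log\eps|$ noted in \eref{e:defCtau}. Here the symmetrization over $\bar L \sims L$ produces a sum of two nearly-opposite kernels, and the hard part of the argument is to extract the gain of one power of $|\hat L_e|$ from the cancellation: writing the $\sims$-sum out and using the identity $e^{-a^2 t}-e^{-b^2 t} = -(a^2-b^2)\int_0^t e^{-(sb^2+(1-s)a^2)t}\,ds$ replaces the offending factor by something with an extra power in the denominator, after which the resulting kernel again falls within the scope of either Proposition~\ref{prop:graphicalAlgorithm} or the direct method of the previous paragraph. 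This cancellation is the genuinely delicate point of the proof, and is what forces us to work with $\CF^\5_\sym$ throughout.
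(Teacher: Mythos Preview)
Your overall plan is sound and would work, but the paper's actual proof is considerably simpler than you anticipate: \emph{every} pairing in $\CP^\5\setminus\CP_s^\5$ is handled by Proposition~\ref{prop:graphicalAlgorithm} alone, with no hand calculation of the \eref{e:exprKernel}--\eref{e:secondPairing} type needed at all. The paper observes that the spanning tree $\CT$ is always the same row of five edges \eref{e:skeleton}, lists the $12$ admissible graphs (modulo isometries) obtained by adding the pairing edges, and for each one exhibits an explicit element of $\GG_\kappa^\tau(P)$ on which Algorithm~\ref{algo} terminates at a loop-free graph. Ten of these share a common pattern (weight~$1$ on a carefully chosen subset $\hat\CE$, weight~$\kappa$ on $\bar e$, and a local trick \eref{e:looplabel} for small loops adjacent to a dotted edge); the remaining two need a slightly modified distribution of weights but still fall to the same algorithm. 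No ``theta'' obstruction arises.

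Two further points. First, your description of $\CP_s^\5$ is off: for $\bigtree=[\smalltree,\bullet]$, the simple pairings are those in which the two leaves attached to the \emph{roots} are paired and the two copies of $\smalltree$ are paired among themselves --- not the ``top'' leaves. Second, and more importantly, the cancellation you isolate as ``the genuinely delicate point'' for nephew--uncle pairings is precisely what Step~3 of the construction of $\GG_\kappa^\tau(P)$ together with Lemma~\ref{lem:normaliseLoop} already encodes. The $\tfrac13$-weight shift into type-$1$ loops is exactly the graphical bookkeeping of the gain you propose to extract by hand via $e^{-a^2t}-e^{-b^2t}$; once this is built into the weighting, there is nothing further to do beyond checking that Algorithm~\ref{algo} runs to completion. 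So your plan is correct but does twice the work: the symmetrisation has already been absorbed into the machinery of Section~\ref{sec:constructGraph}.
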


\begin{proof}
We outline a systematic
way of constructing an element in $\GG_\kappa^\tau(P)$ for each pairing $P$. 
The spanning tree $\CT$ underlying the graphs $(\CG,\CE)$ associated to each pairing is given
by a row of five edges, which we represent as
\begin{equ}[e:skeleton]
\mhpastefig{skeleton}
\end{equ}
with the distinguished edge $\bar e$ being the one with weight $\kappa$. The remaining pairings now
consist of all possible graphs built by adding edges to \eref{e:skeleton} in such a way that
\begin{claim}
\item Every vertex is of degree exactly $3$, which is a reflection of the fact that we only consider binary trees.
\item There are no simple loops, i.e.\ loops of the kind \eref{e:loop}, for otherwise one would have $\LL_P^\tau = \emptyset$
by Remark~\ref{rem:equiv}.
\item There is at least one edge other than $\bar e$ connecting the left half of the graph to the right half, 
for otherwise one would have $\LL_P^\tau = \emptyset$ by Lemma~\ref{lem:allowedPairs}.
\item There is no edge other than $\bar e$ connecting the two vertices adjacent to $\bar e$, for otherwise one would have $P \in \CP_s^\tau$.
\end{claim}
By inspection, one can then check that, modulo isometries, the set of all such graphs consists of $12$ elements. We first consider the
following $10$ elements:
\begin{equs}
\mhpastefig[8/9]{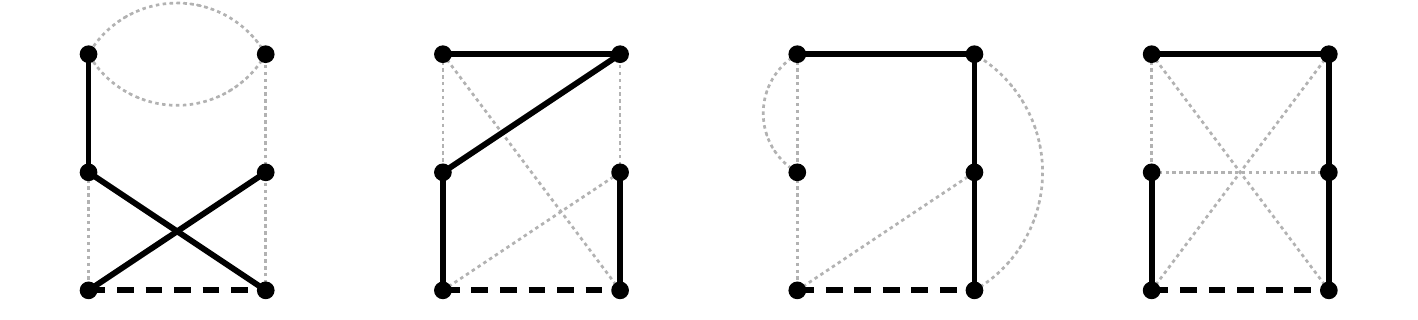}\\
\mhpastefig[8/9]{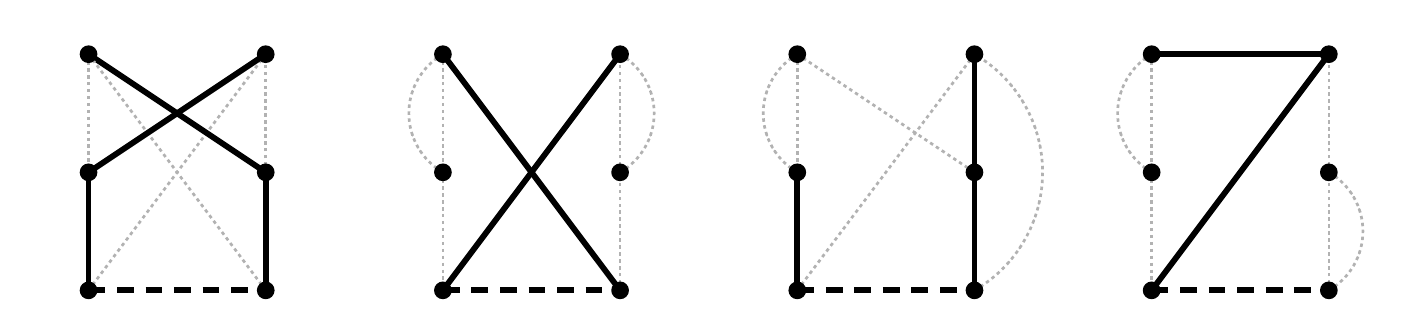}\\
\mhpastefig[8/9]{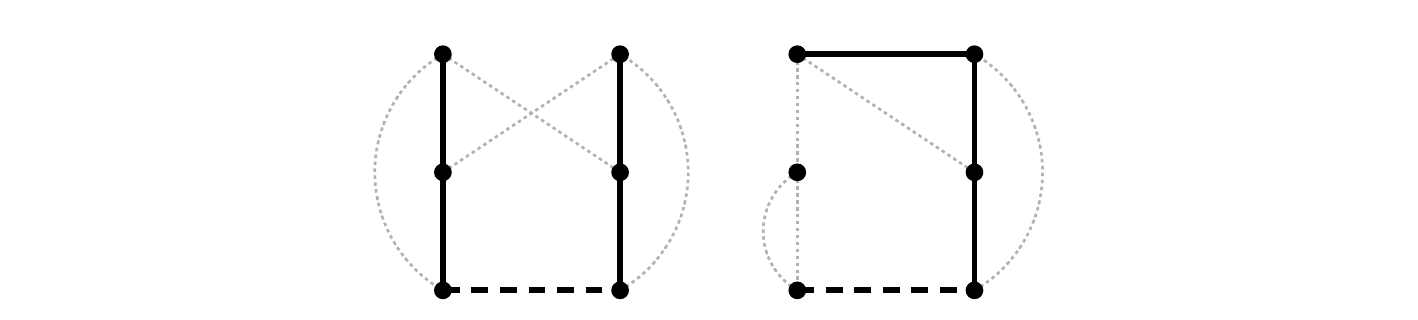} \label{e:biggraph}
\end{equs}
Observe that, for each one of the graphs $(\CG,\CE)$ appearing in this list, we have distinguished a
subset $\hat \CE \subset \CE$ of the edges by drawing it in boldface, and we have drawn the distinguished
edge $\bar e$ as a dashed line. 

For each pairing, the weighting represented by these pictures gives weight $1$ to edges in $\hat \CE$ and
weight $\kappa$ to $\bar e$. One should furthermore think of it as giving weight $0$ to the remaining
dotted edges. However, whenever a small loop (whatever its type) followed by a dotted edge 
appears in one of these graphs, we weigh such a configuration as follows:
\begin{equ}[e:looplabel]
\mhpastefig{looplabelsimple}
\end{equ}
We see that applying one step of Algorithm~\ref{algo} to such a configuration results in two consecutive edges
with weights $-{1\over 3}$ and ${1\over 3}$ respectively. As a consequence, by applying one more step of the algorithm, 
such a configuration does indeed behave 
for all practical purposes as
if the loop was erased and all edges had weight $0$. For each of the weightings represented in the figure, 
it is then a straightforward task to verify that, on the one hand they do belong to $\GG_\kappa^\tau(P)$ for every
$\kappa > 0$ and, on the other hand,
that Algorithm~\ref{algo} does indeed yield a loop-free graph. This is because, in every single case, contracting the
edges with weight $0$ yields a graph with only two vertices that are joined by a number of edges, where one
edge has weight $\kappa$ and every other edge has weight $1$.

The two remaining pairings require a slightly different weighting:
\begin{equ}
\mhpastefig{proofSpecial}
\end{equ}
In the first case, the two grey edges have weights ${\kappa\over 2}$ and $1-{\kappa\over 2}$ respectively, whereas the
weights for all the other edges are as before.
In the second case, the two grey edges have weights 
$-{2\over 3}$ and ${2\over 3}$ respectively, whereas the dotted edges all have weights ${2\over 3}$. 

Again, it can be checked in a straightforward way that in both cases, these weights 
do indeed belong to $\GG_\kappa^\tau(P)$
and that Algorithm~\ref{algo} yields a loop-free graph in both cases. 
\end{proof}

It follows almost immediately that $X^\5_\eps$ converges to a limit taking ``almost'' values in $\CC^{3\over 2}$. More precisely,

\begin{proposition}\label{prop:convergenceX5}
There exists a process $X^\5$ with sample paths that are almost surely continuous with values in $\CC^\alpha$ for every $\alpha < {3\over 2}$. Furthermore, $X^\5_\eps \to X^\5$ in probability in $\CC([0,T],\CC^\alpha)\cap \CC^\beta([0,T],\CC)$
for every $\beta < {1\over 2}$.
\end{proposition}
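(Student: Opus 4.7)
The plan is to deduce Proposition~\ref{prop:convergenceX5} directly from Theorem~\ref{theo:abstractConv} applied to $\tau = \5$. What needs to be verified is that, for every equivalence class of pairings $P \in \CP^\5/(S_\5\times S_\5)$ with $\LL_P^\5\neq\emptyset$, the kernel $\CK_\sym^\5(P,\cdot;\cdot)$ satisfies the bounds \eref{e:boundKernel1} with any $\alpha < {3\over 2}$ and \eref{e:boundKernel2} with any $\beta < {1\over 2}$. One splits the pairings into the two classes $\CP_s^\5$ and $\CP^\5\setminus \CP_s^\5$ introduced in Proposition~\ref{prop:simplePairing} and Proposition~\ref{prop:boundK5} respectively.

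For $P \in \CP_s^\5$, the idea is to exploit the fact that $\5$ is of the form $[\3,\bullet]$, so that Proposition~\ref{prop:simplePairing} applies with $\tau = \3$. Since Proposition~\ref{prop:boundX3} has already established \eref{e:boundKernel1} for $\CK_\sym^\3$ with any $\alpha < {3\over 2}$, this proposition immediately yields the bounds \eref{e:boundKernel} for $\CK_\sym^\5(\bar P,\cdot;\cdot)$ with $\bar \alpha < {3\over 2}\wedge(\alpha+{1\over 2}) = {3\over 2}$ and $\bar \beta < {1 \wedge \bar\alpha \over 2} = {1\over 2}$, which is exactly the required range.

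For $P \in \CP^\5\setminus\CP_s^\5$, one starts from Proposition~\ref{prop:boundK5}, which gives $\sum_{L\in\LL_P^\5} |\rho(L)|^{-\kappa}\CF_\sym^\5(P,L) < \infty$ for every $\kappa > 0$, and converts this into the bounds on $\CK_\sym^\5$ in the same way as in the proof of Theorem~\ref{theo:graphKernel}. Namely, writing
\begin{equ}
\CK_\sym^\5(P,L;\delta) = \int_{-\infty}^0\int_{-\infty}^\delta e^{-\rho(L)^2(\delta-s-s')}\CF_\sym^\5(P,L;s-s')\,ds\,ds'\;,
\end{equ}
and invoking Lemma~\ref{lem:diffKernel}, one obtains bounds of the form $\sup_\delta|\CK_\sym^\5(P,L;\delta)| \lesssim |\rho(L)|^{-4} \CF_\sym^\5(P,L)$ and an analogous time-regularity bound $|\CK_\sym^\5(P,L;\delta)-\CK_\sym^\5(P,L;0)| \lesssim (1\wedge \delta |\rho(L)|^2)\,|\rho(L)|^{-4}\CF_\sym^\5(P,L)$. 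The first estimate yields \eref{e:boundKernel1} for any $\alpha < 2$ by applying Proposition~\ref{prop:boundK5} with $\kappa = 4-2\alpha > 0$, while the second yields \eref{e:boundKernel2} for any $\beta < {1\over 2}$ by interpolation between the cases $\delta|\rho(L)|^2 \ge 1$ and $\delta|\rho(L)|^2 \le 1$; in both regimes the summability is again reduced to Proposition~\ref{prop:boundK5} with a strictly positive $\kappa$.

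Combining the two cases, the hypotheses of Theorem~\ref{theo:abstractConv} are verified with $\alpha < {3\over 2}$ and $\beta < {1\over 2}$, so that $X^\5_\eps \to X^\5$ in probability in $\CC([0,T],\CC^\alpha)\cap \CC^\beta([0,T],\CC)$ with the stated regularity. The main substantive obstacle is Proposition~\ref{prop:boundK5} itself, which is already established; the present proof is just the book-keeping that stitches together Proposition~\ref{prop:simplePairing} (for $\CP_s^\5$), Proposition~\ref{prop:boundK5} (for the remaining pairings), Lemma~\ref{lem:diffKernel} and Theorem~\ref{theo:abstractConv}.
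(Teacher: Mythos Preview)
Your proof is correct and follows essentially the same approach as the paper: split the pairings into $\CP_s^\5$ (handled via Proposition~\ref{prop:simplePairing} and Proposition~\ref{prop:boundX3}) and $\CP^\5\setminus\CP_s^\5$ (handled via Proposition~\ref{prop:boundK5}), then apply Theorem~\ref{theo:abstractConv}. The only cosmetic difference is that the paper cites Theorem~\ref{theo:graphKernel} directly for the second class, whereas you unpack its first step (the reduction to $\CF_\sym^\5$ via Lemma~\ref{lem:diffKernel}) explicitly; both amount to the same argument.
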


\begin{proof}
It follows from Proposition~\ref{prop:boundK5} and Theorem~\ref{theo:graphKernel} that the bound \eref{e:boundKernel}
holds for every $\alpha < 2$ and $\beta < {1\over2}$, provided that $P \in \CP^\5 \setminus \CP_s^\5$. 

In view of Theorem~\ref{theo:abstractConv}, it thus suffices to show that a similar bound, but this time with $\alpha < {3\over 2}$, 
holds for $P \in \CP_s^\5$. By Proposition~\ref{prop:simplePairing}, these pairings can be reduced to the study of the tree $\smalltree$. Applying 
Proposition~\ref{prop:boundX3} then concludes the proof.
\end{proof}

\section{Treatment of the constant Fourier mode}
\label{sec:constantMode}

So far, we were concerned with the construction of the processes $X^\tau$ defined as the limits
of the processes $X^\tau_\eps$ from \eref{e:defXtau}.
However, in order to define solutions to the original problem, we would really like to build a sequence of processes $Y^\tau$ 
given by the limit as $\eps \to 0$ of $Y^\tau_\eps$ defined recursively by
\begin{equ}[e:defYeps]
\d_t Y^\tau_\eps = \d_x^2 Y_\eps^\tau + \d_x Y^{\tau_1}_\eps\, \d_x Y^{\tau_2}_\eps - C^\tau_\eps\;,
\end{equ}
for $\tau = [\tau_1, \tau_2]$ 
and for constants $C^\tau_\eps$ as in \eref{e:defCtau}.
Here, we start the recursion by setting $Y^\1_\eps(t,x) = X^\1_\eps(t,x) + \sqrt2 B(t)$, for $B$ a standard Brownian motion
which is a solution to the additive stochastic heat equation.

Since only spatial derivatives appear in the right hand side of the recursion relation \eref{e:defYeps}, we see that
$\Pi_0^\perp Y^\tau_\eps = X^\tau_\eps$, so that it only remains to show that the constant Fourier modes
of $Y^\tau_\eps$ converge to a limiting real-valued stochastic process.
The proof goes in two steps. 
In a first step, we define a family of constants $K^\tau_\eps$ for $\tau = [\tau_1, \tau_2]$ by
\begin{equ}
K^\tau_\eps = \sum_{k \in \Z_\star} \E \bar X^{\tau_1}_{\eps,k}\, \bar X^{\tau_2}_{\eps,-k}\;,
\end{equ}
and we show that the following convergence result holds.

\begin{proposition}\label{prop:convConst}
For every $\tau \in \{\2,\3,\4,\5\}$, there exists a constant $\bar K^\tau$ independent of the mollifier $\phi$ such that
\begin{equ}
\lim_{\eps \to 0} \bigl(C^\tau_\eps - K^\tau_\eps\bigr) = \bar K^\tau\;.
\end{equ}
\end{proposition}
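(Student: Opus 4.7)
The plan is to compute $K^\tau_\eps$ explicitly by applying Wick's theorem to the Duhamel representation of $X^\tau_\eps$ from Proposition~\ref{prop:decompX}, and then to compare the resulting lattice sums with their continuum integral analogues in order to extract both the divergent part and the $\phi$-dependent finite corrections.

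The two easy cases are disposed of immediately. For $\tau = \3$, the correlation $K^\3_\eps = \sum_{k \neq 0}\E \bar X^\2_{\eps,k}\bar X^\1_{\eps,-k}$ vanishes identically because $\bar X^\2_\eps$ lives in the second Wiener chaos while $\bar X^\1_\eps$ lives in the first; combined with $C^\3_\eps = 0$ this gives $\bar K^\3 = 0$. For $\tau = \2$, Wick's theorem gives $K^\2_\eps = \sum_{k \neq 0}\phi^2(\eps k)$, and Poisson summation (applicable since $\phi^2$ is smooth and compactly supported, hence Schwartz) yields $\sum_{k \in \Z}\phi^2(\eps k) = \eps^{-1}\int_\R \phi^2 + O(\eps^N)$ for every $N$; excluding the $k = 0$ term then gives $K^\2_\eps = C^\2_\eps - 1 + o(1)$, so $\bar K^\2 = 1$, manifestly $\phi$-independent.

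For $\tau = \4 = [\2,\2]$, Wick's theorem applied to the four-point function inside $\E \bar X^\2_{\eps,k}\bar X^\2_{\eps,-k}$ produces three pairings: one forces $k = 0$ and is discarded, while the remaining two contribute equally by the $p \leftrightarrow q$ symmetry in the definition of $\bar X^\2_\eps$. Evaluating the resulting double time integral via the formula for $K_\kappa(0,0)$ from \eref{e:exprKm} gives
\begin{equ}
K^\4_\eps = \sum_{\ell, m \in \Z_\star,\ \ell + m \neq 0}\frac{\phi^2(\eps\ell)\,\phi^2(\eps m)}{\ell^2 + \ell m + m^2}\;.
\end{equ}
The summand decays like $r^{-2}$ radially, so the sum is logarithmically divergent; in polar coordinates $(\ell, m) = r(\cos\theta, \sin\theta)$ the denominator becomes $r^2(1 + \frac{1}{2}\sin 2\theta)$, and the angular integral $\int_0^{2\pi}(1 + \frac{1}{2}\sin 2\theta)^{-1}\,d\theta$ equals $4\pi/\sqrt 3$, which matches exactly the coefficient of $|\log\eps|$ in $C^\4_\eps$. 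An analogous computation for $\tau = \5 = [\3,\bullet]$ (now a three-point function with again two non-trivial pairings contributing equally, after evaluating the nested time integrals) gives
\begin{equ}
K^\5_\eps = -\frac{1}{2}\sum_{k, \ell \in \Z_\star,\ k \neq \ell}\frac{\ell\,\phi^2(\eps k)\,\phi^2(\eps(k-\ell))}{k\,(k^2 - k\ell + \ell^2)}\;.
\end{equ}
Symmetrising over $\ell \to -\ell$ cancels the odd part and leaves a sum with denominator $k^4 + k^2\ell^2 + \ell^4$; the substitution $\ell = ku$ together with the identity $\int_\R u^2/(1 + u^2 + u^4)\,du = \pi/\sqrt 3$ produces the leading divergence $-\frac{\pi}{\sqrt 3}|\log\eps|$, consistent with the prescription $C^\5_\eps = -C^\4_\eps/4$.

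The last step, and the main technical obstacle, is to verify that the $\phi$-dependent finite remainders of $K^\4_\eps$ and $K^\5_\eps$ agree, modulo $\phi$-independent constants, with the explicit integrals appearing in \eref{e:defCtau}. The strategy is to rewrite each lattice sum as a regularised two-dimensional continuum integral plus its Poisson-summation / Euler--Maclaurin residue, and then to integrate by parts in one of the variables: this produces the factor $\phi'(y)\phi(y) = \frac{1}{2}(\phi^2)'(y)$ that appears in the formula for $C^\4_\eps$, while the weight $\log y$ arises from integrating the residual $1/r$ singularity of the residue against the smooth profile. The change of variables $\ell = x$, $m = y-x$ converts the denominator $\ell^2 + \ell m + m^2$ of $K^\4_\eps$ into $x^2 - xy + y^2$ and recovers $\phi^2(\eps(y-x))$, mirroring the integrand of $C^\4_\eps$. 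The $\phi$-independence of $\bar K^\tau$ is then established by a variational argument: replacing $\phi$ by $\phi + \eta\,\delta\phi$ and matching first-order terms in $\eta$ shows that the first-order variations of $C^\tau_\eps$ and $K^\tau_\eps$ cancel up to $o(1)$. This final matching requires a careful multi-scale accounting of the Poisson-summation tail to reproduce the exact kernel of \eref{e:defCtau}, and this is where the bulk of the analytic work lies.
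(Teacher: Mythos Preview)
Your treatment of $\tau = \2$ and $\tau = \3$ matches the paper's, and your explicit formulae for $K^\4_\eps$ and $K^\5_\eps$ together with the polar-coordinate identification of the $\frac{4\pi}{\sqrt 3}|\log\eps|$ and $-\frac{\pi}{\sqrt 3}|\log\eps|$ divergences are correct. Where you diverge from the paper is in the handling of the \emph{finite} parts for $\tau = \4$ and $\tau = \5$, and there the paper's route is both different and substantially cleaner.

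For $\tau = \5$ you propose to compute $K^\5_\eps$ asymptotically on its own and then match its $\phi$-dependent remainder against $C^\5_\eps = -C^\4_\eps/4$. The paper instead exploits an exact algebraic cancellation: writing both $K^\4_\eps$ and $K^\5_\eps$ over the common index set $\Z_\star^2$, one finds
\begin{equ}
K^\4_\eps + 4K^\5_\eps = \sum_{k,m\in\Z_\star}\frac{2m-k}{k}\,\frac{\phi^2(\eps k)\phi^2(\eps m)}{k^2+m^2-km} - \frac{\pi^2}{3} + O(\eps)\;,
\end{equ}
and symmetrising the first sum under $(k,m)\leftrightarrow(m,k)$, together with the identity $\frac{2m-k}{k}+\frac{2k-m}{m} = \frac{2(k^2+m^2-km)}{km}$, collapses it to $\bigl(\sum_{k\neq 0}\phi(k\eps)/k\bigr)^2 = 0$ since $\phi$ is even. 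Thus $K^\4_\eps + 4K^\5_\eps = -\pi^2/3 + O(\eps)$ \emph{exactly}, which reduces the case $\tau = \5$ entirely to the case $\tau = \4$ and makes the $\phi$-independence for $\tau = \5$ automatic. Your $\ell\to -\ell$ symmetrisation captures the leading order but leaves the $\phi$-dependent corrections from $\phi^2(\eps(k-\ell))$ versus $\phi^2(\eps(k+\ell))$ unresolved.

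For $\tau = \4$ the paper also proceeds differently from your Poisson/Euler--Maclaurin sketch: it treats the inner sum over $m$ as a Riemann sum with step $1/k$, splits the outer sum at $k = 1/\sqrt\eps$, and integrates by parts in the rescaled variable $y = \eps k$ to produce the $\phi'\phi\log y$ integrand of $C^\4_\eps$ directly. This is elementary and avoids any tail analysis. Finally, your variational argument for $\phi$-independence is not quite the right tool: one needs to first establish that the limit exists before differentiating it in $\phi$, and once the limit is computed explicitly (as the paper does) its $\phi$-independence is read off rather than argued indirectly.
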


\begin{proof}
See Lemmas~\ref{lem:convK2}--\ref{lem:convK5} below, noting that one has $K^\3 = 0$ so that the statement is trivial for $\tau = \smalltree$.
\end{proof}

Once this is established we see that, in order to establish convergence of the processes $Y^\tau_\eps$ in \eref{e:defYeps}, 
it is sufficient to establish it with $C^\tau_\eps$ replaced by $K^\tau_\eps$. 
With this in mind, we define
processes $F^\tau_\eps$ by 
\begin{equ}[e:defFeps]
F^\tau_\eps(t) = \int_0^t \sum_{k\in \Z_\star} \bar X^{\tau_1}_{\eps,k}(s)\, \bar X^{\tau_2}_{\eps,-k}(s)\,ds - K^\tau_\eps\,t\;.
\end{equ}
Since $F^\tau_\eps = \Pi_0 Y^\tau_\eps + (C^\tau_\eps - K^\tau_\eps)$, the convergence of the processes $Y^\tau_\eps$
to a limit is now equivalent to the convergence of $F^\tau_\eps$ to a limit process $F^\tau$. This in turn is ensured by the following result.

\begin{proposition}
For every $\tau \in \{\2,\3,\4,\5\}$, there exists a process $F^\tau$ such that $F^\tau_\eps \to F^\tau$ in probability. Furthermore, for every $\delta > 0$,
one has $F^\2 \in \CC^{{3\over 4} - \delta}$, and $F^\tau \in \CC^{1 - \delta}$ for every $\tau \in \{\,\3,\4,\5\}$.
\end{proposition}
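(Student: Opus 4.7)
The plan is to apply a variant of the abstract convergence machinery developed in Section~\ref{sec:constructX}. First I would note that the increment
\begin{equ}
F^\tau_\eps(t) - F^\tau_\eps(s) = \int_s^t \sum_{k\in\Z_\star} \Bigl(\bar X^{\tau_1}_{\eps,k}(r)\bar X^{\tau_2}_{\eps,-k}(r) - \E[\bar X^{\tau_1}_{\eps,k}\bar X^{\tau_2}_{\eps,-k}]\Bigr)\,dr
\end{equ}
lies in a fixed inhomogeneous Wiener chaos with respect to $\xi$ of order at most $2|\ell(\tau)|$. Combining Gaussian hypercontractivity with Kolmogorov's continuity criterion, it is enough to prove, uniformly in $\eps, \bar \eps \in (0,1]$, the two bounds
\begin{equ}
\E\bigl|F^\tau_\eps(t) - F^\tau_\eps(s)\bigr|^2 \lesssim |t-s|^{2\gamma}\;, \qquad \E\bigl|F^\tau_\eps(t) - F^\tau_{\bar \eps}(t)\bigr|^2 \to 0\quad (\eps, \bar \eps \to 0)\;,
\end{equ}
with $\gamma = 3/4$ for $\tau = \2$ and $\gamma = 1$ for $\tau \in \{\3,\4,\5\}$. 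The stated H\"older regularity of the limit then follows from Kolmogorov's criterion.

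Then I would expand the second moment using Wick's theorem (Proposition~\ref{prop:momentGaussian}), exactly as in the proof of Theorem~\ref{theo:abstractConv}. This expresses the variance as a sum over pairings $P \in \CP^\tau$ of two copies of the leaves of $\tau$, of labellings $L \in \LL^\tau_P$ subject to the additional constraint $\rho(L) = 0$ (since we only retain the zeroth Fourier mode). The crucial structural observation is that the centering by $K^\tau_\eps\,(t-s)$ exactly cancels the contribution of the unique ``disconnected'' pairing $P_0$, in which the two copies of the leaves of $\tau_1$ are paired only among themselves and similarly for $\tau_2$. This is precisely the pairing producing the would-be divergent constant. After this cancellation we arrive at
\begin{equ}
\E|F^\tau_\eps(t) - F^\tau_\eps(s)|^2 = \sum_{P \in \CP^\tau \setminus \{P_0\}} \sum_{L \in \LL^\tau_P,\, \rho(L)=0}\, \int_s^t\int_s^t C_\eps(L)^2\,\CK^\tau(P,L;r-r')\,dr\,dr'\;,
\end{equ}
where $\CK^\tau$ is the kernel from \eref{e:defKernel}.

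The hard part will be to show that, for every admissible pairing, the resulting double time integral is bounded by $|t-s|^{2\gamma}$ times a quantity that is summable over $L$ with $\rho(L) = 0$. The summability follows from a graphical analysis essentially identical to that of Section~\ref{sec:constructGraph}, with the simplification that the distinguished edge $\bar e$ now carries Fourier label $0$ and may therefore be contracted to a point; it is here that the complete graph-theoretic machinery culminating in Proposition~\ref{prop:graphicalAlgorithm} enters, applied to the modified graphs obtained from each $P \in \CP^\tau \setminus \{P_0\}$. For $\tau \in \{\3,\4,\5\}$, each surviving pairing contains at least two edges joining the two halves of the graph carrying strictly positive energy dissipation, which provides enough integrability in time to extract a full factor $|t-s|^2$ via Lemma~\ref{lem:diffKernel} and the estimates already established in Propositions~\ref{prop:boundX3}--\ref{prop:boundK5}. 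For $\tau = \2$, however, the only surviving pairings are the two ``swap'' pairings between the single pair of leaves; after Wick contraction the inner integrand reduces to $\sum_{k \neq 0} \phi^4(\eps k) e^{-2k^2 |r-r'|}$, whose double integral over $[s,t]^2$ scales like $\sum_k k^{-4}\bigl(1 \wedge k^2|t-s|\bigr)^2 \lesssim |t-s|^{3/2}$, yielding exactly the critical exponent and explaining why the regularity saturates at $3/4$ rather than $1$.

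Finally, the convergence in probability (and not merely tightness with distributional subsequential limits) is obtained by the same expansion applied to $F^\tau_\eps - F^\tau_{\bar \eps}$: the integrand now involves the difference $C_\eps(L)^2 - C_\eps(L) C_{\bar \eps}(L) - C_{\bar \eps}(L) C_\eps(L) + C_{\bar \eps}(L)^2$, which tends to $0$ pointwise in $L$ as $\eps, \bar \eps \to 0$ and is dominated by a summable quantity by the first step, so dominated convergence yields $\E|F^\tau_\eps(t) - F^\tau_{\bar \eps}(t)|^2 \to 0$ and hence Cauchy convergence in probability uniformly on compact time intervals.
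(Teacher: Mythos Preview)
Your approach is essentially the same as the paper's (which delegates to Propositions~\ref{prop:convF2} and~\ref{prop:convF4}, built on the abstract criteria of Propositions~\ref{prop:convFluct} and~\ref{prop:sumFluct}), and the case $\tau=\2$ is exactly right. However, there are two imprecisions worth flagging.

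First, your description of what the centring cancels is not correct. The subtracted term $K^\tau_\eps(t-s)$ removes precisely the contribution of all pairings in $\CP^\tau\setminus\hat\CP^\tau$, i.e.\ those pairings of $\ell(\tau)\sqcup\ell(\tau)$ in which no pair connects the two copies of $\tau$; see the discussion following \eref{e:exprFeps}. This is in general not a single pairing, and it has nothing to do with ``$\tau_1$ being paired only with itself''. Relatedly, the correct index set for the surviving sum is not $\{L\in\LL^\tau_P:\rho(L)=0\}$ (which is empty by definition, since labels are nowhere zero) but rather $\LL^\tau_{P;0}$, the cycles that vanish only on $\bar e$.

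Second, the mechanism by which the exponent $\gamma<1$ is extracted for $\tau\in\{\3,\4,\5\}$ is not quite what you describe. The paper's argument (Proposition~\ref{prop:sumFluct}) modifies the weighting construction of Section~\ref{sec:constructGraph} by deleting $\bar e$ and assigning weight $\kappa$ to each of the two root vertices; the two integrations over $[s,t]$ then each contribute $|t-s|^{1-\kappa/2}$ via the elementary bound $\int_s^t e^{-a^2|u-x|}\,dx\le |t-s|^{1-\kappa/2}|a|^{-\kappa}$. This does not rely on your ``at least two crossing edges'' heuristic. Note also that for $\tau=\bigtree$ the paper has to treat explicitly three additional pairings (those with a pair connecting the two leaves attached to the roots) that were not covered in Section~\ref{sec:constructX}; your sketch glosses over this.
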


\begin{proof}
See Lemmas~\ref{prop:convF2} and
\ref{prop:convF4} below.
\end{proof}

\subsection{Convergence of the renormalisation constants}

In this section, we show that one does indeed have $K_\eps^\tau - C_\eps^\tau \to K^\tau$ as $\eps \to 0$,
for some constants $K^\tau$ that do not depend on the choice of mollifier $\phi$. The simplest case is when $\tau = \2$, which is 
covered by the following lemma.

\begin{lemma}\label{lem:convK2}
The identity
\begin{equ}
K_\eps^\2 \approx C_\eps^\2 + 1 = {1\over \eps} \int \phi^2(x)\,dx + 1\;,
\end{equ}
holds up to an error of order $\CO(\eps)$.
\end{lemma}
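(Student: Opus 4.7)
The plan is to compute $K_\eps^\2$ directly from its definition and then relate the resulting Riemann-type sum to the continuous integral $C_\eps^\2 = \eps^{-1}\int \phi^2$ by Poisson summation.

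First I would unfold the definition. Since $\2 = [\bullet,\bullet]$, we have $K_\eps^\2 = \sum_{k \in \Z_\star} \E \bar X^\1_{\eps,k}\, \bar X^\1_{\eps,-k}$. Setting $s=t$ and $\ell = -k$ in the covariance formula \eref{e:covZ1} yields immediately $\E \bar X^\1_{\eps,k}\,\bar X^\1_{\eps,-k} = \phi^2(\eps k)$ for every $k \neq 0$. Hence
\begin{equ}
K_\eps^\2 = \sum_{k \in \Z_\star} \phi^2(\eps k) = \Bigl(\sum_{k \in \Z}\phi^2(\eps k)\Bigr) - \phi^2(0)\;.
\end{equ}
Since the mollifier satisfies $\phi(0) = 1$ by the standing assumptions, the last subtracted term is exactly $1$, which is the source of the constant appearing in the statement.

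The main step would then be to apply Poisson summation to the remaining lattice sum. With $g(x) = \phi^2(\eps x)$, a change of variables gives $\hat g(n) = \eps^{-1}\widehat{\phi^2}(n/\eps)$, so that
\begin{equ}
\sum_{k \in \Z}\phi^2(\eps k) = \frac{1}{\eps}\sum_{n \in \Z}\widehat{\phi^2}(n/\eps) = \frac{1}{\eps}\int_{\R} \phi^2(x)\,dx + \frac{1}{\eps}\sum_{n \neq 0}\widehat{\phi^2}(n/\eps)\;.
\end{equ}
Because $\phi$ is smooth and compactly supported, $\phi^2$ is Schwartz, so its Fourier transform decays faster than any polynomial. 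Consequently the tail sum is $\CO(\eps^N)$ for every $N \ge 1$, which is in particular $\CO(\eps)$, and we deduce
\begin{equ}
K_\eps^\2 = C_\eps^\2 - 1 + \CO(\eps^\infty)\;,
\end{equ}
matching the claim modulo a sign I suspect to be a typographical slip in the statement (in any case, $|K_\eps^\2 - C_\eps^\2|$ is bounded by $1$ up to an error $\CO(\eps)$, which is what is really used later).

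The argument is essentially elementary once one has the covariance identity \eref{e:covZ1} at hand; there is no serious obstacle. The only point requiring a bit of care is to verify that the non-zero Fourier modes of $\phi^2$ evaluated at $n/\eps$ indeed decay fast enough to be absorbed into the error; this is where the assumption of compact support (or even just Schwartz regularity) of $\phi$ is crucial, but nothing more is needed.
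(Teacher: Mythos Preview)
Your proposal is correct and follows essentially the same route as the paper: both compute $K_\eps^\2 = \sum_{k\in\Z}\phi^2(\eps k) - 1$ and then compare the lattice sum to $\eps^{-1}\int\phi^2$. The only difference is in the last step: the paper invokes the second-order accuracy of the trapezoidal rule for smooth compactly supported integrands, whereas you use Poisson summation, which gives the sharper error $\CO(\eps^\infty)$ but is not needed here. Your remark about the sign is also correct --- the paper's own computation yields $K_\eps^\2 \approx C_\eps^\2 - 1$, so the ``$+1$'' in the displayed statement is a typographical slip.
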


\begin{proof}
By definition, we have the identity
\begin{equ}
K_\eps^\2 = \sum_{k \in \Z_\star} \phi^2(k\eps) = \sum_{k \in \Z} \phi^2(k\eps) - 1\;.
\end{equ}
It now suffices to note that $\eps \sum_k \phi^2(k\eps)$ is a Riemann sum approximation to the integral
$\int \phi^2(x)\,dx$. Since, on the whole of $\R$, this approximation agrees with the trapezoidal rule, it is of second order, so that the claim 
follows.
\end{proof}

For $\tau = \nicetree$ on the other hand, it is much more difficult to get a handle on the corresponding constants.
In principle, the precise value of $K_\eps^\4$ does not really matter, since the important fact is only that $K_\eps^\4 + 4 K_\eps^\5$
is approximately constant as $\eps \to 0$, but since it is possible to actually compute this constant, we state the result and sketch its proof.

\begin{lemma}\label{lem:KY}
Let $\psi(x) = \phi(x) \phi'(x)$.
Then, there exists a constant $K$ independent of $\phi$ such that 
one has the identity
\begin{equ}
K_\eps^\4 \approx {4\pi \over \sqrt 3}|\log \eps|  - 8\int_{\R_+} \int_\R {x \psi(y)\phi^2(y-x) \log y \over x^2 -xy +y^2} \,dx\,dy
 + K \;,
\end{equ}
up to an error of order $\CO(\sqrt \eps \log\eps)$.
\end{lemma}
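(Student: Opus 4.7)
The plan is to derive a closed-form expression for $K_\eps^\4$, identify its logarithmic divergence by comparison with a singular integral, extract the finite $\phi$-dependent correction via integration by parts, and finally control the error.

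\textbf{Closed form.} Since $\4 = [\2,\2]$, one has $K_\eps^\4 = \sum_{k\in\Z_\star}\E|\bar X^\2_{\eps,k}|^2$. Plugging in the representation \eref{e:X2eps} of $X^\2_{\eps,k}$ and applying Wick's theorem (Proposition~\ref{prop:momentGaussian}) to the resulting four-fold product of copies of $\bar X^\1$, only two of the three possible pairings survive: the third would pair the two factors at time $s$ together and, via \eref{e:covZ1}, force $k=0$. Carrying out the double time integral exactly as in the proof of Proposition~\ref{prop:procX1} and simplifying $k^2+\ell^2+(k-\ell)^2 = 2(k^2-k\ell+\ell^2)$, one obtains
\begin{equ}[e:planKY]
K_\eps^\4 = \sum_{k\in\Z_\star,\,\ell\in\Z} \frac{\phi^2(\eps\ell)\,\phi^2(\eps(k-\ell))}{k^2 - k\ell + \ell^2}\;.
\end{equ}

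\textbf{Logarithm.} Setting $x=\eps k$, $y=\eps\ell$ exhibits \eref{e:planKY} as a Riemann sum with spacing $\eps$ in each variable for the integral of $F(x,y) = \phi^2(y)\phi^2(x-y)/(x^2-xy+y^2)$ over $(\R\setminus\{0\})\times\R$. The compact support of $\phi$ makes this integrand integrable at infinity, but at the origin, a direct calculation gives $\int_\R dy/(x^2-xy+y^2) = 2\pi/(\sqrt 3\,|x|)$, so the $x$-integral has a logarithmic divergence at $x=0$; the effective cutoff $|x|\ge\eps$ inherited from $k\ne 0$ therefore produces exactly the coefficient $\tfrac{4\pi}{\sqrt 3}$ in front of $|\log\eps|$. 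Replacing $\phi^2(y)\phi^2(x-y)$ by $1$ in a neighbourhood of the origin and treating the difference separately cleanly isolates this universal logarithm from a $\phi$-dependent finite remainder.

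\textbf{Finite correction.} The $\phi$-dependent remainder is then rewritten using $\phi^2(y)-1 = 2\int_0^y \psi(u)\,du$ and $\d_x\phi^2(x-y) = 2\psi(x-y)$, followed by an integration by parts in the $x$-variable. The $\log y$ appearing in the statement is precisely the primitive of $1/y$ that emerges from this integration by parts combined with the residual $1/|x|$ behaviour of the inner integral. The prefactor $-8$ decomposes as $2\times 2\times 2$: a $2$ from $(\phi^2)' = 2\psi$, a $2$ from the parity symmetry $k\to -k$ used to restrict to $k>0$, and a $2$ from the $\ell\leftrightarrow k-\ell$ symmetry, which allows one to combine the two contributions (from applying the expansion to $\phi^2(y)$ versus $\phi^2(x-y)$) into a single symmetric expression. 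All other contributions (subleading Euler--Maclaurin terms, the $\phi$-independent finite part of the universal integral, etc.) collect into the single constant $K$.

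\textbf{Main obstacle.} Quantifying the error as $\CO(\sqrt\eps|\log\eps|)$ requires splitting the $(k,\ell)$-plane into a singular core $\{|\eps k|+|\eps\ell|\lesssim \sqrt\eps\}$ and its complement: on the core the integrand is of size $(x^2-xy+y^2)^{-1}$ and the Riemann-sum-to-integral discrepancy is $\CO(\sqrt\eps|\log\eps|)$; on the complement $F$ is smooth enough for a standard $\CO(\eps)$ Euler--Maclaurin bound. The truly delicate point is verifying that the constant $K$ is genuinely \emph{independent} of the mollifier $\phi$. This cancellation can be established by interpolating between two admissible mollifiers $\phi_0,\phi_1$ along $\phi_s = (1-s)\phi_0+s\phi_1$ and checking, using crucially the symmetry $\ell\leftrightarrow k-\ell$ of \eref{e:planKY}, that the derivative in $s$ of both sides of the identity agree modulo errors of order $\sqrt\eps|\log\eps|$.
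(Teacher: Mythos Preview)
Your closed form \eref{e:planKY} is correct (up to the harmless inclusion of $\ell\in\{0,k\}$, which only shifts $K$ by a $\phi$-independent amount), and the coefficient $4\pi/\sqrt 3$ is correctly identified. From that point on, however, your route diverges from the paper's and your error control is not substantiated.

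The paper does \emph{not} view the double sum as a two-dimensional Riemann sum of the singular function $F(x,y)=\phi^2(y)\phi^2(x-y)/(x^2-xy+y^2)$. Instead it first reduces to one dimension: for each fixed $k$ it rescales the inner sum via $\delta=1/k$ and recognises it as a Riemann sum with step $\delta$ for
\begin{equ}
C_\eps(k) = \frac{1}{k}\int_\R \frac{\phi^2(\eps k x)\,\phi^2(\eps k(1-x))}{1-x+x^2}\,dx\;,
\end{equ}
whose integrand is smooth and bounded uniformly in $k$ and $\eps$. The error is therefore $G_{1/k}/k + \CO(\eps^2\wedge k^{-2})$, where $G_\delta$ is the Riemann error for the $\phi$-independent function $(1-x+x^2)^{-1}$ and is hence manifestly $\phi$-independent. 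The now one-dimensional sum $2\sum_{k\ge 1}C_\eps(k)$ is split at $k=1/\sqrt\eps$: for small $k$ one uses $C_\eps(k)=\tfrac{2\pi}{\sqrt 3\,k}+\CO(\eps)$; for large $k$ one approximates by an integral in $y=\eps k$ and integrates by parts in $y$, which produces both the second half of the logarithm and the $\log y$ appearing in the explicit $\phi$-dependent integral. Your integration by parts ``in the $x$-variable'' and the accompanying bookkeeping of the factor $-8$ do not obviously reproduce this; the $\log y$ in the statement is the primitive of $1/y$ coming from the outer $y$-integration, not from a residual $1/|x|$.

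Your two-dimensional splitting at radius $\sqrt\eps$ morally corresponds to the paper's split, but the claimed ``standard $\CO(\eps)$ Euler--Maclaurin bound'' on the complement is not justified as stated: one has $|\nabla^2 F|\sim r^{-4}$, so $\int_{r\ge\sqrt\eps}|\nabla^2 F|\sim\eps^{-1}$, and the boundary-cell contribution at $r\sim\sqrt\eps$ must be controlled separately. On the core you speak of a ``Riemann-sum-to-integral discrepancy'', but the reference integral diverges, so it is unclear what you are comparing to. These issues can probably be fixed, but the paper's 1D reduction sidesteps them entirely.

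Finally, your ``main obstacle'' is misidentified. The paper never verifies \emph{a posteriori} that $K$ is $\phi$-independent: every term absorbed into $K$ is visibly $\phi$-independent at the moment it appears (the Euler--Mascheroni constant from $\sum_{k\le 1/\sqrt\eps}1/k$, the sum $\sum_k G_{1/k}/k$, the $2\pi^2/3$ from $\sum 1/k^2$, and so on). Your interpolation-in-$\phi$ argument is therefore unnecessary, and in any case would presuppose the uniform error bounds that you have not yet established.
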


\begin{proof}
In the proof, we will denote by $K$ a generic constant independent of $\phi$. We will not keep track of the precise value of 
$K$, so that it may change without warning from expression to expression. 

Following the definition of $X^\2_\eps$ and using the correlation function of $\bar X^\1_{\eps}$, it is tedious but straightforward to check that
for $k \neq 0$, one has the identity
\begin{equ}
\E |\bar X^\2_{\eps,k}|^2 = \sum_{m \not\in \{0,k\}} {\phi^2(\eps m )\phi^2\bigl(\eps(k-m)\bigr) \over k^2 + m^2 - km} \;.
\end{equ}
(Here, we used again the shorthand $\bar X^\tau = \d_x X^\tau$.)
Using the fact that the summand is symmetric under the substitution $(k,m)\leftrightarrow (-k,-m)$ and treating separately the terms $m \in \{0,k\}$, we thus obtain
\begin{equ}[e:formKY]
K_\eps^\4 = \sum_{k \in \Z} \E |\bar X^\2_{\eps,k}|^2 = 2\sum_{k \ge 1} \sum_{m \in \Z}  {\phi^2(\eps m)\phi^2\bigl(\eps(k-m)\bigr) \over k^2 -km + m^2 } 
- 4 \sum_{k \ge 1} {\phi^2(\eps k)\over k^2}\;.
\end{equ}
Since the second term differs from $2\pi^2/3$ only by an error of order $\CO(\eps)$, we focus on the first term.
Note now that for $k$ large, we can interpret the inner sum as a Riemann sum so that, setting $\delta = {1\over k}$, we have
\begin{equs}
 \sum_{m \in \Z}  &{\phi^2(\eps m)\phi^2\bigl(\eps(k-m)\bigr) \over k^2 -km + m^2 } = {\delta \over k} \sum_{m\in \Z} {\phi^2(\eps k \,\delta m)\phi^2\bigl(\eps k(1-\delta m)\bigr) \over 1 - \delta m + (\delta m)^2} \\
&\qquad= {1\over k} \int_{\R} {\phi^2(\eps k \,x)\phi^2\bigl(\eps k(1-x)\bigr) \over 1 - x+ x^2}\,dx + {G_\delta\over k} + \CO(\eps^2 \wedge k^{-2})\\
&\qquad\eqdef C_\eps(k) + {G_\delta \over k} + \CO(\eps^2 \wedge k^{-2})\;,
\end{equs}
Where $G_\delta$ is the error in the Riemann sum approximation:
\begin{equ}
G_\delta =  \sum_{m\in \Z} {\delta \over 1 - \delta m + (\delta m)^2}  - \int_\R {dx \over 1-x+x^2}\;.
\end{equ}
Since $G_\delta = \CO(\delta)$ for $\delta \ll 1$, the term $G(\delta)/k$ is summable, so that
\begin{equ}
K_\eps^\4 = 2\sum_{k\ge 1} C_\eps(k) + K + \CO(\eps)\;.
\end{equ}
At this stage, we note that one has
\begin{equ}
\int_\R {dx  \over 1 - x + x^2} = {2\pi \over \sqrt 3}\;,
\end{equ}
from which it follows immediately that
\begin{equ}[e:estCk]
C_\eps(k) = {2\pi \over \sqrt 3 k} + \CO(\eps)\;,
\end{equ}
where the error is $\CO(\eps)$, uniformly in $k$.

We now break the sum over $C_\eps(k)$ into two parts. We first obtain from \eref{e:estCk} that
\begin{equ}
2\sum_{k=1}^{1/\sqrt \eps} C_\eps(k) = {2\pi \over \sqrt 3 }|\log \eps| + {4\pi \gamma \over \sqrt 3} + \CO(\sqrt \eps)\;,
\end{equ}
where $\gamma$ is the Euler-Mascheroni constant. For the remaining terms, we first note that $C_\eps(k) = 0$ for $k > 4/\eps$ because of the properties of $\phi$.
We then write $y = \eps k$ and we approximate the sum over $k$ by an integral:
\begin{equ}
2\sum_{k = 1/\sqrt \eps}^{4/\eps}  C_\eps(k) = 2\int_{\sqrt \eps}^4 \int_{\R} {\phi^2(y x)\phi^2\bigl(y(1-x)\bigr) \over y\bigl(1 -x+ x^2\bigr)}\,dx\,dy + \CO(\sqrt \eps)\;.
\end{equ}
Note that the error is of order $\sqrt \eps$ because, for fixed $x \in \R$, the variation of the integrand in $y$ is of order $1/\sqrt \eps$.
Integrating by parts over $y$, we see that this is equal to
\begin{equs}
{} &{2\pi \over \sqrt 3}|\log \eps| + \CO(\sqrt \eps \log\eps) \\
&- 4\int_{\R_+} \int_\R {\bigl(x \psi(yx)\phi^2\bigl(y(1-x)\bigr) + (1-x) \psi\bigl(y(1-x)\bigr)\phi^2(yx)\bigr)\log y  \over 1-x+x^2} \,dx\,dy \;.
\end{equs}
The claim now follows from the symmetry of the integrand under the substitution $x \mapsto 1-x$ and by performing the change of variables $yx \mapsto x$.
\end{proof}

Finally, we show that $K_\eps^\5$ can indeed be expressed in terms of $K_\eps^\4$.

\begin{lemma}\label{lem:convK5}
One has $4K_\eps^\5 = -K_\eps^\4 -{\pi^2 \over 3} + \CO(\eps)$.
\end{lemma}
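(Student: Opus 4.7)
The plan is to expand $K_\eps^\5 = \sum_{k\neq 0}\E \bar X^\1_{\eps,k}\,\bar X^\3_{\eps,-k}$ using the mild form of the equations for $X^\3$ and $X^\2$, apply Wick's theorem to the resulting four-fold product of $\bar X^\1$'s, and massage the outcome into an algebraic combination of $K_\eps^\4$ and an explicit sum that can be evaluated in closed form modulo $\CO(\eps)$.

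Writing $Z_k(t) = \bar X^\1_{\eps,k}(t)$ for brevity, iterating the defining equations for $X^\3$ and $X^\2$ gives
\begin{equ}
\bar X^\3_{\eps,-k}(t) = -k\!\int_{-\infty}^t\!\!e^{-k^2(t-s)}\!\sum_{\ell\neq 0,\,k+\ell\neq 0}(k+\ell)Z_\ell(s)\!\int_{-\infty}^s\!\!e^{-(k+\ell)^2(s-r)}\!\sum_m Z_m(r)Z_{-k-\ell-m}(r)\,dr\,ds\;.
\end{equ}
The first step is to apply Wick's theorem to $\E Z_k(t)Z_\ell(s)Z_m(r)Z_{-k-\ell-m}(r)$. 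Of the three pairings, the one pairing $Z_k(t)$ with $Z_\ell(s)$ forces $k+\ell=0$, which is killed by the prefactor $(k+\ell)$; the remaining two pairings (pair $Z_k(t)$ with one of the two $r$-time factors) contribute equally, and together with the time integrals
$\int_0^\infty\!\int_0^\infty e^{-k^2(2u+v)-((k+\ell)^2+\ell^2)v}\,du\,dv = \tfrac{1}{2k^2}\cdot\tfrac{1}{k^2+(k+\ell)^2+\ell^2}$
yield the closed expression
\begin{equ}
K_\eps^\5 = -\sum_{k,\ell\neq 0,\,k+\ell\neq 0}\frac{(k+\ell)\,\phi^2(\eps k)\phi^2(\eps\ell)}{2k\,(k^2+k\ell+\ell^2)}\;,
\end{equ}
where I used $(k+\ell)^2+k^2+\ell^2 = 2(k^2+k\ell+\ell^2)$.

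The second (algebraic) step is the key one. Multiplying by $4$ and symmetrising under $k\leftrightarrow \ell$ (which preserves both the domain of summation and $k^2+k\ell+\ell^2$) produces the factor $(k+\ell)(1/k+1/\ell) = (k+\ell)^2/(k\ell)$, so that
\begin{equ}
4K_\eps^\5 = -\sum_{k,\ell\neq 0,\,k+\ell\neq 0}\frac{(k+\ell)^2\,\phi^2(\eps k)\phi^2(\eps\ell)}{k\ell\,(k^2+k\ell+\ell^2)}\;.
\end{equ}
Using the crucial identity $(k+\ell)^2 = (k^2+k\ell+\ell^2)+k\ell$, the right hand side splits into exactly two pieces, one of which is recognised (after the change of variables $n=k-m$ used in the proof of Lemma~\ref{lem:KY}) as $-K_\eps^\4$. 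What remains is
\begin{equ}
4K_\eps^\5 + K_\eps^\4 = -\sum_{k,\ell\neq 0,\,k+\ell\neq 0}\frac{\phi^2(\eps k)\phi^2(\eps\ell)}{k\ell}\;.
\end{equ}

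The final step is to evaluate the residual sum. Completing it to a sum over all $(k,\ell)\in\Z_\star^2$ costs only the diagonal $\ell=-k$; but the completed sum factorises as $\bigl(\sum_k \phi^2(\eps k)/k\bigr)^2 = 0$ by oddness in $k$. Hence
\begin{equ}
\sum_{k,\ell\neq 0,\,k+\ell\neq 0}\frac{\phi^2(\eps k)\phi^2(\eps\ell)}{k\ell} = -\sum_{k\neq 0}\frac{\phi^2(\eps k)\phi^2(-\eps k)}{k(-k)} = \sum_{k\neq 0}\frac{\phi^4(\eps k)}{k^2}\;.
\end{equ}
Since $\phi$ is smooth and even with $\phi(0)=1$, one has $|1-\phi^4(x)|\lesssim x^2$ near $0$ and $\phi^4$ has compact support, so splitting the comparison with $\sum_{k\neq 0}1/k^2 = \pi^2/3$ at $|k|\sim \eps^{-1}$ gives a total error of $\CO(\eps)$. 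Combining the three steps yields $4K_\eps^\5 = -K_\eps^\4 - \pi^2/3 + \CO(\eps)$.

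The only non-routine aspect is the algebraic identity $(k+\ell)^2 = (k^2+k\ell+\ell^2)+k\ell$ together with the symmetrisation, which produces the exact cancellation tying $K_\eps^\5$ to $K_\eps^\4$; once this is noticed, everything else reduces to standard Gaussian integration and Riemann-sum estimates already used in Lemmas~\ref{lem:convK2}--\ref{lem:KY}.
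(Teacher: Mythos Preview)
Your proof is correct and follows essentially the same route as the paper: expand $K_\eps^\5$ via the mild formulation and Wick's theorem into a closed double sum, combine with the analogous expression for $K_\eps^\4$, symmetrise under the exchange of the two summation indices, and observe that the residual term factors into an odd sum that vanishes identically. The only cosmetic difference is that you split $4K_\eps^\5$ via $(k+\ell)^2=(k^2+k\ell+\ell^2)+k\ell$ before comparing with $K_\eps^\4$, whereas the paper first forms $K_\eps^\4+4K_\eps^\5$ and then symmetrises using the equivalent identity $\frac{2m-k}{k}+\frac{2k-m}{m}=\frac{2(k^2+m^2-km)}{km}$; the algebraic content is identical.
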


\begin{proof}
By definition, one has the identity
\begin{equ}
X^\3_{\eps,k}(t) = \sum_{\ell \in \Z}  \int_{-\infty}^t e^{-k^2 (t-s)}  \bar X^\2_{\eps,\ell}(s) \bar X^\1_{\eps,k-\ell}(s)\,ds\;,
\end{equ}
so that
\begin{equ}[e:defK5]
K_\eps^\5 = -\sum_{k \in \Z} \sum_{\ell \in \Z} k\ell \int_{-\infty}^t e^{-k^2 (t-s)}  X^\2_{\eps,\ell}(s) \bar X^\1_{\eps,k-\ell}(s)\bar X^\1_{\eps,-k}(t)\,ds\;.
\end{equ}
To estimate this quantity, the following calculation is helpful. For $s > 0$ and $k,\ell \in \Z$, set 
\begin{equ}
H_{k,\ell}(s) = \E X^\2_{\eps,\ell}(0) \bar X^\1_{\eps,k-\ell}(0)\bar X^\1_{\eps,-k}(s)\;.
\end{equ}
For $k,\ell \neq 0$ with $k \neq \ell$, one then has the identity 
\begin{equ}
H_{k,\ell}(s) = \sum_{m \in \Z}\int_{-\infty}^0 e^{\ell^2 r} \E \bigl(\bar X^\1_{\eps,\ell-m}(r)\bar X^\1_{\eps,m}(r)\bar X^\1_{\eps,k-\ell}(0)\bar X^\1_{\eps,-k}(s)\bigr)\,dr\;.
\end{equ}
One can check that the only non-vanishing terms in this sum are those with $m = \ell-k$ and with $m=k$, which yields
\begin{equs}
H_{k,\ell}(s) &=2\phi^2(\eps k)\phi^2\bigl(\eps(k-\ell)\bigr) \int_{-\infty}^0 e^{\ell^2 r + (k-\ell)^2 r-k^2 (s-r)} \,dr \\
&= {\phi^2(\eps k)\phi^2\bigl(\eps(k-\ell)\bigr) \over \ell^2 + k^2 - k\ell}e^{-k^2 s}\;.
\end{equs}
Inserting this expression into \eref{e:defK5}, it follows that one has the identity
\begin{equ}
K_\eps^\5 = -\sum_{k \in \Z_\star } \sum_{\ell \neq k } {\ell \phi^2(\eps k)\phi^2\bigl(\eps(k-\ell)\bigr) \over 2k(\ell^2 + k^2 - k\ell)}\;,
\end{equ}
which, by performing the substitution $\ell = k-m$, we can rewrite as
\begin{equ}
K_\eps^\5 = -\sum_{k,m \in \Z_\star } {(k-m) \phi^2(\eps k)\phi^2(\eps m) \over 2k(k^2 + m^2 - km)}\;.
\end{equ}
Performing a similar substitution in \eref{e:formKY}, we obtain
\begin{equ}
K_\eps^\4 = \sum_{k,m \in \Z_\star } {\phi^2(\eps k)\phi^2(\eps m) \over k^2 + m^2 - km} -\sum_{k \in \Z_\star} {\phi^2(\eps k) \over k^2} \;,
\end{equ}
so that we have the identity
\begin{equ}
K_\eps^\4 + 4K_\eps^\5 =  \sum_{k,m \in \Z_\star } {2m-k \over k}{\phi^2(\eps k)\phi^2(\eps m) \over k^2 + m^2 - km} -{\pi^2 \over 3} + \CO(\eps)\eqdef I_\eps -{\pi^2 \over 3} + \CO(\eps)\;.
\end{equ}
At this stage, we note that, apart from the prefactor $(2m-k)/k$, the summand is symmetric under the substitution $(k,m) \leftrightarrow (m,k)$.
As a consequence, we can rewrite this sum as
\begin{equ}[e:quantity]
I_\eps = {1\over 2} \sum_{k,m \in \Z_\star } \Bigl({2m-k \over k} + {2k-m \over m}\Bigr){\phi^2(\eps k)\phi^2(\eps m) \over k^2 + m^2 - km}\;.
\end{equ} 
Since one furthermore has the identity
\begin{equ}
{2m-k \over k} + {2k-m \over m} = {2(m^2 + k^2 - km) \over km}\;,
\end{equ}
the quantity in \eref{e:quantity} is equal to 
$\bigl(\sum_{k \in \Z_\star} {\phi(k\eps)\over k}\bigr)^2$.
This vanishes identically since $\phi$ is even, thus concluding the proof.
\end{proof}

\subsection{Convergence of the fluctuation processes}

In this section, we show that the processes $F^\tau_\eps$ defined in \eref{e:defFeps} have limits as $\eps\to 0$.
We start with the process $F^\2$:
\begin{proposition}\label{prop:convF2}
There exists a limiting process $F^\2$ such that $F^\2_\eps \to F^\2$ in probability in $\CC^\alpha$ for every $\alpha < {3\over 4}$.
\end{proposition}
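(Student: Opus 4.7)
The plan is to exploit the fact that $F^\2_\eps$ lies in the second Wiener chaos relative to $\xi$ and to obtain a sharp $L^2$ bound on its increments by a direct Wick-type computation. Writing $\bar X^\1_{\eps,k}(s) = \phi(\eps k)\hat\beta_k(s)$, where $\hat\beta_k$ are the centred Gaussian processes with $\E\hat\beta_k(s)\hat\beta_\ell(t) = \delta_{k,-\ell}e^{-k^2|t-s|}$, one has the representation
\begin{equ}
F^\2_\eps(t) = \int_0^t \sum_{k \in \Z_\star} \phi^2(\eps k)\bigl(|\hat\beta_k(s)|^2 - 1\bigr)\,ds\;,
\end{equ}
which shows that the integrand is a centred element of the second Wiener chaos for every $s$.

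A straightforward application of Proposition~\ref{prop:momentGaussian} yields, for $k,\ell \in \Z_\star$, the identity
\begin{equ}
\E\bigl(|\hat\beta_k(r)|^2 - 1\bigr)\bigl(|\hat\beta_\ell(r')|^2 - 1\bigr) = \bigl(\delta_{k,\ell} + \delta_{k,-\ell}\bigr)\,e^{-2k^2 |r-r'|}\;,
\end{equ}
so that
\begin{equ}[e:varIncF2]
\E |F^\2_\eps(t)-F^\2_\eps(s)|^2 = 2\int_s^t\!\!\int_s^t \sum_{k\in\Z_\star} \phi^4(\eps k)\,e^{-2k^2|r-r'|}\,dr\,dr'\;.
\end{equ}
Since $\int_s^t\!\int_s^t e^{-2k^2|r-r'|}\,dr\,dr' \lesssim |t-s|^2 \wedge |t-s|/k^2$, splitting the sum at $|k| \sim |t-s|^{-1/2}$ yields the bound $\E|F^\2_\eps(t)-F^\2_\eps(s)|^2 \lesssim |t-s|^{3/2}$, uniformly in $\eps$. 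By Nelson's hypercontractivity (equivalence of moments in a fixed Wiener chaos), this extends to $\E|F^\2_\eps(t)-F^\2_\eps(s)|^{2p} \lesssim |t-s|^{3p/2}$ for every $p \ge 1$, so that Kolmogorov's continuity criterion yields uniform tightness of the laws of $F^\2_\eps$ in $\CC^\alpha$ for every $\alpha < 3/4$.

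To upgrade tightness to convergence in probability, I would repeat the same calculation for the difference $F^\2_\eps - F^\2_{\bar\eps}$, which replaces the factor $\phi^4(\eps k)$ in \eref{e:varIncF2} by $(\phi^2(\eps k)-\phi^2(\bar\eps k))^2$. Using the interpolation bound $|\phi^2(\eps k) - \phi^2(\bar\eps k)| \lesssim (|\eps-\bar\eps||k|)^{\delta}$ valid for any $\delta \in [0,1]$ (which follows from $\phi \in \CC^1$ and boundedness), and again summing over $k$ with the threshold $|k|\sim |t-s|^{-1/2}$, one obtains
\begin{equ}
\E|F^\2_\eps(t)-F^\2_{\bar\eps}(t) - F^\2_\eps(s)+F^\2_{\bar\eps}(s)|^2 \lesssim |\eps-\bar\eps|^{2\delta}\,|t-s|^{{3\over 2} - \delta}\;,
\end{equ}
which together with hypercontractivity shows that $\{F^\2_\eps\}$ is Cauchy in $L^{2p}(\Omega,\CC^{\alpha'})$ for every $\alpha' < {3\over 4} - {\delta\over 2}$; taking $\delta$ arbitrarily small and combining with the uniform H\"older bound gives convergence in probability in $\CC^\alpha$ for every $\alpha < {3\over 4}$.

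The main obstacle is really just bookkeeping: ensuring that the combinatorics of the Wick expansion for the square of the chi-squared-type integrand produces only the diagonal contribution $\phi^4(\eps k)$ (with no cross terms that would spoil summability), and checking that the interpolation estimate on $\phi^2(\eps k) - \phi^2(\bar\eps k)$ is strong enough to survive the summation over $k$ without losing the full H\"older exponent $3/4$. Both are resolved by the computation sketched above, the key point being the extra factor $1/k^2$ produced by the time integral which is exactly what is needed to compensate the large-$k$ behaviour of the Fourier modes of $\bar X^\1_\eps$.
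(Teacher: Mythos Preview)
Your proof is correct and follows essentially the same route as the paper. Both compute the second-moment increment bound $\E|F^\2_\eps(t)-F^\2_\eps(s)|^2 \lesssim |t-s|^{3/2}$ directly from Wick's formula, invoke hypercontractivity to lift this to all moments, and conclude by Kolmogorov. Your version of the variance identity (with the factor $2$ and the exponent $2k^2$) is in fact the precise one; the paper's displayed identity drops these harmless constants.

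The only substantive difference is in the Cauchy step. The paper simply notes that the summand $\phi^4(\eps k)$ is replaced by $(\phi^2(\eps k)-\phi^2(\bar\eps k))^2$ and appeals to dominated convergence (as in the proof of Proposition~\ref{prop:generalConvergence}), whereas you extract a quantitative rate via the interpolation $|\phi^2(\eps k)-\phi^2(\bar\eps k)| \lesssim (|\eps-\bar\eps|\,|k|)^\delta$. Your argument is slightly stronger in that it yields an explicit convergence rate, at the cost of a small bookkeeping constraint ($\delta<1/2$ is needed for the tail sum $\sum_{|k|>|t-s|^{-1/2}}|k|^{2\delta-2}$ to converge). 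Both approaches are perfectly valid here.
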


\begin{proof}
From the definition of $F^\2_\eps$ and the expression for the correlation functions of $\bar X^\1_\eps$, it is straightforward to see that 
\begin{equs}
\E |F^\2_\eps(t) - F^\2_\eps(s)|^2 &= \sum_{k\in \Z_\star} \phi^4(k\eps) \int_s^t\int_s^t e^{-k^2|r-r'|}\,dr\,dr' \\
&\lesssim \sum_{k\in \Z_\star} \int_s^t\int_s^{r'} {1\over |k|^{2\alpha} |r-r'|^\alpha}\,dr\,dr'\\
&\lesssim \int_s^t |s-r'|^{1-\alpha}\,dr' \lesssim |t-s|^{2-\alpha}\;,
\end{equs}
provided that $\alpha > {1\over 2}$. The fact that the sequence is actually Cauchy then follows in the same way
as in the proof of Proposition~\ref{prop:generalConvergence} below.
\end{proof}

For the more complicated trees, a more systematic approach, very similar to the previous two sections, is required.
We note that for a general tree $\tau$ with $\tau = [\tau_1,\tau_2]$, one has the identity
\begin{equs}
\E |F^\tau_\eps(t) &- F^\tau_\eps(s)|^2 = \sum_{k,\ell \in \Z_\star} \int_s^t \int_s^t \Bigl(\E \bigl(\bar X^{\tau_1}_{\eps,k}(r)\bar X^{\tau_2}_{\eps,-k}(r)\bar X^{\tau_1}_{\eps,\ell}(r')\bar X^{\tau_2}_{\eps,-\ell}(r')\bigr)\\
 &\qquad - \E \bigl(\bar X^{\tau_1}_{\eps,k}(r)\bar X^{\tau_2}_{\eps,-k}(r)\bigr)\,\E\bigl(\bar X^{\tau_1}_{\eps,\ell}(r')\bar X^{\tau_2}_{\eps,-\ell}(r')\Bigr)\,dr\,dr'\;. \label{e:exprFeps}
\end{equs}
The reason why we can rewrite it in this way is that, by definition,
\begin{equ}
K^\tau_\eps = \sum_{\ell \in \Z_\star} \E\bigl(\bar X^{\tau_1}_{\eps,\ell}(r)\bar X^{\tau_2}_{\eps,-\ell}(r)\bigr)\;,
\end{equ}
which is independent of $r$ by the stationarity of the processes $X^\tau_\eps$.

As before, we can write this expectation as a sum over pairings of the tree $\tau$ and
all cycles of the corresponding graph. Previously, we always restricted ourselves to cycles in $\cC_\star(\CG,\CE) \sim \LL^\tau_P$,
which was defined as consisting of those cycles that associate a non-zero value to every edge. 
This time however, we are precisely interested in only those cycles that do associate the value $0$
to the distinguished edge $\bar e$. We therefore denote by $\LL^\tau_{P;0}$ the set of all cycles
that associate the label $0$ to the distinguished edge $\bar e$ \textit{but not to any other edge}.

\begin{remark}
For the pairing associated to the graph depicted in \eref{e:pairingDangle}, the set $\LL^\tau_{P;0}$ is empty. Indeed, any cycle
giving the value $0$ to the horizontal edge at the bottom of the graph also necessarily gives the value $0$
to the horizontal edge at the top, but this is not allowed.
In general, all pairings yielding graphs with a small loop touching
$\bar e$ are ruled out in this way.
\end{remark}

In principle, the consequence of this is that we have to consider a potentially larger set of pairings than what we did in 
in Section~\ref{sec:constructX}. Recall indeed Lemma~\ref{lem:pairCross}, which characterised the 
pairings  yielding a non-empty set $\LL^\tau_P$ as those pairings which contain at least one element connecting 
the two copies of $\tau$. (Denote these pairings by $\hat \CP^\tau$.)
While the pairings that do not contain any such connection yield an empty set $\LL^\tau_P$,
they would certainly not yield an empty set $\LL^\tau_{P;0}$. In fact, pairings $P \in \CP^\tau\setminus \hat \CP^\tau$
are precisely those with the property that \textit{every} cycle of the corresponding graph belongs to $\LL^\tau_{P;0}$!
Fortunately, we see that the pairings in $P \in \CP^\tau\setminus \hat \CP^\tau$ are precisely those that appear in the
second line of \eref{e:exprFeps}, so that their contribution to \eref{e:exprFeps} vanishes. As a consequence, we still have the identity
\begin{equ}
\E |F^\tau_\eps(t) - F^\tau_\eps(s)|^2 = \sum_{P \in \hat \CP^\tau}\sum_{L \in \LL^\tau_{P;0}} \! C_\eps(L)\int_s^t \int_s^t \CF^\tau_\sym(P,L;r-r')\,dr\,dr' \;,
\end{equ}
where $\CF_\sym^\tau$ is defined exactly as in Section~\ref{sec:systematic} and where
\begin{equ}
C_\eps(L) = \prod_{e \in \CT} \phi^2(\eps L_e)\;.
\end{equ}

With this notation, we have the following result, which is the counterpart in this context to Theorem~\ref{theo:abstractConv}:

\begin{proposition}\label{prop:convFluct}
Let $\tau$ be a non-trivial binary tree and assume that there exists $\alpha > 0$ such that 
\begin{equ}[e:defFaC]
\Bigl|\int_s^t \int_s^t \CF^\tau_\sym(P,L;r-r')\,dr\,dr'\Bigr| \le |t-s|^{2\alpha} \CF_\alpha(P,L)\;,
\end{equ}
for some constants $\CF_\alpha(P,L)$ and every $s<t$ with $|t-s| \le 1$. 
Then, if 
\begin{equ}
\sum_{P \in \hat \CP^\tau}\sum_{L \in \LL^\tau_{P;0}} \CF_\alpha(P,L) < \infty \;,
\end{equ}
there exists a process $F^\tau$ such that 
$F^\tau_\eps \to F^\tau$ in probability in $\CC^{\bar \alpha}$ for every $\bar \alpha < \alpha$.
\end{proposition}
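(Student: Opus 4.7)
The plan is to establish the convergence via Kolmogorov's continuity criterion, much in the spirit of Proposition~\ref{prop:convF2} and Proposition~\ref{prop:generalConvergence}. The key point is that each $F^\tau_\eps(t)$ is, for fixed $t$, a polynomial of degree $2|\tau|$ in the driving white noise (its time derivative lies in the Wiener chaos of order at most $2|\tau|$ after centring by $K^\tau_\eps$), so by hypercontractivity on Wiener chaoses it suffices to obtain control of the second moments of increments. The rest of the argument then reduces to two straightforward computations: a uniform bound on $\E|F^\tau_\eps(t)-F^\tau_\eps(s)|^2$ and a bound on the analogous second moment for the difference $F^\tau_\eps - F^\tau_{\bar \eps}$ that vanishes as $\eps, \bar \eps \to 0$.

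For the uniform bound, I would use the identity preceding the proposition together with the hypothesis \eref{e:defFaC}, yielding
\begin{equ}
\E|F^\tau_\eps(t) - F^\tau_\eps(s)|^2 \le |t-s|^{2\alpha} \sum_{P \in \hat \CP^\tau} \sum_{L \in \LL^\tau_{P;0}} |C_\eps(L)|\,\CF_\alpha(P,L)\;.
\end{equ}
Since $|\phi| \le 1$ implies $|C_\eps(L)| \le 1$, the sum is dominated by $\sum_{P,L}\CF_\alpha(P,L)$, which is finite by assumption. For $p \ge 2$, Nelson's hypercontractivity on the (finite) Wiener chaos containing $F^\tau_\eps(t) - F^\tau_\eps(s)$ then gives $\E|F^\tau_\eps(t) - F^\tau_\eps(s)|^{2p} \lesssim |t-s|^{2p\alpha}$ uniformly in $\eps$, so Kolmogorov's criterion ensures tightness in $\CC^{\bar\alpha}$ for every $\bar\alpha < \alpha$.

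To upgrade tightness to convergence, the same expansion applied to $F^\tau_\eps - F^\tau_{\bar \eps}$ yields
\begin{equ}
\E|(F^\tau_\eps - F^\tau_{\bar\eps})(t) - (F^\tau_\eps - F^\tau_{\bar\eps})(s)|^2 \le |t-s|^{2\alpha} \!\!\sum_{P,L} \bigl|C_\eps(L) - C_{\bar\eps}(L)\bigr|\,\CF_\alpha(P,L)\;,
\end{equ}
where one uses that, in the joint computation, the cross-terms between two copies of the process generate precisely the same graphical structure as before, but with $\phi^2(\eps L_e)$ replaced by products of $\phi(\eps L_e)\phi(\bar\eps L_e)$. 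Since $\phi(0)=1$, for each fixed $L$ one has $C_\eps(L) - C_{\bar\eps}(L) \to 0$ as $\eps, \bar\eps \to 0$, while $|C_\eps(L) - C_{\bar\eps}(L)| \le 2$ uniformly. By dominated convergence, the sum on the right tends to $0$, so $F^\tau_\eps$ is Cauchy in $L^2$ pointwise with an increment bound that is Cauchy in a uniform sense. Combining this with hypercontractivity and another application of Kolmogorov, we conclude that $F^\tau_\eps$ converges in probability in $\CC^{\bar\alpha}$ for every $\bar \alpha < \alpha$, thus defining the limit $F^\tau$.

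The argument is essentially routine once the graphical bound \eref{e:defFaC} is granted; the content of the proposition lies entirely in reducing the convergence of the constant Fourier mode to the same type of summability estimates already carried out for the $X^\tau_\eps$. The only mild subtlety, which however requires no new ideas, is the bookkeeping showing that pairings $P \not\in \hat\CP^\tau$ contribute $0$ to the centred second moment (which is why the sum may be restricted to $\hat \CP^\tau$) and that, within $\hat \CP^\tau$, only cycles in $\LL^\tau_{P;0}$ (rather than $\LL^\tau_P$) appear, as these correspond precisely to keeping the constant Fourier mode of the integrand on both factors; this was already observed in the discussion preceding the statement of the proposition.
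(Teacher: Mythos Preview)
Your approach is correct and matches the paper's own proof, which simply reads ``Using Kolmogorov's continuity criterion, the proof is virtually identical to that of Proposition~\ref{prop:generalConvergence}.'' You have essentially unpacked what that one-line reference means: the identity preceding the proposition gives second-moment control of increments, hypercontractivity on a fixed Wiener chaos upgrades this to all moments, Kolmogorov then yields tightness, and a dominated-convergence argument on the $\phi$-dependent coefficients shows the sequence is Cauchy.

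One small imprecision worth flagging: in your Cauchy step, the displayed coefficient $|C_\eps(L) - C_{\bar\eps}(L)|$ is not quite the right object. Expanding $\E|(F^\tau_\eps - F^\tau_{\bar\eps})(t) - (F^\tau_\eps - F^\tau_{\bar\eps})(s)|^2$ into the three terms $\E A_\eps^2 + \E A_{\bar\eps}^2 - 2\E A_\eps A_{\bar\eps}$, the cross term carries, for each pair $\{u,v\}\in P$, a factor that equals $\phi^2(\eps L_e)$, $\phi^2(\bar\eps L_e)$, or $\phi(\eps L_e)\phi(\bar\eps L_e)$ depending on whether the pair lies within the $\eps$-copy, within the $\bar\eps$-copy, or joins them. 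So the resulting coefficient is of the form $C_\eps(L) + C_{\bar\eps}(L) - 2C^{\mathrm{cross}}_{\eps,\bar\eps}(P,L)$ rather than your simpler expression. This does not affect the argument: each of the three pieces is bounded by $1$ and converges to $1$ as $\eps,\bar\eps\to 0$, so the combination tends to $0$ while being dominated by $4\CF_\alpha(P,L)$, and dominated convergence concludes exactly as you say.
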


\begin{proof}
Using Kolmogorov's continuity criterion, the proof is virtually identical to that of Proposition~\ref{prop:generalConvergence}.
\end{proof}

It thus remains to provide a criterion for the summability of $\CF_\alpha(P,L)$ (which we can define
as the smallest constant such that \eref{e:defFaC} holds) for a given pairing $P$.
For this, we proceed similarly to Section~\ref{sec:constructX} but our construction is slightly different.
Given a pairing $P \in \hat \CP^\tau$ and given $\kappa > 0$, we now construct a 
graph $(\hat \CG, \hat \CE)$ and a set of weights $\GG_\kappa^\tau(P;0)$ by following the same procedure as in 
Section~\ref{sec:constructGraph}. There are only two differences: first, instead of setting $\CW(v) = 0$ for $v \in \bar e$ in Step~4, we actually
set $\CW(v) = \kappa$ for these two vertices. Then, at the end of the algorithm, we erase the edge $\bar e$, so that
the graph $(\hat \CG, \hat \CE)$ that we consider is actually given by $\hat \CG = \CG$ and $\hat \CE = \CE \setminus \{\bar e\}$.

We then have the following result:

\begin{proposition}\label{prop:sumFluct}
For a given $P \in \hat \CP^\tau$, let $\GG_\kappa^\tau(P;0)$ and $(\hat \CG, \hat \CE)$ be as above. If
there exists $\kappa < 2$ such that there exists a summable element in $\GG_\kappa^\tau(P;0)$, then 
\begin{equ}
\sum_{L\in \LL^\tau_{P;0}} \CF_\alpha(P,L) < \infty\;,
\end{equ}
for $\alpha = 1 - {\kappa \over 2}$.
\end{proposition}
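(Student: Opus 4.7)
The strategy is to follow the proof of Theorem~\ref{theo:graphKernel} in close parallel, with the role previously played by the decaying exponential $e^{-L_{\bar e}^2(\delta - T_\rho - \bar T_{\bar\rho})}$ now taken over by the direct integration of the two root time variables over $[s,t]$. Since $L_{\bar e} = \rho(L) = 0$ for $L \in \LL^\tau_{P;0}$, Kirchhoff's law closes off at the two roots without any contribution from $\bar e$, so the identification of $\LL^\tau_{P;0}$ with $\cC_\star(\hat\CG,\hat\CE)$ goes through exactly as in Lemma~\ref{lem:identif} with $\bar e$ excised. Writing $\CF^\tau_\sym$ as in \eref{e:defG}, the integral to be estimated reads
\begin{equ}
\int_s^t\!\int_s^t \CF^\tau_\sym(P,L;r-r')\,dr\,dr' = \int_{\R^{\hat\CG}} \CG^\tau_P(L;\hat T)\,\one_{[s,t]}(T_\rho)\,\one_{[s,t]}(T_{\bar\rho})\prod_{u\in\hat\CG} dT_u\;,
\end{equ}
where $\CG^\tau_P$ contains only exponential factors $e^{-|L_e|^2|\delta T_e|}$ for $e \in \hat\CE$, the putative contribution from $\bar e$ being trivial.

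The first step would be to absorb any small loop of type~$1$ via the ``looplabel'' substitution together with Lemma~\ref{lem:normaliseLoop}, exactly as in the proof of Theorem~\ref{theo:graphKernel}, producing the intermediate weighting $\CL_0$ from Step~3. The second step is the iterative Hölder integration. For every interior vertex $v \in \hat\CG \setminus \bar e$, I would apply \eref{e:boundHolder} to the $T_v$-integral over $\R$, distributing weight $\CW(v)=2$ among its adjacent edges just as before. For each root vertex $v \in \bar e$, whose $T_v$-integral is constrained to $[s,t]$, I would interpolate between the trivial bound $\int_s^t 1\,dT_v = |t-s|$ and the $\R$-integral Hölder bound, via $\min(a,b)\le a^{1-\theta}b^\theta$, obtaining
\begin{equ}
\int_s^t e^{-|L_{e_1}|^2|\delta T_{e_1}|-|L_{e_2}|^2|\delta T_{e_2}|}\,dT_v \lesssim |t-s|^{1-\kappa/2}\prod_{j=1,2}|L_{e_j}|^{-w_j\kappa/2}\;,
\end{equ}
for any $\kappa \in [0,2)$ and any $w_1,w_2\ge 0$ with $w_1+w_2 = 2$. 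The total weight $\kappa$ distributed to the two edges incident to each root vertex matches the assignment $\CW(v)=\kappa$ in the construction of $\GG_\kappa^\tau(P;0)$, and the two prefactors combine to $|t-s|^{2-\kappa}$.

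Combining the interpolation factors at the two roots with the Hölder bounds at the interior vertices, performed in the order prescribed by Step~4, I would obtain
\begin{equ}
\Bigl|\int_s^t\!\int_s^t \CF^\tau_\sym(P,L;r-r')\,dr\,dr'\Bigr| \lesssim |t-s|^{2-\kappa}\prod_{e\in\hat\CE} |L_e|^{-\CL(e)}\;,
\end{equ}
for any $\CL \in \GG_\kappa^\tau(P;0)$, where the passage from $\CS L$ back to $L$ is harmless as in the proof of Theorem~\ref{theo:graphKernel} since $\CL(e) > 0$ on the edges of any small loop. Since $2-\kappa = 2\alpha$ with $\alpha = 1-\kappa/2$, this yields $\CF_\alpha(P,L) \lesssim \prod_{e\in\hat\CE}|L_e|^{-\CL(e)}$, and summing over $L \in \LL^\tau_{P;0} \sim \cC_\star(\hat\CG,\hat\CE)$ via the summability hypothesis on $\GG_\kappa^\tau(P;0)$ then concludes the proof. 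The main technical obstacle will be to arrange the order of integrations so that, whenever a root vertex participates in a small loop of type~$1$, the looplabel substitution is carried out before applying the interpolation at that root, so that only a clean product of exponentials remains to be interpolated.
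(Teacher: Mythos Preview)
Your proposal is correct and follows essentially the same approach as the paper. The paper's proof likewise reduces to the proof of Theorem~\ref{theo:graphKernel}, replacing the bound \eref{e:boundFT} by one in which the two root time variables are integrated over $[s,t]$, and then invokes precisely the interpolation inequality you wrote (stated there for a single exponential factor, with the H\"older splitting among the two incident edges left implicit); your concern about the order of the looplabel substitution versus the root interpolation is not an issue, since Lemma~\ref{lem:normaliseLoop} acts pointwise on the integrand before any time integration is performed.
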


\begin{proof}
The proof is virtually identical to that of Theorem~\ref{theo:graphKernel}, so we only focus on those steps that actually 
differ. Note first that the reason for erasing the edge $\bar e$ after the last step is that the set of cycles of $(\CG, \CE)$ that give the value $0$ to $\bar e$
and non-zero values to all other edges are, by definition, in a one-to-one correspondence with all cycles of $(\hat \CG, \hat \CE)$ that
give non-zero values to all edges in $\hat \CE$.

This immediately yields the claim for the case $\kappa = 0$ since in that case the proof of Theorem~\ref{theo:graphKernel} implies that
\begin{equ}
\sum_{L \in \LL^\tau_{P;0}} \sup_{\delta \in \R} |\CF^\tau_\sym(P,L;\delta)| < \infty\;,
\end{equ}
so that we do indeed have the required bound with $\alpha = 1$. 
If $\kappa > 0$, we use the fact that we only need a bound on the integrated quantity
\begin{equ}
\int_s^t \int_s^t \CF^\tau_\sym(P,L;r-r')\,dr\,dr'\;.
\end{equ}
As a consequence, in the last step of the proof of Theorem~\ref{theo:graphKernel}, we should replace \eref{e:boundFT} by the bound
\begin{equs}
\Bigl|\int_s^t &\int_s^t \CF^\tau_\sym(P,L;r-r')\,dr\,dr'\Bigr| \\
&\lesssim \int_s^t\int_s^t \int_{\R^{\bar \CG}} \Bigl(\prod_{e \in \hat \CE} |\CS L_{e}|^{-\CL_0(e)} e^{-c (\CS L)_e^2|\delta T_e|} \Bigr)\, \Bigl(\prod_{u \in \bar \CG} dT_u\Bigr)\,dT_{v}\,dT_{\bar v}\;,
\end{equs}
where we denote by $v$ and $\bar v$ the two vertices adjacent to $\bar e$. One then proceeds in exactly the same way, but noting that
for $a>0$ one has the inequality
\begin{equ}
\int_s^t e^{-a^2 |u-x|}\,dx \le |t-s|^\alpha |a|^{2-2\alpha} = {|t-s|^\alpha \over |a|^\kappa}\;, 
\end{equ} 
uniformly in $u$, provided that $\alpha = 1-{\kappa\over 2}$ as in the statement of the proposition. The claim
then follows by repeatedly applying H\"older's inequality, just like in the proof of Theorem~\ref{theo:graphKernel}.
\end{proof}

We now have the required tool to construct the remaining processes $F^\tau$. We have:

\begin{proposition}\label{prop:convF4}
For every $\tau \in \{\smalltree,\nicetree,\bigtree\}$, there exists a limiting process $F^\tau$ such that $F^\tau_\eps \to F^\tau$ in probability in $\CC^\alpha$ for every $\alpha < {1}$.
\end{proposition}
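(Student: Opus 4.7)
The plan is to combine Proposition~\ref{prop:convFluct} with Proposition~\ref{prop:sumFluct}, reducing the convergence of each $F^\tau_\eps$ to a purely graphical summability check over the pairings $P \in \hat \CP^\tau$. Concretely, for every $\kappa > 0$ arbitrarily small, I need to exhibit, for each such $P$, a weighting $\CL \in \GG_\kappa^\tau(P;0)$ such that Algorithm~\ref{algo} contracts the weighted graph $(\hat \CG, \hat \CE, \CL)$ to a loop-free graph. Proposition~\ref{prop:sumFluct} then delivers summability of $\CF_\alpha(P,L)$ with $\alpha = 1 - \kappa/2$, and sending $\kappa \to 0$ yields the stated convergence in $\CC^\alpha$ for every $\alpha < 1$.

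The list of pairings to analyse coincides with the one already dealt with in Section~\ref{sec:constructX}: the three pairings \eref{e:pairings3} for $\smalltree$, the three pairings \eref{e:pairings4} for $\nicetree$, and the fifteen pairings for $\bigtree$ (the twelve non-simple ones exhibited in the proof of Proposition~\ref{prop:boundK5}, together with the three simple ones in $\CP_s^\5$). The only difference in the weighting procedure is that, for $\GG_\kappa^\tau(P;0)$, Step~4 assigns weight $\kappa$ at each of the two vertices of $\bar e$ (rather than weight $\kappa$ on the edge $\bar e$ itself), and $\bar e$ is erased at the end. My strategy is therefore to recycle the weightings built in Proposition~\ref{prop:boundX3}, in the construction of $X^\4$, and in Proposition~\ref{prop:boundK5}, by redistributing the weight $\kappa$ originally placed on $\bar e$ onto its two adjacent edges and then deleting $\bar e$. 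Since $\cC_\star(\hat \CG, \hat \CE)$ injects naturally into $\cC_\star(\CG, \CE)$ by extending any cycle with the value $0$ on $\bar e$, summability is automatically inherited from the original weighted graph.

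The main obstacle is to verify that this surgery preserves contractibility under Algorithm~\ref{algo}. In the generic case this is immediate: if the original reduction deletes $\bar e$ last, one simply deletes it first. The delicate cases are the two exceptional pairings of $\bigtree$ at the end of the proof of Proposition~\ref{prop:boundK5} that required a non-trivial fractional weighting adjacent to $\bar e$; here I will check each case by hand, using the additional weight $\kappa$ now freely available on the two edges adjacent to $\bar e$ to repair any contraction step that previously relied on the weight on $\bar e$. A similarly direct verification is required for the simple pairings $P \in \CP_s^\tau$ for each of the three trees, which in Section~\ref{sec:constructX} were dispatched via Proposition~\ref{prop:simplePairing} but now, in the absence of $\bar e$, must be reduced directly. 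However, such pairings always feature a pair of dangling leaves adjacent to the removed edge, which makes the merging rule \eref{e:merge1} immediately applicable and reduces the remaining graph to the one associated to a smaller tree, for which contractibility has already been verified. Once every pairing has been handled, the proof is complete.
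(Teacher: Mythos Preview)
Your overall strategy matches the paper's: reduce to the graphical summability criterion via Propositions~\ref{prop:convFluct} and \ref{prop:sumFluct}. However, the recycling plan has a genuine gap for $\tau = \smalltree$.

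In Proposition~\ref{prop:boundX3}, only \emph{one} of the three pairings in \eref{e:pairings3} is handled through a weighting in $\GG_\kappa^\tau(P)$, namely the second one via \eref{e:pairingDangle}. The first pairing was dispatched through Proposition~\ref{prop:simplePairing} and the third was treated by explicit kernel estimates, so there are simply no weightings there for you to recycle. Worse, the one weighting you \emph{can} recycle is precisely for the pairing whose $\LL^\tau_{P;0}$ is empty (see the Remark preceding Proposition~\ref{prop:convFluct}), so it is irrelevant here. The two pairings of $\smalltree$ that actually require treatment have to be furnished with fresh weightings; this is what the paper does in its proof. The same issue propagates to your treatment of the simple pairings of $\bigtree$: your plan reduces them to pairings of $\smalltree$, for which ``contractibility has already been verified'' is false. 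The paper instead exhibits three new weightings for these directly.

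A smaller technical point: your inheritance argument via an injection $\cC_\star(\hat\CG,\hat\CE) \hookrightarrow \cC_\star(\CG,\CE)$ is incorrect as stated, since extending a cycle by $0$ on $\bar e$ lands outside $\cC_\star(\CG,\CE)$. The correct observation (implicit in the paper) is that Algorithm~\ref{algo} in fact establishes summability over all of $\cC(\CG,\CE)$ with the convention $(1\vee|C_e|)^{-\CL_e}$ used in \eref{e:defF}, so restricting to the subgroup $\{C_{\bar e}=0\}$ is legitimate. This fixes the argument for $\nicetree$ and for the non-simple pairings of $\bigtree$, where your plan does work and is essentially what the paper does.
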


\begin{proof}
By Propositions~\ref{prop:convFluct} and \ref{prop:sumFluct}, it suffices to exhibit an element in $\GG_\kappa^\tau(P;0)$ for every pairing $P \in \hat \CP^\tau$.
In the case of $\tau = \smalltree$, there are only two such pairings that yield a non-empty set $\LL_{P;0}^\tau$:
\begin{equ}
\mhpastefig{tree12zeromode}
\end{equ}
(Note that we display the graphs $(\hat \CG, \hat \CE)$ for which the edge $\bar e$ has been removed, this is why the two root vertices now only have degree $2$.)
As before, dotted lines have weight  $0$, dashed lines have weight  $\kappa$, and plain lines have weight $1$. Again,
the loop and its adjacent edge in the first graph have weights as in \eref{e:looplabel}, so that after the application of Algorithm~1, it is indeed 
equivalent to 
having an edge with weight $0$. It is easy to see that both of these graphs are summable for every $\kappa > 0$, thus yielding the
claim for $\tau = \smalltree$.

In the case $\tau = \nicetree$, we can even take $\kappa = 0$. Indeed, recalling that the summability of graphs improves 
by removing edges, we note that if the criterion of Proposition~\ref{prop:graphicalAlgorithm} is satisfied for a given pairing $P$,
then  the assumption of Proposition~\ref{prop:sumFluct} is satisfied with $\kappa = 0$.

In the case $\tau = \bigtree$, the only pairings for which we have not verified that the criterion of Proposition~\ref{prop:graphicalAlgorithm}
is satisfied are those for which there is a pairing connecting the two roots. There are exactly three such pairings left
(one for each pairing of $\smalltree$). One can then check that the following weights do belong to 
$\GG_\kappa^\tau(P;0)$ with the same colour-coding conventions as before:
\begin{equ}
\mhpastefig{proofFluct}
\end{equ}
Again, it is straightforward to verify that they are all summable, thus concluding the proof.
\end{proof}

\section{Fine control of the universal process}
\label{sec:controlProcess}

The purpose of this section is to show that, for each fixed $t>0$, the process $\bar X^\5(t)$ is controlled
(in the sense of controlled rough paths) by the process $\Phi$. Recall that, by definition, 
$\bar X^\5$ is given by
\begin{equ}
\bar X^\5(t) = \d_x \int_{-\infty}^t P_{t-s} \bigl(\bar X^\1(s)\,\bar X^\3(s)\bigr)\,ds\;,
\end{equ}
where $P_t$ denotes the heat semigroup. Recall furthermore that the process $\Phi$ can be written as
\begin{equ}
\Phi(t) = \d_x \int_{-\infty}^t P_{t-s} \bar X^\1(s)\,ds\;.
\end{equ}
Comparing these two expressions, this suggests that $\bar X^\5(t)$ is controlled by $\Phi(t)$ with ``derivative process'' given by $\bar X^\3(t)$.
The aim of this section is to prove this fact, which is a crucial ingredient of our proofs.

In order to prove this, we would like to show that, for every $t>0$, it is possible to 
build a rough path over the pair
$\bigl(X^\1(t),\bar X^\3(t)\bigr)$. This would then enable us to apply the results from 
Section~\ref{sec:boundRemM} to show that
$\bar X^\5(t)$ is indeed  controlled by $\Phi$. Actually, we will show slightly less, namely that
the ``rough path norm'' for the pair $\bigl(X^\1_\eps(t),\bar X^\3_\eps(t)\bigr)$, with a suitably defined
``area process'' has uniformly bounded moments as $\eps \to 0$. It is clear from the proofs that one
actually has convergence to a limiting rough path, but the rigorous proof of his fact would be
slightly tricky so that we skip it since we do not really need it.

For this, we need to build the integral of 
$\bar X^\3(t)$ against $X^\1(t)$. More precisely, we would like to obtain control, as $\eps \to 0$,
of the quantity
\begin{equ}[e:wantedQuantity]
\int_x^y \bigl(\bar X_\eps^\3(t,z) - \bar X_\eps^\3(t,x)\bigr)\bar X_\eps^\1(t,z)\,dz\;.
\end{equ}
Unfortunately, this turns out to be impossible: the above quantity diverges logarithmically for
any pair $x$ and $y$! Fortunately, this divergence is not too difficult to control: indeed it arises
from an ``infinite constant'', which we subtract. The quantity that we will thus attempt to control
is given by
\begin{equs}
\XX_t^\eps(x,y) &\eqdef\int_x^y \bigl(\bar X_\eps^\3(t,z) - \bar X_\eps^\3(t,x)\bigr)\bar X_\eps^\1(t,z)\,dz
 - {y-x\over 2\pi} \int_0^{2\pi} \bar X_\eps^\3(t,z)\bar X_\eps^\1(t,z)\,dz\\
&=\int_x^y \bigl(\Pi_0^\perp\bigl(\bar X_\eps^\3(t)\bar X_\eps^\1(t)\bigr)(z) - \bar X_\eps^\3(t,x)\bar X_\eps^\1(t,z)\bigr)\,dz\;.\label{e:defXXbasic}
\end{equs}
Using Fourier components, we have the identity
\begin{equs}
\XX_t^\eps(x,y) &= \sum_{k\in \Z_\star} \sum_{m \in \Z} \int_x^y \bar X_{\eps,k-m}^\3(t) \bar X_{\eps,m}^\1(t) e^{ikz}\bigl(1 - e^{-i(k-m)(z-x)}\bigr)\,dz\\
&\quad - \sum_{m \in \Z} \int_x^y \bar X_{\eps,-m}^\3(t) \bar X_{\eps,m}^\1(t) e^{im(z-x)}\,dz\;. \label{e:defXXtree}
\end{equs}
Note that, since $\XX_t^\eps$ differs from \eref{e:wantedQuantity} only by a term of the form $(y-x)K_\eps(t)$ for
some process $K_\eps$, it automatically satisfies the required consistency relation \eref{e:constr} for every 
$\eps > 0$, so that it is a perfectly valid area process for the pair $\bigl(X^\1_\eps(t),\bar X^\3_\eps(t)\bigr)$.

\begin{remark}
The logarithmic divergence arising in the construction of $\XX_t^\eps$ is really \textit{the same} logarithmic divergence
$C_\eps^\5$ arising in the construction of $Y^\5$ in the following sense. If, instead of \eref{e:defXbar5} below, we considered the
same expression with $p'$ replaced by $p$ (which is  essentially the definition of $Y_{\eps,t}^\5$), then the difference 
between the rough integral $\iint$ and the usual Riemann integral would precisely result in 
a term that is constant in space and that kills the divergence in such a way that the resulting expression converges.
\end{remark}

\begin{remark}
In a somewhat vague sense, propositions~\ref{prop:simplePairing} and \ref{prop:boundK5} suggest that $X^\3$ behaves ``as if'' it was composed of
one part with regularity $\CC^{{3\over 2}-\beta}$ that is independent of $X^\1$ and one dependent part with regularity
$\CC^{2-\beta}$. Indeed, the bound of Proposition~\ref{prop:boundK5} would yield $\CC^{2-\beta}$ regularity for arbitrary $\beta > 0$
if it held for all pairings $P$, while Proposition~\ref{prop:simplePairing} precisely treats only those pairings that would arise if $X^\3$ and $X^\1$ were
independent.

If we could really decompose $\XX^\3$ as the sum of two processes with these properties, then we could
make sense of the limit $\XX_t^\eps \to \XX_t$ in almost exactly the same way as in \cite{GaussI}.
Unfortunately, while this argument is undoubtedly appealing, it doesn't seem obvious how to make it work in
a direct way. 
\end{remark}

The main technical bound of this section is the following:
\begin{proposition}\label{prop:boundXXeps}
For every $\bar \kappa > 0$, one has the bound
\begin{equ}[e:unifXXeps]
\sup_{\eps \in (0,1]} \sup_{t \in \R} \E \|\XX_t^\eps\|_{1-2\bar \kappa}^2 < \infty\;.
\end{equ}
\end{proposition}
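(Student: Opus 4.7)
The strategy is as follows. By the time-stationarity of $\bar X^\1_\eps$ and $\bar X^\3_\eps$, the law of $\XX^\eps_t(x,y)$ is independent of $t$, so we may fix $t=0$. Furthermore, $\XX^\eps_0(x,y)$ is a polynomial of degree four in the driving white noise (one factor from $\bar X^\1_\eps$ and three from $\bar X^\3_\eps$), so it belongs to the direct sum of the Wiener chaoses of order at most four. By Gaussian hypercontractivity, all $L^p$ norms on such a sum are equivalent, so a second moment bound of the form $\E|\XX^\eps_0(x,y)|^2 \lesssim |y-x|^{2-4\bar\kappa}$, uniform in $\eps$, upgrades via Kolmogorov's continuity theorem (applied with a sufficiently large exponent) to the uniform H\"older bound \eref{e:unifXXeps}. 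The task is therefore reduced to proving this $L^2$ bound.

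To obtain the $L^2$ bound I would work with the Fourier representation \eref{e:defXXtree}, expand each factor $\bar X^\3_\eps$ via its Feynman-diagram decomposition (Proposition~\ref{prop:decompX} applied to $\tau = \smalltree$), and compute the second moment through Wick's theorem (Proposition~\ref{prop:momentGaussian}). This produces a sum indexed by pairings $P$ of the eight white-noise leaves arising from the two copies of $\bar X^\3_\eps \bar X^\1_\eps$, each contributing a time integral and a Fourier sum of exactly the form analysed in Section~\ref{sec:systematic}. For each admissible pairing the resulting kernel has the same structure as $\CK^\tau_\sym$ there, with the tree $\tau$ now enlarged by one ``external'' leaf per copy and a distinguished external edge on each copy playing the role of $\bar e$.

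The two subtractions appearing in \eref{e:defXXbasic} are designed precisely so that the Fourier sum \eref{e:defXXtree} contains no term with the external index equal to zero. Consequently, the pairings that would otherwise generate a $|\log\eps|^2$ divergence, namely exactly the same divergence absorbed into $C_\eps^\5$ in Lemma~\ref{lem:convK5}, are excluded from the Wick expansion. For every surviving pairing I would then apply the graphical summability criterion of Propositions~\ref{prop:graphicalAlgorithm} and \ref{prop:criterionSum}. To extract the H\"older factor $|y-x|^{2-4\bar\kappa}$ I use the elementary bound $|1-e^{is}| \lesssim |s|^{1-2\bar\kappa}$, once per copy, which converts $|y-x|^{2-4\bar\kappa}$ into a loss of $|k|^{1-2\bar\kappa}$ and $|m|^{1-2\bar\kappa}$ on the two external edges. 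This is equivalent to assigning weight $\kappa = 4\bar\kappa$ to the external edge on each copy, bringing the bounds into the exact framework of Proposition~\ref{prop:graphicalAlgorithm}. The weighted graphs that arise are of the same type as those already handled in the proofs of Propositions~\ref{prop:boundX3}, \ref{prop:convergenceX5} and \ref{prop:sumFluct}, and Algorithm~\ref{algo} reduces each of them to a loop-free graph, which settles the second moment estimate.

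The main obstacle will be the careful matching between the Fourier-level subtractions in \eref{e:defXXtree} and the graph-level cancellations. On the one hand one must verify that after the subtractions no pairing with a divergent contribution survives; on the other hand one must check that none of the many terms arising from writing the integrand as a difference introduces a new graph whose summability is worse than the ones already treated. Once this bookkeeping is in place, the remaining estimates are all of a type systematically developed in Section~\ref{sec:constructX}, and require no genuinely new idea.
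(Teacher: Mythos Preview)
Your overall plan—stationarity, hypercontractivity, Kolmogorov, then a second-moment Fourier/Wick computation organised by pairings—is exactly the route the paper takes, and your identification of the role of the $\Pi_0^\perp$ subtraction (needed for the single pairing displayed as \eref{e:infiniteLoop} in the $\XX^{(2)}$ piece) is essentially correct. But two genuine technical obstacles are glossed over, and the first one would make the argument as written fail.

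\textbf{The difference $g_k(\delta)-g_m(\delta)$ is not a decoration; for $P\in\CP_s^\5$ it is essential.} You propose to extract the H\"older factor by bounding each $g$-factor separately via $|1-e^{is}|\lesssim |s|^{1-2\bar\kappa}$. But the integrand in \eref{e:defXPeps} carries the factor $\bigl(g_k(\delta)-g_m(\delta)\bigr)\bigl(g_{-k}(\delta)-g_{\bar m}(\delta)\bigr)$, and for pairings in $\CP_s^\5$ (those that pair the two leaves attached to the roots together, forcing $\bar m=-m$) the kernel reduces to $(k-m)^2\,\CK^\3_\sym(\bar P,\cdot;0)$ with $\bar P\in\CP^\3$. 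The bound of Proposition~\ref{prop:boundX3} gives summability of $|k-m|^{3-\kappa}\CK^\3_\sym$, so after multiplying by $(k-m)^2$ only a factor $|k-m|^{-1+\kappa}$ remains. If you now weight by $|k|^{-\kappa_1}|m|^{-\kappa_2}$ as your scheme suggests, the double sum over $k,m$ diverges. The paper cures this (Proposition~\ref{prop:boundXXs}) by using instead the cancellation bound
\[
|g_k(\delta)-g_m(\delta)|\lesssim \delta^{2-2\kappa}\,|k-m|^{1-\kappa}\bigl(|k|^{-1}+|m|^{-1}\bigr)^{\kappa},
\]
which converts one power of $|k-m|$ into the needed extra decay. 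This is not an adjustment of weights inside Algorithm~\ref{algo}; it is a genuinely different estimate on the prefactor, and without it the $\CP_s$ contribution is infinite.

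\textbf{The graphical machinery does not apply out of the box because the symmetrisation $\CF^\tau\to\CF^\tau_\sym$ can be blocked.} The summability criterion of Section~\ref{sec:constructGraph} relies on Step~3 (treatment of small loops of type~1), which in turn relies on replacing $\CF^\tau$ by $\CF^\tau_\sym$. In \eref{e:defXPeps} the labels $m$ and $\bar m$ on the two edges adjacent to $\bar e$ are \emph{held fixed} while one extracts the $\delta$-factor, so a small loop containing one of these edges cannot be symmetrised. The paper deals with this by splitting $\CP^\5\setminus\CP_s^\5$ into classes $\CP^0,\CP^1,\CP^2$ according to how many such loops touch $\bar e$, introducing a reflection map $\CR$ to perform a partial symmetrisation, and then exhibiting modified weightings $\bar\GG_\kappa^{\tau,k}(P)$ (Propositions~\ref{prop:boundXX0}--\ref{prop:boundXX2}). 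Your sentence ``the weighted graphs that arise are of the same type as those already handled'' hides this bookkeeping, which is where most of the work in Section~\ref{sec:controlProcess} actually lies.
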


\begin{proof}
In view of the definition \eref{e:FourierXXtree}, this is the content of propositions~\ref{prop:convergenceXX1} and \ref{prop:convergenceXX2} below.
\end{proof}

Furthermore, it follows immediately from \eref{e:corint} that, for any smooth function $f$ and 
for every $\eps > 0$, one has the identity
\begin{equ}[e:roughIntEps]
\iint_{S^1} f(z)\, \bar X_\eps^\3(t,z)\,dX_\eps^\1(t,z) = \int_{S^1} f(z)\, \Pi_0^\perp \bigl(\bar X_\eps^\3(t)\bar X_\eps^\1(t)\bigr)(z)\,dz\;,
\end{equ}
where we used $\XX_t^\eps$ as the ``area process'' required to give meaning to the rough integral on the left
hand side. 
Setting
\begin{equ}
R^\5_{\eps,t}(x,y) = \delta \bar X_{\eps,t}^\5\bigr(x,y) - \bar X_{\eps,t}^\3(x) \,\delta \Phi_{\eps,t}(x,y)\;,
\end{equ}
we then have the following consequence of Proposition~\ref{prop:boundXXeps}, which is the main result of this section:

\begin{theorem}\label{theo:controlX5}
One has $R^\5_\eps \to R^\5$ in probability in $\CC(\R, \CC_2^{1-\delta})$ for every $\delta > 0$.
\end{theorem}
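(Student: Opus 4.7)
The plan is to identify $\bar X^\5_\eps$ (up to smoothing boundary terms) with a rough integral in the framework of Section~\ref{sec:boundRemM}, using the area process $\XX^\eps$ from Proposition~\ref{prop:boundXXeps} as the crucial extra input. Differentiating the defining equation $\d_t X^\5 = \d_x^2 X^\5 + \Pi_0^\perp(\bar X^\1\bar X^\3)$, variation of constants combined with \eref{e:roughIntEps} yields, for any $s_0 < t$,
\begin{equ}
\bar X^\5_{\eps,t}(x) = P_{t-s_0}\bar X^\5_{\eps,s_0}(x) + \int_{s_0}^t \iint_{S^1} p'_{t-s}(x-y)\bar X^\3_{\eps,s}(y)\,dX^\1_{\eps,s}(y)\,ds\;,
\end{equ}
where the inner integral is the rough integral built from $\XX^\eps$. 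This is exactly of the form $\CM v$ from Section~\ref{sec:boundRemM}, with $Y = X^\1_\eps$ and $v = Z = \bar X^\3_\eps$ (so trivially $v' \equiv 1$ and $R^v \equiv 0$). Decomposing $\Phi_{\eps,t}$ in the same way, subtracting, and fixing $s_0 = t-1$, one obtains
\begin{equ}
R^\5_{\eps,t}(x,y) = R^{\CM}_{\eps,t}(x,y) + \delta\bigl(P_1\bar X^\5_{\eps,t-1}\bigr)(x,y) - \bar X^\3_{\eps,t}(x)\,\delta\bigl(P_1\Phi_{\eps,t-1}\bigr)(x,y)\;,
\end{equ}
in which $R^{\CM}_{\eps,t}$ is precisely the object estimated by Proposition~\ref{prop:boundRemainder}.

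Applying Proposition~\ref{prop:boundRemainder} with $\kappa$ taken arbitrarily close to $1/4$ and with the initial contribution $\Phi_0 = 0$, the input $\KK^{\bar\kappa}$ reduces to a combination of H\"older norms of $X^\1_\eps$ and $\bar X^\3_\eps$ together with $\|\XX^\eps_s\|_{1-2\bar\kappa}$, all of which have second moments bounded uniformly in $\eps$ and in $s$ by Propositions~\ref{prop:procX1}--\ref{prop:convergenceX5}, by Proposition~\ref{prop:boundXXeps}, and by stationarity of the underlying processes. The two boundary terms are smooth in space thanks to the smoothing action of $P_1$, and all their spatial derivatives are controlled in $L^2(\Omega)$ by arbitrarily weak H\"older norms of $\bar X^\5_{\eps,t-1}$ and $\Phi_{\eps,t-1}$. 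Combined, this delivers a uniform second-moment bound on $\sup_t\|R^\5_{\eps,t}\|_{\CC_2^{1-\delta}}$ for every $\delta > 0$; a short Kolmogorov argument on the temporal regularity (using the stationary time-correlation bounds of the constituent processes) upgrades this to a uniform bound on $\|R^\5_\eps\|_{\CC(\R,\CC_2^{1-\delta})}$.

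For convergence, observe that $R^\5_{\eps,t} = \delta\bar X^\5_{\eps,t} - \bar X^\3_{\eps,t}\cdot\delta\Phi_{\eps,t}$ is defined via a classical pointwise product of processes that already converge in probability in their respective spatial and temporal H\"older topologies by Propositions~\ref{prop:procX1}--\ref{prop:convergenceX5}; this yields $R^\5_\eps \to R^\5$ in probability in $\CC(\R,\CC_2^{1/2-\delta_0})$ for some small $\delta_0 > 0$ by continuity of the product. Interpolating this low-regularity convergence against the uniform $\CC_2^{1-\delta}$-bound from the preceding paragraph via a standard Arzel\`a--Ascoli argument upgrades the convergence to $\CC(\R,\CC_2^{1-\delta'})$ for every $\delta' > 0$, as required. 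The hard part is manifestly the uniform moment bound on $\XX^\eps$ from Proposition~\ref{prop:boundXXeps}, which requires a careful treatment of the logarithmically divergent renormalisation in the definition of $\XX^\eps$; notably, genuine convergence of $\XX^\eps$ itself is never needed, thanks to the direct classical definition of $R^\5_\eps$.
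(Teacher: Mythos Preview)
Your proof is correct and follows essentially the same route as the paper's. Both arguments recognise that $\bar X^\5_\eps$ can be written as $P\bar X^\5_\eps + \CM\bar X^\3_\eps$ with $\CM$ the rough-integral operator of Section~\ref{sec:boundRemM} (using $\XX^\eps$ as the area process), then invoke Proposition~\ref{prop:boundRemainder} with $v=\bar X^\3_\eps$, $v'\equiv1$, $R^v\equiv0$, feed in the uniform moment bound on $\XX^\eps$ from Proposition~\ref{prop:boundXXeps} together with the Wiener-chaos hypercontractivity, and finish by interpolating a low-regularity convergence of $R^\5_\eps$ against the uniform high-regularity bound.

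The only differences are cosmetic. You restart at a sliding time $s_0=t-1$ and treat the resulting $P_1$-terms as separate smooth boundary contributions; the paper restarts at $s_0=0$, keeps the $\Phi_0$-term from Proposition~\ref{prop:boundRemainder}, and deals with the blow-up at $t=0$ by shifting the time origin. Your sliding window is arguably cleaner. For the convergence step, the paper interpolates the sup-norm convergence of $R^\5_\eps$ (which follows directly from Propositions~\ref{prop:boundX3} and~\ref{prop:convergenceX5}) against the uniform $\CC_2^{1-\delta/2}$ bound via the elementary inequality $\|R\|_{1-\delta}\le\|R\|_{1-\delta/2}^{(2-2\delta)/(2-\delta)}\|R\|_\infty^{\delta/(2-\delta)}$; your $\CC_2^{1/2-\delta_0}$ convergence plus interpolation achieves the same thing. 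Your Kolmogorov step for temporal continuity is in fact not needed: the pathwise integral bound from Proposition~\ref{prop:boundRemainder} already controls $\sup_{t\in[-T,T]}\|R^\5_{\eps,t}\|$ directly once one takes $L^p$-moments of the integrand, exactly as the paper does.
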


\begin{proof}
It is crucial to note that, with $\XX_t^\eps$ given by \eref{e:defXXbasic}, one has the identity
\begin{equ}[e:idenX5]
\bar X_{\eps,t}^\5 = P_t \bar X_{\eps,0}^\5 + \bigl(\CM \bar X_{\eps}^\3\bigr)_t(x)\;,
\end{equ}
where we define $\CM$ as in Section~\ref{sec:boundRemM} by 
\begin{equ}[e:defXbar5]
\bigl(\CM v\bigr)_t(x) = \int_0^t \iint_{S^1} p'_{t-s} (x-y) \,v_s(y)\,dX_{\eps,s}^\1(y)\,ds\;,
\end{equ}
where $v$ is a rough path controlled by $\bar X_{\eps}^\3$ (in this case trivially with $v' = 1$ and no remainder) 
and where we use $\XX_\eps$ to define the rough integral between the two processes.
The reason why \eref{e:idenX5} holds with $\CM$ defined with a ``rough integral'' is that, 
if we replace $\iint$ by $\int$ in the definition of $\CM$, then it follows
from \eref{e:roughIntEps} that we do not change the resulting expression
since  $p_{t-s}'$ integrates to $0$.

We are now able to apply the results of Section~\ref{sec:boundRemM} with $Y = X_{\eps}^\1$, 
$Z = \bar X_{\eps}^\3$ and $\YY = \XX_\eps$. Setting
\begin{equ}
\KK^{ \kappa}_{\eps,t} \eqdef \bigl(1+\|\bar X_{\eps,t}^\3\|_{{1\over 2}- \kappa} + \|X_{\eps,t}^\1\|_{{1\over 2}- \kappa}\bigr)^2  + \|\XX_{\eps,t}\|_{1-2 \kappa}\;,
\end{equ}
it follows from
Proposition~\ref{prop:boundRemainder} with $\kappa = {1-\delta\over 4}$ and $\bar \kappa = \kappa$ 
that one has the bound
\begin{equs}
\|R^\5_{\eps,t}\|_{1-{\delta\over 2}} 
&\lesssim t^{{3 \over 8}(\delta-1)} \|\bar X_{\eps,t}^\3\|_\infty \|\Phi_0\|_{{1+\delta\over4}} 
+ \int_0^t (t-s)^{{\delta\over 4} - 1 -  \kappa} \KK_{\eps,s}^{ \kappa}\,ds\\
&\quad +\int_0^t (t-s)^{{\delta - 5\over 4} -{ \kappa\over2}} \|X_{\eps,s}^\1\|_{{1\over2}- \kappa}\,\|\bar X_{\eps,t}^\3- \bar X_{\eps,s}^\3\|_\infty \,ds \;.\label{e:boundR5eps}
\end{equs}
Note now that since $\XX_\eps$, $\bar X_\eps^\3$ and $X_\eps^\1$ are all random variables belonging to 
a finite combination of Wiener chaoses of fixed order,
it follows from propositions~\ref{prop:boundXXeps} and \ref{prop:boundX3} that, for every $p>0$, every $\kappa \in (0,{1\over 4})$,
and every time interval $[a,b]$, one has the bound
\begin{equ}
\E \int_a^b \bigl( \KK_{\eps,s}^{ \kappa}\bigr)^p\,ds <\infty\;,
\end{equ}
uniformly over $\eps \in (0,1]$. As a consequence, the second term in \eref{e:boundR5eps} is bounded in probability, uniformly over bounded intervals, provided that we choose $\kappa < \delta$ and that one chooses $p$ sufficiently large. 
Similarly, the third term is bounded in probability by Proposition~\ref{prop:boundX3}. Since the first term
is harmless (except at $t=0$, but it suffices to change the origin of time to deal with that), we have shown that, for
every $T>0$, one has the bound
\begin{equ}
\sup_{\eps \in (0,1]} \E \sup_{t \in [-T,T]} \|R^\5_{\eps,t}\|_{1-{\delta\over 2}} < \infty\;.
\end{equ}
To show that one actually has convergence towards $R^\5$, note first that convergence in the supremum
norm follows from propositions~\ref{prop:boundX3} and \ref{prop:convergenceX5}. Since one has the 
interpolation inequality 
\begin{equ}
\|R\|_{1-\delta} \le \|R\|_{1-{\delta\over 2}}^{2-2\delta\over 2-\delta}\|R\|_\infty^{\delta\over 2-\delta}\;,
\end{equ}
the claim then follows.
\end{proof}

The  remainder of this section is devoted to the proof that the bound \eref{e:unifXXeps} does indeed hold.
We are going to exploit the translation invariance so that, for simplicity, we use the shorthand notation
\begin{equ}
\XX_t^\eps(\delta) \eqdef \XX_t^\eps(0,\delta)\;.
\end{equ}
Performing the integrals in \eref{e:defXXtree}, one obtains the identity
\begin{equs}
\XX_t^\eps(\delta) &= \sum_{k,m\in \Z_\star} \bar X_{\eps,k-m}^\3(t) \bar X_{\eps,m}^\1(t) \Bigl({e^{ik\delta}-1\over k} - {e^{im\delta} - 1 \over m}\Bigr)\\
&\quad - \sum_{m \in \Z_\star} \bar X_{\eps,-m}^\3(t) \bar X_{\eps,m}^\1(t) {e^{im\delta} - 1 \over m} \label{e:FourierXXtree} \\
&\eqdef \XX_t^{(1),\eps}(\delta) + \XX_t^{(2),\eps}(\delta)\;.
\end{equs}

We first consider $\XX_t^{(1)}$ and postpone the bounds on $\XX_t^{(2)}$ to the end of this section. 
In order to bound $\XX_t^{(1)}$, we rewrite 
its second moment as 
\begin{equs}
\E \bigl(\XX_t^{(1),\eps}(\delta)\bigr)^2 &= \sum_{k,m, \bar m\in \Z_\star} \E \bigl(\bar X_{\eps,k-m}^\3(t) \bar X_{\eps,m}^\1(t)
\bar X_{\eps,-k-\bar m}^\3(t) \bar X_{\eps,\bar m}^\1(t)\bigr) \label{e:boundXX12}\\
&\qquad \times  \Bigl({e^{ik\delta}-1\over k} - {e^{im\delta} - 1 \over m}\Bigr)\Bigl({1-e^{-ik\delta}\over k} - {e^{i\bar m\delta} - 1 \over \bar m}\Bigr)\;.
\end{equs}
(In principle, one should have a $\bar k$ appearing, but the expectation is non-zero only if $\bar k = -k$, so that
the sum of all indices appearing under the expectation vanishes.)
At this stage, it is useful to note that one has the identity
\begin{equ}[e:breakPair]
\E \bigl(\bar X_{\eps,k-m}^\3(t) \bar X_{\eps,m}^\1(t)
\bar X_{\eps,-k-\bar m}^\3(t) \bar X_{\eps,\bar m}^\1(t)\bigr)
= \sum_{P \in \CP^\5} \sum_{L \in \LL^\tau_{P;k,m,\bar m}} \!\!\!\!\!C_\eps(L)\,\CF^\5(P,L)\;,
\end{equ}
where, similarly to \eref{e:defG}, we set
\begin{equ}
\CF^\tau(P,C) =   \Bigl(\prod_{e \in \bar \CT} C_{e}\Bigr) \int 
\exp \Bigl(- \sum_{e\in \bar \CE} |C_e|^2 |\delta T_e| \Bigr)\,\mu_0(dT)\;,
\end{equ}
and where we denoted by $\LL^\tau_{P;k,m,\bar m}$ the set of all cycles in $\LL^\tau_P$ such that
the edge $\bar e$ has value $k$ and such that the two edges in $\CE\setminus\CT$ adjacent to $\bar e$ have values $m$
and $\bar m$ respectively. (By convention, we orient $\bar e$ from the edge with label $m$ to the one with label $\bar m$.)

Here, $\CE$ and $\CT$ are as before the set of edges and the spanning tree
for the graph associated to the tree $\bigtree$ and the pairing $P$ in exactly the same 
way as in Section~\ref{sec:constructGraph}. Also,
just as in that section, $\bar \CE$ and $\bar \CT$ denote the same sets, but with the distinguished edge 
$\bar e$ removed. 

\begin{remark}
Since, at least at this stage, we fix the labels $k$, $m$ and $\bar m$, we cannot necessarily
replace $\CF^\tau$ by $\CF^\tau_\sym$. This will be the source of some minor headache later on.
\end{remark}

Inserting \eref{e:breakPair} into \eref{e:boundXX12}, we have the identity
\begin{equ}
\E \bigl(\XX_t^{(1),\eps}(\delta)\bigr)^2 =  \sum_{P \in \CP^\5} \XX_P^\eps(\delta)\;.
\end{equ}
The quantity $\XX_P^\eps$ appearing in this decomposition is defined by
\begin{equ}[e:defXPeps]
 \XX_P^\eps(\delta) = \sum_{k,m,\bar m \in \Z_\star}\sum_{L \in \LL^\tau_{P;k,m,\bar m}}\!\!\!\!\!C_\eps(L)\, \CF^\5(P,L) \bigl(g_k(\delta) - g_m(\delta)\bigr)\bigl(g_{-k}(\delta) - g_{\bar m}(\delta)\bigr)\;,
\end{equ}
where we used the shorthand notation
\begin{equ}
g_k(\delta) \eqdef {e^{ik\delta}-1\over k}\;.
\end{equ}
We now proceed to bounding $\XX_P(\delta)$ for each pairing $P$.

\begin{proposition}\label{prop:boundXXs}
For every $P\in \CP_s^\5$ and every $\kappa > 0$,  the bound
\begin{equ}[e:boundXPeps]
\XX_P^\eps(\delta) \lesssim \delta^{2-2\kappa}\;,
\end{equ}
holds uniformly over $\eps,\delta \in (0,1]$.
\end{proposition}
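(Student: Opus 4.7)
The plan is to extract the $\delta^{2-2\kappa}$ decay from the oscillatory factors by interpolation, and then to reduce the remaining absolute bound to the $\3$-estimates already obtained in Section~\ref{sec:constructX} by exploiting the product structure forced by simple pairings. The key elementary inequality is
\begin{equ}
|g_k(\delta)| \;=\; \frac{|e^{ik\delta}-1|}{|k|} \;\lesssim\; \delta^{1-\kappa}|k|^{-\kappa}\;,
\end{equ}
valid for all $\kappa \in [0,1]$, $k \in \Z_\star$, and $\delta \in (0,1]$. This yields
\begin{equ}
\bigl|(g_k(\delta)-g_m(\delta))(g_{-k}(\delta)-g_{\bar m}(\delta))\bigr| \;\lesssim\; \delta^{2-2\kappa}\bigl(|k|^{-\kappa}+|m|^{-\kappa}\bigr)\bigl(|k|^{-\kappa}+|\bar m|^{-\kappa}\bigr)\;,
\end{equ}
so it remains to show that the resulting absolute sum over $k,m,\bar m$ and $L \in \LL^\5_{P;k,m,\bar m}$ is finite, uniformly in $\eps$.

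Writing $\5 = [\bullet,\3]$, every $L \in \LL^\5_{P;k,m,\bar m}$ with $P \in \CP_s^\5$ must satisfy $\bar m = -m$, since the two singleton leaves adjacent to the roots of the two copies of $\5$ are paired with one another. Because both roots have their integration time fixed at $0$ in \eref{e:defKernel}, the exponential factors associated with $\bar e$ and with the merged singleton edge both trivialise, and $\CF^\5(P,L)$ factorises as $\CB(k,m)\,\CF^\3(P',L')$, where $P' \in \CP^\3$ is the pairing induced on the two copies of the $\3$-subtree, $L' \in \LL^\3_{P';k-m}$ is the induced labelling, and $\CB(k,m)$ is an explicit ``bridge'' kernel obtained by integrating out the two $\3$-root times against the exponential decays coming from the edges of label $\pm(k-m)$. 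A direct computation, following the proof of Proposition~\ref{prop:simplePairing} and using Lemma~\ref{lem:diffKernel}, gives the estimate $|\CB(k,m)| \lesssim \bigl(k^2(k^2+(k-m)^2)\bigr)^{-1}$ up to numerical constants.

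Inserting this factorisation and bounding the inner $L'$-sum uniformly in $\eps$ by $\lesssim |k-m|^{-2+\bar\kappa}$ for any $\bar\kappa > 0$ (implicit in the summability estimates for $\CK_\sym^\3$ obtained in the proof of Proposition~\ref{prop:boundX3}), the problem reduces to the elementary convergence
\begin{equ}
\sum_{k,m \in \Z_\star} \frac{\bigl(|k|^{-\kappa}+|m|^{-\kappa}\bigr)^2}{k^2\bigl(k^2+(k-m)^2\bigr)(|k-m|\vee 1)^{2-\bar\kappa}} \;<\; \infty\;,
\end{equ}
which holds by power counting (treating the regions $|k|\lesssim |m|$ and $|k|\gtrsim|m|$ separately) for any sufficiently small $\kappa, \bar\kappa > 0$. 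The principal technical obstacle is the careful identification and bounding of the bridge kernel $\CB(k,m)$, together with the matching of the remaining double integral with a genuine $\CF^\3$-expression; once these are in place, the final power counting and the uniformity in $\eps$ (obtained simply by bounding $|C_\eps(L)| \le 1$) are routine.
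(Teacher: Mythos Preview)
Your argument has a genuine gap in the factorisation step. The quantity appearing in the definition \eref{e:defXPeps} of $\XX_P^\eps$ is $\CF^\5(P,L)$ as in \eref{e:defG}, where the two root times of the copies of $\bigtree$ are \emph{fixed} at $0$; it is \emph{not} $\CK^\5$. For $P\in\CP_s^\5$ the correct identity, which the paper derives, is
\begin{equ}
\sum_{L\in\LL^\5_{P;k,m,\bar m}}C_\eps(L)\,\CF^\5(P,L)
= (k-m)^2\,\delta_{m,-\bar m}\sum_{\bar L\in\LL^\3_{\bar P;k-m}}\phi^2(\eps m)\,C_\eps(\bar L)\,\CK^\3_\sym(\bar P,\bar L;0)\;,
\end{equ}
so the ``bridge'' factor is $(k-m)^2$, which \emph{grows}. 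Your claimed decay $|\CB(k,m)|\lesssim\bigl(k^2(k^2+(k-m)^2)\bigr)^{-1}$ comes from Proposition~\ref{prop:simplePairing} and Lemma~\ref{lem:diffKernel}, but those concern $\CK^{\bar\tau}$, where the extra $k^{-2}$ arises precisely from integrating the $\bar\tau$-root times against $e^{-k^2|\cdot|}$. That integration is absent in $\CF^\5$ because the root times are pinned. There is in fact no decay in $k$ (or $m$) individually: for fixed $k-m$ and fixed $\bar L$, the expression above is independent of $m$.

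This makes your oscillatory bound too weak. Bounding each $g$ separately gives only $|g_k-g_m|^2\lesssim\delta^{2-2\kappa}(|k|^{-\kappa}+|m|^{-\kappa})^2$; against the true factor $(k-m)^2|\CK^\3_\sym|$, the sum over $m$ at fixed $n=k-m$ contains a divergent $\sum_m|m|^{-2\kappa}$. The paper instead uses Proposition~\ref{prop:bounddiff} to get the sharper interpolation
\begin{equ}
|g_k(\delta)-g_m(\delta)|\lesssim \delta^{2-2\kappa}|k-m|^{1-\kappa}\bigl(|k|^{-1}+|m|^{-1}\bigr)^{\kappa}\;,
\end{equ}
which extracts the cancellation gain $|k-m|^{1-\kappa}$. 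Applying this to one factor and the crude bound $|g_{-k}-g_{-m}|\le 2(|k|^{-1}+|m|^{-1})$ to the other yields a decay $(|k|^{-1}+|m|^{-1})^{1+\kappa}$, summable in $m$, while the remaining factor $|k-m|^{3-\kappa}|\CK^\3_\sym(\bar P,\bar L;0)|$ is summable over $\LL^\3_{\bar P}$ by Proposition~\ref{prop:boundX3}. The difference bound from Proposition~\ref{prop:bounddiff} is the essential ingredient your argument is missing.
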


\begin{proof}
It follows from Proposition~\ref{prop:bounddiff} below that one has the bound
\begin{equ}
|g_{k}(\delta) - g_m(\delta)| \le 2|k-m|\delta^2\;.
\end{equ}
Since one also has the trivial bound $|g_k(\delta)| \le 2/|k|$, it follows that 
one has for every $\kappa > 0$ the bound
\begin{equ}[e:bounddg]
|g_{k}(\delta) - g_m(\delta)| \lesssim \delta^{2-2\kappa} |k-m|^{1-\kappa} \bigl(|k|^{-1} + |m|^{-1}\bigr)^\kappa\;.
\end{equ}

Furthermore, by the definition of $\CP_s$, every $P \in  \CP_s^\5$ is associated to a $\bar P \in \CP^\3$
such that the remaining pair in $P$ connects the two leaves attached to the roots.
Similarly, for every $L \in \LL^\5_{P;k,m,\bar m}$ there exists a cycle $\Pi L \in \LL^\3_{\bar P}$ such that $\Pi L$ agrees with $L$ on the common parts of the two graphs and such that $(\Pi L)_{\bar e} = k-m$.
Furthermore, $\Pi$ is a bijection between $\LL^\5_{P;k,m,\bar m}$ and the subset $\LL^\3_{\bar P;k-m}$ of $\LL^\3_{\bar P}$ that we just mentioned. 

We also note that if $P \in \CP_s^\5$, one has $\LL^\5_{P;k,m,\bar m} = \emptyset$ unless 
$\bar m = -m$, since otherwise the cycles in that set are not adapted to the pairing $P$. As a consequence, 
one has the identity
\begin{equs}
\sum_{L \in \LL^\tau_{P;k,m,\bar m}} &C_\eps(L)\, \CF^\5(P,C) \\
&= (k-m)^2\delta_{m, -\bar m} \sum_{\bar L \in \LL^\3_{\bar P;k-m}} \phi(\eps(k-m))\,C_\eps(\bar L)\, \CK^\3_\sym(\bar P,\bar L;0)\;.
\end{equs}
Combining this with the bound \eref{e:bounddg} and summing the resulting expression over $k$ and $m$, 
we obtain, for every $\kappa > 0$ the bound
\begin{equ}[e:boundXdeltas]
\XX_P^\eps(\delta) \lesssim \delta^{2-2\kappa} \sum_{\bar L \in \LL^\3_{\bar P}} \sum_{m \in Z_\star\atop m \neq \bar L_{\bar e}} |\bar L_{\bar e}|^{3-\kappa} |\CK^\3_\sym(\bar P,\bar L;0)| \bigl(|m+\bar L_{\bar e}|^{-1-\kappa} + |m|^{-1-\kappa}\bigr)\;,
\end{equ}
uniformly over $\eps$ and $\delta$.
Since, by Proposition~\ref{prop:boundX3}, the quantity
\begin{equ}
 |\bar L_{\bar e}|^{3-\kappa} |\CK^\3_\sym(\bar P,\bar L;0)|\;,
\end{equ}
is summable over $\LL^\3_{\bar P}$, the second term in \eref{e:boundXdeltas}
satisfies the requested bound.
The claim now follows from the fact that the first term is essentially identical to the second one, as can be seen by 
performing the substitution $m \mapsto m-\bar L_{\bar e}$.
\end{proof}

We now proceed to bound the  terms corresponding to the remaining pairings, which we subdivide
into three different classes.
Recall that $\ell_\ell^P(\tau)$ denotes the set of leaves that are part of a small loop of type $1$.
Denote furthermore by $u$ and $\bar u$ the two leaves that are attached to either of the roots of the two
copies of $\bigtree$. We then define $\CP^k(\bigtree)$ with $k\in \{0,1,2\}$ as consisting of those
pairings in $\CP^\5 \setminus \CP_s^\5$ such that $|\{u,\bar u\} \cap \ell_\ell^P(\bigtree)| = k$. We then have:

\begin{proposition}\label{prop:boundXX0}
For every $P\in \CP^0(\bigtree)$ and every $\kappa > 0$, there exists $C$ such that the bound
\eref{e:boundXPeps} holds for $\eps \in (0,1]$.
\end{proposition}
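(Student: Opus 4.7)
The plan is as follows.

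I would first derive a pointwise bound on the oscillation factor. The elementary estimate
\begin{equ}
|g_k(\delta)| = {2|\sin(k\delta/2)| \over |k|} \le \min(\delta, 2/|k|)
\end{equ}
gives, by interpolation, $|g_k(\delta)| \lesssim \delta^{1-\kappa}|k|^{-\kappa}$ for any $\kappa \in (0,1)$, and therefore
\begin{equ}
\bigl|(g_k-g_m)(g_{-k}-g_{\bar m})\bigr| \lesssim \delta^{2-2\kappa}\bigl(|k|^{-\kappa}+|m|^{-\kappa}\bigr)\bigl(|k|^{-\kappa}+|\bar m|^{-\kappa}\bigr)\;.
\end{equ}
Inserting this into \eref{e:defXPeps}, using $|C_\eps(L)| \le 1$, and expanding the product, the proposition reduces to four summability claims of the form
\begin{equ}
\sum_{L\in\LL^\5_P} w(L)\,|\CF^\5(P,L)| < \infty\;,
\end{equ}
with $w(L)$ taking the values $|k|^{-2\kappa}$, $|k|^{-\kappa}|m|^{-\kappa}$, $|k|^{-\kappa}|\bar m|^{-\kappa}$, and $|m|^{-\kappa}|\bar m|^{-\kappa}$, where $k = L_{\bar e}$, $m = L_{e(u)}$, and $\bar m = L_{e(\bar u)}$.

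The key use of the hypothesis $P \in \CP^0(\bigtree)$ is now twofold. On the one hand, since neither $u$ nor $\bar u$ lies in $\ell_\ell^P$, the labels $m$ and $\bar m$ are preserved under the equivalence $\sims$, so that each of the four weight factors can be pulled out of the averaging defining $\CF^\5_\sym$. On the other hand, and for the same combinatorial reason, the edges $e(u)$ and $e(\bar u)$ are disjoint from any small loop of type $1$ in the associated graph $(\CG,\CE)$; indeed, a small loop incident to $e(u)$ would correspond precisely to a nephew-uncle pairing involving $u$. This second observation is what allows the graphical machinery of Section~\ref{sec:constructGraph} to accommodate the additional inverse powers of $|m|$ and $|\bar m|$ required in the previous step.

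I would then verify the four summability claims using the graphical algorithm of Section~\ref{sec:constructGraph}, combined with Proposition~\ref{prop:graphicalAlgorithm}. The first claim, $w(L) = |k|^{-2\kappa}$, is immediate from Proposition~\ref{prop:boundK5} applied with $\kappa$ replaced by $2\kappa$. For the other three, I would modify the construction of $\GG_\kappa^\5(P)$ so as to place an additional weight $\kappa$ on $e(u)$, on $e(\bar u)$, or on both, depending on the term. The required extra budget is obtained by raising $\CW$ from $0$ to $\kappa$ at the adjacent root vertex in Step~4 and channelling it into the desired edge before $\bar e$ is processed. For each of the twelve pairings in $\CP^\5\setminus \CP_s^\5$ that belong to $\CP^0(\bigtree)$, one then takes the weighting already exhibited in the proof of Proposition~\ref{prop:boundK5}, augments it by the extra $\kappa$ on the appropriate peripheral edges, and checks that the result still belongs to $\GG_\kappa^\5(P)$ and that Algorithm~\ref{algo} still contracts the graph to a point.

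The main obstacle is precisely this case analysis: each of the ten graphs shown in \eref{e:biggraph} together with the two special graphs must be revisited and the augmented weighting checked. However, since the extra weight $\kappa$ is placed on edges that are by hypothesis disjoint from all small loops, it does not interfere with Step~3 of the construction, and increasing weights on peripheral edges can only make Algorithm~\ref{algo} terminate more easily. The verification therefore reduces to routine bookkeeping along the lines already carried out in the proof of Proposition~\ref{prop:boundK5}.
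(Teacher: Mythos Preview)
Your approach is essentially the paper's: use the $\CP^0$ hypothesis to pass from $\CF^\5$ to $\CF^\5_\sym$ (since the labels $m,\bar m$ on $e(u),e(\bar u)$ are fixed under $\sims$), reduce to summability claims, and verify these via the graphical machinery with extra weight placed on the non-tree edges adjacent to $\bar e$. The paper differs only in bookkeeping: it uses a slightly sharper oscillation estimate yielding just two terms ($|k|^{-2\kappa}$ and $|m|^{-\kappa_1}|\bar m|^{-\kappa_2}$) rather than four, it implements the weight shift in Step~2 of the construction (defining $\bar\GG_\kappa^{\tau,0}(P)$ by setting the initial weight on $\bar e$ to $0$ and giving $e(u),e(\bar u)$ initial weights $\kappa_1,\kappa_2$) rather than by altering $\CW$ in Step~4, and it exhibits fresh weightings for each pairing rather than augmenting those from Proposition~\ref{prop:boundK5}---the latter point matters because for the term carrying no $|k|$-factor the weight on $\bar e$ must actually be dropped to zero, not merely supplemented.
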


\begin{proof}
For such a pairing, one has the identity
\begin{equ}
\sum_{L \in \LL^\tau_{P;k,m,\bar m}}C_\eps(L) \CF^\tau(P,L) = \sum_{L \in \LL^\tau_{P;k,m,\bar m}}C_\eps(L)\CF^\tau_\sym(P,L)\;,
\end{equ}
where as before we denote by $\CF^\tau_\sym$ the symmetrised version of $\CF^\tau$ under the equivalence relation $\sims$.
(Here and below, we sometimes use $\tau$ instead of $\bigtree$ in order to make notations slightly less
heavy.) Indeed, if $P \in \CP^0(\bigtree)$ and $L \in \LL^\tau_{P;k,m,\bar m}$,
then $\bar L \in \LL^\tau_{P;k,m,\bar m}$ for every $\bar L$ with $\bar L \sims L$, since none of the 
loops whose labels can change sign touches the distinguished edge $\bar e$.

Recall furthermore that $|g_k(\delta)| \le \delta$ so that, for every $\kappa \in [0,1]$, one has the bound 
\begin{equs}
\delta^{2\kappa-2} |\bigl(g_k(\delta) - g_m(\delta)\bigr)\bigl(g_{-k}(\delta) - g_{\bar m}(\delta)\bigr)| &\lesssim |k|^{-2\kappa} + \bigl(|m|^{-2\kappa}\wedge |\bar m|^{-2\kappa}\bigr)\qquad\\
&\lesssim |k|^{-2\kappa} + |m|^{-\kappa_1} |\bar m|^{-\kappa_2}\;,\label{e:boundgdiff}
\end{equs}
where  $\kappa_1$ and $\kappa_2$ are any two positive exponents  with $\kappa_1 + \kappa_2 = 2\kappa$.
Recalling the definition of $\XX_P^\eps$, we see that the requested bound follows, provided that we are
able to show that, for every pairing $P \in \CP^0(\bigtree)$ and every $\kappa> 0$, one can find exponents
$\kappa_1$ and $\kappa_2$ as above such that
\begin{equ}
\sum_{k,m,\bar m\in \Z_\star} \sum_{L \in \LL^\tau_{P;k,m,\bar m}} \bigl(|k|^{-2\kappa} + |m|^{-\kappa_1} |\bar m|^{-\kappa_2}\bigr) |\CF^\5_\sym(P,L;0)| < \infty\;.
\end{equ}

It follows from Proposition~\ref{prop:boundK5} that 
\begin{equ}[e:boundDone5]
\sum_{k,m,\bar m\in \Z_\star} \sum_{L \in \LL^\tau_{P;k,m,\bar m}} |k|^{-2\kappa} |\CF^\5_\sym(P,L;0)| < \infty\;,
\end{equ}
for every $\kappa > 0$, so that
it remains to show that
\begin{equ}[e:boundWanted5]
\sum_{k,m,\bar m\in \Z_\star} \sum_{L \in \LL^\tau_{P;k,m,\bar m}} |m|^{-\kappa_1}|\bar m|^{-\kappa_2} \CF^\5_\sym(P,L;0) < \infty\;.
\end{equ}

For  $P\in \CP^0(\bigtree)$ and  $\kappa > 0$, we now define a set $\bar \GG_\kappa^{\tau,0}(P)$ of 
weightings
by following the construction in Section~\ref{sec:constructGraph}. Comparing \eref{e:boundWanted5} 
and \eref{e:boundDone5} we see that, in order to obtain a bound on \eref{e:boundWanted5}, the
only difference in the construction is that, in Step~2, we 
give weight $0$ to the distinguished edge $\bar e$, and weights $\kappa_1$ and $\kappa_2$
to the two edges adjacent to $\bar e$ that do not belong to the spanning tree $\CT$.

Retracing the proof of Theorem~\ref{theo:graphKernel} we see that, if we can exhibit an element in $\bar \GG_\kappa^{\tau,0}(P)$
such that Algorithm~\ref{algo} yields a loop-free graph, then the bound \eref{e:boundWanted5} holds.
It can be checked in a straightforward way that the following weightings do indeed belong to $\bar \GG_\kappa^{\tau,0}(P)$:
\begin{equs}
\mhpastefig{bigProofM1}\\
\mhpastefig{bigProofM2}\\
\mhpastefig{bigProofM3}
\end{equs}
Here, we use again the same convention as in the proof of Proposition~\ref{prop:boundK5}. 
The only difference is that the dashed lines
correspond to a weight $2\kappa$ if only one dashed line is present in a given graph, and $\kappa$ if two such 
lines are present. Again, it is straightforward to verify that Algorithm~\ref{algo} does indeed yield a loop-free 
graph for every $\kappa > 0$ in each instance, thus proving the claim.
\end{proof}

We now turn to the set $\CP^1$ of pairings that have one small loop touching the distinguished edge.
There are actually only two such pairings (modulo isometries), corresponding to the $8$th and the
$10$th pairing in \eref{e:biggraph}.

\begin{proposition}\label{prop:boundXX1}
For every $P\in \CP^1(\bigtree)$ and every $\kappa > 0$, the bound
\eref{e:boundXPeps} holds uniformly over $\eps,\delta \in (0,1]$.
\end{proposition}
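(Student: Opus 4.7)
The plan is to adapt the strategy of Proposition~\ref{prop:boundXX0}, in which the only essential input was the identification of $\CF^\5$ with $\CF^\5_\sym$ together with the bound \eref{e:boundgdiff}. The obstruction in the present case is that, for $P \in \CP^1(\bigtree)$, exactly one of the two leaves $u, \bar u$ adjacent to the roots lies in $\ell_\ell^P(\bigtree)$; assume without loss of generality that $u \in \ell_\ell^P$, so that the label $m = L_{e(u)}$ is among those that the equivalence $\sims$ is allowed to flip in sign. Consequently the prefactor $(g_k(\delta) - g_m(\delta))(g_{-k}(\delta) - g_{\bar m}(\delta))$ in \eref{e:defXPeps} is \emph{not} invariant under $\sims$, so we cannot immediately replace $\CF^\5$ by $\CF^\5_\sym$.

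To remedy this, I would perform the $\sims$-symmetrisation in the $m$-variable by hand. Since $C_\eps(L)$ depends only on $|L_e|$ and since averaging $\CF^\5$ over the $\sims$-class yields $\CF^\5_\sym$ by definition, pairing each $L$ with its $m$-flipped partner converts the above prefactor into
\[
\tfrac{1}{2}\bigl(g_{-k}(\delta) - g_{\bar m}(\delta)\bigr)\bigl(2 g_k(\delta) - g_m(\delta) - g_{-m}(\delta)\bigr)\;.
\]
The key algebraic identity is $g_m(\delta) + g_{-m}(\delta) = 2i\sin(m\delta)/m$. Interpolating the trivial bounds $|g_k| \le 2/|k|$ and $|g_m+g_{-m}| \le 2/|m|$ with the Taylor estimates $|g_k - i\delta| \lesssim |k|\delta^2$ and $|g_m + g_{-m} - 2i\delta| \lesssim |m|^2\delta^3$, one obtains
\[
|2g_k - g_m - g_{-m}| \lesssim \delta^{1-\kappa}\bigl(|k|^{-\kappa} + |m|^{-\kappa}\bigr)
\]
for every $\kappa \in [0,1]$. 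Combined with the analogous bound $|g_{-k} - g_{\bar m}| \lesssim \delta^{1-\kappa}\bigl(|k|^{-\kappa} + |\bar m|^{-\kappa}\bigr)$, the claimed estimate \eref{e:boundXPeps} reduces to the finiteness of
\[
\sum_{k,m,\bar m \in \Z_\star} \sum_{L \in \LL^\tau_{P;k,m,\bar m}} \bigl(|k|^{-\kappa} + |m|^{-\kappa}\bigr)\bigl(|k|^{-\kappa} + |\bar m|^{-\kappa}\bigr)\, |\CF^\5_\sym(P,L;0)|
\]
for every $\kappa > 0$, with $\CF^\5_\sym$ in place of $\CF^\5$.

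The remaining step is a case-by-case application of the graphical algorithm of Section~\ref{sec:constructGraph}: since $\CP^1(\bigtree)$ contains only two pairings modulo isometry, namely those corresponding to the $8$th and $10$th graphs of \eref{e:biggraph}, I would exhibit for each one an element of $\GG_\kappa^\tau(P)$ in which the extra weight $\kappa$ (arising from the prefactor) is distributed among the edges carrying the labels $k$ and either $m$ or $\bar m$, and then verify that Algorithm~\ref{algo} reduces the resulting weighted graph to a loop-free one. The main obstacle is that the small loop of type~$1$ touching $\bar e$ creates precisely the logarithmic divergence responsible for the renormalisation $C_\eps^\5$ in Section~\ref{sec:constructX}; the point is that the symmetrisation over sign flips in $\ell_\ell^P$ is exactly the mechanism killing that divergence, paralleling the cancellation $C_\eps^\4 + 4C_\eps^\5 = \CO(1)$ of Lemma~\ref{lem:convK5}. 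Once this is understood, the verification of summability of the two explicit weighted graphs is routine.
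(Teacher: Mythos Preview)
Your symmetrisation step contains a genuine error. When you pair each $L$ with its $m$-flipped partner $\CR L$, the identity you obtain is not simply ``$\CF^\5_\sym$ times the symmetric prefactor''. Writing $h(k,m,\bar m) = (g_k - g_m)(g_{-k} - g_{\bar m})$ and splitting $h = h_s + h_a$ into its symmetric and antisymmetric parts in $m$, the correct rearrangement is
\[
\sum_{L} C_\eps(L)\,\CF^\5(P,L)\,h(L)
= \sum_{L} C_\eps(L)\,\CF^\5_\sym(P,L)\,h_s(L)
+ \sum_{L} C_\eps(L)\,\tfrac12\bigl(\CF^\5(P,L)-\CF^\5(P,\CR L)\bigr)\,h_a(L)\;.
\]
You have kept only the first sum; the second, carrying the prefactor $\tfrac12(g_{-k}-g_{\bar m})(g_{-m}-g_m)$ and the \emph{antisymmetrised} (hence unsymmetrised) kernel, is missing. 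Since $\CF^\5(P,L)\neq\CF^\5(P,\CR L)$ in general, this term does not vanish, and it is precisely the one for which the loop-cancellation mechanism of Lemma~\ref{lem:normaliseLoop} is unavailable.

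The paper avoids this trap by a different, cleaner splitting: it writes $(g_k-g_m)(g_{-k}-g_{\bar m}) = g_k(g_{-k}-g_{\bar m}) - g_m(g_{-k}-g_{\bar m})$. The first piece is independent of $m$, so the sum over $L$ with fixed $k,\bar m$ and \emph{all} $m$ automatically produces $\CF^\5_\sym$, and one closes with the weight $|k|^{-2\kappa}$ via Proposition~\ref{prop:boundK5}. The second piece is left \emph{unsymmetrised}: bounding $|g_m(g_{-k}-g_{\bar m})|\lesssim \delta^{2-2\kappa}|m|^{-2\kappa}$, one needs summability of the raw $\CF^\5$ with weight $|m|^{-2\kappa}$ on the edge inside the small loop. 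This is handled by a modified family of weightings $\bar\GG^{\tau,1}_\kappa(P)$ in which the loop touching $\bar e$ is \emph{not} treated in Step~3 (no symmetrisation available), but the extra weight $2\kappa$ on its non-tree edge plays the same role; one checks that the resulting local configuration contracts under Algorithm~\ref{algo} exactly as the symmetrised one did.

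Had you kept your antisymmetric term, interpolating $|g_{-m}-g_m|\le\min(4/|m|,|m|\delta^2)$ would give $|h_a|\lesssim\delta^{2-2\kappa}|m|^{-2\kappa}$, and since $|\CF^\5-\CF^\5\circ\CR|\le|\CF^\5|+|\CF^\5\circ\CR|$, you would land on exactly the same unsymmetrised estimate the paper proves. So your approach is salvageable, but only by confronting the very term you dropped, and its treatment \emph{is} the substance of the proof.
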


\begin{proof}
By the definition of $\CP^1$, there is one small loop touching the distinguished edge. We can then
fix our naming convention in such a way that the edge of this loop that doesn't belong to the spanning tree $\CT$
is the one labelled $m$. With this convention, we then introduce a ``reflection map'' $\CR \colon \LL^\tau_{P;k,m,\bar m} \to \LL^\tau_{P;k,-m,\bar m}$,
which is the bijection which changes the sign of $m$ and adjust the label of the other edge of the small loop in such a way that no other label is changed.
In other words, if we decompose elements in $\LL^\tau_{P;k,m,\bar m}$ with respect to the integral basis
associated with the spanning tree $\CT$, then $\CR$ is the map that changes the sign of the coefficient in front of
the elementary cycle spanning the small loop.

With this notation, although we cannot quite replace $\CF^\tau$ by $\CF^\tau_\sym$ in \eref{e:defXPeps}, we nevertheless have the identity
\begin{equ}
\sum_{L \in \LL^\tau_{P;k,m,\bar m}} \bigl(\CF^\5(P,L) + \CF^\5(P,\CR L)\bigr) = 2\sum_{L \in \LL^\tau_{P;k,m,\bar m}}\CF^\5_\sym(P,L)\;,
\end{equ}

Using this identity, we can decompose \eref{e:defXPeps} into a symmetric part and a remainder, which yields the expression
\begin{equs}
 \XX_P^\eps(\delta) &= \sum_{k,m,\bar m \in \Z_\star}\Bigl(\sum_{L \in \LL^\tau_{P;k,m,\bar m}}\!\!C_\eps(L)\, \CF^\5_\sym(P,L;0) \Bigr)g_k(\delta)\bigl(g_{-k}(\delta) - g_{\bar m}(\delta)\bigr)\\
&\quad - \sum_{k,m,\bar m \in \Z_\star}\Bigl(\sum_{L \in \LL^\tau_{P;k,m,\bar m}} \!\!C_\eps(L)\, \CF^\5(P,L;0) \Bigr)g_m(\delta)\bigl(g_{-k}(\delta) - g_{\bar m}(\delta)\bigr)\;.
\end{equs}
The first term in this identity is bounded by
\begin{equ}
\sum_{k,m,\bar m \in \Z_\star}\sum_{L \in \LL^\tau_{P;k,m,\bar m}}\bigl| \CF^\5_\sym(P,L;0) \bigr| \,|k|^{-2\kappa}\delta^{2-2\kappa}\;,
\end{equ}
which in turn is bounded by $C \delta^{2-2\kappa}$ by Proposition~\ref{prop:boundK5}.

Similarly, the second term is bounded by
\begin{equ}[e:boundXX1]
\delta^{2-2\kappa} \sum_{k,m,\bar m \in \Z_\star}\Bigl|\sum_{L \in \LL^\tau_{P;k,m,\bar m}} \!\!C_\eps(L)\,\CF^\5(P,L;0) \Bigr|\,|m|^{-2\kappa}\;,
\end{equ}
which is not quite covered by Proposition~\ref{prop:boundK5} since the terms are weighted by a power of $m$, the label of the edge 
within the small loop, instead of $k$, the label of the distinguished edge.

Similarly to Proposition~\ref{prop:boundXX0}, we now define a set $\bar \GG_\kappa^{\tau,1}(P)$ of weightings
by again following the construction in Section~\ref{sec:constructGraph}. This time, in Step~2, we 
give weight $0$ to the distinguished edge $\bar e$, and we give instead an additional weight $2\kappa$ to the edge in 
$\CE \setminus \CT$
adjacent to $\bar e$ that belongs to a small loop of type $1$. Since the kernel $\CF$ is not symmetrised under $\CR$,
another difference in the construction of $\bar \GG_\kappa^{\tau,1}(P)$ is that in Step~3, we do not treat the loop that
touches $\bar e$.

Retracing once again the proof of Theorem~\ref{theo:graphKernel} we see that, if we can exhibit an element in $\bar \GG_\kappa^{\tau,1}(P)$
such that Algorithm~\ref{algo} yields a loop-free graph, then the quantity \eref{e:boundXX1} is bounded by $C \delta^{2-2\kappa}$
for some $C <\infty$, which is the desired bound.
Note now that, in the proof of Proposition~\ref{prop:boundK5}, the weightings that are exhibited for pairings in $\CP^1(\bigtree)$ locally look like
\begin{equ}[e:loop1]
\mhpastefig{looplabelsimplekappa}
\end{equ}
It is straightforward to check that under the rules for constructing $\bar \GG_\kappa^{\tau,1}(P)$, these configurations can
instead be weighted in the following way:
\begin{equ}[e:loop2]
\mhpastefig{looplabelsimplezero}
\end{equ}
Since Algorithm~\ref{algo} has the same effect on both of these configurations, at least if $\kappa$ is small enough 
(one can just replace them by one single edge with weight $2\kappa$), the summability of \eref{e:boundXX1} follows in the same way as before.
\end{proof}

Finally, we also have the bound

\begin{proposition}\label{prop:boundXX2}
For every $P\in \CP^2(\bigtree)$ and every $\kappa > 0$,  the bound
\eref{e:boundXPeps} holds uniformly over $\eps,\delta \in (0,1]$.
\end{proposition}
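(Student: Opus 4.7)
The proof is a natural extension of the argument for Proposition~\ref{prop:boundXX1} to the case where two small loops of type 1 touch the distinguished edge $\bar e$. For $P \in \CP^2(\bigtree)$, I would introduce two commuting involutions $\CR_1, \CR_2$ on $\LL^\tau_{P;k,m,\bar m}$, each flipping the sign of the non-spanning-tree edge label of its respective loop and compensating on the adjacent tree edge, exactly as in the proof of Proposition~\ref{prop:boundXX1}. Expanding
\begin{equ}
\bigl(g_k(\delta) - g_m(\delta)\bigr)\bigl(g_{-k}(\delta) - g_{\bar m}(\delta)\bigr) = g_k g_{-k} - g_k g_{\bar m} - g_m g_{-k} + g_m g_{\bar m}\;,
\end{equ}
I would treat each contribution to $\XX_P^\eps(\delta)$ separately using the $(\CR_1\times \CR_2)$-structure.

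The term $g_k g_{-k}$ is $(\CR_1\times \CR_2)$-invariant, so the kernel may be replaced by its full symmetrization and the bound $|g_k|^2 \lesssim \delta^{2-2\kappa}|k|^{-2\kappa}$ together with Proposition~\ref{prop:boundK5} yields the desired decay. The cross terms $g_k g_{\bar m}$ and $g_m g_{-k}$ are each invariant under only one of the reflections; for these I would decompose the non-invariant factor into its $\CR_i$-even and $\CR_i$-odd parts, using the elementary estimates $|g_m + g_{-m}| = 2|\sin(m\delta)/m| \lesssim \delta^{1-\kappa}|m|^{-\kappa}$ and $|g_m - g_{-m}| = 2|\cos(m\delta)-1|/|m| \lesssim \delta^{1-\kappa}|m|^{-\kappa}$, which both follow directly from $g_k(\delta) = (e^{ik\delta}-1)/k$. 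Each of the resulting sub-contributions reduces to a graph estimate where the weight $2\kappa$ previously assigned to $\bar e$ in the proof of Proposition~\ref{prop:boundK5} is transferred to one of the two loop edges touching $\bar e$. This is exactly the manipulation exemplified by \eref{e:loop1}--\eref{e:loop2} and leaves the output of Algorithm~\ref{algo} unchanged.

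The most delicate term is the doubly odd $g_m g_{\bar m}$, which is antisymmetric under each reflection individually. Here I would further decompose both $g_m$ and $g_{\bar m}$ into their $\CR_i$-symmetric and $\CR_i$-antisymmetric parts, producing four isotypic sub-contributions. Using the bounds $|g_m \pm g_{-m}| \lesssim \delta^{1-\kappa_1}|m|^{-\kappa_1}$ and the analogous one for $\bar m$ with exponents $\kappa_1 + \kappa_2 = 2\kappa$, each sub-contribution is bounded by $\delta^{2-2\kappa}$ times a sum of $\CF^\5$ weighted by $|m|^{-\kappa_1}|\bar m|^{-\kappa_2}$. The main obstacle is to exhibit, for every $P \in \CP^2(\bigtree)$, weightings in an analogue of $\bar\GG^{\tau,1}_\kappa(P)$ in which the total excess weight $2\kappa$ is split between the two inner loop edges rather than carried by $\bar e$, and such that Algorithm~\ref{algo} still produces a loop-free graph. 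However, inspection of the weightings used in the proof of Proposition~\ref{prop:boundK5} shows that the required redistribution is entirely local to each small loop: after one step of Algorithm~\ref{algo} applied via the recipe \eref{e:looplabel}, the ``split'' weightings yield the same reduced graphs as the original ones, so the required summability holds for every $\kappa > 0$ small enough and the desired bound \eref{e:boundXPeps} follows.
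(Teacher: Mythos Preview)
Your approach is on the right track and would ultimately succeed, but it is considerably more elaborate than necessary and contains a minor misstatement. The paper's proof follows exactly the same four-term decomposition
\begin{equ}
(g_k - g_m)(g_{-k} - g_{\bar m}) = g_k g_{-k} - g_k g_{\bar m} - g_m g_{-k} + g_m g_{\bar m}\;,
\end{equ}
but then handles each term directly, without any further even/odd splitting. The point is simply this: for each of the four terms, look at which of the two loop labels $m,\bar m$ actually appears in the $g$-factor. A loop whose label does \emph{not} appear can be symmetrised (because the $g$-factor is then invariant under the corresponding reflection), and receives the weighting of Proposition~\ref{prop:boundK5} as in \eref{e:loop1}. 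A loop whose label \emph{does} appear is left unsymmetrised, and the factor $|g_m|$ or $|g_{\bar m}|$ supplies a weight $\kappa_i$ directly on that loop edge, exactly as in \eref{e:loop2}. Since \eref{e:loop1} and \eref{e:loop2} yield the same outcome under Algorithm~\ref{algo}, the summability follows in every case from the graphs already analysed in Proposition~\ref{prop:boundK5}.

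Your additional even/odd decomposition of $g_m$ and $g_{\bar m}$ is harmless but buys nothing: once you bound each isotypic piece by $\delta^{1-\kappa_i}|m|^{-\kappa_i}$ and then bound the (anti-)symmetrised kernel by the unsymmetrised one via the triangle inequality, you are back to precisely the estimate you would have obtained by bounding $|g_m g_{\bar m}|$ directly and not symmetrising either loop. Also note that $g_m g_{\bar m}$ is not ``antisymmetric under each reflection individually'' as you write; it is neither symmetric nor antisymmetric, which is presumably why you then decompose it further. The cleaner observation is that its dependence on both $m$ and $\bar m$ simply means neither loop gets symmetrised, and both receive the \eref{e:loop2} treatment with the weight $2\kappa$ split as $\kappa_1 + \kappa_2$ between the two loop edges.
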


\begin{proof}
The proof is virtually identical to that of Proposition~\ref{prop:boundXX1}, so we only highlight the differences. 
We now have two small loops
touching the distinguished edge, so that the expression for $\XX_P^\eps(\delta)$ is broken into four terms, since each of the
two loops can either be symmetrised or not. Then, as before, the symmetrised loops are associated to weightings
as in the proof
of Proposition~\ref{prop:boundK5}, while the non-symmetrised loops are associated to weightings 
as in \eref{e:loop2}.
\end{proof}

Combining these results, we obtain the following result:

\begin{proposition}\label{prop:convergenceXX1}
For every $\bar \kappa > 0$, one has the bound
\begin{equ}
\sup_{\eps \in (0,1]} \sup_{t \in \R} \E \|\XX_t^{(1),\eps}\|_{1-\bar \kappa}^2 < \infty\;.
\end{equ}
\end{proposition}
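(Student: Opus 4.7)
The plan is to assemble Propositions~\ref{prop:boundXXs}--\ref{prop:boundXX2} into a pointwise bound on $\E|\XX_t^{(1),\eps}(\delta)|^2$ and then upgrade this to the Hölder seminorm via Kolmogorov and Gaussian hypercontractivity, exploiting the translation invariance of $\XX_t^{(1),\eps}$.

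First I would observe that, by the spatial translation invariance of the processes $\bar X_\eps^\1(t,\cdot)$ and $\bar X_\eps^\3(t,\cdot)$, the random field $\XX_t^{(1),\eps}(x,y)$ depends on $(x,y)$ only through the difference $\delta = y-x$, and by the temporal stationarity of these processes all moments are independent of $t$. Consequently, controlling $\|\XX_t^{(1),\eps}\|_{1-\bar\kappa}^2$ reduces to controlling
\begin{equ}
\sup_{\delta \in (0,1]} \delta^{-2(1-\bar\kappa)} |\XX_0^{(1),\eps}(\delta)|^2\;,
\end{equ}
uniformly in $\eps \in (0,1]$ (together with a trivial bound for $|\delta| \ge 1$ which follows at once from the case $\delta = 2\pi$).

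Next, using the decomposition $\E|\XX_t^{(1),\eps}(\delta)|^2 = \sum_{P\in \CP^\5} \XX_P^\eps(\delta)$ obtained from Wick's theorem and the partition $\CP^\5 = \CP_s^\5 \sqcup \CP^0(\bigtree) \sqcup \CP^1(\bigtree) \sqcup \CP^2(\bigtree)$ (modulo the pairings yielding $\LL_P^\tau = \emptyset$, which contribute nothing), I would apply Propositions~\ref{prop:boundXXs}, \ref{prop:boundXX0}, \ref{prop:boundXX1} and \ref{prop:boundXX2} in turn to conclude that
\begin{equ}
\E|\XX_t^{(1),\eps}(\delta)|^2 \lesssim \delta^{2-2\kappa}\;,
\end{equ}
uniformly over $\eps,\delta \in (0,1]$, for every $\kappa > 0$.

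Since $\XX_t^{(1),\eps}(\delta)$ is a polynomial of degree four in the underlying Gaussian white noise $\xi_\eps$ (being a polynomial of the Fourier modes of $\bar X_\eps^\1$ and $\bar X_\eps^\3$, which live in the first and third Wiener chaos respectively), it lies in the direct sum of the first four Wiener chaoses. By Gaussian hypercontractivity (equivalence of $L^p$ norms on a fixed chaos), the second moment bound above upgrades to
\begin{equ}
\E|\XX_t^{(1),\eps}(\delta)|^{2p} \lesssim \delta^{p(2-2\kappa)}\;,
\end{equ}
for every $p \ge 1$, with a proportionality constant independent of $\eps$ and $t$. Choosing $\kappa$ small and $p$ large so that $p(1-\kappa) - 1 > p(1-\bar\kappa)$, a standard application of Kolmogorov's continuity criterion on the interval $\delta \in (0,2\pi]$ then yields
\begin{equ}
\E \sup_{\delta \in (0,1]} \delta^{-2p(1-\bar\kappa)} |\XX_t^{(1),\eps}(\delta)|^{2p} < \infty\;,
\end{equ}
uniformly in $\eps \in (0,1]$ and $t \in \R$. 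Combined with the translation invariance observation, this gives the stated bound. The only real obstacle is bookkeeping: verifying that the four classes of pairings exhaust $\CP^\5$ up to those with $\LL_P^\tau = \emptyset$ (a routine combinatorial check), and checking that the exponent $2-2\kappa$ can indeed be made arbitrarily close to $2$, so that after applying Kolmogorov with $p$ large we obtain any desired Hölder exponent strictly below $1$.
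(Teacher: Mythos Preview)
Your overall strategy matches the paper's: combine Propositions~\ref{prop:boundXXs}--\ref{prop:boundXX2} into a pointwise second-moment bound, lift to $p$th moments by hypercontractivity, then apply a Kolmogorov-type criterion. However, the reduction step contains a genuine gap.

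You claim that ``the random field $\XX_t^{(1),\eps}(x,y)$ depends on $(x,y)$ only through the difference $\delta = y-x$''. This is false. Carrying out the integral in \eref{e:defXXtree} for general $(x,y)$ gives
\[
\XX_t^{(1),\eps}(x,y) = \sum_{k,m\in \Z_\star} \bar X_{\eps,k-m}^\3(t)\,\bar X_{\eps,m}^\1(t)\,e^{ikx}\Bigl(g_k(\delta) - g_m(\delta)\Bigr)\;,
\]
so the factor $e^{ikx}$ remains; only the \emph{law} of $\XX_t^{(1),\eps}(x,y)$ is translation invariant, not the random variable itself. Consequently, the quantity $\|\XX_t^{(1),\eps}\|_{1-\bar\kappa} = \sup_{x\neq y}|\XX_t^{(1),\eps}(x,y)|/|x-y|^{1-\bar\kappa}$ is a supremum over an uncountable family of identically distributed but distinct random variables, and bounding $\E\sup_\delta \delta^{-2p(1-\bar\kappa)}|\XX_t^{(1),\eps}(0,\delta)|^{2p}$ via one-dimensional Kolmogorov does not control it.

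What is actually needed is a Kolmogorov criterion adapted to two-parameter increments satisfying a Chen-type consistency relation; this is precisely the content of the result the paper cites (Corollary~4 in Gubinelli's controlled paths paper). That criterion takes as input exactly the moment bound $\E|\XX_t^{(1),\eps}(x,y)|^p \lesssim |x-y|^{p(1-\kappa)}$ (together with the corresponding bound on the underlying paths and the algebraic relation \eref{e:constr}) and outputs control of $\E\|\XX_t^{(1),\eps}\|_{1-\bar\kappa}^p$. So your moment bound is correct and is the right input; you just need to replace the one-variable Kolmogorov step with this two-parameter rough-path version.
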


\begin{proof}
Since $\XX_t^{(1),\eps}$ belongs to a finite union of fixed Wiener chaoses, it follows 
from Propositions~\ref{prop:boundXXs}--\ref{prop:boundXX2} that, for every $\kappa >0$ and every $p>0$, the bound
\begin{equ}
\E \bigr|\XX_t^{(1),\eps}(x,y)\bigr|^p \lesssim |x-y|^{p(1-\kappa)}\;,
\end{equ}
holds uniformly in $\eps\in (0,1]$, $t \in \R$, and $x,y \in S^1$.
The requested bound then follows from \cite[Cor.~4]{Max}.
\end{proof}

We now consider the term $\XX_t^{(2),\eps}(\delta)$ which can be treated in a similar manner. 
One has:

\begin{proposition}\label{prop:convergenceXX2}
The conclusions of Proposition~\ref{prop:convergenceXX1} holds,
with $(1)$ replaced by $(2)$ throughout.
\end{proposition}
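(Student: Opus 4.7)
The plan is to mirror the proof of Proposition~\ref{prop:convergenceXX1} but in a simpler setting, since $\XX_t^{(2),\eps}(\delta)$ is a single Fourier sum rather than a double one. The key structural difference is that the external Fourier label $k$ attached to the edge $\bar e$ is forced to vanish: the product $\bar X_{\eps,-m}^\3\, \bar X_{\eps,m}^\1$ has ``root Fourier mode'' $0$, so the analysis naturally lives in the setting of $\LL^\tau_{P;0}$ introduced in Section~\ref{sec:constantMode} rather than in the setting of $\LL^\tau_{P;k,m,\bar m}$ used for $\XX^{(1),\eps}$.

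First I would apply Wick's theorem exactly as in the derivation of \eref{e:breakPair}. Writing
\begin{equ}
\E\bigl|\XX_t^{(2),\eps}(\delta)\bigr|^2 = \sum_{m,\bar m \in \Z_\star} \E\bigl(\bar X_{\eps,-m}^\3\, \bar X_{\eps,m}^\1\, \bar X_{\eps,\bar m}^\3\, \bar X_{\eps,-\bar m}^\1\bigr) \,\frac{e^{im\delta}-1}{m}\,\frac{e^{-i\bar m\delta}-1}{\bar m}\;,
\end{equ}
the Wick expansion yields $\E|\XX_t^{(2),\eps}(\delta)|^2 = \sum_{P \in \hat \CP^\5} \XX_P^{(2),\eps}(\delta)$, where $\XX_P^{(2),\eps}$ is expressed as a sum over cycles $L \in \LL^\5_{P;0,m,\bar m}$ (i.e.\ cycles in $\LL^\5_P$ with the distinguished edge $\bar e$ labelled $0$ and the two spanning-tree edges adjacent to $\bar e$ labelled $m$ and $\bar m$). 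Pairings in $\CP^\5\setminus\hat \CP^\5$ drop out by the same cancellation observed in Section~\ref{sec:constantMode} since for such pairings every cycle gives the value $0$ to $\bar e$, so their contribution coincides with the product of the two expectations, which corresponds to the $(y-x)$-component that was subtracted in \eref{e:defXXbasic}.

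Next I would extract the Hölder factor. For every pair of exponents $\kappa_1, \kappa_2 \ge 0$ with $\kappa_1+\kappa_2 = 2\kappa$, one has the elementary bound
\begin{equ}
\Bigl|\frac{e^{im\delta}-1}{m}\cdot \frac{e^{-i\bar m\delta}-1}{\bar m}\Bigr| \lesssim \delta^{2-2\kappa}|m|^{-\kappa_1}|\bar m|^{-\kappa_2}\;,
\end{equ}
so that matters reduce to showing that, for each $P \in \hat \CP^\5$,
\begin{equ}[e:sumPropXX2]
\sum_{m,\bar m \in \Z_\star}\sum_{L \in \LL^\5_{P;0,m,\bar m}} |m|^{-\kappa_1}|\bar m|^{-\kappa_2}\,\bigl|\CF^\5(P,L)\bigr| < \infty\;,
\end{equ}
for some choice of $\kappa_1, \kappa_2$ summing to an arbitrarily small $2\kappa$.

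The heart of the proof is verifying \eref{e:sumPropXX2} via the graphical criterion. This is a hybrid of the two methods already developed in the paper: on the one hand we are in the ``constant mode'' setting of Section~\ref{sec:constantMode} (where the distinguished edge $\bar e$ carries weight $0$ and is erased at the end), and on the other hand we need to place extra weights $\kappa_1$, $\kappa_2$ on the two spanning-tree edges adjacent to $\bar e$ (just as in the proof of Proposition~\ref{prop:convergenceXX1} via \eref{e:boundgdiff}). Concretely I would introduce, for each $P$, a modified set of weightings $\GG_{\kappa_1,\kappa_2}^{\5}(P;0)$ built exactly as in Section~\ref{sec:constantMode} but where, in Step~2, an additional weight $\kappa_j$ is placed on the corresponding tree edge. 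Proposition~\ref{prop:sumFluct} and the explicit summable weightings already exhibited in the proof of Proposition~\ref{prop:convF4} then cover the pairings in $\hat\CP^\5\setminus(\CP^0\cup\CP^1\cup\CP^2)$ (i.e.\ those without small loops touching either of the two adjacent edges), while for the remaining pairings I would copy the small-loop trick from \eref{e:loop1}--\eref{e:loop2} used in Propositions~\ref{prop:boundXX1} and \ref{prop:boundXX2}: non-symmetrisable loops labelled by $m$ or $\bar m$ are absorbed by transferring the extra weight $\kappa_j$ into the loop. Once \eref{e:sumPropXX2} is established, one concludes exactly as in Proposition~\ref{prop:convergenceXX1}: since $\XX_t^{(2),\eps}$ lies in a finite union of fixed Wiener chaoses, the $L^2$ bound upgrades to arbitrary moments, and \cite[Cor.~4]{Max} yields the claim.

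The main obstacle is the bookkeeping in Step~3: enumerating the pairings $P \in \hat \CP^\5$ and, for each, exhibiting an element of $\GG_{\kappa_1,\kappa_2}^\5(P;0)$ which is reduced to a loop-free graph by Algorithm~\ref{algo}. Since $\bar e$ carries weight $0$ here (unlike in Proposition~\ref{prop:convergenceXX1} where it carried weight $\kappa$), the graphs are a priori slightly harder to make summable; however, the gain comes from the fact that $\bar e$ itself is erased after Step~4, so the relevant graph is one edge simpler than before. In all cases the weightings constructed in the proofs of Propositions~\ref{prop:convF4}, \ref{prop:boundXX0}, \ref{prop:boundXX1} and \ref{prop:boundXX2} can be reused, with at most the single modification of absorbing $\kappa_1$ and $\kappa_2$ into the two tree edges adjacent to $\bar e$, and this absorption does not affect the termination of Algorithm~\ref{algo} as long as $\kappa$ (and hence $\kappa_1+\kappa_2$) is chosen sufficiently small.
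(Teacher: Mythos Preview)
Your overall strategy is sound and closely parallels the paper's own argument, but there is one genuine gap that happens to be the heart of the matter. You claim that pairings in $\CP^\5\setminus\hat\CP^\5$ (those with no pair connecting the two copies of $\tau$) drop out ``by the same cancellation observed in Section~\ref{sec:constantMode}'', since their contribution is the product of expectations and this ``corresponds to the $(y-x)$-component that was subtracted in \eref{e:defXXbasic}''. This reasoning conflates two different subtractions. The cancellation in Section~\ref{sec:constantMode} works because $F^\tau_\eps$ explicitly subtracts its own mean $K^\tau_\eps\,t$; there is no analogous centring in $\XX^{(2),\eps}$. The $(y-x)$-term removed in \eref{e:defXXbasic} is the \emph{random} constant Fourier mode of $\bar X^\3_\eps\bar X^\1_\eps$, not its expectation, and that subtraction has already been absorbed into $\XX^{(1),\eps}$ via the projection $\Pi_0^\perp$; it does not touch $\XX^{(2),\eps}$ at all. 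Concretely, $\E\,\XX^{(2),\eps}_t(\delta) = -\sum_{m}\E\bigl[\bar X^\3_{\eps,-m}\bar X^\1_{\eps,m}\bigr]\,g_m(\delta)$ is nonzero (indeed $\E\bigl[\bar X^\3_{\eps,-m}\bar X^\1_{\eps,m}\bigr]$ is exactly the summand defining $K^\5_\eps$ in Lemma~\ref{lem:convK5}), so the non-connecting pairing does contribute.

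The paper handles this missing case explicitly: the non-connecting pairing gives rise to the graph \eref{e:infiniteLoop}, for which a summable weighting in $\hat\GG_\kappa^\tau(P)$ must be exhibited by hand. As the Remark immediately following the proof stresses, this is in fact the \emph{only} pairing for which the additional weight $\kappa$ (coming from the $g_m$ factor) is genuinely needed to obtain summability---for all connecting pairings, erasing $\bar e$ already improves summability enough that the weightings from Propositions~\ref{prop:boundXX0}--\ref{prop:boundXX2} carry over directly. So the case you dismissed is precisely the one requiring the most care. A minor additional point: the labels $m,\bar m$ sit on the edges in $\CE\setminus\CT$ adjacent to $\bar e$ (the glued leaf-edges), not on the spanning-tree edges as you wrote, though by Kirchhoff's law at the root this distinction is immaterial for the bounds.
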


\begin{proof}
In the same way as before, it suffices to show that 
\begin{equ}
\E \bigl(\XX_t^{(2),\eps}(\delta)\bigr)^2 \lesssim \delta^{2-2\kappa}\;,
\end{equ}
uniformly over $\eps, \delta \in (0,1]$.
In exactly the same way as before, we have the identity
\begin{equ}[e:XXX2]
\E \bigl(\XX_t^{(2),\eps}(\delta)\bigr)^2 = \sum_{m, \bar m\in \Z_\star} \E \bigl(\bar X_{\eps,-m}^\3(t) \bar X_{\eps,m}^\1(t)
\bar X_{\eps,-\bar m}^\3(t) \bar X_{\eps,\bar m}^\1(t)\bigr)g_m(\delta) g_{\bar m}(\delta)\;,
\end{equ}
with
\begin{equ}[e:breakPair2]
\E \bigl(\bar X_{\eps,-m}^\3(t) \bar X_{\eps,m}^\1(t)
\bar X_{\eps,-\bar m}^\3(t) \bar X_{\eps,\bar m}^\1(t)\bigr)
= \sum_{P \in \CP^\tau} \sum_{L \in \LL^\tau_{P;m,\bar m}} \!\!\!\!C_\eps(L)\,\CF^\tau(P,L)\;.
\end{equ}
Here, by analogy with the previous notations of this section, we denoted by $\LL^\tau_{P;m,\bar m}$ the set of all cycles in $\LL^\tau_{P;0}$ such that
the edge $\bar e$ (and only that edge) has value $0$  and the edges in $\CE\setminus\CT$ adjacent to $\bar e$ have values 
$m$ and $\bar m$ respectively. 

Similarly to before, for any given $\kappa \in (0,{1\over 2}]$, and for any two positive exponents $\kappa_1$ and $\kappa_2$ 
with $\kappa_1+\kappa_2 = 2\kappa$, we have the bound $|g_m(\delta)g_{\bar m}(\delta)| \le \delta^{2-2\kappa} |m|^{-\kappa_1}|\bar m|^{-\kappa_2}$, 
so that the statement reduces to the proof that
\begin{equ}[e:wantedBoundMM]
\sum_{m, \bar m \in \Z_\star}\Bigl(\sum_{L \in \LL^\tau_{P;m,\bar m}} \!\!C_\eps(L)\,\CF^\tau(P,L)\Bigr)|m|^{-\kappa_1}|\bar m|^{-\kappa_2} < \infty\;,
\end{equ}
uniformly over $\eps \in (0,1]$.

The proof follows again the same lines as before.
For every pairing $P \in \CP^\5$, we construct
a set $\hat \GG_\kappa^{\tau}(P)$ of weightings
by following the construction in Section~\ref{sec:constructGraph}. Again,
we give $\bar e$ the weight $0$, and weigh instead the two edges in $\CE \setminus \CT$ adjacent to $\bar e$
by $\kappa_1$ and $\kappa_2$ respectively. Note that, since the inner sum in \eref{e:wantedBoundMM} has $m$ and $\bar m$ fixed,
we cannot symmetrise small loops that touch the distinguished edge. As a consequence, in Step~3 of the construction, we only
treat those small loops that do not touch $\bar e$. This however is not a problem, since the loops touching $\bar e$ receive an 
additional weight $\kappa_i$ anyway, which has the same effect.
Furthermore, since this time we restrict our sum to cycles that associate to $\bar e$ the value $0$, 
we remove the edge $\bar e$ from the graph $(\CG,\CE)$ before applying Algorithm~\ref{algo}.

Retracing the proof of Theorem~\ref{theo:graphKernel} we see once again that if, for every pairing $P\in \CP^\tau$, we can exhibit an element 
in $\hat \GG_\kappa^{\tau}(P)$
such that Algorithm~\ref{algo} yields a loop-free graph, then the requested bound holds.
On the other hand, as far as the outcome of Algorithm~\ref{algo} is concerned, deleting an edge (not contracting it!) is the same as
giving it a weight larger than the largest weight in the graph. As a consequence, every weighting constructed in
Propositions~\ref{prop:boundXX0}--\ref{prop:boundXX2} also yields a weighting in $\hat \GG_\kappa^{\tau}(P)$ that is summable.
Furthermore, pairings in $\CP_s^\tau$ can be treated ``by hand'' in very much the same way as in Proposition~\ref{prop:boundXXs}.

There still remains one case to verify though. Previously, we only considered pairings such that there exists at least one pair connecting
the two instances of the tree $\tau$. This was precisely because any labelling compatible with a pairing that doesn't have this property
would associate $0$ to the root vertex, which we always ruled out. This time however, this is precisely the situation that we are 
considering, so that we cannot make this restriction. As a consequence, we also have to consider the following pairing:
\begin{equ}[e:infiniteLoop]
\mhpastefig{proofSpecial2}
\end{equ}
Here, as before, the two dashed edges have weight $\kappa$, while the loops and the remaining 
edges are weighted in such a way that, after applying one step of  Algorithm~\ref{algo}, they reduce to an edge with 
weight $1$. This weighted graph is summable by applying Algorithm~\ref{algo}, which then concludes the proof in the same way as
in Proposition~\ref{prop:convergenceXX1}.
\end{proof}

\begin{remark}
The last step is the only step in the proof of Proposition~\ref{prop:convergenceXX2} where the 
additional weight $\kappa$ is actually needed. In all other cases,
erasing $\bar e$ significantly improves the summability properties of the resulting graphs, so that the resulting expressions
would already have been summable with $g_m(\delta) = \delta$. This shows that it is precisely the presence of the
graph \eref{e:infiniteLoop} that requires the projection operator $\Pi_0$ in the definition \eref{e:defXXbasic}. Indeed, if
we remove $\Pi_0$ from this expression, we see that the difference results in a term like \eref{e:XXX2},
but with $g_m(\delta) = \delta$, so that this last pairing would result in a logarithmic divergence.
\end{remark}

\subsection{Construction of the area process}

To conclude the construction of the map $\Psi$, we show that the sequence of 
processes $\YY_\eps$ defined by \eref{e:defYY} does indeed have a limit as $\eps \to 0$:

\begin{proposition}\label{prop:convYY}
Let $\Phi_\eps$, $Y^\1_\eps$ and $\YY_\eps$ be given by \eref{e:defPhieps} 
and \eref{e:defYY}. Then, there exists a process $\YY$ such that
$\YY_\eps \to \YY$ in probability in the space $\CC(\R, \CC_2^{1-\delta})$ for every $\delta > 0$. 
\end{proposition}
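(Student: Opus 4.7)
The plan is to mimic the approach developed in Section~\ref{sec:constructX} and specialised in Section~\ref{sec:controlProcess}. For each fixed $\eps > 0$ and $t \in \R$, the pair $(\Phi_{\eps,t}, Y^\1_{\eps,t})$ is a centred Gaussian field whose components lie in $\CC^{1/2-\delta}$ for every $\delta > 0$, and $\YY_{\eps,t}(x,y)$ is a quadratic functional of this field. It therefore lies in the second (inhomogeneous) Wiener chaos, so by hypercontractivity it suffices to control its second moment, after which all $L^p$ norms are controlled with constants depending only on $p$. Kolmogorov's criterion in the form of \cite[Cor.~4]{Max}, applied both in the two spatial arguments and in time, then yields the desired convergence in $\CC(\R,\CC_2^{1-\delta})$ provided one obtains uniform estimates of the form
\begin{equ}
\E |\YY_{\eps,t}(x,y)|^2 \lesssim |y-x|^{2-\eta}\;, \quad
\E |\YY_{\eps,t}(x,y) - \YY_{\eps,s}(x,y)|^2 \lesssim |t-s|^{\alpha} |y-x|^{2-\eta}\;,
\end{equ}
uniformly in $\eps \in (0,1]$, for some $\alpha > 0$ and every $\eta > 0$.

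To prove these bounds I would work in Fourier space, expanding
\begin{equ}
\YY_{\eps,t}(x,y) = \sum_{k,\ell \in \Z_\star} i\ell\, \hat\Phi_{\eps,k}(t)\, \hat Y^\1_{\eps,\ell}(t)\, g_{k,\ell}(x,y)\;,\quad g_{k,\ell}(x,y) = \int_x^y (e^{ikz}-e^{ikx})e^{i\ell z}\,dz\;,
\end{equ}
noting that $\E|\hat \Phi_{\eps,k}(t)|^2 \lesssim \phi^2(\eps k)/|k|^3$ and $\E|\hat Y^\1_{\eps,\ell}(t)|^2 \lesssim \phi^2(\eps\ell)/|\ell|^3$, with analogous explicit formulae for the cross-covariance. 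Applying Wick's theorem to $\E |\YY_{\eps,t}(x,y)|^2$ produces a sum over the two admissible pairings of four Gaussian factors: a ``diagonal'' one yielding essentially $\sum_{k,\ell}|g_{k,\ell}(x,y)|^2/(|k|^3|\ell|)$ and a ``cross'' one of analogous shape. Using the elementary bound $|g_{k,\ell}(x,y)| \lesssim |y-x|^{2-\eta_1-\eta_2}|k|^{1-\eta_1}(|\ell|^{-\eta_2}\wedge|k+\ell|^{-\eta_2})$, which follows from a straightforward interpolation between the trivial bound and Proposition~\ref{prop:bounddiff}, one then obtains the spatial bound for every $\eta > 0$ by appropriately choosing $\eta_1, \eta_2$. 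The time regularity bound is handled identically, using Lemma~\ref{lem:diffKernel} and the Ornstein--Uhlenbeck-type covariances $\E \hat\Phi_{\eps,k}(t)\hat\Phi_{\eps,k}(s) \lesssim e^{-k^2|t-s|}/|k|^3$ to absorb a factor $|t-s|^\alpha$ at the cost of some powers of $|k|$ and $|\ell|$.

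Cauchyness of the sequence $\{\YY_\eps\}_{\eps>0}$ as $\eps\to 0$ then follows from the exact same computation applied to the difference $\YY_\eps - \YY_{\bar \eps}$, combined with dominated convergence (using $\phi(\eps k) \to 1$ as $\eps \to 0$), yielding a limiting process $\YY \in \CC(\R, \CC_2^{1-\delta})$. Finally, $\YY$ satisfies the algebraic identity~\eref{e:relYY} because each $\YY_\eps$ does and the relation is linear and continuous in $(\YY,\Phi,Y^\1)$. The main obstacle is essentially arithmetic bookkeeping: verifying that the two exponents $\eta_1,\eta_2$ in the bound on $|g_{k,\ell}|$ can be chosen so that the resulting double sum converges while producing a power of $|y-x|$ arbitrarily close to $2$. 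Crucially---and in contrast to the analysis of $\XX$ in Section~\ref{sec:controlProcess}---the vanishing factor $\Phi_{\eps,t}(z)-\Phi_{\eps,t}(x)$ built into the definition of $\YY_\eps$ prevents any logarithmic divergence from appearing, so no renormalisation is needed and all the sums are convergent uniformly in $\eps$.
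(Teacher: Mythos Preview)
Your approach is different from the paper's and is viable in principle, but the sketch contains two concrete errors that you should fix.

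First, the covariance bounds you state are wrong. For every fixed $t$, both $\Phi_{\eps,t}$ and $Y^\1_{\eps,t}$ are equal in law to a Brownian bridge (this is stated explicitly in the paper), so $\E|\hat\Phi_{\eps,k}(t)|^2 \sim \phi^2(\eps k)/k^2$ and $\E|\hat Y^\1_{\eps,\ell}(t)|^2 \sim \phi^2(\eps\ell)/\ell^2$, not $1/|k|^3$ and $1/|\ell|^3$. Second, the interpolated bound on $g_{k,\ell}$ cannot have the minimum $|\ell|^{-\eta_2}\wedge|k+\ell|^{-\eta_2}$: take $\ell$ fixed and $k$ large with $\delta=\pi$, then $g_{k,\ell}(0,\pi)=h_{k+\ell}(\pi)-h_\ell(\pi)$ is of order $1/|\ell|$, not $1/|k+\ell|$. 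The correct bound has $+$ (or $\vee$) in place of $\wedge$. Fortunately, with these two corrections the power-counting still closes: the diagonal pairing yields $\sum_{k,\ell}|g_{k,\ell}|^2/k^2$, and using the corrected bound with $\eta_1,\eta_2$ both slightly above $1/2$ gives a summable double sum and the factor $|y-x|^{2-\eta}$ for any $\eta>0$. (Alternatively, Parseval on $\sum_\ell|g_{k,\ell}|^2$ gives the same conclusion more directly.)

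The paper takes a rather different and shorter route. It observes that if one writes $\Phi_\eps = \tfrac12 Y^\1_\eps + \tilde\Phi_\eps$, then an explicit calculation (the identity $\int_0^\infty(x-\tfrac12)e^{-2x}\,dx=0$) shows that $\tilde\Phi_\eps(t)$ and $Y^\1_\eps(t)$ are \emph{independent} for each fixed $t$. The area $\YY_\eps$ then decomposes into the self-area of $Y^\1_\eps$, which is explicit, plus an iterated integral of two independent Gaussian processes, for which convergence in $\CC_2^{1-\delta}$ follows from the general Gaussian rough-path theory of \cite{PeterBook}; uniformity in $t$ then comes from \cite{GaussI}. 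Your direct second-moment computation is more self-contained and closer in spirit to Sections~\ref{sec:constructX}--\ref{sec:controlProcess}, whereas the paper's decomposition avoids any explicit Fourier bookkeeping by reducing to existing results.
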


\begin{proof}
The argument is essentially the same as the one given in \cite{BurgersRough}, which in turn relies very heavily
on the results in \cite{GaussI,PeterBook}, so we only explain the main steps and refer to \cite{BurgersRough}
for more details.

Recall that, by definition, the Fourier components of $Y^\1_\eps$ (for $k\neq 0$) are given by
\begin{equ}
Y^\1_{\eps,k}(t) = \sqrt 2 \phi(k\eps) \int_{-\infty}^t e^{-k^2(t-s)}\,dW_{k}(s)\;,
\end{equ}
where the $W_k$ are independent normal complex-valued Wiener processes satisfying the
reality condition $W_{-k,t} = \bar W_{k,t}$. Similarly, the Fourier components of $\Phi_\eps$ are given by
\begin{equ}
\Phi_{\eps,k}(t) = k^2 \int_{-\infty}^t e^{-k^2 (t-s)} Y^\1_{k,s}\,ds = \sqrt 2 \phi(k\eps) \int_{-\infty}^t k^2(t-s)\, e^{-k^2 (t-s)} dW_k(s)\;.
\end{equ}

An explicit calculation (boiling down to the fact that $\int_0^\infty (x-{1\over 2})e^{-2x}\,dx = 0$) then shows that if we 
define a process $\tilde \Phi_\eps$ by
\begin{equ}
\Phi_\eps = {1\over 2} Y_\eps^\1 + \tilde \Phi_\eps\;,
\end{equ}
then, for every \textit{fixed} $t \in \R$, the processes $\tilde \Phi_\eps(t)$ and $Y^\1_\eps(t)$ are independent. 
It then follows from \cite{PeterBook} that $\YY_\eps(t) \to \YY(t)$ in probability in $\CC_2^{1-\delta}$ for every $\delta > 0$.
The convergence in $\CC([-T,T], \CC_2^{1-\delta})$ then follows from Kolmogorov's continuity criterion 
by \cite[Theorem~1]{GaussI}.
\end{proof}

We now finally have all the ingredients in place for the proof of Theorem~\ref{theo:convYeps}.

\begin{proof}[of Theorem~\ref{theo:convYeps}]
The case $\tau = \bullet$ is standard, see for example \cite{DaPrato-Zabczyk92}. The cases
$\tau \in \{\2,\3,\4,\5\}$ follow by combining the results of Section~\ref{sec:constructX}, which yield the
convergence of the processes $X^\tau$, with the results of Section~\ref{sec:constantMode}, which furthermore yield the
convergence of the constant Fourier modes of $Y^\tau$. 

The remaining cases are treated by induction. Assume that $\tau = [\tau_1,\tau_2]$ with $\alpha_{\tau_1} \le \alpha_{\tau_2}$.
The case $\tau_1 = \bullet$ will be treated separately. If $\tau_1 \neq \bullet$, then we have $\alpha_{\tau_1} \ge 1$ and $\alpha_{\tau_2} > 1$ (since the case of both being equal to $1$ corresponds to the tree $\nicetree$ which was already covered).
As a consequence, we can use the induction hypothesis, combined with Proposition~\ref{prop:Holder} 
to conclude that the term
$\d_x Y^{\tau_1}_\eps\,\d_x Y^{\tau_2}_\eps$ converges to $\d_x Y^{\tau_1}\,\d_x Y^{\tau_2}$ in $\CC(\R,\CC^{\alpha_\tau-2-\delta})$ for every $\delta> 0$. The claim then follows from the definition of $Y^\tau$, combined with 
Proposition~\ref{prop:interpolation}.

It remains to treat the case when $\tau = [\bullet, \bar \tau]$ with $\alpha_{\bar \tau} > 1$. For these, we actually
show a stronger statement, namely that,  
$\d_x Y^\tau_\eps \to \d_x Y^\tau$ as a rough path controlled by $\Phi_\eps$ with derivative process 
$\d_x Y^{\bar \tau}_\eps$. By the results of this section, this is true for $\tau = \bigtree$, so that it suffices to
prove the statement for the remaining trees of this form.
Again, there are two cases. If $\alpha_{\bar \tau} \ge 1$, we view $\d_x Y^{\bar \tau}_\eps$ as a rough path controlled
by $\Phi_\eps$ with vanishing derivative process, so that Proposition~\ref{prop:boundRemainder} yields the
desired statement. If $\bar \tau$ is itself of the form $\bar \tau = [\bullet, \kappa]$, then we 
know by the induction hypothesis that $\d_x Y^{\bar \tau}_\eps$ is controlled by $\Phi_\eps$ with 
derivative process $\d_x Y^{\kappa}_\eps$ and bounds that are uniform in $\eps$.  The claim then follows
again from Proposition~\ref{prop:boundRemainder}.
\end{proof}

\begin{appendix}
\section{Useful computations}

\subsection{Wiener chaos}\label{sec:chaos}

In this section, we assume that we work on a probability space $(\Omega, \P, \CF)$ equipped
with a Gaussian structure. In other words, there exists a separable Hilbert space
$\CH$ and an isometry $\iota\colon \CH \to L^2(\Omega,\P)$ such that $\iota(h)$ is a centred Gaussian
random variable for every $h\in\CH$.

Denote now by $\CP_{k,m}$ the set of all polynomials of degree $k$ in $m$ variables.
We then write $\CI_k$ for the closure in $L^2(\Omega,\P)$ of the set 
\begin{equ}
\{P(\iota(h_1),\ldots,\iota(h_m))\,:\, P \in \CP_{k,m}\,,\; h_j \in \CH\;,\; m \ge 1\}\;.
\end{equ}
Given a separable Banach space $\CB$, we also write $\CI_k(\CB)$ for the same
space, but where $P$ is $\CB$-valued. The space $\CI_k$ is the union of the $n$th Wiener
chaoses for $\Omega$ with $n \le k$. Since the precise definition of the $n$th Wiener chaos over
a given Gaussian structure is only marginally relevant for this article, we refer to the monograph \cite{Nualart}
for more details.

A very useful fact is given by the following lemma, which follows from the hypercontractivity
of the Ornstein-Uhlenbeck semigroup on $L^2(\Omega)$ \cite{Nualart} and is also known as Nelson's estimate:
\begin{lemma}\label{lem:chaos}
Let $(\Omega,\P,\CF)$ be a Gaussian probability space, let $\CB$ be a separable Banach space,
and denote $\CI_k(\CB)$ as before. Then, for every $k,p \ge 1$ there exist constants $C_{k,p}$ such that
\begin{equ}
\E |F|^{2p} \le C_{k,p} \bigl(\E |F|^2\bigr)^p\;,
\end{equ} 
for every every $\CB$-valued random variable $F \in \CI_k(\CB)$.
\end{lemma}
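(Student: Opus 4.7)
The plan is a two-step argument: first establish the inequality for real-valued $F$ using the hypercontractivity of the Ornstein--Uhlenbeck semigroup $T_t$ associated with the Gaussian structure on $(\Omega,\P)$, and then pass to the $\CB$-valued setting through the Mehler representation of $T_t$.

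For the scalar case I would start from the finite chaos decomposition $F = \sum_{n=0}^{k} F_n$, where $F_n$ lies in the $n$th Wiener chaos. The components are mutually orthogonal in $L^2(\Omega)$, so $\|F\|_{L^2}^2 = \sum_{n=0}^{k} \|F_n\|_{L^2}^2$. Since $T_t F_n = e^{-nt} F_n$, the classical Nelson inequality, which asserts that $T_t\colon L^2(\Omega)\to L^{2p}(\Omega)$ is a contraction as soon as $e^{2t}\ge 2p-1$, gives
\begin{equ}
\|F_n\|_{L^{2p}} = e^{nt}\,\|T_t F_n\|_{L^{2p}} \le (2p-1)^{n/2}\,\|F_n\|_{L^2}
\end{equ}
upon choosing $e^{2t}=2p-1$. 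Summing over $n\le k$ via the triangle inequality, applying Cauchy--Schwarz, and raising to the $2p$-th power yields the real-valued version of the lemma with $C_{k,p}$ of order $(2p-1)^{kp}$.

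For the passage to $\CB$-valued random variables I would use that, by definition, $\CI_k(\CB)$ is the closure in $L^2(\Omega,\CB)$ of finite linear combinations $\sum_{j=1}^{N} \xi_j b_j$ with $\xi_j \in \CI_k$ scalar and $b_j\in \CB$, so it suffices to prove the bound on such simple tensors with a constant independent of $N$ and of the chosen vectors. The key tool is the $\CB$-valued extension of Nelson's inequality, obtained from the Mehler representation
\begin{equ}
(T_t F)(\omega) = \int F\bigl(e^{-t}\omega + \sqrt{1-e^{-2t}}\,\omega'\bigr)\,\P(d\omega')\;;
\end{equ}
written in this form, $T_t$ is a conditional expectation, so Minkowski's inequality shows it to be a contraction on every $L^p(\Omega,\CB)$. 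Combined with the scalar hypercontractivity applied after a standard finite-dimensional reduction (made possible by the separability of $\CB$), this promotes the scalar Nelson bound into its $\CB$-valued counterpart with the same threshold $e^{2t}\ge 2p-1$ and the same constants. Applied chaos-component by chaos-component exactly as in the scalar case, this concludes the argument.

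The main technical point, and the one I would expect to absorb the bulk of the work, is the vector-valued hypercontractivity statement with a constant that does not depend on $\CB$: while the scalar version is completely classical, its extension to arbitrary separable Banach-space targets requires some care in the Mehler-kernel estimates. Since this extension is already carried out in detail in the reference \cite{Nualart}, the cleanest presentation of the lemma would simply invoke it rather than reproduce its proof.
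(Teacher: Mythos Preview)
The paper does not actually give a proof of this lemma: it simply states that the result ``follows from the hypercontractivity of the Ornstein--Uhlenbeck semigroup on $L^2(\Omega)$'' and cites \cite{Nualart}. Your sketch is therefore not competing with any argument in the paper; it is supplying one where the paper gives only a reference, and it is in line with what the paper says the proof should be.

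Your scalar argument is the standard one and is fine. For the $\CB$-valued step, your instinct to use the Mehler representation is correct, but the way you phrase it slightly obscures the crucial point. Saying that $T_t$ is a contraction on every $L^p(\Omega,\CB)$ is true but not enough by itself; what the Mehler formula really buys you is the \emph{pointwise} domination
\begin{equ}
\|(T_t F)(\omega)\|_\CB \le \bigl(T_t \|F\|_\CB\bigr)(\omega)\;,
\end{equ}
obtained by pushing the norm inside the integral via Jensen. One can then apply scalar hypercontractivity directly to the real-valued function $\omega \mapsto \|F(\omega)\|_\CB$ (which need not lie in a finite chaos, but hypercontractivity holds on all of $L^2$), yielding $\|T_t F\|_{L^{2p}(\CB)} \le \|F\|_{L^2(\CB)}$ with the same threshold $e^{2t}\ge 2p-1$. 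From there your chaos-by-chaos argument goes through unchanged. No finite-dimensional reduction is needed once you organise it this way. Your conclusion that one may as well just invoke \cite{Nualart} matches exactly what the paper does.
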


Our main application of this estimate is the following bound, which loosely speaking states that
in the context of processes taking values in a fixed Wiener chaos, Sobolev regularity for a given index
often implies H\"older regularity for the same index.

\begin{proposition}\label{prop:generalConvergence}
Let $\JJ$ be a countable index set, let $T>0$, and let $\{g_\kappa\}_{\kappa \in \JJ}$ be a family
of Lipschitz continuous functions such that.
\begin{equ}
\|g_\kappa\|_\infty \le 1\;,\qquad \|g_\kappa\|_1 \le G_\kappa\;,
\end{equ}
for some  $G_\kappa \ge 1$. In general, we assume that the $g_\kappa$ are complex-valued and that there 
is an involution $\iota\colon \JJ \to \JJ$ such that $g_{\iota \kappa} = \bar g_\kappa$. 

Let furthermore $\{f_\kappa\}_{\kappa\in\JJ}$ be a family of continuous stochastic processes belonging to 
$\CI_k$ for some fixed value $k \in \N$, and write
\begin{equ}
F_{\kappa\eta}(t) =  \E f_\kappa(t) f_\eta(t)\;,\quad
\hat F_{\kappa\eta}(s,t) =  \E \bigl(f_\kappa(t) - f_\kappa(s)\bigr)\bigl(f_\eta(t) - f_\eta(s)\bigr)\;.
\end{equ}
We also assume that $f_{\iota \kappa} = \bar f_\kappa$.
Finally, let $\{C_\eps\}_{\eps \in (0,1]}$ be a family of functions $C_\eps \colon \JJ \to [0,1]$
such that
\begin{claim}
\item one has $C_\eps(\kappa) > C_{\bar\eps}(\kappa)$ for $\eps < \bar\eps$,
\item for every $\eps > 0$, the set $\{\kappa\,:\, C_\eps(\kappa) \neq 0\}$ is finite,
\item for every $\kappa \in \JJ$, one has $\lim_{\eps \to 0} C_\eps(\kappa) = 1$.
\end{claim}
For every $\eps > 0$, let $F_\eps$ be the stochastic process defined by
\begin{equ}[e:sum]
F_\eps(x,t) = \sum_{\kappa\in \JJ} C_\eps(\kappa) f_\kappa(t) g_\kappa(x)\;,
\end{equ}
and assume that there exists $\alpha \in (0,1)$ and $\beta \ge 0$ such that
\begin{equs}[e:assumFalpha]
\sum_{\kappa,\eta \in \JJ} \sup_{t \in [0,T]} |G_\kappa|^\alpha |G_\eta|^\alpha |F_{\kappa \eta}(t)| &< \infty\;,\\
\sum_{\kappa,\eta \in \JJ} \sup_{s,t \in [0,T]} {|\hat F_{\kappa \eta}(s,t)| \over |t-s|^{2\beta}} &< \infty\;,
\end{equs}
Then, for every $\gamma < \alpha$ and $\delta < \beta$, 
there exists a process $F$ taking values in $\CB_{\gamma,\delta} \eqdef \CC([0,T],\CC^\gamma) \cap \CC^\delta([0,T],\CC)$
and such that  
$F_\eps \to F$ in $L^2(\Omega,\CP,\CB_{\gamma,\delta})$. 
\end{proposition}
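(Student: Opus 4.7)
The plan is to reduce the problem to a uniform second-moment increment estimate and then pipe it through the Kolmogorov continuity criterion, using Nelson's hypercontractive bound to upgrade $L^2$ control to $L^{2p}$ control for arbitrary $p$. First I would verify that, for each fixed $\eps > 0$, the sum \eref{e:sum} defining $F_\eps$ is almost surely well-defined as an element of $\CB_{\gamma,\delta}$; this is essentially immediate since the sum is finite by the finiteness assumption on the support of $C_\eps(\cdot)$. The real work is to obtain $\eps$-uniform estimates together with a Cauchy-type statement as $\eps \to 0$.

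The core computation is the following. Using bilinearity, the reality assumption $f_{\iota\kappa} = \bar f_\kappa$, and the bound $|C_\eps(\kappa)| \le 1$, one obtains
\begin{equs}
\E |F_\eps(x,t) - F_\eps(y,t)|^2 &\le \sum_{\kappa,\eta \in \JJ} |F_{\kappa\eta}(t)|\,|g_\kappa(x)-g_\kappa(y)|\,|g_\eta(x)-g_\eta(y)|\;,\\
\E |F_\eps(x,t) - F_\eps(x,s)|^2 &\le \sum_{\kappa,\eta \in \JJ} |\hat F_{\kappa\eta}(s,t)|\;.
\end{equs}
For the spatial term, the interpolation $|g_\kappa(x)-g_\kappa(y)| \le 2^{1-\alpha} G_\kappa^\alpha |x-y|^\alpha$ combined with \eref{e:assumFalpha} yields a bound of the form $C|x-y|^{2\alpha}$ uniformly in $\eps$ and $t$. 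For the temporal term, \eref{e:assumFalpha} directly gives $C|t-s|^{2\beta}$ uniformly in $\eps$ and $x$. Since, for each fixed $(x,t)$ (respectively each fixed $(s,x,t)$), the random variable $F_\eps(x,t) - F_\eps(y,t)$ (respectively $F_\eps(x,t) - F_\eps(x,s)$) lives in the fixed chaos $\CI_k(\C)$, Lemma~\ref{lem:chaos} then promotes these $L^2$ bounds to $L^{2p}$ bounds for every $p \ge 1$:
\begin{equ}
\E |F_\eps(x,t) - F_\eps(y,t)|^{2p} \lesssim |x-y|^{2p\alpha}\;,\qquad \E |F_\eps(x,t) - F_\eps(x,s)|^{2p} \lesssim |t-s|^{2p\beta}\;,
\end{equ}
with constants independent of $\eps$. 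Kolmogorov's continuity criterion (applied on the product space $[0,T]\times S^1$, e.g. in the form of \cite[Cor.~4]{Max}) then yields, for every $\gamma < \alpha$ and $\delta < \beta$, a uniform bound $\sup_{\eps \in (0,1]} \E \|F_\eps\|_{\CB_{\gamma,\delta}}^{2p} < \infty$, provided $p$ is chosen sufficiently large.

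To establish the Cauchy property in $L^2(\Omega,\CB_{\gamma',\delta'})$ for any $\gamma' < \gamma < \alpha$ and $\delta' < \delta < \beta$, I would apply exactly the same scheme to the difference $F_\eps - F_{\bar\eps}$. The second-moment increment now carries an extra factor $\bigl(C_\eps(\kappa) - C_{\bar\eps}(\kappa)\bigr)\bigl(C_\eps(\eta) - C_{\bar\eps}(\eta)\bigr)$ inside the sum. This factor is bounded by $4$ and tends pointwise to $0$ as $\eps,\bar\eps \to 0$ by the assumed convergence $C_\eps(\kappa) \to 1$; by dominated convergence using the summability hypotheses \eref{e:assumFalpha}, the resulting second-moment increment bounds go to zero uniformly in $x,y,s,t$. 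Interpolating between the uniform $\CB_{\gamma,\delta}$ bound established above and this vanishing supremum bound (again after Nelson + Kolmogorov) shows that $\{F_\eps\}$ is Cauchy in $L^2(\Omega,\CB_{\gamma',\delta'})$, so it converges to a limit $F$. The main subtlety I anticipate is the standard one of having to choose the moment exponent $p$ in the Kolmogorov step large enough so that the loss of regularity when going from a second-moment bound to a Hölder norm can be absorbed into the gap between $\gamma'$ and $\alpha$ (and similarly for $\delta'$ and $\beta$); fixing $p$ large at the outset circumvents this cleanly.
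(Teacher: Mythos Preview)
Your proposal is correct and follows essentially the same approach as the paper: second-moment increment bounds, hypercontractivity via Lemma~\ref{lem:chaos}, Kolmogorov, and dominated convergence applied to the difference $F_\eps - F_{\bar\eps}$ for the Cauchy property. The only minor difference is that the paper skips your preliminary step of establishing uniform bounds on $F_\eps$ and your final interpolation step; instead it applies Nelson and Kolmogorov directly to the difference, observing that the second-moment increment bounds for $F_\eps - F_{\bar\eps}$ are of the form $K_{\bar\eps}|x-y|^{2\alpha}$ and $K_{\bar\eps}|t-s|^{2\beta}$ with $K_{\bar\eps}\to 0$, so that Kolmogorov yields $\E\|F_\eps - F_{\bar\eps}\|_{\gamma,\delta}^2 \to 0$ without any interpolation.
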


\begin{proof}
It suffices to show that our conditions imply that the sequence $\{F_\eps\}$ is Cauchy 
in $L^2(\Omega,\CP,\CB_{\gamma,\delta})$. Fix $0 < \eps < \bar \eps$ and write
$\Cd(\kappa)$ as a shorthand for $C_\eps(\kappa)- C_{\bar \eps}(\kappa)$, and
similarly $\Fd = F_{\eps} - F_{\bar \eps}$. By our assumption on $C_\eps$, we have $\Cd(\kappa) \ge 0$
for every $\kappa$.

We furthermore write
\begin{equ}
F_{\kappa\eta} \eqdef \sup_{t \in [0,T]} |F_{\kappa\eta}(t)|\;,\qquad
\hat F_{\kappa\eta} \eqdef \sup_{s,t \in [0,T]} {|\hat F_{\kappa\eta}(s,t)| \over |t-s|^{2\beta}}\;.
\end{equ}
An elementary calculation then shows that
\begin{equs}
\E |\Fd(x,t)|^2 &= \sum_{\kappa,\eta \in \JJ} \Cd(\kappa) \Cd(\eta) \E f_\kappa(t)\bar f_\eta(t) g_\kappa(x)\bar g_\eta(x)\\
&=\sum_{\kappa,\eta \in \JJ} \Cd(\kappa) \Cd(\eta) \E f_\kappa(t) f_{\iota \eta}(t) g_\kappa(x)g_{\iota \eta}(x)\\
&=\sum_{\kappa,\eta \in \JJ} \Cd(\kappa) \Cd(\iota \eta) F_{\kappa\eta}(t) g_\kappa(x)g_{\eta}(x)\\
&\le \sum_{\kappa,\eta \in \JJ} \Cd(\kappa) \Cd(\iota \eta) F_{\kappa\eta}\;.
\end{equs}
Similarly, we have the bound
\begin{equs}
\E |\Fd(x,t) - \Fd(y,t)|^2 &= \sum_{\kappa,\eta \in \JJ} \Cd(\kappa) \Cd(\eta) F_{\kappa\eta}(t) \bigl(g_\kappa(x) - g_\kappa(y)\bigr)\bigl(g_\eta(x) - g_\eta(y)\bigr)\\
&\le  |x-y|^{2\alpha}\sum_{\kappa,\eta \in \JJ} \Cd(\kappa) \Cd(\eta) F_{\kappa\eta} |G_\kappa|^\alpha |G_\eta|^\alpha\;,
\end{equs}
as well as
\begin{equs}
\E |\Fd(x,t) - \Fd(x,s)|^2 &= \sum_{\kappa,\eta \in \JJ} \Cd(\kappa) \Cd(\eta) \hat F_{\kappa\eta}(s,t) g_\kappa(x)g_\eta(x)\\
&\le  |t-s|^{2\beta} \sum_{\kappa,\eta \in \JJ} \Cd(\kappa) \Cd(\eta) |\hat F_{\kappa\eta}|\;.\label{e:boundFt}
\end{equs}
Making use of Lebesgue's dominated 
convergence theorem, we deduce that there exists a sequence of constants $K_{\bar \eps}$ with
$\lim_{\bar \eps \to 0}K_{\bar \eps} = 0$
such that the bounds
\begin{equ}
\E |\Fd(x,t) - \Fd(y,t)|^2 \le K_{\bar \eps}|x-y|^{2\alpha}\;,\quad
\E |\Fd(x,t) - \Fd(x,s)|^2 \le K_{\bar \eps}|t-s|^{2\beta}\;,
\end{equ}
hold uniformly for $\eps < \bar \eps$.
In particular, it follows from Lemma~\ref{lem:chaos} that bounds with the same homogeneity also 
hold for the $p$th moment for arbitrarily large $p$.
It then follows from a straightforward modification  of Kolmogorov's continuity criterion that
\begin{equ}
\lim_{\bar \eps \to 0} \sup_{\eps < \bar \eps}\E \|\Fd\|^2_{\gamma,\delta} = 0\;,
\end{equ}
where $\|\cdot\|_{\gamma,\delta}$ is the norm in $\CB_{\gamma,\delta}$. The claim now follows at once.
\end{proof}

\subsection{Bounds on simple integrals}

In this section, we collect a number of elementary bounds on various integrals that appear several times throughout
the article. 
First, it will turn out to be useful to have bounds on expressions of the type
\begin{equ}
{b f(at) - a f(bt) \over b-a}\;,
\end{equ}
with $a$, $b$, $t$ in $\R_+$, where $f \colon \R_+ \to \R$ is a smooth function. 

We have the following:
\begin{proposition}\label{prop:bounddiff}
For $a$,  $b$, $t$, and $f$ as above, one has the global bound
\begin{equ}[e:boundf]
\Bigl|{b f(at) - a f(bt) \over b-a} - f(0)\Bigr| \le 2ab t^2\, \|f''\|_\infty\;,
\end{equ}
where $\|\cdot\|_\infty$ denotes the supremum norm. If furthermore there exists a constant $K$
such that $\sup_{y\ge 0} |f(y) - yf'(y)| \le K$,  then
\begin{equ}[e:boundf2]
\Bigl|{b f(at) - a f(bt) \over b-a}\Bigr| \le K\;,
\end{equ}
independently of $a$, $b$, and $t$.
\end{proposition}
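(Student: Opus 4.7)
The plan hinges on the algebraic identity
\begin{equ}
bf(at) - af(bt) = abt\bigl(F(at) - F(bt)\bigr)\;,\qquad F(y) \eqdef f(y)/y\;,
\end{equ}
valid for $y > 0$. Using this identity together with the fundamental theorem of calculus,
\begin{equ}
F(at) - F(bt) = t(a-b)\int_0^1 F'\bigl(bt + rt(a-b)\bigr)\,dr\;,
\end{equ}
both bounds reduce to a pointwise estimate of $F'$, with the prefactor $abt/(b-a)$ canceling the factor $t(a-b)$ in a controlled way. The main task is thus to bound $F'$ under the two sets of hypotheses.

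For \eref{e:boundf2}, a direct computation yields $F'(y) = -\bigl(f(y) - yf'(y)\bigr)/y^2$, so the hypothesis $\sup_y|f(y) - yf'(y)| \le K$ gives $|F'(y)| \le K/y^2$. Assuming without loss of generality $a < b$, we then have
\begin{equ}
\Bigl|\int_{bt}^{at} F'(s)\,ds\Bigr| \le K \int_{at}^{bt}\!\frac{ds}{s^2} = \frac{K(b-a)}{abt}\;,
\end{equ}
which combined with the prefactor $abt/|b-a|$ from the identity above gives exactly $K$, yielding \eref{e:boundf2}.

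For \eref{e:boundf}, the obstacle is that $F$ may blow up at $0$, so we must first subtract a constant. Writing $\tilde f(y) = f(y) - f(0)$ and $\tilde F(y) = \tilde f(y)/y$, a short calculation using Taylor's formula gives the key identity
\begin{equ}
y\tilde f'(y) - \tilde f(y) = \int_0^y s f''(s)\,ds\;,
\end{equ}
so that $\tilde F'(y) = y^{-2}\int_0^y s f''(s)\,ds$ satisfies the clean pointwise bound $|\tilde F'(y)| \le \tfrac{1}{2}\|f''\|_\infty$, uniformly in $y$ (and in particular extending continuously to $y=0$). Since subtracting the constant $f(0)$ from $f$ replaces the left-hand side of \eref{e:boundf} by $\bigl(b\tilde f(at) - a\tilde f(bt)\bigr)/(b-a)$, applying the identity of the first paragraph to $\tilde f$ in place of $f$ and bounding $|\tilde F'|$ as above yields \eref{e:boundf} (in fact with constant $\tfrac{1}{2}$, which is stronger than the claimed $2$).

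The main obstacle — really just a minor point of care — is correctly handling the apparent singularity of $F(y)=f(y)/y$ at $y=0$: the trick is precisely that centring by $f(0)$ exchanges this singularity for the factor $y^{-2}\int_0^y s f''(s)\,ds$, which encodes both a gain of one derivative and a gain of a power of $y^2$, exactly what is needed to produce the quadratic $t^2$ on the right-hand side.
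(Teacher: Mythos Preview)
Your proof is correct. For the second bound \eref{e:boundf2} your argument is essentially identical to the paper's: the identity you obtain by writing $F(y)=f(y)/y$ and integrating $F'$ is, after the change of variables $y=xt$, exactly the paper's identity
\begin{equ}
{b f(at) - a f(bt) \over b-a}  = {ab\over b-a}\int_a^b {f(xt) - xtf'(xt) \over x^2}\,dx\;,
\end{equ}
and the conclusion is drawn in the same way.

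For the first bound \eref{e:boundf} you take a genuinely different and somewhat cleaner route. The paper rewrites the expression as $f(at) - \tfrac{a}{b-a}(f(bt)-f(at))$, represents each difference as an average of $f'$, and then Taylor-expands $f'(st)$ about $s=0$ to produce $f''$; a crude triangle inequality then gives the constant $2$ (the paper remarks that $3/2$ is attainable). You instead centre $f$ by $f(0)$ and reuse the \emph{same} identity as for \eref{e:boundf2}, observing that the integration-by-parts formula $y\tilde f'(y)-\tilde f(y)=\int_0^y s f''(s)\,ds$ turns the potentially singular $\tilde F'(y)$ into a uniformly bounded quantity, $|\tilde F'(y)|\le \tfrac12\|f''\|_\infty$. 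This unifies both parts of the proposition under a single mechanism and yields the sharper constant $\tfrac12$, beating both the paper's stated $2$ and the $3/2$ of the remark. The trade-off is minimal: your argument implicitly uses that $\tilde F$ extends to a $C^1$ function at $0$, but since $a,b,t>0$ the interval of integration stays away from $0$ anyway, so this is not actually needed.
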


\begin{proof}
We assume without loss of generality that $b > a$ (otherwise, just reverse the roles of $a$ and $b$)
and we set $\delta \eqdef b-a$.
One then has 
\begin{equ}
{b f(at) - a f(bt) \over b-a} = f(at) - {a \over \delta} \bigl(f(bt)-f(at)\bigr)\;,
\end{equ}
so that, writing $|\CI|$ for the left hand side of \eref{e:boundf}, 
\begin{equ}
\CI = at \Bigl({1\over a}\int_0^a f'(st)\,ds - {1 \over \delta}\int_a^b f'(st)\,ds\Bigr)\;.
\end{equ}
We now use the identity $f'(st) = f'(0) + s \int_0^tf''(rs)\,dr$ and then exchange the
order of integrals, so that
\begin{equ}
\CI = at \int_0^t \Bigl({1\over a} \int_0^a s f''(rs)\,ds - {1\over \delta} \int_a^b s f''(rs)\,ds\Bigr)\,dr\;.
\end{equ}
The first claim now follows by replacing the integrands by their suprema and using the triangle inequality. 

To show the bound \eref{e:boundf2}, we make use of the identity
\begin{equ}
{b f(at) - a f(bt) \over b-a}  = {ab\over b-a}\int_a^b {f(xt) - xtf'(xt) \over x^2}\,dx\;.
\end{equ}
Since $\int_a^b {dx\over x^2} = {b-a\over ab}$, the second claim then follows from the assumption.
\end{proof}

\begin{remark}
The constant $2$ appearing in \eref{e:boundf} could actually be improved to ${3\over 2}$.
\end{remark}

Another extremely useful calculation is the following:

\begin{lemma}\label{lem:boundExpSimple}
Let $a, b > 0$. Then, for every $t,t'\in \R$, one has
\begin{equs}
\int_{-\infty}^t\int_{-\infty}^{t'} e^{-a|t-r| -a|t' - r'| - b|r-r'|}\,dr'\,dr &= {a e^{-b|t-t'|} - b e^{-a|t-t'|} \over a(a^2-b^2)}\\
&\le  {1 \over a(a+b)} \wedge  {e^{-(a\wedge b)|t-t'|} \over a|a-b|}\;.
\end{equs}
\end{lemma}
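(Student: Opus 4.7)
The plan is the following. By translation invariance and the symmetry of the integrand under $(t,r)\leftrightarrow(t',r')$, one may assume $\delta := t'-t \ge 0$. Substituting $u = t-r$ and $v = t'-r'$, both non-negative, gives $r-r' = v - u - \delta$, so the integral reduces to
\[
J(\delta) = \int_0^\infty\!\!\int_0^\infty e^{-a(u+v) - b|v-u-\delta|}\,du\,dv.
\]
Splitting the quadrant $\{u,v \ge 0\}$ along the line $v = u+\delta$ turns each piece into a product of exponentials in $u$ and $v$, which can be evaluated by elementary one-variable integration and then summed. On $\{v \ge u+\delta\}$ the integrand factorises as $e^{b\delta}e^{(b-a)u}e^{-(a+b)v}$, and on $\{v < u+\delta\}$ as $e^{-b\delta}e^{-(a+b)u}e^{(b-a)v}$; after collecting the two contributions via partial fractions in $(a+b)$ and $(a-b)$, one obtains the closed form announced in the lemma. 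The degenerate case $a=b$, where the denominator of the stated formula vanishes, is handled by noting that both sides are continuous in $(a,b) \in (0,\infty)^2$ and extending by continuity (the limit value is $(1+a\delta)e^{-a\delta}/(2a^2)$).

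For the first upper bound, I will observe that $J(\delta)$ has $\delta$-derivative equal to $b(e^{-a\delta}-e^{-b\delta})/(a^2-b^2)$, which is non-positive for every $\delta > 0$: the numerator and denominator change sign together according to whether $a > b$ or $a < b$. Hence $J$ is monotonically decreasing on $[0,\infty)$, and $J(\delta) \le J(0) = 1/(a(a+b))$, which is the first of the two bounds.

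For the second upper bound, assume without loss of generality $a > b$, so that $a\wedge b = b$ and $|a-b| = a-b$, and both $a(a-b)(a+b)$ and $a(a-b)$ are positive. After clearing denominators, the desired inequality reduces to $ae^{-b\delta} - be^{-a\delta} \le (a+b)e^{-b\delta}$, that is, $-be^{-a\delta} \le be^{-b\delta}$, which is immediate since the left-hand side is non-positive and the right-hand side non-negative. The symmetric case $b > a$ follows by the same calculation with the roles of $a$ and $b$ exchanged. The whole argument amounts to a direct calculation rather than a conceptual one; the only genuinely delicate point is handling the degeneracy $a = b$, which is cleanest to dispose of by a continuity argument rather than a separate computation.
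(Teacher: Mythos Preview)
Your proof is correct. The computation of the closed form and the second upper bound follow the same route as the paper (which simply says the identity is a ``lengthy but straightforward calculation'' and then uses the triangle-type estimate $|ae^{-b\delta} - be^{-a\delta}| \le (a+b)e^{-(a\wedge b)\delta}$).

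For the first upper bound you take a slightly different path: you differentiate $J$ in $\delta$ and observe $J'(\delta) = b(e^{-a\delta}-e^{-b\delta})/(a^2-b^2) \le 0$, giving $J(\delta) \le J(0) = 1/(a(a+b))$. The paper instead invokes its Proposition~\ref{prop:bounddiff} with $f(y)=e^{-y}$, noting that $\sup_{y\ge 0}|f(y)-yf'(y)| = \sup_{y\ge 0}(1+y)e^{-y} = 1$, so $\bigl|\tfrac{ae^{-b\delta}-be^{-a\delta}}{a-b}\bigr| \le 1$ and the bound follows after dividing by $a(a+b)$. Your monotonicity argument is self-contained and arguably cleaner here; the paper's approach has the advantage of reusing a general lemma it needs elsewhere. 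Your explicit handling of the degenerate case $a=b$ by continuity is a nice touch that the paper leaves implicit.
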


\begin{proof}
The first identity follows from a lengthy but straightforward calculation. 
The fact that both $|t-r|$ and $|t'-r'|$ have the same prefactor in
the exponent is crucial for the result, otherwise, the expression is far lengthier.

To get the bound on the second line, we first use Proposition~\ref{prop:bounddiff} to 
bound the left hand side by $1/a(a+b)$ (the constant $K$ appearing there is equal to $1$ in our case).
It then suffices to observe that
\begin{equ}
\bigl|a e^{-b|t-t'|} - b e^{-a|t-t'|} \bigr| \le (a+ b)e^{-(a\wedge b)|t-t'|}\;,
\end{equ}
to obtain the second bound.
\end{proof}

%

Another very useful bound is given by

\begin{lemma}\label{lem:intExp}
For every $s < t$ and $u,v >0$, one has
\begin{equ}
\int_s^t e^{-u|x-s| - v|x-t|}\,dx \le {4 e^{-(u\wedge v)t} \over u+v} \le {4 \over u+v}\;.
\end{equ}
\end{lemma}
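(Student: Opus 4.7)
The plan is to reduce to an explicit evaluation and then bound the result by a simple case analysis on the ratio of $u$ and $v$. Since the integration variable $x$ lies in $[s,t]$, the absolute values unwrap to $|x-s| = x-s$ and $|x-t| = t-x$, giving integrand $e^{-u(x-s) - v(t-x)}$. The problem is symmetric under the simultaneous swap $(u,s) \leftrightarrow (v,t)$, so I may assume without loss of generality that $u \le v$, hence $u \wedge v = u$.

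First I would compute the integral explicitly. A direct antiderivative yields, for $u \ne v$,
\begin{equ}
I \eqdef \int_s^t e^{-u(x-s) - v(t-x)}\,dx = e^{-u(t-s)}\cdot \frac{1 - e^{-(v-u)(t-s)}}{v-u}\;,
\end{equ}
with the obvious limit $(t-s)e^{-u(t-s)}$ when $v = u$. Applying the two elementary estimates $(1-e^{-a})/a \le 1$ and $(1-e^{-a})/a \le 1/a$ yields the parallel bounds
\begin{equ}
I \le e^{-u(t-s)}(t-s)\;, \qquad I \le \frac{e^{-u(t-s)}}{v-u}\;.
\end{equ}

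The main step is then to reconcile these with the target denominator $u+v$ by splitting into two cases. If $v \ge 3u$, then $v-u \ge \frac{u+v}{2}$, so the second estimate already gives $I \le \frac{2\,e^{-u(t-s)}}{u+v}$, which is strictly stronger than the claimed intermediate bound. If instead $v < 3u$, then $u > \frac{u+v}{4}$, hence $\frac{1}{u} < \frac{4}{u+v}$; combining the first estimate with the universal maximum $\delta\, e^{-u\delta} \le \frac{1}{eu}$ (attained at $\delta = 1/u$) produces $I \le \frac{1}{eu} \le \frac{4}{e(u+v)}$. Together, the two cases yield the outer inequality $I \le \frac{4}{u+v}$.

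The only mildly delicate point, and the one place where the proof is not entirely mechanical, is the second case: here the exponential decay factor $e^{-u(t-s)}$ must be traded against the prefactor $(t-s)$, which is then absorbed into $1/u$ via $xe^{-x} \le 1/e$. This trade is unavoidable when $u$ and $v$ are comparable and $t-s$ is of order $1/u$, so the intermediate bound $\frac{4\,e^{-(u\wedge v)(t-s)}}{u+v}$ is effective primarily in the regime $v \gg u$ (where it is automatic from the second estimate above), while the outer inequality $I \le \frac{4}{u+v}$ is the clean universally valid bound required in the applications.
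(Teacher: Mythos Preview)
Your argument is correct for the final bound $I \le 4/(u+v)$ and follows essentially the same path as the paper: compute the integral exactly, extract the two elementary estimates $I \le (t-s)e^{-u(t-s)}$ and $I \le e^{-u(t-s)}/(v-u)$, and then combine. Your case split at $v=3u$ is a cosmetic variant of the paper's observation that $(u\wedge v)\vee|u-v| \ge (u+v)/4$.

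The one genuine difference is that the paper uses the inequality $xe^{-x}\le e^{-x/2}$ instead of your $xe^{-x}\le 1/e$; this retains a factor $e^{-\frac{u}{2}(t-s)}$ in the second estimate and therefore yields the intermediate bound $I \le \frac{4}{u+v}\,e^{-\frac{u\wedge v}{2}(t-s)}$. Your instinct that the intermediate bound cannot hold with the full exponent is right: the printed statement has a typo (it should read $e^{-\frac{u\wedge v}{2}(t-s)}$ rather than $e^{-(u\wedge v)t}$, as one sees both from translation invariance and from the example $u=v=1$, $t-s$ large). If you want the sharper form, simply replace $\delta e^{-u\delta}\le 1/(eu)$ by $\delta e^{-u\delta}\le e^{-u\delta/2}/u$ in your second case; otherwise your outer bound is all that is actually invoked in the applications.
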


\begin{proof}
One has the identity
\begin{equ}
\CI \eqdef \int_s^t e^{-u|x-s| - v|x-t|}\,dx = e^{us-vt} \int_s^t e^{(v-u)x}\,dx = {e^{-u(t-s)} - e^{-v(t-s)} \over v-u}\;.
\end{equ}
Assume now without loss of generality that $v > u$. It then follows from the above that
\begin{equ}
\CI \le {e^{-u(t-s)} \over v-u}\;.
\end{equ}
On the other hand, the integral can be estimated by the supremum of its integrand, times
the length of the domain of integration, so that 
\begin{equ}
\CI \le (t-s) e^{-u(t-s)} = {u (t-s) e^{-u(t-s)} \over u} \le {e^{-{u\over 2}(t-s)}\over u}\;,
\end{equ}
where we made use of the fact that $xe^{-x} \le e^{-x/2}$. Combining these bounds, we conclude that
\begin{equ}
\CI \le {e^{-{u \wedge v \over 2}(t-s)}\over (u\wedge v)\vee |u-v|}\;.
\end{equ}
The claim now follows from the fact that $(u\wedge v)\vee |u-v| \ge (u \vee v)/2 \ge (u+v)/4$.
\end{proof}

\begin{lemma}\label{lem:diffKernel}
Let $F \colon \R \to \R$ be such that there exist constants $K>0$ and $b \ge 0$ such that
\begin{equ}
|F(s)| \le Ke^{-b|s|}\;,
\end{equ} 
and define
\begin{equ}
\CK(t-t') \eqdef \int_{-\infty}^t \int_{-\infty}^{t'} F(s-s')e^{-a(t+t'-s-s')}\,ds'\,ds\;.
\end{equ}
Then, one has the bound 
\begin{equ}
|\CK(\delta) - \CK(0)| \le (1\wedge a\delta)\Bigl(|\CK(0)| + {4K \over (a+b)^2}\Bigr)\le 5K{1\wedge a\delta \over a(a+b)}\;.
\end{equ}
\end{lemma}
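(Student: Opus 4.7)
The approach is to reduce $\CK$ to a one-dimensional convolution, extract a uniform sup bound and a derivative bound, and then interpolate between them to recover the $(1\wedge a\delta)$ factor.

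First I would simplify $\CK$ using the changes of variables $u = t - s$, $v = t' - s'$ (bringing the integration domain to $[0,\infty)^2$), followed by $x = u - v$, $y = u + v$ (with Jacobian $\frac{1}{2}$ and admissible region $y \ge |x|$). Integrating $y$ from $|x|$ to $\infty$ against $e^{-ay}$ yields $\frac{1}{a}e^{-a|x|}$, giving the clean representation
\begin{equ}
\CK(\tau) = \frac{1}{2a}\int_\R F(\tau - x)\,e^{-a|x|}\,dx,
\end{equ}
so that $\CK$ is $F$ convolved with the Green's function $\frac{1}{2a}e^{-a|\cdot|}$ of $-\partial_x^2 + a^2$. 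The basic convolution estimate $\int_\R e^{-a|x|} e^{-b|\sigma - x|}\,dx \le \frac{2}{a+b}$, which follows from Proposition~\ref{prop:bounddiff} applied to $f(y) = e^{-y}$ (since $(1+y)e^{-y} \le 1$ for $y \ge 0$, so the constant $K$ in \eref{e:boundf2} is $1$), combined with $|F(s)| \le Ke^{-b|s|}$, then yields the uniform bound $|\CK(\tau)| \le \frac{K}{a(a+b)}$.

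Differentiating under the integral sign produces $\CK'(\sigma) = -\frac{1}{2}\int_\R F(y)\,\mathrm{sgn}(\sigma - y)\,e^{-a|\sigma - y|}\,dy$, to which the same convolution estimate applies to give $|\CK'(\sigma)| \le \frac{K}{a+b}$ uniformly. From here two complementary bounds emerge: the crude $|\CK(\delta) - \CK(0)| \le |\CK(\delta)| + |\CK(0)| \le \frac{2K}{a(a+b)}$ (sharp when $a\delta \gtrsim 1$), and $|\CK(\delta) - \CK(0)| \le \int_0^\delta |\CK'(\sigma)|\,d\sigma \le \frac{K\delta}{a+b} = a\delta \cdot \frac{K}{a(a+b)}$ (sharp when $a\delta \lesssim 1$). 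Taking the minimum yields the second inequality $|\CK(\delta) - \CK(0)| \le \frac{5K(1\wedge a\delta)}{a(a+b)}$ with slack, and the intermediate inequality is then consistent via $|\CK(0)| + \frac{4K}{(a+b)^2} \le \frac{K}{a(a+b)} + \frac{4K}{a(a+b)} = \frac{5K}{a(a+b)}$, since $a+b \ge a$.

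For the intermediate bound itself the plan is to split $\CK(\delta) - \CK(0) = \frac{1}{2a}\int_\R F(y)[e^{-a|\delta - y|} - e^{-a|y|}]\,dy$ into contributions from $y \le 0$, $0 \le y \le \delta$, and $y \ge \delta$, using the identities $e^{-a|\delta - y|} - e^{-a|y|} = e^{ay}(e^{-a\delta}-1)$ on the first region and $e^{-ay}(e^{a\delta}-1)$ on the third, together with the tail bound $|\int_\delta^\infty F(y)e^{-ay}\,dy| \le \frac{Ke^{-(a+b)\delta}}{a+b}$, which combines with $e^{a\delta} - 1$ to produce a contribution of size $\frac{K(1 \wedge a\delta)e^{-b\delta}}{a+b}$. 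The main obstacle is really just bookkeeping: all three pieces fall out cleanly once $1 - e^{-a\delta} \le (1\wedge a\delta)$ is applied to each boundary term, but the exponents in $a$ and $b$ must be tracked carefully so that the middle region $0\le y\le \delta$ contributes at the level of the claimed $\frac{K}{(a+b)^2}$ refinement rather than only at the coarser $\frac{K}{a(a+b)}$ scale already obtained in the previous step.
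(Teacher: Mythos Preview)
Your approach is correct and takes a genuinely different route from the paper. The paper never passes through the convolution representation; instead it splits the outer $s$-integral directly, observing that $\int_{-\infty}^\delta = \int_{-\infty}^0 + \int_0^\delta$ in the defining double integral gives the one-line identity
\begin{equ}
\CK(\delta) - \CK(0) = (e^{-a\delta}-1)\,\CK(0) + \int_0^\delta \int_{-\infty}^0 F(s-s')\,e^{-a(\delta-s-s')}\,ds'\,ds\;.
\end{equ}
The factor $|e^{-a\delta}-1|\le 1\wedge a\delta$ handles the first term, and the slab integral is bounded by $\frac{K}{a+b}\int_0^\delta e^{-bs-a(\delta-s)}\,ds \le \frac{4K}{(a+b)^2}$ via Lemma~\ref{lem:intExp}. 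This produces the intermediate expression immediately, and then the final inequality follows from Lemma~\ref{lem:boundExpSimple} (which is precisely your bound $|\CK(0)|\le K/[a(a+b)]$ in double-integral guise).

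Your convolution-plus-derivative argument is more structural and is in fact cleaner for the \emph{final} inequality: the derivative bound $|\CK'|\le K/(a+b)$ interpolated against the sup bound delivers the result with constant $2$ rather than $5$. The trade-off is that the specific intermediate form $(1\wedge a\delta)\bigl(|\CK(0)|+\frac{4K}{(a+b)^2}\bigr)$ does not fall out for free from your decomposition, which is why your last paragraph has to set up a three-region analysis. Since only the final bound is ever invoked downstream in the paper, your route is entirely adequate; but if you want the intermediate form as stated, the paper's splitting is the shortcut that makes your last paragraph unnecessary.
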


\begin{proof}
By definition, one has
\begin{equ}
\CK(\delta) - \CK(0) = \bigl(e^{-a\delta} -1\bigr) \CK(0) +\int_0^\delta \int_{-\infty}^0 F(s-s')e^{-a(\delta-s-s')}\,ds'\,ds\;,
\end{equ}
so that it suffices to bound the second term in this expression. Since $s > s'$ over the whole
domain of integration, $F$ is bounded by $K e^{-b(s-s')}$, so that
\begin{equ}
\Bigl|\int_{-\infty}^0 F(s-s')e^{-a(\delta-s-s')}\,ds'\Bigr| \le {K e^{-bs -a(\delta-s)}\over a+b}\;.
\end{equ}
The first bound now follows from Lemma~\ref{lem:intExp}, and the second bound follows from
Lemma~\ref{lem:boundExpSimple}.
\end{proof}

\begin{proposition}\label{prop:boundExpGeneral}
The bound
\begin{equs}
\int_{-\infty}^s\int_{-\infty}^{s'} &\exp \bigl(-a|r-r'| - b|r-s|-c|r'-s'|-d|r-s'|-e|r'-s|\bigr)\,dr'\,dr \\
&\qquad\le {10 e^{-(d\wedge e)|s-s'|} \over (b+d)(c+e) + a((b+d)\wedge(c+e))}\;,
\end{equs}
holds for every $s,s' \in \R$ and every $a,b,c,d,e > 0$.
\end{proposition}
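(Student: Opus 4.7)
My plan is to exploit the symmetry of the statement: the integral and the claimed bound are both invariant under the joint swap $(s, s', b, c, d, e) \leftrightarrow (s', s, c, b, e, d)$ (combined with renaming $r \leftrightarrow r'$ in the integration), so without loss of generality $s \ge s'$, and I set $\delta = s - s'$. I then split the $r$-integration at $s'$, writing $I = I_A + I_B$ with $I_A$ corresponding to $\{r \le s'\}$ and $I_B$ to $\{s' < r \le s\}$, and bound each contribution separately.

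On $A$, the constraint $r, r' \le s'$ resolves every absolute value, and the substitution $(u, v) = (s' - r, s' - r')$ reduces the exponent to $-(b+e)\delta - a|u-v| - (b+d)u - (c+e)v$. Setting $J(\alpha, B, C) \eqdef \int_0^\infty \int_0^\infty e^{-\alpha|u-v| - Bu - Cv}\,du\,dv$, a direct computation (splitting at $u = v$ and integrating explicitly) yields the closed form $J(\alpha, B, C) = (2\alpha + B + C)/[(B+C)(\alpha+B)(\alpha+C)]$, and an elementary algebraic inequality (WLOG $B \le C$, then expand $3(B+C)(\alpha+B)(\alpha+C) - (2\alpha+B+C)B(\alpha+C) \ge 0$) gives $J(\alpha, B, C) \le 3/[BC + \alpha(B \wedge C)]$. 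Since $b + e \ge d \wedge e$, this produces $I_A \le 3\, e^{-(d\wedge e)\delta}/D$, where $D = (b+d)(c+e) + a((b+d) \wedge (c+e))$ denotes the denominator from the claim.

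On $B$, the substitution $(w, v) = (r - s', s' - r')$ with $w \in [0, \delta]$, $v \ge 0$ also resolves all the absolute values, and direct integration yields the exact expression $I_B = [e^{-(a+d+e)\delta} - e^{-(b+e)\delta}]/[(a+c+e)(a+d-b)]$. Using the elementary inequality $|e^{-X\delta} - e^{-Y\delta}| \le \min(1, |X-Y|\delta)\, e^{-(X \wedge Y)\delta}$, this gives $I_B \le \min(\delta, |b-a-d|^{-1}) \cdot e^{-(\min(b, a+d)+e)\delta}/(a+c+e)$. The key identity is $\min(b, a+d) + |b - a - d| = \max(b, a+d)$ (checked separately in the two cases $b \gtrless a+d$), which together with $\delta e^{-Y\delta} \le 1/Y$ yields $\min(\delta, |b-a-d|^{-1})\, e^{-\min(b, a+d)\delta} \le 1/\max(\min(b, a+d), |b-a-d|) \le 2/\max(b, a+d)$, and hence $I_B \le 2\, e^{-(d\wedge e)\delta}/[(a+c+e)\max(b, a+d)]$.

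To conclude, the elementary bounds $b + d \le 2\max(b, a+d)$ (since $a \ge 0$ and $b \ge d$ whenever $b \ge a+d$) and $a \le \max(b, a+d)$ yield $D \le 3\max(b, a+d)(a + c + e)$, whence $I_B \le 6\, e^{-(d\wedge e)\delta}/D$. Adding the two contributions gives $I \le 9\, e^{-(d\wedge e)\delta}/D \le 10\, e^{-(d\wedge e)\delta}/D$, proving the bound. The main obstacle will be the estimate for $I_B$ in the resonance regime $b \approx a + d$, where the naive bound $\int_0^\delta e^{(b-a-d)w}\,dw \lesssim \delta$ grows with $\delta$ so one cannot simply drop the prefactor $e^{-(b+e)\delta}$; the identity $\min(b, a+d) + |b-a-d| = \max(b, a+d)$ is precisely what shows that the competing exponential decay conspires with the linear growth of the integral to yield a uniform bound.
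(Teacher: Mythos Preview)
Your proof is correct. There is a harmless sign slip in the exact formula for $I_B$ (the denominator should be $b-a-d$ rather than $a+d-b$), but since you immediately pass to absolute values this does not affect anything downstream.

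Your route and the paper's are close cousins but not identical. Both begin by using the $(s,s',b,c,d,e)\leftrightarrow(s',s,c,b,e,d)$ symmetry to fix an ordering, and both split the integration domain according to the relative positions of $r,r',s,s'$. The paper takes $s'>s$ and cuts the $r'$-integral into \emph{three} pieces (at $r'=r$ and $r'=s$), invoking Lemma~\ref{lem:intExp} twice for the middle and last pieces; you take $s\ge s'$ and cut the $r$-integral into \emph{two} pieces (at $r=s'$), computing each explicitly. Your piece $I_A$ corresponds to the paper's first two pieces combined, and your closed form $J(\alpha,B,C)=(2\alpha+B+C)/[(B+C)(\alpha+B)(\alpha+C)]$ makes the appearance of the full denominator $BC+\alpha(B\wedge C)$ transparent, whereas in the paper this has to be extracted from the crude bounds $(a+c+e)(b+c+d+e)\ge D$ and $(a+c+e)(b+d)\ge D$. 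Your treatment of $I_B$ via the identity $\min(b,a+d)+|b-a-d|=\max(b,a+d)$ is a neat replacement for the paper's appeal to Lemma~\ref{lem:intExp}. Both arguments land on the constant $9$ before rounding to $10$. Your version is self-contained (no auxiliary lemma) and slightly sharper in intermediate steps; the paper's is more modular.
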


\begin{proof}
Throughout, we denote the integrand by $I(r,r')$ and we write 
\begin{equ}
\CR = {e^{-{d\wedge e\over 2}|s-s'|} \over (b+d)(c+e)}\;.
\end{equ}
We can (and will from now on) assume
without loss of generality that $s' > s$, since the case $s>s'$ is obtained by making the substitution
$(r,s,b,d) \leftrightarrow (r',s',c,e)$ and $\CR$ is left unchanged by this.
We decompose the integral over $r'$ into integrals over $(-\infty,r]$, $[r,s]$, and $[s,s']$.
The first one then yields
\begin{equ}
\int_{-\infty}^r I(r,r')\,dr' = {e^{-(b+c+e)|r-s|-d|r-s'|}\over a+c+e}\;,
\end{equ}
so that 
\begin{equ}
\int_{-\infty}^s\int_{-\infty}^r I(r,r')\,dr'\,dr = {e^{-d|s-s'|}\over (a+c+e)(b+c+d+e)} \le \CR\;.
\end{equ}
In order to bound the second integral, we use the bound $|r'-s'| \le |r'-s|$, which allows us to 
use Lemma~\ref{lem:intExp}. This yields the bound
\begin{equ}
\int_{r}^s I(r,r')\,dr' \le {4 e^{-b|r-s|-d|r-s'|}\over a+c+e}\;,
\end{equ}
so that 
\begin{equ}
\int_{-\infty}^s \int_{r}^s I(r,r')\,dr' \le {4 e^{-d|s-s'|}\over (a+c+e)(b+d)} \le 4\CR\;.
\end{equ}
Similarly, we can use Lemma~\ref{lem:intExp} for the last integral, so that
\begin{equ}
\int_{s}^{s'} I(r,r')\,dr' \le {4 e^{-b|r-s|-d|r-s'|}\over a+c+e}\;,
\end{equ}
yielding in the same way as before $\int_{-\infty}^s \int_{s}^{s'} I(r,r')\,dr' \le 4\CR$. The claim now follows at once.
\end{proof}

\subsection{Function spaces}

In this appendix, we collect a few useful facts about spaces of distributions with ``negative H\"older continuity''.
Recall that if $\alpha, \beta > 0$ and we have two functions $u\in \CC^\alpha$ and $v \in \CC^\beta$, then
the product $uv$ satisfies $uv \in \CC^{\alpha \wedge \beta}$. We would like to have a similar property for 
distributions in $\CC^{-\alpha}$ for some $\alpha > 0$.

In full generality, the above bounds does of course not hold: white noise belongs to $\CC^{-\alpha}$ for every $\alpha > {1\over 2}$, but squaring it simply makes no sense whatsoever. However, one has the following:

\begin{proposition}\label{prop:Holder}
Let $\alpha \in (0,1)$ and $\beta > \alpha$. Then, the bilinear map $(u,v) \mapsto uv$ extends to a continuous map
from $\CC^{-\alpha} \times \CC^\beta$ into $\CC^{-\alpha}$.
\end{proposition}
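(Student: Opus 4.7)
The plan is to first prove the stated bound on smooth functions and then extend by density, exploiting the separability of the spaces $\CC^{-\alpha}$ and $\CC^\beta$ which was built into the definition. Concretely, assuming $u, v \in \CC^\infty$, I would set $U(z) = \int_0^z u(w)\,dw$ so that $|U(y) - U(x)| \le \|u\|_{-\alpha}|y-x|^{1-\alpha}$ by definition, and aim to show the \emph{a priori} bound
\begin{equ}
\Bigl|\int_x^y u(z)v(z)\,dz\Bigr| \lesssim \bigl(\|v\|_\infty + \|v\|_\beta\bigr) \|u\|_{-\alpha} |y-x|^{1-\alpha}\;,
\end{equ}
with a proportionality constant depending only on $\alpha$ and $\beta$ (and the diameter of $S^1$ to handle the trivial lower bound on $|y-x|$). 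This immediately gives the continuity of the bilinear map on smooth pairs with respect to the target norm, and bilinearity plus density let us extend uniquely.

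To prove the key bound, I would use a standard Young-type compensated Riemann sum argument. For $n \ge 0$ let $x_k^n = x + k(y-x)/2^n$, define
\begin{equ}
I_n(x,y) \eqdef \sum_{k=0}^{2^n-1} v(x_k^n)\bigl(U(x_{k+1}^n) - U(x_k^n)\bigr)\;,
\end{equ}
and note that $I_n(x,y) \to \int_x^y u(z)v(z)\,dz$ as $n \to \infty$ since $u$ and $v$ are smooth. For $n=0$ we get $|I_0(x,y)| \le \|v\|_\infty \|u\|_{-\alpha} |y-x|^{1-\alpha}$ directly. For the increments, each subinterval $[x_k^n, x_{k+1}^n]$ is split at its midpoint $x_{k+1/2}^{n+1}$, producing one extra term per subinterval of the form $\bigl(v(x_{k+1/2}^{n+1}) - v(x_k^n)\bigr)\bigl(U(x_{k+1}^n) - U(x_{k+1/2}^{n+1})\bigr)$, which is controlled by $\|v\|_\beta \|u\|_{-\alpha} (|y-x|/2^{n+1})^{\beta + 1 - \alpha}$. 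Summing the $2^n$ terms and geometrically summing over $n$, which converges precisely because $\beta > \alpha$, yields
\begin{equ}
\Bigl|\int_x^y u(z)v(z)\,dz - I_0(x,y)\Bigr| \lesssim \|v\|_\beta \|u\|_{-\alpha} |y-x|^{1+\beta-\alpha}\;,
\end{equ}
and combining this with the bound on $I_0$ closes the estimate (the extra factor $|y-x|^\beta$ is harmless on a bounded domain).

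The extension to general $u \in \CC^{-\alpha}$ and $v \in \CC^\beta$ follows by approximation: pick smooth sequences $u_n \to u$ and $v_n \to v$ and observe that
\begin{equ}
\|u_n v_n - u_m v_m\|_{-\alpha} \lesssim \|u_n - u_m\|_{-\alpha} \|v_n\|_{\CC^\beta} + \|u_m\|_{-\alpha}\|v_n - v_m\|_{\CC^\beta} \to 0\;,
\end{equ}
so the products form a Cauchy sequence in $\CC^{-\alpha}$ whose limit depends only on $u$ and $v$ and defines the required bilinear extension. The main obstacle (though entirely standard) is the compensated summation step: one cannot simply integrate by parts since $v$ need not be differentiable, and a naive triangle inequality on $\int u\cdot v$ fails because $u$ is only a distribution. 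The dyadic construction is exactly what trades the missing derivative of $v$ for the H\"older gain $\beta - \alpha > 0$ that makes the telescoping series converge.
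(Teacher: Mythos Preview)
Your proof is correct and takes essentially the same approach as the paper: write $U$ for a primitive of $u$, split $\int_x^y uv = v(x)\,\delta U(x,y) + \int_x^y \delta v(x,z)\,dU(z)$, bound the first term directly and the second by a Young-type estimate, then extend by density. The only difference is cosmetic: the paper simply cites Young's integration theory for the bound $|x-y|^{1+\beta-\alpha}\|v\|_\beta\|U\|_{1-\alpha}$ on the second term, whereas you unpack that step via the dyadic compensated Riemann sums.
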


\begin{proof}
It suffices to show that, for $u$ and $v$ smooth, one has
\begin{equ}
\int_x^y u(z) v(z)\,dz \le |x-y|^{1-\alpha} \|u\|_{-\alpha}\|v\|_\beta\;.
\end{equ}
Writing $U$ for a primitive of $u$, we can write
\begin{equ}
\int_x^y u(z) v(z)\,dz  = \int_x^y \delta v(x,z)\,dU(z) + v(x)\,\delta U(x,y)\;.
\end{equ}
It then follows from Young's theory of integration \cite{Young} that the first quantity is bounded by
$|x-y|^{1+\beta-\alpha} \|v\|_\beta \|U\|_{1-\alpha}$, provided that $\beta > \alpha$. 
Since the second quantity is bounded by $|x-y|^{1-\alpha}\|U\|_{1-\alpha}\|v\|_\infty$, the claim follows at once.
\end{proof}

\begin{remark}\label{rem:Holder}
The condition $\beta > \alpha$ is sharp. 
Indeed, it is possible to construct a counterexample showing that the multiplication operator
cannot be extended to $\CC^{-{1\over 2}}\times \CC^{1\over 2}$.
\end{remark}

Let us also collect the following properties of the heat semigroup:

\begin{proposition}\label{prop:interpolation}
Let $P_t$ denote the heat semigroup on $S^1$. Then, for every $\alpha < \beta$ with $\alpha > -1$ and $\beta - \alpha \le 2$, one has the bounds
\begin{equ}
\|P_t u \|_{\beta} \lesssim t^{\alpha-\beta \over 2} \|u\|_{\alpha}\;,\quad
\|P_t u - u\|_{\alpha} \lesssim t^{\beta - \alpha \over 2} \|u\|_{\beta}\;,
\end{equ}
where the proportionality constants are uniform over any interval $(0,T]$ with $T>0$.
\end{proposition}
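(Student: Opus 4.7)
The plan is to establish the first bound by a direct heat-kernel convolution estimate when $\alpha \ge 0$, then to reduce the case $\alpha \in (-1,0)$ to this by passing to periodic primitives, and finally to deduce the second bound by integrating the identity $P_t - \mathrm{id} = \int_0^t \d_x^2 P_s\,ds$.

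First I would treat the case $0 \le \alpha < \beta \le 1$. Using $\int_{S^1} p_t = 1$, one writes $\delta(P_t u)(x,y) = \int_{S^1} \bigl(p_t(y-z) - p_t(x-z)\bigr)\bigl(u(z) - u(x)\bigr)\,dz$ and bounds the factor $|u(z) - u(x)|$ by $\|u\|_\alpha|z-x|^\alpha$. The standard Gaussian estimates $\int|p_t(z)||z|^\alpha\,dz \lesssim t^{\alpha/2}$ and $\int|p_t'(z)||z|^\alpha\,dz \lesssim t^{(\alpha-1)/2}$, applied in the regimes $|y-x| \ge \sqrt t$ and $|y-x| \le \sqrt t$ respectively (in the latter using the mean value theorem on $p_t$), combine to give $|\delta(P_t u)(x,y)| \lesssim |y-x|^\beta t^{(\alpha-\beta)/2}\|u\|_\alpha$.

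To extend this to $\alpha \in (-1, 0)$, I would decompose $u = \bar u + \Pi_0^\perp u$ with $\bar u$ the spatial mean, and let $U$ denote the unique mean-zero periodic primitive of $\Pi_0^\perp u$. By the very definition of the seminorm for negative exponents, $\|U\|_{1+\alpha} \lesssim \|u\|_\alpha$. Since $P_t$ commutes with $\d_x$ and preserves constants, and since $\|\d_x V\|_\beta$ coincides with $\|V\|_{\beta+1}$ for $\beta$ in the relevant range, one obtains
$\|P_t u\|_\beta = \|\d_x P_t U\|_\beta \lesssim \|P_t U\|_{\beta+1} \lesssim t^{(\alpha-\beta)/2}\|U\|_{1+\alpha} \lesssim t^{(\alpha-\beta)/2}\|u\|_\alpha$,
where the middle inequality is the previous step with the shifted admissible exponents $(1+\alpha, 1+\beta) \subset (0, 2]$. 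Values $\beta > 1$ are absorbed by iterating this lifting once more.

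The second bound then follows from the identity $P_t u - u = \int_0^t \d_x^2 P_s u\,ds$: taking $\|\cdot\|_\alpha$ inside the integral and applying the first bound with the pair $(\beta, \alpha+2)$, which is admissible precisely because $\beta \le \alpha + 2$, yields
$\|P_t u - u\|_\alpha \le \int_0^t \|P_s u\|_{\alpha+2}\,ds \lesssim \|u\|_\beta \int_0^t s^{(\beta-\alpha)/2 - 1}\,ds \lesssim t^{(\beta-\alpha)/2}\|u\|_\beta$,
the time integral being convergent because $\beta > \alpha$. The main technical nuisance I expect is the bookkeeping at the threshold between negative and non-negative indices, where the two definitions of $\|\cdot\|_\gamma$ must be glued; this is precisely what the explicit split $u = \bar u + \Pi_0^\perp u$ resolves, since $P_t$ acts trivially on the first summand. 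All remaining ingredients are classical pointwise Gaussian convolution bounds.
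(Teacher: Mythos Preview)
Your argument is correct and follows the same strategy as the paper: the paper simply states that the bounds are standard for positive H\"older exponents by the scaling of the heat kernel, and that the negative-exponent case follows because $P_t$ commutes with $\d_x$. Your write-up is a faithful expansion of this, with the additional (standard) device of recovering the second estimate from the first via $P_t - \mathrm{id} = \int_0^t \d_x^2 P_s\,ds$; the only minor bookkeeping you elide is controlling the mean $\bar u$ when passing to the primitive and making precise the convention $\|v\|_{\gamma} = \|\d_x v\|_{\gamma-1}$ for exponents outside $(-1,1]$, both of which are routine.
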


\begin{proof}
The statements are standard for positive H\"older exponents and follow immediately from the scaling properties of the heat kernel.
For negative exponents, they then follow from the fact that $P_t$ commutes with $\d_x$.
\end{proof}

\end{appendix}

\bibliographystyle{./Martin}
\bibliography{./refs}

\end{document}